\title[Chow classes of matroids]{Chow classes of matroids and\\ standard Young tableaux}
  \tikzstyle{rpath}=[rounded corners=0.1pt]
  \tikzstyle{dot}=[draw, shape=circle, fill=black, inner sep=0pt, minimum size=3pt]
\newtheoremstyle{theorems}% <name>
{13pt}% <Space above>
{13pt}% <Space below>
{\slshape}% <Body font>
{}% <Indent amount>
{\bfseries}% <Theorem head font>
{.}% <Punctuation after theorem head>
{.5em}% <Space after theorem headi>
{}% <Theorem head spec (can be left empty, meaning `normal')>
\theoremstyle{theorems}
\newtheorem{theorem}{Theorem}[section]
\newtheorem{corollary}[theorem]{Corollary}
\newtheorem{lemma}[theorem]{Lemma}
\newtheorem{conjecture}[theorem]{Conjecture}
\newtheorem{proposition}[theorem]{Proposition}
\newtheorem*{theorem*}{Theorem}
\newtheorem*{conjecture*}{Conjecture}
\newtheoremstyle{definition}% <name>
{12pt}% <Space above>
{12pt}% <Space below>
{}% <Body font>
{}% <Indent amount>
{\bfseries}% <Theorem head font>
{.}% <Punctuation after theorem head>
{.5em}% <Space after theorem headi>
{}% <Theorem head spec (can be left empty, meaning `normal')>
\theoremstyle{definition}
\newtheorem{definition}[theorem]{Definition}
\newtheorem{problem}[theorem]{Problem}
\newtheorem{example}[theorem]{Example}
\newtheorem{remark}[theorem]{Remark}
\newcommand{\NN}{\mathbb{N}}
\newcommand{\ZZ}{\mathbb{Z}}
\newcommand{\CC}{\mathbb{C}}
\DeclareMathOperator{\Sc}{Sc}
\DeclareMathOperator{\rmv}{rmv}
\DeclareMathOperator{\rank}{rank}
\DeclareMathOperator{\spn}{span}
\DeclareMathOperator{\supp}{supp}
\DeclareMathOperator{\des}{des}
\DeclareMathOperator{\Des}{Des}
\DeclareMathOperator{\SSYT}{SSYT}
\newcommand{\SYT}{\mathrm{SYT}}
\newcommand{\M}{\mathsf{M}}
\newcommand{\N}{\mathsf{N}}
\newcommand{\U}{\mathsf{U}}
\renewcommand{\S}{\mathsf{S}}
\DeclareMathOperator{\Volume}{V}
\DeclareMathOperator{\conv}{conv}
\DeclareMathOperator{\parallelExt}{P}
\DeclareMathOperator{\seriesExt}{S}
\DeclareMathOperator{\schurmatrix}{\Sigma}
\DeclareMathOperator{\KtoSc}{\mathcal L}
\DeclareMathOperator{\ribbon}{\rho}
\newcommand\polytope[1]{\mathscr{#1}}
\newcommand\SetOf[2]{\left\{\left.#1\vphantom{#2}\ \right|\ #2\vphantom{#1}\right\}}
\author[J.P.~Hamre]{Jon Pål Hamre}
\author[B.~Schr\"oter]{Benjamin Schr\"oter}
\author[L.~Vecchi]{Lorenzo Vecchi}
\author[E.~Verkama]{Emil Verkama}
\address{
  Department of Mathematics, KTH Royal Institute of Technology, Stockholm, Sweden
}
\email{\{jphamre, schrot, lvecchi, verkama\}@kth.se}
\thanks{
JPH is supported by the Swedish Research Council grant 2023-04063.
BS is supported by the Swedish Research Council grant 2022-04224.
EV is supported by the Swedish Research Council grant 2022-03875.
}
\colorlet{Red}{red!50}
\colorlet{Blue}{blue!50}
\colorlet{Yellow}{yellow!50}
\colorlet{Violet}{violet!50}
\keywords{
matroid theory, 
Schubert calculus, 
symmetric functions, 
ribbon Schur functions, 
valuative matroid invariants,
standard Young tableaux}
\subjclass[2020]{05B35, 05E05, 14N15, 52B40}
\begin{document}

\begin{abstract}
    We study the Chow classes of arbitrary matroids in the Grassmannian. We develop a new combinatorial approach for computing them, by first focusing on snake matroids and then extending our results via valuativity to any matroid. Our main contribution identifies the Poincaré dual of the Chow class of a snake matroid with a specific ribbon Schur function, providing an explicit formula for its coefficients in the Schubert basis as the number of standard Young tableaux of a given shape with a prescribed descent set. This agrees with a formula by Klyachko for the uniform matroid. As consequences, we recover and simplify classical results such as Gessel--Viennot’s enumeration of permutations with fixed descent sets, and formulas for the volume of lattice path matroids. Furthermore, we demonstrate the power of our findings by proving that certain Schubert coefficients are positive for all connected paving matroids. 
\end{abstract}
\maketitle

\section{Introduction} \noindent
The algebraic torus \(T=(\CC^*)^n\) acts naturally on the complex Grassmannian \(G(k,n)\) of \(k\)-subspaces of \(\CC^n\). To each point \(x\) in \(G(k,n)\) one may associate the \emph{torus orbit closure} \(\overline{Tx}\), i.e., the Zariski closure of the \(T\)-orbit of \(x\). It is a classical result of Gelfand, Goresky, MacPherson and Serganova \cite{GGMS} that \(\overline{Tx}\) is isomorphic to the projective toric variety of the matroid (base) polytope of the linear matroid \(\M_x\), which is represented by the columns of any matrix whose rowspace is \(x\).
In light of this result, it is natural to expect that properties of \(\overline{Tx}\) solely depend on the combinatorics of the matroid~\(\M_x\). One example of such a property is the class of \(\overline{Tx}\) in the Chow ring \(A^\bullet(G(k,n))\), as studied in \cite{FinkSpeyer}. This notion can be extended uniquely to a valuative invariant defined on arbitrary (not necessarily representable) matroids; see Remark~\ref{rem:valuation from rep} for further details. For an arbitrary matroid \(\M\) of rank \(k\) on a ground set of size \(n\), we call such a class \(\Sc(\M) \in A^\bullet(G(k,n))\) the \emph{Chow class} of the matroid \(\M\). Since the Schubert cycles form a basis of the ring \(A^\bullet(G(k,n))\), we may write
\[
    \Sc(\M) \ = \ \sum_{\eta} d_\eta(\M)\, \sigma_\eta \, ,
\]
where the sum ranges over partitions whose Young diagram is contained in the \(k \times (n-k)\) rectangle. The unique integers \(d_\eta(\M)\) appearing in this expansion are called the \emph{Schubert coefficients} of the matroid \(\M\). 

The Chow classes of matroids can be computed using equivariant \(K\)-theory \cite{FinkSpeyer,bergetfink_K} or tautological bundles \cite{BEST}. These methods are, however, relatively sophisticated and computationally expensive. 
This is one of the difficulties in the attempt to understand Chow classes, and it is therefore of interest to develop new techniques that are more direct and conceptually more transparent. A fundamental open question is the following.

\begin{conjecture*}[\cite{bergetfink_K}]
    For any matroid \(\M\) and partition \(\eta\), the Schubert coefficient \(d_\eta(\M)\) is nonnegative.
\end{conjecture*}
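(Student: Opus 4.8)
The plan is to reduce the conjecture to a highly structured family of matroids where positivity is manifest, and then attempt to propagate it. First I would obtain a combinatorial handle on $\Sc(\N)$ for \emph{snake matroids} $\N$ — the lattice path matroids whose base polytopes are the simplest ``staircase'' shapes. The claim to establish here is that the Poincaré dual of $\Sc(\N)$ equals a ribbon Schur function $s_{\ribbon}$, where the ribbon shape is read directly off the snake. Granting this, the Schubert expansion of $\Sc(\N)$ is governed by the expansion of $s_{\ribbon}$ in the Schur basis, and by Gessel's theory of fundamental quasisymmetric functions and $P$-partitions, the coefficient of $s_\eta$ in $s_{\ribbon}$ equals the number of standard Young tableaux of shape $\eta$ whose descent set is the one prescribed by the ribbon. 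That number is visibly a nonnegative integer, so $d_\eta(\N)\ge 0$ for every snake matroid $\N$, and one recovers Klyachko's uniform-matroid formula as the special case where the ribbon is a hook.

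Second, I would try to bootstrap from snakes to arbitrary matroids via \emph{valuativity}. Since $\Sc$ is a valuative invariant (see Remark~\ref{rem:valuation from rep}), $\Sc(\M)$ depends only on the class of the base polytope $\polytope{P}(\M)$ in the valuative group, so if one can write $[\polytope{P}(\M)]$ as an integer combination of $[\polytope{P}(\N)]$ over snake matroids $\N$, then each $d_\eta(\M)$ is \emph{the same} integer combination of the nonnegative numbers $d_\eta(\N)$. For Schubert matroids and, more generally, lattice path matroids such decompositions are available; I would check when they can be taken with nonnegative coefficients, in which case positivity of $d_\eta$ is immediate, and en route one should recover the volume formulas for lattice path matroids and the Gessel--Viennot enumeration of permutations with a fixed descent set.

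The main obstacle is precisely that for a general matroid the snake expansion of $[\polytope{P}(\M)]$ is \emph{not} nonnegative: cancellation of signs is unavoidable, so termwise positivity of the $d_\eta(\M)$ does not follow formally from the snake case. To attack the full conjecture one must control these signs globally, and I expect the right move is to combine the ribbon picture with the tautological/$K$-theoretic description of $\Sc(\M)$, arranging the alternating sums so that they telescope into genuine tableau counts. Absent a complete resolution, the realistic target is positivity for structured families where the cancellations are explicit — most notably connected paving matroids, whose polytope is a hypersimplex with small simplices sliced off, so that $\Sc(\M)$ becomes a signed sum of hook (snake) contributions that one can hope to prove stays coefficientwise nonnegative. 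This last step is where I expect the real difficulty to lie, and it is where the enumeration of standard Young tableaux with prescribed descent sets does the essential work.
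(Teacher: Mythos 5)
The statement you are asked about is an open conjecture of Berget--Fink; the paper does not prove it either, so there is no ``paper's proof'' to match. What you have written is a research plan, and it does track the partial progress the paper actually makes: the Chow class of a snake matroid is Poincar\'e dual to a ribbon Schur function, Gessel's theorem turns its Schubert coefficients into counts of standard Young tableaux with a prescribed descent set (hence nonnegative), valuativity plus the snake decomposition of lattice path matroids gives a sign-free extension to that class, and the genuinely hard step --- controlling the cancellations in the signed snake expansion of a general matroid --- is exactly where the paper also stops short.

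Two corrections, though. First, Klyachko's formula for $\U_{k,n}$ is \emph{not} the special case ``ribbon is a hook'': the uniform matroid is not a snake, and the hook ribbon corresponds to the minimal matroid $\mathsf{T}_{k,n}$, whose class is a single Schubert cycle. One recovers Klyachko's count of SYT with exactly $k-1$ descents by summing the ribbon contributions over \emph{all} snakes fitting in the $k\times(n-k)$ rectangle (equivalently, over all descent sets of size $k-1$). Second, your hoped-for endgame for paving matroids overstates what the cancellation control actually yields: the paper's Theorem \ref{thm: nonnegativity paving} establishes positivity only for the special shapes $\eta(m)^c$ with $\eta(m)=[n-k,m+1,1^{k-m-2}]$, by writing $d_\eta(\M)=d_\eta(\U_{k,n})-\sum_h c_h\, d_\eta(\Lambda_{k,h,n})$ and proving the ratio inequality
\[
\frac{d_{\eta(0)^c}(\Lambda_{k,h,n})}{d_{\eta(0)^c}(\U_{k,n})}\ \geq\ \frac{d_{\eta(m)^c}(\Lambda_{k,h,n})}{d_{\eta(m)^c}(\U_{k,n})}\, ,
\]
which anchors everything to positivity of the $\beta$-invariant. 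Nothing in your outline supplies a mechanism for all shapes, even for paving matroids, so you should present that step as open rather than as a target you expect the descent-set enumeration alone to deliver.
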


For representable matroids the Schubert coefficients count the number of intersection points of \(\overline{Tx}\) with certain Schubert varieties and are therefore nonnegative. In general, however, such a geometric interpretation is missing.
Nonnegativity has been verified only in a few nonrepresentable cases: when \(\M\) is \emph{sparse paving} \cite{JP}, when \(\eta^c\) is a \emph{hook} \cite{Speyer}, it has two rows and full first row, or it has two columns and full first column which follows from \cite[Section 9]{BEST}.

The central contribution of this paper is a purely combinatorial description of the Chow classes of matroids, which sheds new light on their properties and allows for drastically more efficient computations. First, we introduce a fundamental identity (Theorem \ref{thm:main identity}) that describes how Chow classes of matroids behave under series and parallel extensions. 
We use the identity to prove the following result (Theorem \ref{thm:ribbon snake}) for the Chow classes of \emph{snake matroids}. 

\begin{theorem*}
    Let \(\S\) be the snake matroid associated to the ribbon \(\ribbon\). Then the Poincaré dual of its Chow class is equal to the ribbon Schur function \(s_{\ribbon}\). 
\end{theorem*}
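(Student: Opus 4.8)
The plan is to induct on the number of cells of the ribbon $\ribbon$ — equivalently, on the size of the ground set of the snake matroid $\S$ — using Theorem~\ref{thm:main identity} to relate the Chow class of a snake to those of its one-element series and parallel extensions, and then matching the resulting recursion against the classical product rule for ribbon Schur functions.

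The first step is to fix the dictionary between the two ways a ribbon can grow and the two ways a snake can be extended. Writing the composition underlying $\ribbon$ as $\alpha=(\alpha_1,\dots,\alpha_\ell)$, a ribbon with one more cell arises either by incrementing the last part, giving a ribbon $\ribbon'$ with composition $(\alpha_1,\dots,\alpha_\ell+1)$, or by appending a part equal to $1$, giving a ribbon $\ribbon''$ with composition $(\alpha_1,\dots,\alpha_\ell,1)$; every ribbon with at least two cells is obtained in exactly one of these two ways from a unique smaller ribbon. Unwinding the lattice-path description of snakes, $\S_{\ribbon'}$ should be the parallel extension of $\S_{\ribbon}$ at its last element and $\S_{\ribbon''}$ the series extension at the same element (with the two possibly interchanged, depending on the orientation convention used to read off the composition). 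On the symmetric-function side, these moves are tied together by the single-box case of the ribbon product rule,
\[
    s_{\ribbon}\cdot s_{1} \;=\; s_{\ribbon'} \,+\, s_{\ribbon''},
\]
valid for every ribbon $\ribbon$.

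For the induction it is cleanest to work on the Poincaré-dual side throughout. Under the standard presentation of $A^{\bullet}(G(k,n))$ as a quotient of the ring $\Lambda$ of symmetric functions, Poincaré duality sends $\sigma_\eta$ to the class of $s_{\eta^{c}}$, where $\eta^{c}$ is the complement of $\eta$ in the $k\times(n-k)$ rectangle; so the statement to prove is that $\sum_\eta d_\eta(\S_\ribbon)\, s_{\eta^{c}}$ represents $s_\ribbon$, i.e.\ that $d_\eta(\S_\ribbon)=\langle s_\ribbon,\, s_{\eta^{c}}\rangle$ for all $\eta$. The base case is the single-cell ribbon, where $\S=\U_{1,2}$: its torus orbit closure is all of $\PP^{1}$, so $\Sc(\U_{1,2})=\sigma_\emptyset$, whose Poincaré dual $\sigma_{1}$ matches $s_{\ribbon}=s_1$; alternatively one may take the one-row and one-column ribbons as base cases, where $\S$ is $\U_{1,n}$ (resp.\ $\U_{n-1,n}$) and $s_\ribbon=h_{n-1}$ (resp.\ $e_{n-1}$), in agreement with Klyachko's formula for uniform matroids. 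For the inductive step, feed the parallel extension $\S_{\ribbon'}$ and the series extension $\S_{\ribbon''}$ into Theorem~\ref{thm:main identity}; the identity relates their Chow classes to $\Sc(\S_\ribbon)$ together with classes of minors, which for a snake are again governed by smaller snakes or uniform matroids (up to loops and coloops) and are therefore known by induction. Translating through Poincaré duality and substituting the inductive value $s_\ribbon$ for the dual of $\Sc(\S_\ribbon)$, the multiplicative term in the identity should reproduce $s_\ribbon\cdot s_1$ via the Pieri rule; comparing with $s_\ribbon s_1 = s_{\ribbon'}+s_{\ribbon''}$ and checking that the parallel and series extensions pick up the two summands $s_{\ribbon'}$ and $s_{\ribbon''}$ respectively — with coefficients exactly $1$ and no leftover terms — completes the induction.

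The step I expect to be the main obstacle is the bookkeeping across different Grassmannians. A series or parallel extension changes both $k$ and $n$, so the relation produced by Theorem~\ref{thm:main identity} lives a priori among classes in several distinct Chow rings, and one must check that after Poincaré duality it descends from a single clean identity in $\Lambda$ — in particular that none of the Schur functions produced falls outside the relevant rectangle and gets truncated to zero — so that the comparison with $s_\ribbon s_1 = s_{\ribbon'}+s_{\ribbon''}$ is legitimate and the two extensions are correctly disentangled into the two distinct summands. A secondary but still delicate point is pinning down, consistently and including the degenerate cases at the very start of the ribbon, the correspondence between ``incrementing the last part / appending a new part'' and ``parallel / series extension'', so that all orientation conventions match the hypotheses of Theorem~\ref{thm:main identity}.
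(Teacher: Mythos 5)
There is a genuine gap at the heart of the inductive step. When you specialize Theorem~\ref{thm:main identity} to \(b=1\), \emph{both} extensions appear added together on the same side of the identity:
\[
\Sc\bigl(\seriesExt(\M)\oplus\U_{0,1}\bigr)+\Sc\bigl(\parallelExt(\M)\oplus\U_{1,1}\bigr)\;=\;\rmv_1\bigl(\Sc(\M\oplus\U_{1,1}\oplus\U_{0,1})\bigr)\,,
\]
so, after dualizing and invoking the inductive hypothesis for \(\ribbon\), all you learn is
\(\Sc^c(\S_{\ribbon''})+\Sc^c(\S_{\ribbon'})=s_{[1]}\,s_{\ribbon}=s_{\ribbon'}+s_{\ribbon''}\).
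This is one equation in two unknowns, and nothing forces the two summands to match up ``respectively with coefficients exactly \(1\)''. Neither degree nor the ambient rectangles disentangle them: both new classes consist of partitions of \(n\), and the rectangles \(k\times(n+1-k)\) and \((k+1)\times(n-k)\) overlap. Nor do the Schur supports separate: for \(\ribbon=\ribbon(2,2)\) one has \(s_{\ribbon(2,3)}=s_{[4,1]}+s_{[3,2]}\) and \(s_{\ribbon(2,2,1)}=s_{[3,2]}+s_{[3,1,1]}+s_{[2,2,1]}\), which share the term \(s_{[3,2]}\). So the comparison with \(s_{\ribbon}s_1=s_{\ribbon'}+s_{\ribbon''}\), while a correct identity, cannot by itself certify that the series extension accounts for \(s_{\ribbon''}\) and the parallel extension for \(s_{\ribbon'}\). (The bookkeeping issues you flag as the ``main obstacle'' are real but manageable; the unfixable point as stated is this one-equation-two-unknowns problem.)

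The paper sidesteps this by using Theorem~\ref{thm:main identity} with general \(b\), which strips an entire part of the composition at once: the resulting recursion (Corollary~\ref{cor:main identity snakes}) expresses \(\Sc^c(\S(b_1,\dots,b_{k+1}))\) as \(s_{[b_{k+1}]}\Sc^c(\S(b_1,\dots,b_k))-\Sc^c(\S(b_1,\dots,b_{k-1},b_k+b_{k+1}))\), where \emph{both} right-hand terms are snakes with one fewer part and hence known by induction on the rank; the recursion is then recognized as a column expansion of a Jacobi--Trudi determinant, giving \(s_{\ribbon}\). Your argument could be repaired along similar lines by a double induction (on the number of parts, then on the size of the last part): once the theorem is known for all compositions with \(k\) parts, the term \(\Sc^c(\S(b_1,\dots,b_{k-1},b_k+1))\) in your identity is already known, and the identity then \emph{solves for} \(\Sc^c(\S(b_1,\dots,b_k,1))\) rather than merely constraining a sum. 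But as written, the induction on the number of cells does not close.
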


In the statement we exploited a standard isomorphism between the Chow ring \(A^\bullet(G(k,n))\) and a certain quotient of the ring of symmetric functions.
\emph{Ribbon Schur functions} are well studied classical objects, see for example \cite{macmahon_combinatory_analysis,king-welsh-vanwilligenburg,McNamara-Willigenburg}. A consequence of our result is that Schubert coefficients of snake matroids are Littlewood--Richardson coefficients (Corollary \ref{cor:Sc_coefficients_snakes_are_LR_ribbons}). This new insight, together with a theorem by Gessel \cite{gessel} can be used to prove that each Schubert coefficient \(d_\eta(\S)\) counts the number of standard Young tableaux with a prescribed \emph{descent set}. We also give a direct proof of this in Theorem \ref{thm:combinatorial Sc snakes}. For a partition \(\eta\) and a composition \(\mathbf b\) of length \(k\), let \(\SYT_\eta(\mathbf{b})\) denote the set of standard Young tableaux of shape \(\eta\) whose descent set is equal to \(\Des(\mathbf{b}) = \SetOf{\sum_{i = 1}^sb_i}{1 \leq s \leq k-1}\).

\begin{theorem*}
    Let \(\S\) be the snake matroid of rank \(k\) on \(n\) elements that is associated to the composition \(\mathbf{b}\) and let \(\eta\) be a partition contained in the diagram \(k\times (n-k)\). The Schubert coefficient of \(\S\) indexed by \(\eta\) is
    \[
        d_{\eta}(\S) \ = \ |\SYT_{\eta^c}(\mathbf{b})| \, ,
    \]
    where \(\eta^c\) denotes the complement of \(\eta\) in \(k\times (n-k)\).
\end{theorem*}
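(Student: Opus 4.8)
The plan is to derive the formula from Theorem~\ref{thm:ribbon snake}, which identifies the Poincar\'e dual of $\Sc(\S)$ with the ribbon Schur function $s_{\ribbon}$, together with the classical Schur expansion of a ribbon Schur function; I will also indicate the alternative inductive route carried out in Theorem~\ref{thm:combinatorial Sc snakes}. First I would pass to the symmetric-function model of $A^\bullet(G(k,n))$ used in the statement of Theorem~\ref{thm:ribbon snake}: there $\sigma_\eta$ corresponds to the Schur function $s_\eta$ for $\eta\subseteq k\times(n-k)$, and Poincar\'e duality is the linear involution with $\sigma_\eta\mapsto\sigma_{\eta^c}$, i.e.\ $s_\eta\mapsto s_{\eta^c}$ (the self-duality of this Gorenstein ring, whose socle is $s_{k\times(n-k)}$). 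Writing $\Sc(\S)=\sum_\eta d_\eta(\S)\,\sigma_\eta$ and applying Poincar\'e duality, Theorem~\ref{thm:ribbon snake} becomes the a priori congruence $\sum_{\mu\subseteq k\times(n-k)}d_{\mu^c}(\S)\,s_\mu=s_{\ribbon}$ in the quotient ring. Because the border strip $\ribbon$ attached to $\S$ --- equivalently, the ribbon of the composition $\mathbf b$ under the standard correspondence --- fits inside the $k\times(n-k)$ rectangle (this is built into the definition of a snake matroid), every partition $\mu$ with $[s_\mu]s_{\ribbon}\neq0$ satisfies $\mu\subseteq k\times(n-k)$; hence the reduction modulo the defining ideal is harmless, and the displayed equality already holds as an identity of symmetric functions. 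Comparing the coefficient of $s_\mu$ and setting $\mu=\eta^c$ reduces the theorem to the combinatorial claim
\[
    [s_{\eta^c}]\, s_{\ribbon} \ = \ |\SYT_{\eta^c}(\mathbf b)| .
\]

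This claim is classical. By Gessel's theorem \cite{gessel}, the ribbon Schur function expands in the Schur basis with $[s_\mu]\,s_{\ribbon}=\#\{\,T\in\SYT(\mu) : \Des(T)=\Des(\mathbf b)\,\}$, where $\Des(\mathbf b)=\{\,b_1,\ b_1+b_2,\ \dots,\ b_1+\cdots+b_{k-1}\,\}$ is exactly the descent set appearing in the statement. (One can also prove this expansion by hand: write $s_{\ribbon}$ in terms of complete homogeneous symmetric functions via M\"obius inversion on the refinement order of compositions, expand each $h_\beta$ using Kostka numbers, and collapse the resulting alternating sum by inclusion--exclusion over descent sets.) This yields $[s_{\eta^c}]s_{\ribbon}=|\SYT_{\eta^c}(\mathbf b)|$ and completes the proof; this route also exhibits the Schubert coefficients of a snake as Littlewood--Richardson coefficients, cf.\ Corollary~\ref{cor:Sc_coefficients_snakes_are_LR_ribbons}.

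Alternatively, one avoids ribbon Schur functions altogether and argues by induction on $n$ via the fundamental identity of Theorem~\ref{thm:main identity}. A snake matroid is built from a single element by a chain of series and parallel extensions, which on the diagram side appends the cells of $\ribbon$ one at a time, each step either opening a new row (series) or extending the last row (parallel). Theorem~\ref{thm:main identity} then rewrites $d_\eta(\S)$ through the Schubert coefficients of the snake on $n-1$ elements, and one checks that the same recursion governs $|\SYT_{\eta^c}(\mathbf b)|$, where on the tableau side one removes the box containing the largest entry --- a branching/Pieri step for standard tableaux with a fixed descent set. In both approaches the conceptual input (Theorem~\ref{thm:ribbon snake}, respectively Theorem~\ref{thm:main identity}) does the real work, and I expect the main obstacle to be bookkeeping: keeping all the conventions aligned --- Poincar\'e duality, the symmetric-function model, the dictionary between snakes, border strips and the composition $\mathbf b$, and the passage to descent sets --- so that it is precisely $\Des(\mathbf b)$, and not its reverse or complement, that appears in the end.
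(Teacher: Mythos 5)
Your proposal is correct and follows the paper's own route: the paper obtains this theorem exactly as you do, from Theorem~\ref{thm:ribbon snake} together with Gessel's Schur expansion of ribbon Schur functions \cite[Theorem~7]{gessel}, and your alternative inductive sketch is precisely the paper's independent proof via Corollary~\ref{cor:main identity snakes} and Proposition~\ref{prop:syt recursion}. The only small imprecision is in that sketch: the paper's induction runs on the rank \(k\) through the three-term recursion \(\Sc^c(\S(\mathbf b)) = s_{[b_{k+1}]}\Sc^c(\S(\mathbf b')) - \Sc^c(\S(\mathbf b''))\), removing the whole last row of the ribbon at once rather than one cell at a time, but this does not affect your complete first argument.
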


This result extends a formula for uniform matroids by Klyachko \cite{klyachko-r}, see also \cite{nadeau-tewari} for an independent proof. An alternative formula for the Chow class of a uniform matroid was studied by Berget and Fink \cite{berget-fink-equivariant}. A combinatorial interpretation of this formula that, in hindsight, could have hinted to our results can be found in \cite{lian}.
We are also able to give remarkably simple proofs of known results in the literature, such as the Gessel--Viennot enumeration of permutations with fixed descent set \cite{Gessel-Viennot} and formulas for the volume of lattice path matroids, in particular snake matroids \cite{Knauer-snakes, benedetti-knauer-valenciaporras}. Furthermore, with Proposition \ref{prop:support} we provide new bounds for the support of the Chow class of snake matroids or equivalently ribbon Schur functions.
Moreover, we extend our results to the larger class of \emph{lattice path matroids} (Theorem \ref{thm:combinatorial Sc lattice path}) and, together with results in \cite{Hampe}, provide a concrete computational algorithm for evaluating the Chow class of any matroid which we demonstrate in the Appendix~\ref{sec:appendix}.

We believe that this unexpected connection with the combinatorics of tableaux and symmetric functions can help with the nonnegativity conjecture of Schubert coefficients. We demonstrate this by proving positivity for specific shapes of \(\eta\) and the class of \emph{paving matroids} (Theorem \ref{thm: nonnegativity paving}). Our new expressions allow us to compute the Chow classes and verify the nonnegativity conjecture for all matroids up to \(8\) elements and all paving matroids up to \(15\) elements.

\subsection*{Outline} Section \ref{sec: preliminaries} introduces the necessary background on tableaux, matroids, valuations, and recalls the definitions of lattice path and snake matroids. We also review the definitions of the Chow class and of Schubert coefficients of a matroid.
Section \ref{sec:series parallel identity} establishes our key technical result, an identity relating the Chow class of a matroid with its series and parallel extensions (Theorem \ref{thm:main identity}).
In Section \ref{sec:chow class of snakes} we use Theorem \ref{thm:main identity} to compute the Chow classes of snake and lattice path matroids. We first show that the Poincaré dual of the Chow class of a snake matroid equals the corresponding ribbon Schur function, and then in Section \ref{sec:combinatorial} we derive a combinatorial formula expressing each of its Schubert coefficients as the number of standard Young tableaux with a prescribed descent set. Section \ref{sec:lpm} is dedicated to the extension of our results to lattice path matroids and thus also nested matroids and uniform matroids.
Section \ref{sec: applications} discusses consequences and applications of our results. We discuss the Gessel--Viennot enumeration of permutations with fixed descent sets, and volume formulas for lattice path matroids. We also derive new bounds for the support of a Chow class in Section \ref{sec:support} and partial positivity results for paving matroids in Section \ref{sec:positivity}. In Section \ref{sec:open problems} we present some open problems and in Appendix \ref{sec:appendix}, we show how to apply our results to compute Chow classes of arbitrary matroids.
We provide here a list of results on Schubert coefficients for various classes of matroids:
\begin{itemize}
    \item Snake matroids
    \begin{itemize}
        \item Ribbon Schur functions: Theorem \ref{thm:ribbon snake}
        \item Determinantal formula: Lemma \ref{lem:snake determinant}
        \item Littlewood--Richardson coefficients: Corollary \ref{cor:Sc_coefficients_snakes_are_LR_ribbons}
        \item Kostka numbers: Corollary \ref{cor:snake alternating formula}
        \item Standard Young tableaux: Theorem \ref{thm:combinatorial Sc snakes}
        \item Product of Schur functions: Proposition \ref{prop:Sc snake as a product}
    \end{itemize}
    \item Lattice path matroids: Theorem \ref{thm:combinatorial Sc lattice path}
    \item Nested matroids: Theorem \ref{cor:combinatorial Sc nested} 
    %\item Uniform matroids: Theorem \ref{thm:combinatorial Sc uniform} 
    \item Arbitrary matroids: Appendix \ref{sec:appendix}
\end{itemize}

\subsection*{Acknowledgments} The authors would like to thank Andrew Berget, Petter Brändén, Katharina Jochemko,  Krishna Menon, Felipe Rincón, Kris Shaw and for helpful comments and discussions during the writing process.
We also thank Carl Lian for pointing out Remark \ref{rmk:Carl Lian}, Philippe Nadeau and Vasu Tewari for Remark \ref{rmk:Vasu} and the reference to \cite{klyachko-r}, and Chris Eur for pointing us to the nonnegativity results in \cite{BEST}.

\section{Preliminaries and notation}\label{sec: preliminaries}
\subsection{Partitions, diagrams and tableaux}\label{sec:tableaux}
We begin by briefly summarizing the notions of partitions, skew shapes, diagrams and tableaux that we use in the remainder of this manuscript; most of it can be found in greater detail in \cite{Stanley-EC1,Stanley-EC2}.

We say that a sequence of positive integers \(\mathbf{b} = (b_1, b_2, \dots, b_k)\) with \(\sum_ib_i = n\) is a \emph{composition} of \emph{size} \(n\) and \emph{length} \(k\). If \(\lambda\) is a composition whose entries are ordered in nonincreasing order, we call it a \emph{partition}. We also write \(\lambda \vdash n\) and, as a shorthand notation, \(\lambda = [\ldots, 3^{m_3},2^{m_2},1^{m_1}]\) if the partition has \(m_i\) parts of size \(i\). For example,
\[
    \lambda = [5,3,2,2] \ = \  [5,3,2^2] \vdash 12 \, .
\]

The \emph{Young diagram} associated to a partition \(\lambda\vdash n\) is obtained by drawing a left-justified array of \(n\) boxes with \(\lambda_i\) boxes in row \(i\). Throughout we identify a partition with its Young diagram.
For positive integers \(p\) and \(q\) we denote the rectangular partition \([q^p]\) by \(p \times q\).
We say that a partition \(\mu\) is \emph{contained} in a partition \(\lambda\), and write \(\mu \subseteq \lambda\), if \(\mu_i \leq \lambda_i\) for every \(i\). When a partition \(\lambda\) is contained in \(p \times q\), we denote the \emph{complement} of \(\lambda\) in \(p\times q\) by \(\lambda^c = [\lambda^c_1, \lambda^c_2, \dots , \lambda^c_p]\) where \(\lambda^c_i = q - \lambda_{p+1-i}\). Given two partitions \(\mu \subseteq \lambda\), the \emph{skew Young diagram} \(\lambda/\mu\) is obtained by taking the set-theoretic difference of \(\lambda\) and \(\mu\), i.e., it is the set of boxes that belong to \(\lambda\) but not \(\mu\). A skew diagram where consecutive rows have at most one column in common is called a \emph{ribbon}. Ribbons will play a fundamental role in the rest of the paper. To any composition \(\mathbf b\) we can associate a ribbon \(\ribbon(\mathbf b)\) whose \(i\)-th row from the bottom has length \(b_i\), for all \(i\). Figure \ref{fig:young diagram} depicts two Young diagrams and the corresponding skew Young diagram, which turns out to be a ribbon. 

\begin{figure}[ht]
    \centering
    \scalebox{0.8}{\begin{tikzpicture}
    \node[anchor=south west] at (0,0) {
      \ydiagram{5,3,2,2}
    };
    \node[anchor=south west] at (4.5,0.65) {
      \ydiagram{2,1,1}
    };
    \node[anchor=south west] at (7,0) {
      \ydiagram{2+3,1+2,1+1,2}
    };
\end{tikzpicture}}
    \caption{The Young diagrams of the partitions \(\lambda = [5,3,2^2] \vdash 12\) on the left, \(\mu = [2,1^2] \vdash 4\) in the center and the ribbon \(\ribbon(2,1,2,3)\) on the right.}
    \label{fig:young diagram}
\end{figure}
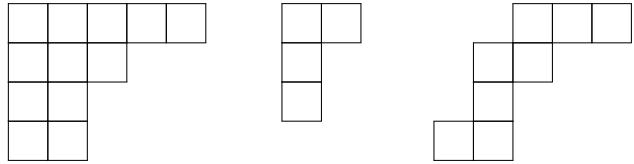

A \emph{semistandard Young tableau} of shape \(\lambda\) is a filling of the Young diagram of \(\lambda\) with positive integers that is weakly increasing over rows and strictly increasing over columns. We denote by \(\SSYT_\lambda\) the set of semistandard Young tableaux of shape \(\lambda\), and we use the notation \(\SSYT_\lambda({\leq} k)\) for the set of all semistandard Young tableaux whose filling is selected from the numbers \([k] = \{1,2,\ldots,k\}\). A \emph{standard Young tableau} is a semistandard Young tableau which uses all numbers from \([n]\) where \(n = |\lambda|\). We denote the set of standard Young tableaux of shape \(\lambda\) by \(\SYT_\lambda\). Similarly, one may define (semi)standard skew tableaux for diagrams of shape \(\lambda/\mu\).

Given a partition \(\lambda\), the \emph{symmetric Schur function} \(s_\lambda\) is the function over infinitely many variables \(u_1,u_2,\ldots\) , defined by
\[
    s_\lambda \ = \sum_{T \in \SSYT_\lambda}u^T \, ,
\]
where \(u^T = u_1^{t_1}u_2^{t_2}\ldots u_k^{t_k}\) and the exponent \(t_i\) is the number of boxes in \(T\) that have an entry equal to \(i\). 
When restricting the sum to semistandard Young tableaux in \(\SSYT_\lambda({\leq}k)\), we get the \emph{symmetric Schur polynomial in \(k\) variables},
which, by abuse of notation, we denote also by \(s_\lambda = s_\lambda(u_1,\ldots, u_k)\). It is known that the Schur functions (resp. polynomials) form a basis for the ring of symmetric functions (resp. polynomials). In particular, we may write 
\begin{equation}
    \label{eq:mult of schur functions}
    s_\mu s_\eta \ = \ \sum_{\lambda}c^\lambda_{\mu,\eta}s_\lambda \ .
\end{equation}
The coefficients \(c^\lambda_{\mu,\eta}\) are called \emph{Littlewood--Richardson coefficients} and are in general hard to compute. 
When \(\eta = [b]\), respectively \(\eta = [1^b]\), the formula simplifies greatly to what is known as Pieri's formula.
\begin{theorem}[Pieri's formula]
    \label{thm:pieri}
    Let \(\lambda\) be a partition and \(b \in \NN\). Then
    \[
    s_\mu s_{[b]} \ = \ \sum_{\lambda}s_\lambda \ ,
    \]
    where the sum runs over all partitions whose diagram is obtained from \(\mu\) by adding \(b\) boxes, at most one per column. Similarly,
    \[
        s_\mu s_{[1^b]} = \sum_{\lambda} s_\lambda\ ,
    \]
    where the sum runs over all partitions whose diagram is obtained from \(\mu\) by adding \(b\) boxes, at most one per row.
\end{theorem}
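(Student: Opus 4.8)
This is a classical identity, proved for instance in \cite[Section 7.15]{Stanley-EC2}; I would give a self-contained combinatorial argument. First observe that a semistandard tableau of shape \([b]\) is simply a weakly increasing word \(i_1\le\cdots\le i_b\), so \(s_{[b]}=h_b:=\sum_{i_1\le\cdots\le i_b}u_{i_1}\cdots u_{i_b}\) is the complete homogeneous symmetric function; dually, a tableau of shape \([1^b]\) is a strictly increasing word, so \(s_{[1^b]}=e_b\) is the elementary symmetric function. Thus the two assertions of the theorem are precisely the identities \(s_\mu h_b=\sum_\lambda s_\lambda\), the sum over \(\lambda\supseteq\mu\) with \(\lambda/\mu\) a horizontal strip of size \(b\), and \(s_\mu e_b=\sum_\lambda s_\lambda\), the sum over vertical strips of size \(b\).

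To prove the first identity I would build a weight-preserving bijection between the pairs \((T,w)\) with \(T\in\SSYT_\mu\) and \(w=w_1\le\cdots\le w_b\) a weakly increasing word — these index the monomials of \(s_\mu h_b\) with multiplicity — and the disjoint union \(\bigsqcup_\lambda\SSYT_\lambda\) over partitions \(\lambda\) with \(\lambda/\mu\) a horizontal strip of size \(b\). The bijection is RSK row insertion: send \((T,w)\) to the tableau obtained by inserting \(w_1,\dots,w_b\) successively into \(T\). The inverse takes \(S\in\SSYT_\lambda\), removes the \(b\) boxes of the strip \(\lambda/\mu\) by reverse bumping, in right-to-left order of columns, and reads off the ejected letters, recovering the word \(w\); that these two maps are mutually inverse is the standard fact that insertion and reverse insertion undo each other. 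Taking the generating function \(\sum u^{(\cdot)}\) on both sides then yields \(s_\mu h_b=\sum_\lambda s_\lambda\).

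What forces the bijection to land in the right set — and this is essentially the only content of the proof — is the \emph{Row Bumping Lemma}: in two successive row insertions, first of \(x\) and then of \(x'\) with \(x\le x'\), the box created by \(x'\) lies in a column strictly to the right of the box created by \(x\). Hence the \(b\) boxes added while inserting a weakly increasing word occupy \(b\) distinct columns, so \(\lambda/\mu\) is indeed a horizontal strip, and conversely each such strip is hit exactly once. For the second identity one runs the same argument after inserting the strictly increasing word \(w_1<\cdots<w_b\) in the \emph{reverse} order \(w_b,\dots,w_1\): the dual case \(x>x'\) of the Row Bumping Lemma then places the successive new boxes in distinct rows, producing a vertical strip. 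Alternatively one deduces it from the first identity via the standard involution of the ring of symmetric functions that exchanges \(s_\lambda\) with \(s_{\lambda'}\) and \(h_b\) with \(e_b\), using that \(\lambda/\mu\) is a horizontal strip exactly when \(\lambda'/\mu'\) is a vertical one. The main obstacle is therefore concentrated entirely in the Row Bumping Lemma, whose proof is a short but careful induction on the rows traversed by the two bumping routes; the base cases, the weight-preservation, and the assembly of the bijection are all routine.
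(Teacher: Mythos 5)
Your proof is correct. The paper does not prove this statement at all: Pieri's formula is quoted as classical background (with \cite{Stanley-EC2} as the standing reference for Section 2.1), so there is nothing to compare against. Your argument — identifying \(s_{[b]}\) with \(h_b\) and \(s_{[1^b]}\) with \(e_b\), then building the weight-preserving bijection via RSK row insertion with the Row Bumping Lemma controlling that the new boxes form a horizontal (resp.\ vertical) strip — is the standard textbook proof and is sound, including the alternative derivation of the second identity from the first via the involution exchanging \(s_\lambda\) with \(s_{\lambda^t}\).
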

Similarly, one can define the \emph{skew Schur function} \(s_{\lambda/\mu}\). It is a well-known result that when expanding \(s_{\lambda/\mu}\) in the Schur basis, one obtains
\[
s_{\lambda/\mu} = \sum_{\eta}c^\lambda_{\mu,\eta}s_\eta \,.
\]
\subsection{Matroids}\label{sec:matroids}
We assume familiarity with basic concepts from matroid theory.
Here, we recall only some chosen facts in order to set up the notation used throughout the paper. For any undefined concept we refer the reader to \cite{Oxley} and to \cite{Hampe} for the background on cyclic flats. There are many ways to represent a matroid. For example, a matroid \(\M\) is given by its \emph{ground set}, that we denote by \(E\), and its \emph{bases} by \(\mathcal{B}\subseteq 2^E\) which satisfy an exchange axiom. 
It is known that every basis of a matroid has the same cardinality, which is called the \emph{rank} of the matroid \(\M\). 
\begin{remark}
    We often need an order on the ground set and it is convenient to us to use \(E = [n] = \{1,2,\ldots,n\}\) as the ground set with its natural ordering.
    Moreover, for this reason we do not identify isomorphic matroids.
\end{remark}

We denote the set of all matroids by \(\mathcal{M}\) and all matroids of given rank \(k\) and ground set \([n]\) by \(\mathcal{M}_{n,k}\). 
One of the most basic examples for a matroid is the \emph{uniform matroid}~\(\U_{k,n}\) of rank \(k\) on \(E=[n]\) whose bases include all subsets of \([n]\) of size \(k\).
A standard operation on matroids is the \emph{direct sum}.
\begin{definition}
    Let \(\M_1,\ldots, \M_m\) be a family of matroids on disjoint ground sets \(E_i\) with bases \(\mathcal B_i\).
    The \emph{direct sum} \(M_1 \oplus \dots \oplus M_m\) of these matroids is a matroid on \(E=\bigsqcup_i E_i\) and bases \(\mathcal B = \{ \bigsqcup_i B_i \mid B_i \in \mathcal B_i \}\).
\end{definition}

\begin{remark}
    As the order on the ground set plays a key role we follow the following convention, when not stated otherwise.
    If \(\M_1\) is a matroid on \(E_1=[n_1]\) and \(\M_2\) is a matroid on \(E_2=[n_2]\) then \(\M = \M_1\oplus\M_2\) is a matroid on \(E=[n_1+n_2]\) with \(\M|_{[n_1]}=\M_1\) and \(\M|_{\{n_1+1,\ldots,n_1+n_2\}}=\M_2\). Notice that \(\M_1\oplus\M_2\) and \(\M_2\oplus\M_1\) are not the same matroids, but they are isomorphic.
\end{remark}

A matroid \(\M\) is \emph{disconnected} if it can be written as a direct sum of matroids with nonempty ground sets; otherwise, \(\M\) is \emph{connected}. Every matroid can be written in a unique way as a direct sum of connected matroids, whose ground sets are called the \emph{connected components} of the matroid \(\M\). We denote by \(\kappa(\M)\) the number of connected components of \(\M\).

If \(\M = \N \oplus \U_{1,1}\) then the last element is a coloop and in \(\M = \N \oplus\U_{0,1}\) it is a loop; both of these matroids are disconnected whenever \(\N\) has a nontrivial ground set. 

We need two further operations on matroids, which we now recall.
\begin{definition}
    Let \(\M\) be a matroid of rank \(k\) on \([n]\). 
    Adding the element \(n+1\) in parallel to an element \(e\) is the \emph{parallel extension} \(\parallelExt_e(\M)\) of \(\M\) on the ground set \(E=[n+1]\) and bases
    \[
        \mathcal B(\M) \cup \SetOf{B \setminus \{e\} \cup \{n+1\}}{B \in \mathcal{B}(\M), \, e \in B}\, .
    \]
    The dual operation is a coextension \(\seriesExt_e(\M)\), called the \emph{series extension} of \(\M\). Its ground set is \(E=[n+1]\) and its set of bases is
    \[
        \SetOf{B \cup \{n+1\}}{B \in \mathcal B(\M)} \cup \SetOf{B \cup \{e\}}{B \in \mathcal B(\M), \, e \notin B}\, .
    \]
\end{definition}
\begin{remark}
     We usually consider the parallel and series extension with respect to the element \(e=n\). In these cases we write \(\parallelExt(\M)\) instead of \(\parallelExt_e(\M)\) and \(\seriesExt(\M)\) instead of \(\seriesExt_e(\M)\).
\end{remark}

\subsection{Lattice path, nested and snake matroids} The most important examples of matroids in this article fall into the class of \emph{lattice path matroids}, which were first introduced in \cite{BoninDeMier}.
Let \(k\leq n\) and consider lattice paths consisting of north steps \(\mathbf N = (0,1)\) and east steps \(\mathbf E = (1,0)\) from \((0,0)\) to \((k,n-k)\).
Given such a lattice path \(T\), denote by \(B(T)\) the set of indices of \(T\) which are north steps.
Given two lattice paths \(\mathbf L\) and \(\mathbf U\) such that \(\mathbf U\) is always above \(\mathbf L\), consider the collection
    \[
        \mathcal B(\mathbf L,\mathbf U)\ = \ \SetOf{B(T)}{T \text{ is always between \(\mathbf L\) and \(\mathbf U\)}}\, .
    \]
The collection \(\mathcal B(\mathbf L, \mathbf U)\) forms the bases of a matroid of rank \(k\) on \([n]\) that we denote by \(\M[\mathbf L, \mathbf U]\). 
A matroid that arises in this way is called a \emph{lattice path matroid}. All lattice path matroids are representable over infinite fields. A matroid that is isomorphic to a lattice path matroid with \(\mathbf U = \mathbf N^k\mathbf E^{n-k}\) is called a \emph{nested matroid}. In particular a lattice path matroid with \(\mathbf L = \mathbf E^{n-k}\mathbf N^k\) is a nested matroid. We denote the family of all nested matroids of rank \(k\) on \([n]\) by \(\mathcal{N}_{n,k}\). 

\begin{example}
    The uniform matroid \(\U_{k,n}\) is the lattice path matroid with \(\mathbf L = \mathbf E^{n-k}\mathbf N^k\) and \(\mathbf U = \mathbf N^k\mathbf E^{n-k}\). This matroid is nested.
\end{example}
A connected lattice path matroid of rank \(k\) on \([n]\) can also be encoded by a skew shape \(\lambda /\mu\), where \(\lambda\) has \(k\) parts and \(\lambda_1 = n-k\), thus it is convenient for us to use the alternative notation \(\M(\lambda/\mu)\). A \emph{snake matroid} is a connected lattice path matroid whose associated skew shape is a ribbon. 
For a composition \(\mathbf b\) we write \(\S(\mathbf b)\) for the corresponding snake matroid \(\M(\rho(\mathbf b))\).
  
\begin{example}\label{ex:snake} Consider the snake matroid \(\S(2,1,2,3)\) of rank four on nine elements depicted in Figure~\ref{fig:snake}. It is the lattice path matroid \(\M(\lambda/\mu)\) of the ribbon \(\lambda/\mu\) where \(\lambda = [5,3,2^2]\) and  \(\mu = [2,1^2]\); see Figure~\ref{fig:young diagram}. 
\end{example}

\begin{figure}[ht]
    \centering
    \begin{tikzpicture}
\node[anchor=south west] at (5,0) {
    \begin{ytableau}
    \none & \none & *(White)  & *(White) & *(White)\\
    \none & *(White) & *(White) & \none & \none\\
    \none & *(White) & \none & \none    & \none \\
    *(White)& *(White)& \none & \none    & \none \\
    \end{ytableau}
};
    \draw[Red, line cap=round, line join=round, line width=1.3 pt] (5.15,0.15) -- (5.15,0.8) -- (5.8,0.8) -- (5.8,2.1) -- (6.45,2.1) -- (6.45,2.75) -- (8.40,2.75);
    \draw[Blue, line cap=round, line join=round, line width=1.3 pt] (5.15,0.15) -- (6.45,0.15) -- (6.45,1.45) -- (7.10,1.45) -- (7.10,2.10) -- (8.40,2.10) -- (8.40,2.75);
\end{tikzpicture}
    \caption{The snake matroid \(\S(2,1,2,3)\) from Example~\ref{ex:snake}. We highlighted the upper path \(\mathbf{U}= \mathbf{NENNENEEE}\)\ (red) and lower path \(\mathbf{L}= \mathbf{EENNENEEN}\) (blue).}
    \label{fig:snake}
\end{figure}
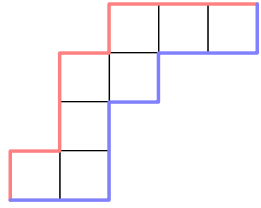

\begin{remark}\label{rem:snake series parallel}
    Snake matroids are exactly the matroids that can be built via a successive sequence of series and parallel extensions with respect to the last element starting from \(\U_{1,2}\). Indeed, we have
    \[
        \S(b_1,\ldots,b_{k-1},b_k) \ = \ \parallelExt^{b_{k}-1}(\seriesExt(\S(b_1,\ldots,b_{k-1}))
    \]
    for all \(k>1\), and \(\S(b_1) = \parallelExt^{b_1-1}(\U_{1,2})\).
\end{remark}

\subsection{Valuations}
Next, we recap the notion of being a valuative function for matroids. To introduce this concept we associate the following \(0/1\)-polytope with the matroid \(\M\)
\[
    \polytope{P}(\M) \ = \ \conv\SetOf{e_B\in\mathbb{R}^E}{B\in\mathcal{B}(\M)}\, ,
\]
where \(e_B = \sum_{i\in B} e_i\) is a sum of standard basis vectors.
This polytope is called the \emph{matroid (base) polytope}. 
A \emph{matroid subdivision} \(\Sigma\) is a polyhedral complex that covers a polytope \(\polytope{P}\) where every cell is a matroid polytope.
A \emph{valuation} \(f:\mathcal{M}\to G\) is a function on the space of all matroids to an abelian group~\(G\) which satisfies the inclusion-exclusion principle, that is for any matroid subdivision \(\Sigma\) of the matroid polytope \(\polytope{P}\) 
\[
    f(\polytope{P})\ = \ \sum_{\polytope{Q}\in\Sigma^{\text{int}}} (-1)^{\dim \polytope{P} - \dim \polytope{Q}} f(\polytope{Q})\, ,
\]
where \(\Sigma^{\text{int}} = \SetOf{\polytope{Q}\in\Sigma}{\polytope{Q}\not\subseteq \partial \polytope{P}} \) and \(\partial \polytope{P}\) is the boundary of the polytope \(\polytope{P}\).

\begin{remark}
    There is a variety of related concepts for polyhedra see \cite[Appendix~A]{EurHuhLarson}, which are either equivalent to our notion by \cite{DerksenFink} or implied by it.  
\end{remark}
We say that a matroid \(\M\) decomposes into \(\N_1,\ldots,\N_j\) if \(\{\polytope{P}(\N_1),\ldots, \polytope{P}(\N_j)\}\) is a matroid subdivision of \(\polytope{P}(\M)\) and we write \(f(\M)\) as a shorthand notation for \(f(\polytope{P}(\M))\).
We call a valuation \(f\) \emph{additive} if \(f(\M)=0\) whenever the matroid~\(\M\) is disconnected.

\begin{example}[\cite{Crapo:67}]\label{ex:beta_invariant}
    Crapo's well known \emph{\(\beta\)-invariant}
    \[
    \beta(\M) \ = \ (-1)^k \sum_{S\subseteq E} (-1)^{|S|} \rank(S)
    \]
    is an example of an additive valuative matroid invariant as it is the linear term of the Tutte polynomial, yet another important valuative matroid invariant; see \cite{speyer_tropical}. 
\end{example}

The following theorem highlights the role of nested matroids for valuations.
\begin{theorem}[{\cite[Theorem 5.4]{DerksenFink}}]\label{thm:bases_valuation}
    For every matroid \(\M\) of rank \(k\) on \([n]\) there exist unique integers \((c(\N,\M))_{\N\in\mathcal{N}_{n,k}}\) such that
    \[
    \sum_{\N\in\mathcal{N}_{n,k}} c(\N,\M) f(\N) \ = \ f(\M)
    \]
    for all valuations \(f\).
\end{theorem}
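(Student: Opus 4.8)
The plan is to establish the statement by induction, using the fact (Theorem~\ref{thm:bases_valuation}) that it suffices to find \emph{some} system of integers that does the job, since uniqueness then follows automatically from the linear independence underlying that theorem. Concretely, I would first recall the structure theory of valuative invariants: by the work of Derksen and Derksen--Fink, the \(\mathcal{G}\)-invariant (or equivalently the collection of functions \(\M \mapsto f(\M)\) ranging over all valuations) spans a space whose dimension, restricted to \(\mathcal{M}_{n,k}\), equals \(|\mathcal{N}_{n,k}|\), and the corresponding evaluation vectors \((f(\N))_{f}\) for \(\N \in \mathcal{N}_{n,k}\) are linearly independent. Granting this, the matrix expressing arbitrary matroids in terms of nested matroids is determined, and the content of the theorem is precisely that nested matroids form a spanning set whose cardinality matches the dimension, hence a basis, of the space of "formal valuative combinations."

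The key steps, in order, would be: (1) Set up the universal valuative invariant --- the indicator function of the matroid polytope modulo the subdivision relations, living in the "valuation module" \(P_{n,k}\) --- so that proving the statement for this universal object proves it for all \(f\) simultaneously. (2) Show that the classes \([\polytope{P}(\N)]\) for \(\N \in \mathcal{N}_{n,k}\) span \(P_{n,k}\). For this I would proceed by induction on some complexity measure of \(\M\) (for instance, the number of pairs of incomparable cyclic flats, or a measure of how far \(\M\) is from being nested): if \(\M\) is not nested, exhibit an explicit matroid subdivision of \(\polytope{P}(\M)\) --- such as a hyperplane split coming from a pair of incomparable cyclic flats --- all of whose pieces (and interior intersections) are matroids of strictly smaller complexity, and apply the inclusion--exclusion relation together with the inductive hypothesis. (3) Show that \(\dim P_{n,k} = |\mathcal{N}_{n,k}|\), equivalently that the spanning set from step~(2) is linearly independent; this is where one invokes the construction of enough valuative invariants (the \(\mathcal{G}\)-invariant, or Billera--Jia--Reiner quasisymmetric functions, or the covaluative/valuative duality of Derksen--Fink) to separate the nested matroids. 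Combining (2) and (3), nested matroids form a basis, so every \(f(\M)\) is a unique integer combination of the \(f(\N)\), and the integrality of the coefficients \(c(\N,\M)\) follows because the change-of-basis matrix between two \(0/1\)-polytope indicator bases is unimodular (triangular with \(\pm 1\) on the diagonal with respect to a suitable ordering of cyclic-flat data).

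The main obstacle is step~(2): producing, for a non-nested matroid \(\M\), a matroid subdivision whose cells and interior faces are all \emph{strictly simpler} in whatever complexity measure drives the induction. The natural candidate is the hyperplane split associated to two incomparable cyclic flats \(F_1, F_2\) of \(\M\) --- splitting along the hyperplane \(\{x : \sum_{i \in F_1} x_i = \rank(F_1)\}\) or a combination thereof --- and one must check both that this is genuinely a matroid subdivision (the two closed halves and their common facet are matroid polytopes, which is classical) and that iterating resolves every incomparability without introducing new ones, so the induction terminates exactly at the nested matroids (those whose cyclic flats form a chain). Care is also needed with the signs and dimensions in the inclusion--exclusion formula to confirm that the resulting integer coefficients are well-defined and independent of the chosen sequence of splits; this independence is, however, guaranteed a posteriori once linear independence of the nested matroids (step~3) is in hand, so the cleanest route is to prove spanning and independence separately rather than tracking coefficients through the recursion.
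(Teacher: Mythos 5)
The paper does not prove this statement at all: it is quoted verbatim as \cite[Theorem 5.4]{DerksenFink} and used as a black box, so there is no internal proof to compare against. Judged on its own terms, your proposal has the right global architecture (spanning of the valuation module by nested classes, plus a dimension/independence count via the $\mathcal G$-invariant or the Derksen--Fink dual basis), but step~(2) contains a genuine gap that the cited authors' actual argument is specifically designed to avoid.

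The gap is the inductive spanning step. You propose to express a non-nested $\M$ by exhibiting a nontrivial matroid subdivision of $\polytope{P}(\M)$ itself into strictly simpler cells. This cannot work in general: there exist non-nested matroids whose base polytopes are \emph{indecomposable}, i.e., admit no nontrivial matroid subdivision (projective-geometry matroids such as the Fano matroid are standard examples; their cyclic flats are far from forming a chain, yet their polytopes are rigid). For such $\M$ your induction produces no relation at all. The valuation module is defined by relations coming from subdivisions of \emph{all} matroid polytopes, and the class $[\polytope{P}(\M)]$ of an indecomposable matroid must be reached by letting $\polytope{P}(\M)$ appear as an interior cell of a subdivision of some \emph{larger} polytope and solving the inclusion--exclusion relation for it --- a fundamentally different mechanism from the one you describe, and the reason Derksen--Fink instead argue via an explicit dual basis of valuative invariants whose pairing with nested matroids is unitriangular (which also delivers integrality directly, rather than via your unsupported unimodularity claim). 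A smaller but concrete error: the hyperplane $\{x:\sum_{i\in F_1}x_i=\rank(F_1)\}$ you name is a \emph{supporting} hyperplane of $\polytope{P}(\M)$, since $\sum_{i\in F}x_i\le\rank(F)$ holds on the entire polytope for every flat $F$; it therefore induces only the trivial subdivision and cannot serve as a split.
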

Moreover, Hampe \cite{Hampe} described an algorithm to compute the coefficients \(c(\N)\) explicitly in terms of the M\"obius function on the poset of chains of cyclic flats of \(\M\).

\begin{remark}\label{rem:valuation from rep}
     Suppose \(f\) is a map defined on all representable matroids, which satisfies the relations \(\sum_{\N\in\mathcal{N}_{n,k}} c(\N,\M) f(\N) \ = \ f(\M)\). Then it is natural to extend this map to a valuation on all matroids. This extension is unique. It follows that if two valuations are equal on all representable matroids, they are equal on all matroids. 
\end{remark}

We can further decompose every lattice path matroid by recursively considering the lowest and highest path passing trough a inner point; see \cite[Appendix A]{FerroniSchroeter:2024} and \cite[Lemma 4.3.5]{Bidkhori:2010}. When \(f\) is an additive function, this decomposition leads to the following result. 
\begin{proposition}
    \label{prop:valuative into snakes}
    Let \(f\) be an additive valuation and \(\M(\lambda/\mu)\) a lattice path matroid, then
    \[
    f(\M(\lambda/\mu)) \ = \ \sum_{\S} f(\S) \, ,
    \]
    where the sum is taken over all snake matroids \(\S\) that fit into the skew shape \(\lambda/\mu\). 
\end{proposition}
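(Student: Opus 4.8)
The plan is to iterate a single decomposition step: given a connected lattice path matroid $\M(\lambda/\mu)$ that is not already a snake, we subdivide its base polytope into two lattice path matroid polytopes glued along a common facet, apply the additive valuation $f$, and then induct. Concretely, since $\M(\lambda/\mu)$ is connected but not a snake, its ribbon-or-skew shape $\lambda/\mu$ contains a box with both a box to its right (in the same row) and a box above it (in the same column); equivalently, the region between the lower path $\mathbf L$ and the upper path $\mathbf U$ has an interior lattice point $v$. Following \cite[Appendix A]{FerroniSchroeter:2024} and \cite[Lemma 4.3.5]{Bidkhori:2010}, I would split the set of lattice paths from $(0,0)$ to $(k,n-k)$ that stay between $\mathbf L$ and $\mathbf U$ according to whether they pass weakly below or weakly above $v$: this expresses $\mathcal B(\mathbf L,\mathbf U)$ as the union of $\mathcal B(\mathbf L,\mathbf U')$ and $\mathcal B(\mathbf L',\mathbf U)$, where $\mathbf U'$ is the highest path through $v$ and $\mathbf L'$ is the lowest path through $v$. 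The first key step is to verify that $\{\polytope P(\M[\mathbf L,\mathbf U']),\polytope P(\M[\mathbf L',\mathbf U])\}$ is a matroid subdivision of $\polytope P(\M[\mathbf L,\mathbf U])$: the two cells cover the polytope, their intersection is the lattice path matroid polytope corresponding to paths through $v$ (hence a face of each, and a proper face since $v$ was an interior point), and both pieces are again lattice path matroid polytopes. This is essentially the content of the cited references, so I would invoke them rather than reprove it.

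With the subdivision in hand, applying the valuation property of $f$ gives
\[
    f(\M[\mathbf L,\mathbf U]) \ = \ f(\M[\mathbf L,\mathbf U']) \ + \ f(\M[\mathbf L',\mathbf U]) \ - \ f(\polytope Q)\,,
\]
where $\polytope Q$ is the intersection cell, which has strictly smaller dimension. The second key step is where additivity enters: the intersection $\polytope Q$ corresponds to lattice paths forced through the interior point $v=(a,b)$, so the associated matroid is a direct sum $\M_1\oplus\M_2$ of two smaller lattice path matroids (one living in the sub-rectangle below-left of $v$, the other in the sub-rectangle above-right of $v$), both with nonempty ground set since $v$ is interior. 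Hence $\M[\mathbf L,\mathbf U]$ restricted to paths through $v$ is disconnected, so $f(\polytope Q)=0$ by additivity, and the correction term vanishes. We are left with $f(\M[\mathbf L,\mathbf U]) = f(\M[\mathbf L,\mathbf U']) + f(\M[\mathbf L',\mathbf U])$, a sum of two lattice path matroids whose skew shapes are strictly contained in $\lambda/\mu$ (each is missing at least one box). Iterating and using that $f$ vanishes on any lattice path matroid that is disconnected (so only the connected pieces, i.e. genuine snakes fitting inside $\lambda/\mu$, survive), the recursion terminates with exactly the snakes contained in $\lambda/\mu$, each appearing once. I would organize this as induction on the number of boxes of $\lambda/\mu$, or equivalently on the number of interior lattice points.

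The main obstacle, and the place demanding the most care, is bookkeeping the recursion so that each snake sub-shape is counted exactly once and no spurious disconnected terms persist — in particular confirming that the two halves $\M[\mathbf L,\mathbf U']$ and $\M[\mathbf L',\mathbf U]$ share no common snake sub-shape, which follows because any path realizing a snake inside one half either still passes strictly on one side of $v$ or becomes the forced path through $v$ (whose matroid is disconnected and contributes $0$). A clean way to avoid double counting is to phrase the whole argument as: the set of snakes fitting in $\lambda/\mu$ is in bijection with the maximal "staircase" subribbons, and the recursion above is exactly the standard deletion/splitting on interior points; this is the same combinatorial skeleton as the decomposition of a Schur polynomial (or a $P$-partition generating function) into ribbons, so I would lean on that analogy for the inductive structure. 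Everything else — that parallel and series extensions of snakes are snakes, that lattice path matroids are closed under the operations used, that $\polytope Q$ is a face of the correct codimension — is routine given the setup in Section \ref{sec: preliminaries}.
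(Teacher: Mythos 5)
Your proposal is correct and follows essentially the same route as the paper, which proves this proposition by exactly the recursive subdivision you describe: split at an interior lattice point into the matroids of paths weakly below and weakly above it (citing \cite[Appendix A]{FerroniSchroeter:2024} and \cite[Lemma 4.3.5]{Bidkhori:2010} for the subdivision), observe that the common cell of paths forced through the point is a direct sum and hence killed by additivity, and iterate until only snakes remain. Your extra care about double counting is sound — a snake lying in both halves would have its upper and lower paths meeting at the interior point, contradicting connectivity — and is the only detail the paper leaves entirely to the reader.
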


\begin{remark}
    For any additive, valuative matroid invariant \(f\), 
    as every nested matroid is isomorphic to a lattice path matroid, we can express \(f(\M)\) as a linear combination of evaluations on snake matroids.
\end{remark}

\begin{example}
Let \(\M[\mathbf{L},\mathbf{U}]\) be the rank three lattice path matroid with lower path \(\mathbf{L} = \mathbf{E}^4\mathbf{N}^3\) and upper path \(\mathbf{U} = \mathbf{N}\mathbf{E}\mathbf{N}\mathbf{E}\mathbf{E}\mathbf{N}\mathbf{E}\).
Figure~\ref{fig:LatticePathDecomp} depicts \(\M[\mathbf{L},\mathbf{U}]\) and all the snake matroids fitting inside \(\lambda/\mu\). This implies that
\[
f(\M[\mathbf{L},\mathbf{U}]) = f(\S(2,3,1)) + f(\S(3,2,1)) +  f(\S(4,1,1))
\]
for any additive valuation \(f\).
\end{example}
\begin{figure}[ht]
    \centering
    \begin{tikzpicture}
\node[anchor=south west] at (-6,-3) {
    \begin{ytableau}
    \none & \none & \none  & *(White)\\
    \none & *(White) & *(White) & *(White)\\
    *(White) & *(White) & *(White) & *(White)\\
    \end{ytableau}
};
\node[black] at (-4.45,-3.2) {\(\M[\mathbf{L},\mathbf{U}]\)};

\node[anchor=south west] at (-1.5,-3) {
    \begin{ytableau}
    \none & \none & \none  & *(White)\\
    \none & *(White) & *(White) & *(White)\\
    *(White) & *(White) & \none & \none\\
    \end{ytableau}
};
\node[black] at (-.05,-3.2) {\(\S(2,3,1)\)};

\node[anchor=south west] at (1.5,-3) {
    \begin{ytableau}
    \none & \none & \none    & *(White)\\
    \none & \none & *(White) & *(White)\\
    *(White) & *(White) & *(White) & \none\\
    \end{ytableau}
};
\node[black] at (2.95,-3.2) {\(\S(3,2,1)\)};

%\node[anchor=south west] at (7.5,-3) {
%    \begin{ytableau}
%    \none & \none & *(White)  & *(White)\\
%    \none & \none & *(White) & \none\\
%    *(White) & *(White) & *(White) & \none\\
%    \end{ytableau}
%};
%\node[black] at (8.95,-3.2) {\(\S(3,1,2)\)};

\node[anchor=south west] at (4.5,-3) {
    \begin{ytableau}
    \none & \none & \none & *(White)\\
    \none & \none & \none & *(White)\\
    *(White) & *(White) & *(White) & *(White)\\
    \end{ytableau}
};
\node[black] at (5.95,-3.2) {\(\S(4,1,1)\)};

\end{tikzpicture}
    \caption{A lattice path matroid and the corresponding snake matroids.}
    \label{fig:LatticePathDecomp}
\end{figure}
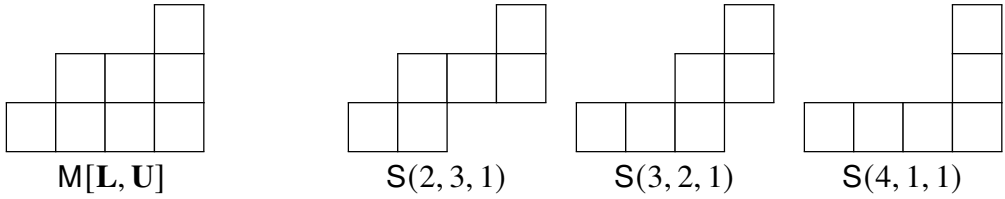

\subsection{The Chow class of a matroid}\label{subsec:schubert coefficients}
For nonnegative integers \(k \leq n\) we denote the Grassmannian of \(k\)-dimensional linear subspaces of \(\CC^n\) by \(G(k,n)\). 
For any point \(x \in G(k,n)\) we denote by \(\M_x\) the matroid of rank \(k\) on \([n]\) represented by the columns of a matrix whose rowspace is \(x\). That is, the bases of \(\M_x\) are the sets \(B \in \binom{[n]}{k}\) such that the Plücker coordinate \(p_B(x)\) is nonzero. The algebraic torus \(T = (\CC^*)^n\) acts on \(G(k,n)\) by scaling the standard coordinates of \(\CC^n\). We are interested in the \emph{torus orbit closure} of \(x\), i.e., the Zariski closure of the \(T\)-orbit of \(x\), denoted by \(\overline{Tx}\). The study of the relationship between \(\overline{Tx}\) and the matroid~\(\M_x\) was initiated by Gelfand, Goresky, MacPherson and Serganova in \cite{GGMS}. The authors showed that \(\overline{Tx}\) is isomorphic to the projective toric variety given by the matroid base polytope \(\polytope{P}(\M_x)\). In light of this result it is natural to expect that properties of \(\overline{Tx}\) depend solely on the matroid~\(\M_x\). An example is the class \([\overline{Tx}]\) in the Chow ring \(A^\bullet(G(k,n))\), which is the main object of interest in the remainder of this article.
The Chow ring \(A^\bullet(G(k,n))\) is generated as a free abelian group by the \emph{Schubert cycles}  \(\sigma_\eta\), indexed by partitions \( \eta \subseteq k\times(n-k)\),
\[
    A^\bullet(G(k,n)) \ = \ \ZZ\SetOf{\sigma_\eta }{\eta \subseteq k \times (n-k)} \, .
\]
Schubert cycles are classes of Schubert varieties \(\sigma_\eta = [X_\eta(V_\bullet)]\), where \(X_\eta(V_\bullet)\) is the Schubert variety associated to the partition \(\eta\) and a complete flag \(V_\bullet\) of \(\CC^n\), see \cite{EisHar} for further details.
This Chow ring is graded by codimension and the top degree part \(A^{k(n-k)}(G(k,n))\) is isomorphic to the integers~\(\ZZ\). The \emph{degree map} \(\deg \colon A^{k(n-k)}(G(k,n)) \to \ZZ\) is the isomorphism normalized via \(\deg(\sigma_{k\times (n-k)}) = 1\). If \(Y\) is a zero-dimensional subvariety of \(G(k,n)\), that is a finite set of points, then \(\deg([Y])\) counts the number of points in \(Y\). 

We prefer to work with symmetric functions instead of Schubert cycles. To do so, we consider the ring \(R = R(k,n)\), given as the quotient of the ring of symmetric functions by the ideal \(\langle s_\eta \mid \eta \not \subseteq k\times (n-k)\rangle\), which is isomorphic to \(A^\bullet(G(k,n))\) via the map \(\sigma_\eta \mapsto s_\eta\) where \(s_\eta\) is the Schur function of the partition \(\eta\). In Section~\ref{sec:series parallel identity} below we specialize further to the corresponding ring of Schur polynomials in \(k\) variables, but we will need the more general concept of symmetric functions in Section~\ref{sec:chow class of snakes}.
This isomorphism allows us to express the multiplicative structure of the Chow ring \(A^\bullet(G(k,n))\) in terms of the combinatorics of partitions. More precisely, one may write a product of Schubert cycles as the (unique) linear combination
\[
    \sigma_\mu \sigma_\eta \ = \ \sum_{\lambda} c_{\mu, \eta}^\lambda \ \sigma_\lambda \, ,
\]
where the sum is taken over all partitions \(\lambda \subseteq k\times (n-k)\) with \(|\lambda| = |\mu| + |\eta|\) and the coefficients are the Littlewood--Richardson coefficients as in \eqref{eq:mult of schur functions}. In addition to the analogue of Pieri's formula, see Theorem \ref{thm:pieri}, we also have the \emph{complimentary dimension formula}; if \(|\mu| + |\eta| = k(n-k)\) then \(\deg(\sigma_\mu \sigma_\eta) = 1\) if \(\eta = \mu^c\) and \(0\) otherwise.

If \(Y\) is a subvariety of \(G(k,n)\) of codimension~\(m\), then we are able to write the Chow class \([Y] \in A^m(G(k,n))\) as a linear combination \([Y] = \sum_\eta d_\eta \sigma_\eta\), where the sum is taken over all partitions \(\eta \vdash m\) with \(\eta \subseteq k\times (n-k)\). Here the coefficient \(d_\eta = \deg([Y]\sigma_{\eta^c})\) is the number of points in the intersection of varieties \(Y \cap X_{\eta^c}(V_\bullet)\), where \(V_\bullet\) is a generic complete flag of~\(\CC^n\). 
The torus orbit closure \(\overline{Tx}\) is of dimension \(n - \kappa(\M_x)\), hence of codimension \( m \coloneqq k(n-k) - (n-\kappa(\M_x))\). Thus we may write
\[
    [\overline{Tx}] \ = \sum_{\substack{\eta \vdash m \\ \eta \subseteq k \times (n-k)}} d_\eta \sigma_\eta \ \in \ A^m(G(k,n)) \, .
\]
It follows from \cite[Proposition A.5]{Speyer} that the class \([\overline{Tx}]\), and each coefficient \(d_\eta\) only depends on the isomorphism class of the matroid \(\M_x\). The coefficients \(d_\eta\) are called the \emph{Schubert coefficients} of the representable matroid \(\M_x\). 
The map \(\M_x\mapsto [\overline{Tx}]\) is a valuation and can, as in Remark~\ref{rem:valuation from rep}, be extended uniquely to a valuation defined on all matroids. In \cite{FinkSpeyer}, this is made precise by defining the class of a matroid \(\M\) in the \(K\)-theory of the Grassmannian; whenever \(\M = \M_x\), this \(K\)-class agrees with the class of the structure sheaf of \(\overline{Tx}\) and specializes to a class in \(A^\bullet(G(k,n))\) that agrees with \([\overline{Tx}]\). We summarize the results from \cite{FinkSpeyer} that are most relevant for us in the next theorem.
\begin{theorem}[\cite{FinkSpeyer}]
    \label{thm:FinkSpeyer}
    Fix integers \(0 \leq k \leq n\) and \((k-1)(n-k-1) \leq m \leq  k(n-k)\). There exists a unique valuative matroid invariant
    \[
        \Sc_m \colon \mathcal M_{n,k} \longrightarrow A^m(G(k,n)) \, ,
    \]
    such that \(\Sc_m(\M) = [\overline{Tx}]\) whenever \(\M = \M_x\) and \(m = k(n-k)-(n-\kappa(\M))\), and is zero if \(m \neq k(n-k)-(n-\kappa(\M))\).
\end{theorem}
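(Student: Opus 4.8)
The plan is to treat uniqueness and existence separately, reconstructing the argument of \cite{FinkSpeyer}. Uniqueness is soft: by Theorem~\ref{thm:bases_valuation} a valuative invariant on \(\mathcal{M}_{n,k}\) is determined by its restriction to the nested matroids \(\mathcal{N}_{n,k}\), and every nested matroid is a lattice path matroid, hence representable over \(\CC\). For a representable matroid \(\M=\M_x\) the class \([\overline{Tx}]\) is independent of the chosen representative by \cite[Proposition~A.5]{Speyer} and lies in \(A^{k(n-k)-(n-\kappa(\M))}(G(k,n))\); the prescribed normalization therefore forces the value of \(\Sc_m\) on every nested matroid, so \(\Sc_m\) is unique. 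The admissible range of \(m\) in the hypothesis coincides with the set of values \(k(n-k)-(n-\kappa)\), \(1\le\kappa\le n\), so no generality is lost.

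For existence I would work in \(K\)-theory. To an arbitrary matroid \(\M\) of rank \(k\) on \([n]\) I would attach a class \(y(\M)\) in the Grothendieck ring \(K^0(G(k,n))\) --- or, more conveniently, in its torus-equivariant refinement \(K^0_T(G(k,n))\) --- defined directly from the matroid polytope \(\polytope{P}(\M)\) by a \(K\)-theoretic Brion formula: a sum over the vertices \(e_B\), \(B\in\mathcal{B}(\M)\), of a local term depending only on the tangent cone of \(\polytope{P}(\M)\) at \(e_B\). Two points then need to be verified. First, \(\M\mapsto y(\M)\) is valuative: the construction factors through the indicator function \(\mathbf{1}_{\polytope{P}(\M)}\), because Brion's identity is insensitive to the lower-dimensional faces that cancel in it, and the indicator functions of the cells of a matroid subdivision satisfy inclusion--exclusion by construction. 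Second, \(y(\M_x)=[\mathcal{O}_{\overline{Tx}}]\) whenever \(\M=\M_x\): this combines the identification of \(\overline{Tx}\) with the projective toric variety of \(\polytope{P}(\M_x)\) from \cite{GGMS} with the \(K\)-theoretic localization formula for toric varieties. (In the Chow ring, granting valuativity, one could alternatively appeal to Remark~\ref{rem:valuation from rep}: it would suffice to know that \(\M_x\mapsto[\overline{Tx}]\) satisfies the nested-matroid relations of Theorem~\ref{thm:bases_valuation}, which is the shadow in \(A^\bullet\) of the \(K\)-theoretic valuativity just described.)

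To finish I would pass from \(y(\M)\in K^0(G(k,n))\) to the associated graded of the topological (codimension) filtration, which for the Grassmannian is canonically \(A^\bullet(G(k,n))\), and define \(\Sc_m(\M)\) to be the degree-\(m\) component of the image of \(y(\M)\). Valuativity of each \(\Sc_m\) is inherited from that of \(y\). Since \(\dim\overline{Tx}=n-\kappa(\M_x)\), the Chow specialization of \([\mathcal{O}_{\overline{Tx}}]\) is supported in codimensions \(\ge k(n-k)-(n-\kappa(\M_x))\) and has leading term precisely \([\overline{Tx}]\) in codimension \(k(n-k)-(n-\kappa(\M_x))\); this gives simultaneously the normalization \(\Sc_m(\M_x)=[\overline{Tx}]\) for the correct \(m\) and the vanishing of \(\Sc_m\) for every other \(m\).

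The hardest step will be matching the polytope-level \(K\)-class with \([\mathcal{O}_{\overline{Tx}}]\) for \emph{all} representable matroids, including disconnected, non-normal, or non-Cohen--Macaulay cases; this requires an honest equivariant localization computation in \(K^0_T(G(k,n))\) together with careful control of the degeneration of \(\overline{Tx}\) onto the union of coordinate strata indexed by \(\mathcal{B}(\M_x)\). Once this is in place, the valuativity of \(y\) is comparatively soft --- in essence the statement that Brion's formula is a valuation on lattice polytopes --- and the descent to Chow groups is routine.
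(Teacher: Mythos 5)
This statement is quoted from \cite{FinkSpeyer}; the paper gives no proof of its own, so there is no internal argument to compare against. Your outline follows the same route as the cited source --- an equivariant \(K\)-class attached to \(\polytope{P}(\M)\) by localization over its vertices, shown to be valuative and to agree with \([\mathcal{O}_{\overline{Tx}}]\) in the representable case, then specialized to the Chow ring --- and your uniqueness argument (Theorem~\ref{thm:bases_valuation} plus the representability of nested matroids plus \cite[Proposition A.5]{Speyer}) is exactly the mechanism the paper invokes in Remark~\ref{rem:valuation from rep}.

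There is, however, a concrete gap in the existence half. You define \(\Sc_m(\M)\) as the degree-\(m\) component of the image of \(y(\M)\) in the associated graded of the topological filtration, and claim that the support statement ``codimension \(\geq k(n-k)-(n-\kappa)\)'' yields the vanishing of \(\Sc_m(\M_x)\) for every \(m\) other than the codimension of \(\overline{Tx}\). It yields vanishing only for \(m\) \emph{below} that codimension. For \(m\) above it, the graded pieces of \([\mathcal{O}_Y]\) are nonzero in general: already for a degree-\(d\) hypersurface in projective space one has \([\mathcal{O}_Y]=1-(1-h)^d\), with nonzero components in every codimension up to \(d\); these are Riemann--Roch correction terms. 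The construction the paper actually uses (the operator \(\KtoSc\) followed by \(t=0\)) extracts only the \emph{lowest-degree} term, which secures the normalization and the vanishing clause --- but then valuativity of \(\Sc_m\) is no longer inherited formally, since ``take the leading term'' is not a linear operation. One must argue separately that in a matroid subdivision every interior cell has codimension at least that of \(\M\), with equality precisely for the maximal cells, so that in the critical degree the inclusion--exclusion identity for the graded piece of \(y\) reduces to the identity for the truncated classes, and that the analogous signed sums vanish in all higher degrees. This verification is the nontrivial content of \cite{FinkSpeyer} that your outline elides; the rest of your plan is sound.
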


\begin{definition}
    \label{def:Schubert coefficients}
    Let \(\M\) be a matroid of rank \(k\) on \([n]\), and \(m = k(n-k) - (n-\kappa(\M))\). The \emph{Schubert coefficients} of the matroid \(\M\) are the integers \(d_{\eta, m}(\M)\) such that
    \[
        \Sc_m(\M) \ = \ \sum_\eta d_{\eta,m}(\M) \sigma_\eta \, .
    \]
\end{definition}
We also consider the Poincaré dual of the class \(\Sc_m(\M)\) and use the notation
\[
    \Sc_m^c(\M) \ = \ \sum_{\eta} d_{\eta, m}(\M) \sigma_{\eta^c} \, .
\]
It follows from Theorem \ref{thm:FinkSpeyer} that for a fixed partition \(\eta\) and integers \(0 \leq k \leq n\) and \((k-1)(n-k-1) \leq m \leq k(n-k)\), the Schubert coefficient
\[
    d_{\eta, m} \colon \mathcal M_{n,k} \longrightarrow \ZZ
\]
is a valuative matroid invariant. Moreover, \(d_{\eta,m}(\M) = 0\) if \(\eta \not\subseteq k\times (n-k)\), \(|\eta| \neq m\) or \(m \neq k(n-k) - (n - \kappa(\M))\). We suppress the subscript \(m\), when it does not cause any confusion. That is, for a matroid \(\M\) of rank \(k\) on \([n]\) we write \(\Sc(\M) = \Sc_{k(n-k) - (n-\kappa(\M))}(\M)\) and \(d_\eta(\M) = d_{\eta, k(n-k) - (n-\kappa(\M))}(\M)\). In particular whenever the matroid \(\M\) is connected we write \(\Sc(\M) = \Sc_{(k-1)(n-k-1)}(\M)\) and \(d_\eta(\M) = d_{\eta,(k-1)(n-k-1)}(\M)\). In this case if \(|\eta| = (k-1)(n-k-1)\), then \(d_\eta(\M)\) is zero for disconnected matroids. That is, \(d_{\eta, (k-1)(n-k-1)}\) is an additive and valuative matroid invariant. When a matroid \(\M\) of rank \(k\) on \([n]\) is representable, there is some point \(x \in G(k,n)\) such that \(\M = \M_x\). In this case we see that the Schubert coefficients  of \(\M\) are nonnegative integers, as they count the number of points in an intersection of varieties. A main open problem about Schubert coefficients is whether this holds for arbitrary matroids.
\begin{conjecture}[{\cite[Conjecture 9.13]{bergetfink_K}}]
    \label{conj:nonnegativity}
    For any matroid \(\M\) and partition \(\eta\), the Schubert coefficient \(d_{\eta}(\M)\) is nonnegative.
\end{conjecture}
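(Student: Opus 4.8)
The plan is to reduce the conjecture to a combinatorial positivity statement about standard Young tableaux, exploiting that each Schubert coefficient \(d_{\eta,m}\) is a valuative matroid invariant (as recorded after Theorem~\ref{thm:FinkSpeyer}) together with the combinatorial formulas for lattice path and snake matroids announced above. First I would dispose of the disconnected case using the compatibility of \(\Sc\) with direct sums, reducing to \(\M\) connected and \(m=(k-1)(n-k-1)\), where \(d_{\eta,m}\) is moreover additive. Then, since \(d_{\eta,m}\) is valuative, Theorem~\ref{thm:bases_valuation} gives
\[
    d_\eta(\M) \ = \ \sum_{\N\in\mathcal N_{n,k}} c(\N,\M)\, d_\eta(\N)\, ,
\]
and Hampe's algorithm computes the integers \(c(\N,\M)\) from the M\"obius function on the poset of chains of cyclic flats of \(\M\). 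Every nested matroid is a lattice path matroid, and by Proposition~\ref{prop:valuative into snakes} the additive invariant \(d_{\eta,(k-1)(n-k-1)}\) further splits \(d_\eta\) of a connected lattice path matroid into a sum over the snakes fitting inside its skew shape. Feeding in \(d_\eta(\S(\mathbf b))=|\SYT_{\eta^c}(\mathbf b)|\) from Theorem~\ref{thm:combinatorial Sc snakes} yields an explicit, but signed, expression
\[
    d_\eta(\M) \ = \ \sum_{\mathbf b} a_{\mathbf b}(\M)\,|\SYT_{\eta^c}(\mathbf b)|
\]
with M\"obius-type coefficients \(a_{\mathbf b}(\M)\in\ZZ\); the whole problem becomes showing this alternating sum is nonnegative.

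To handle the cancellation, I would look for a cancellation-free combinatorial model: a subset of \(\bigsqcup_{\mathbf b}\SYT_{\eta^c}(\mathbf b)\), cut out by conditions read directly off the cyclic flats of \(\M\), whose cardinality equals \(d_\eta(\M)\). The lattice path case suggests the right shape of such a model — one expects \(d_\eta(\M[\mathbf L,\mathbf U])\) to count standard tableaux of shape \(\eta^c\) whose descent set lies in a prescribed interval between \(\Des(\mathbf L)\) and \(\Des(\mathbf U)\) — and extending this to arbitrary \(\M\) would amount to showing the signed Hampe expansion telescopes into such an interval count. For paving matroids the poset of cyclic flats is extremely constrained (only \(\emptyset\), \(E\), and a family of corank-one flats meeting pairwise in low rank), so this bookkeeping is tractable; and for special shapes of \(\eta\) — hooks, two rows with full first row, two columns with full first column — the counts \(|\SYT_{\eta^c}(\mathbf b)|\) collapse to binomial coefficients, which is the route I would take to prove the partial positivity statement of Theorem~\ref{thm: nonnegativity paving}.

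The main obstacle is exactly this cancellation. The Derksen--Fink / Hampe expansion of a general matroid into nested matroids is genuinely signed, and there is at present no subtraction-free combinatorial rule for \(\Sc(\M)\) in general; for representable matroids positivity is geometric (intersection numbers of \(\overline{Tx}\) with Schubert varieties), but that interpretation is unavailable otherwise. So the crux is to produce a manifestly nonnegative combinatorial rule — equivalently, to understand the weighted tableau sums \(\sum_{\mathbf b} a_{\mathbf b}(\M)\,|\SYT_{\eta^c}(\mathbf b)|\) well enough to exhibit an injection or a sign-reversing involution — and it is precisely where such control is currently only available that one must restrict to paving (or sparse paving) matroids, where the interaction between cyclic flats is mild enough to analyze by hand, or to small shapes \(\eta\) where the tableau counts degenerate.
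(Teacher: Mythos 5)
This statement is a \emph{conjecture} (Berget--Fink, Conjecture 9.13), and the paper does not prove it; it only establishes special cases (representable matroids via intersection theory, the paving case for the shapes \(\eta(m)\) in Theorem~\ref{thm: nonnegativity paving}, and computational verification for small ground sets). Your proposal likewise does not prove it, and to your credit you say so: your final paragraph correctly identifies the unresolved cancellation in the signed expansion as ``the crux.'' So the honest assessment is that there is a genuine gap --- namely, the entire conjecture --- and what you have written is a reduction of the problem to an equivalent combinatorial positivity statement, not a proof. The reduction itself is sound and matches the paper's machinery exactly: valuativity plus Theorem~\ref{thm:bases_valuation} and Hampe's algorithm give the signed expansion into nested matroids, Proposition~\ref{prop:valuative into snakes} refines this into snakes, and Theorem~\ref{thm:combinatorial Sc snakes} converts each snake contribution into a tableau count. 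This is precisely how the paper proves its partial results and how the Appendix computes examples.

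Two more specific cautions. First, your hoped-for ``telescoping'' of the signed Hampe expansion into a cancellation-free descent-interval count cannot work in the generality you suggest: for lattice path matroids the transversal description of Theorem~\ref{thm:combinatorial Sc lattice path} is available because the snake decomposition there is genuinely subtraction-free, but for an arbitrary matroid the coefficients \(c(\N,\M)\) (and hence the \(a_{\mathbf b}(\M)\)) really do take both signs --- see the worked example in Appendix~\ref{sec:appendix}, where three of the eight snake summands enter with coefficient \(-1\) --- so any cancellation-free model would have to be of a different nature than a simple interval condition on descent sets. Second, the paving-matroid argument the paper actually uses (Theorem~\ref{thm: nonnegativity paving}) is not the injection or sign-reversing involution you envision; it is an analytic comparison of ratios \(d_{\eta(m)^c}(\Lambda_{k,h,n})/d_{\eta(m)^c}(\U_{k,n})\) against the corresponding \(\beta\)-invariant ratio, using the closed binomial formulas of Lemma~\ref{lem:fullfirst-uniform} and the decomposition of \cite{FerroniSchroeter:2024} into panhandle matroids. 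If you want to push beyond the shapes \(\eta(m)\), that inequality-of-ratios technique, rather than a bijective model, is the concrete lever the paper offers.
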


Not much is known about Schubert coefficients, and computations are generally difficult. In \cite{klyachko-r}, see Corollary \ref{cor:combinatorial Sc uniform}, Klyachko gives a formula for the Schubert coefficients of uniform matroids.

In \cite[Theorem 5.1]{Speyer} Speyer recognizes one of the Schubert coefficients as the \(\beta\)-invariant of Example~\ref{ex:beta_invariant}. Specifically, he shows that for a matroid \(\M\) of rank~\(k\)~on~\([n]\)
\[
    d_{h^c, (k-1)(n-k-1)}(\M) \ = \ \beta(\M) \, ,
\]
where \(h\) is the hook partition \(h = [n-k,1^{k-1}]\).

It is known that the \(\beta\)-invariant is nonnegative for all matroids \cite{Crapo:67}, hence this confirms Conjecture \ref{conj:nonnegativity} for \(\eta = h^c\). The proof of Speyer relies on the fact that \(d_{h^c}\) satisfies the deletion-contraction principle, but this is not true in general for other partitions. 

\begin{example}
    \label{ex:U25}
    Using the result by Klyachko, see Corollary \ref{cor:combinatorial Sc uniform}, we can compute the Chow class of the uniform matroid \(\U_{2,5}\), 
    \[
        \Sc(\U_{2,5}) \ = \ 3\sigma_{\scalebox{0.3}{\young(~~)}} + 1\sigma_{\scalebox{0.3}{\young(~,~)}} \, .
    \]
    Here the complement of the hook partition is \(h^c = [2]\), which agrees with Speyer's result as \(\beta(\U_{2,5}) = 3\).
\end{example}
In \cite[Theorem 1.1]{JP} the first author showed that all the Schubert coefficients of connected sparse paving matroids, except for \(d_{h^c}\), are equal to those of the uniform matroid of same rank and groundset. This confirms Conjecture \ref{conj:nonnegativity} for all sparse paving matroids, which are conjectured to be predominant among all matroids, see \cite{MNWW}.

We can simplify the nonnegativity question posed in Conjecture \ref{conj:nonnegativity} by making the following observations. In \cite[Theorem 4.1]{JP} the first author showed how to compute the Schubert coefficients of a matroid in terms of its direct summands. This result reduces Conjecture \ref{conj:nonnegativity} to the case of connected matroids. Two special cases that are of interest to us are the cases of adding a loop or a coloop to \(\M\).
\begin{proposition}
    \label{prop:add loop or coloop}
    Let \(\M\) be a matroid of rank \(k\) on \([n]\). Then in \(A^\bullet(G(k,n+1))\)
    \[
        \Sc(\M\oplus \U_{0,1}) \ = \ \Sc(\M) \ \sigma_{[1^k]} \, ,
    \]
    where we abuse notation and think of \(\Sc(\M)\) as an element of \(A^\bullet(G(k,n+1))\).
    Similarly, in \(A^\bullet(G(k+1,n+1))\) we have
    \[
        \Sc(\M\oplus \U_{1,1}) \ = \ \Sc(\M) \ \sigma_{[n-k]} \, .
    \]
\end{proposition}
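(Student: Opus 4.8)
The plan is to reduce everything to the representable case and then invoke valuativity. Since $\Sc$ (and each Schubert coefficient $d_\eta$) is a valuative matroid invariant by Theorem~\ref{thm:FinkSpeyer}, and since the maps $\M\mapsto \Sc(\M\oplus\U_{0,1})$ and $\M\mapsto \Sc(\M\oplus\U_{1,1})$ are also valuative (the operation $\M\mapsto\M\oplus\U_{0,1}$ sends a matroid subdivision of $\polytope P(\M)$ to a matroid subdivision of $\polytope P(\M\oplus\U_{0,1})$, because $\polytope P(\M\oplus\U_{0,1})\cong\polytope P(\M)$ as polytopes and direct sum with a fixed matroid is compatible with subdivisions), both sides of each claimed identity are valuative in $\M$. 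By Remark~\ref{rem:valuation from rep} it therefore suffices to verify the two identities for representable matroids $\M = \M_x$ with $x\in G(k,n)$.

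So fix $x\in G(k,n)$ represented by a $k\times n$ matrix $A$ with rowspace $x$, so $\M = \M_x$. First I would treat the loop case. Adding a loop corresponds to appending a zero column to $A$, giving a $k\times(n+1)$ matrix $A'$ whose rowspace is a point $x'\in G(k,n+1)$ with $\M_{x'} = \M\oplus\U_{0,1}$. The torus orbit closure $\overline{Tx'}\subseteq G(k,n+1)$ lives entirely in the locus where the $(n+1)$-st Plücker coordinates supported on that column vanish; concretely, if $\iota\colon G(k,n)\hookrightarrow G(k,n+1)$ is the linear embedding adding a zero coordinate, then $\overline{Tx'} = \iota(\overline{Tx})$ as a set, and one identifies its class. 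The cleanest way is to compute $d_\eta(\M\oplus\U_{0,1}) = \deg\big([\overline{Tx'}]\cdot\sigma_{\eta^c}\big)$ in $G(k,n+1)$: intersecting with a generic Schubert variety and pushing forward along $\iota$, the image $\iota(\overline{Tx})$ meets $X_{\eta^c}(V_\bullet)$ in the same number of points that $\overline{Tx}$ meets $X_{\zeta^c}(V_\bullet')$ in $G(k,n)$, where $\zeta$ is obtained from $\eta$ by the combinatorial shift reflecting the $\cdot\,\sigma_{[1^k]}$ multiplication. Equivalently, and more efficiently, one checks the pushforward formula $\iota_*[\overline{Tx}] = [\overline{Tx}]\cdot\sigma_{[1^k]}$ in $A^\bullet(G(k,n+1))$ directly: $\iota$ realizes $G(k,n)$ as the Schubert subvariety $X_{[1^k]}$ of $G(k,n+1)$ (the $k$-planes contained in the fixed coordinate hyperplane $\CC^n\subseteq\CC^{n+1}$), whose class is $\sigma_{[1^k]}$, and for a subvariety $Z\subseteq X_{[1^k]}$ one has $[\iota(Z)]_{G(k,n+1)} = [Z]_{G(k,n)}\cdot\sigma_{[1^k]}$ under the identification $A^\bullet(G(k,n))\cong A^\bullet(X_{[1^k]})$ via restriction (using that the Schubert basis of $G(k,n)$ maps compatibly). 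This yields $\Sc(\M\oplus\U_{0,1}) = \Sc(\M)\cdot\sigma_{[1^k]}$.

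For the coloop case I would either repeat the same argument with the dual embedding — appending a column not in the span of the others makes the new element a coloop, and $\U_{1,1}$ corresponds to the Schubert class $\sigma_{[n-k]}$ in $G(k+1,n+1)$ (the $(k{+}1)$-planes containing a fixed line), with an analogous pushforward identity — or simply dualize: matroid duality interchanges loops and coloops and $\U_{0,1}$ with $\U_{1,1}$, and under the standard isomorphism $G(k,n)\cong G(n-k,n)$ it interchanges $\Sc$ of a matroid with $\Sc$ of its dual (up to the transpose of partitions, which swaps $\sigma_{[1^k]}$ and $\sigma_{[n-k]}$); applying this to the already-proven loop identity gives the coloop identity. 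I expect the main obstacle to be the geometric bookkeeping in the pushforward step: making precise that the restriction map $A^\bullet(G(k,n+1))\to A^\bullet(X_{[1^k]})\cong A^\bullet(G(k,n))$ sends $\sigma_\eta$ to $\sigma_\eta$ (for $\eta\subseteq k\times(n-k)$) and $\sigma_\eta\mapsto 0$ otherwise, so that the projection formula $\iota_*\big(\iota^*\alpha\cdot[Z]\big) = \alpha\cdot\iota_*[Z]$ with $\alpha = $ (a class restricting to $[\overline{Tx}]$) cleanly produces the factor $\sigma_{[1^k]} = \iota_*(1)$. Everything else — valuativity, the reduction to representables, the combinatorial shift of partitions — is routine once this comparison of Chow rings is set up; alternatively one can bypass the geometry entirely by verifying the identity on the nested-matroid basis of Theorem~\ref{thm:bases_valuation}, using that $\N\mapsto\N\oplus\U_{0,1}$ preserves nestedness and tracking the coefficients $c(\N,\M)$, but the geometric pushforward is more transparent.
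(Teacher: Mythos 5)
Your proposal is correct, but it takes a different route from the paper, which does not actually prove Proposition~\ref{prop:add loop or coloop}: it is presented as a special case of the direct-sum formula of \cite[Theorem 4.1]{JP}. Your argument is self-contained and geometric. The reduction to representable matroids is sound --- both sides are valuative in \(\M\) because \(\polytope{P}(\M\oplus\U_{0,1})=\polytope{P}(\M)\times\{0\}\) (resp.\ \(\times\{1\}\) for a coloop), so direct sum with a fixed matroid carries matroid subdivisions to matroid subdivisions, and Remark~\ref{rem:valuation from rep} applies. The geometric core also checks out: appending a zero column places \(x'\) in the sub-Grassmannian \(X_{[1^k]}=\{W: W\subseteq \CC^n\}\subseteq G(k,n+1)\), the extra torus factor acts trivially there so \(\overline{T'x'}=\iota(\overline{Tx})\), and the projection formula \(\iota_*\iota^*\tilde\alpha=\tilde\alpha\cdot\sigma_{[1^k]}\) together with the fact that \(\iota^*\) kills exactly the \(\sigma_\mu\) with \(\mu_1=n+1-k\) (whose product with \(\sigma_{[1^k]}\) also vanishes by Pieri) makes the pushforward identity unambiguous; the coloop case then follows most cleanly by the duality \(d_{\eta}(\M^*)=d_{\eta^t}(\M)\) recorded at the end of Section~\ref{subsec:schubert coefficients}, since transposition exchanges \(\sigma_{[1^k]}\) and \(\sigma_{[n-k]}\). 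For comparison, note that the paper's later \(K\)-theoretic machinery yields an even shorter algebraic proof that bypasses the geometry entirely: Lemma~\ref{lem:add loop} gives \(\mathcal K(\M\oplus\U_{0,1})=\mathcal K(\M)\cdot Q^k_{n+1}\), and applying \(\KtoSc\) and setting \(t=0\) turns \(Q^k_{n+1}\) into \(e_k(u_1,\dots,u_k)=s_{[1^k]}\); your approach buys geometric transparency (the factor \(\sigma_{[1^k]}\) literally is the class of the sub-Grassmannian containing the orbit closure) at the cost of the pushforward bookkeeping you already identified.
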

In other words, the Schubert coefficients of \(\M \oplus \U_{0,1}\) can be obtained from those of~\(\M\) by adding a column of \(k\) boxes to the left of each partition. Similarly, the Schubert coefficients of \(\M \oplus \U_{1,1}\) can be obtained by adding a row of \(n-k\) boxes to the top of each partition. With the dual class we simply  have
\[
    \Sc^c(\M\oplus \U_{0,1}) \ = \ \Sc^c(\M\oplus \U_{1,1}) \ = \ \Sc^c(\M) \, .
\]
Note that these classes are not elements of the same Chow rings; however, since the equalities above hold when we consider them as formal linear combinations of Schubert cycles, we abuse notation and write an equality for the Chow classes as well.

It is also easy to see that the Schubert coefficients behave well with respect to matroid duality. Recall the isomorphism of Grassmannians \(G(k,n) \to G(n-k,n)\) defined by sending a point \(x\) to its orthogonal complement \(x^\bot\). This induces an isomorphism of Chow rings given by sending \(\sigma_\eta\) to the Schubert cycle \(\sigma_{\eta^t}\) of the transposition of \(\eta\). The matroid \(\M_{x^\bot}\) is the dual of \(\M_x\), and the class \(\Sc_m(\M_x)\) is mapped to \(\Sc_m(\M_{x^\bot})\). Hence \(d_{\eta,m}(\M^*) = d_{\eta^t,m}(\M)\).

\subsection{\texorpdfstring{\(K\)-classes}{K-classes}
}

In this section we recap results of \cite{FinkBerget_matrix} and \cite{bergetfink_K} by Berget and Fink that allow us to define the Chow classes of arbitrary matroids. 
The group \(G =  GL_k(\CC) \times (\CC^*)^n\) acts on the affine space of \(k \times n\) matrices \(\mathbb A ^{k \times n}\) via the usual row operations and scaling columns. To a matrix \(v \in \mathbb A^{k\times n}\) one may associate the \emph{matrix orbit closure} \(X_v\), which is the Zariski closure of the orbit of the matrix~\(v\) by the group~\(G\).
Let \(K_0^G(\mathbb A^{k\times n})\) denote the Grothendieck group of \(G\)-equivariant coherent sheaves on \(\mathbb A^{k\times n}\). The group \(K_0^G(\mathbb A^{k\times n})\) can be identified with the group of Laurent polynomials in the variables \(u_1, \dots , u_k, t_1, \dots, t_n\) with integer coefficients,
\[
    K_0^G(\mathbb A^{k\times n}) \ \cong \ \ZZ[u_1^\pm, \dots , u_k^\pm, t_1^\pm, \dots, t_n^\pm] \, .
\]
In \cite{FinkBerget_matrix} the authors studied the class of \(X_v\) in \(K_0^G(\mathbb A^{k\times n})\), and in \cite{bergetfink_K} they found an expression that only depends on the matroid \(\M_v\) represented by the columns of \(v\). This can be used to extend the definition of \(K\)-class to arbitrary, not necessarily representable, matroids. To state this formula we introduce some notation. 

Let \(\M\) be a matroid of rank \(k\) on \([n]\). For a given permutation \(\omega\) of the set~\([n]\) we denote by \(B(\omega,\M)\) the smallest basis of \(\M\) in the lexicographical order induced by \(\omega\). We may write \(B(\omega)\) if the matroid is clear from context. In the ring of rational functions \(\ZZ (u_1,\ldots,u_k,t_1,\ldots,t_n)\) we define the following expressions
\[
    P^n_{B,\M} \ = \ \sum_{\substack{\omega \in \mathfrak{S}_n \\ B = B(\omega,\M)}} \prod_{\ell = 1}^{n-1} \frac{t_{\omega_\ell}}{t_{\omega_\ell} - t_{\omega_{\ell + 1}}} \qquad Q^k_j \ = \ \prod_{i \in [k]} (1 - u_it_j) \, .
\]
%\[
%    p^n_{B,\M} \ = \ \sum_{\substack{\omega \in \mathfrak{S}_n \\ B = B(\omega,\M)}} \prod_{\ell = 1}^{n-1} \frac{1}{t_{\omega_\ell}- t_{\omega_{\ell + 1}}} \qquad q^k_j \ = \ \prod_{i \in [k]} (u_i + t_j) \, .
%\]

\begin{theorem}[{\cite[Theorem 8.1]{bergetfink_K}}]
    \label{thm:K formula}
    Let \(\M\) be a matroid of rank \(k\) on \([n]\). The class of the matroid \(\M\) in \(K_0^G(\mathbb A^{k\times n})\) is given by 
    \[
        \mathcal K(\M) \ = \ \sum_{B \in \mathcal{B}(\M)} P^n_B \prod_{\substack{j \in [n] \\ j \not\in B}}Q^k_j \, .
    \]
    If \(\M\) is represented by the matrix \(v \in \mathbb A^{k\times n}\), then this is the class of \(X_v\) in \(K_0^G(\mathbb A^{k\times n})\). Moreover, \(\mathcal K\) is a valuation.
\end{theorem}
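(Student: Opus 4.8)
The plan is to take the displayed expression as the \emph{definition} of $\mathcal{K}(\M)$ for an arbitrary matroid and then prove two things: that $\mathcal{K}$ is a valuation, and that $\mathcal{K}(\M_v) = [\mathcal{O}_{X_v}]$ whenever $v$ represents $\M_v$. For valuativity I would check the inclusion--exclusion identity directly on the right-hand side. The factors $Q^k_j = \prod_{i\in[k]}(1-u_it_j)$ and the permutation weights $\prod_{\ell=1}^{n-1} t_{\omega_\ell}/(t_{\omega_\ell}-t_{\omega_{\ell+1}})$ do not depend on $\M$ at all, so the only matroid-dependent ingredient is the assignment $\omega \mapsto B(\omega,\M)$; hence the whole question reduces to a finite combinatorial statement about how the $\omega$-lexicographically minimal basis is distributed among the cells of a matroid polytope subdivision, which one matches against the inclusion--exclusion signs $(-1)^{\dim\polytope{P}-\dim\polytope{Q}}$. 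For the geometric identification I would combine three ingredients: (i) the map $v\mapsto[\mathcal{O}_{X_v}]$ on representable matroids is itself valuative, which is essentially the content of \cite{FinkSpeyer}; (ii) the identity $\mathcal{K}(\M)=[\mathcal{O}_{X_v}]$ holds on a valuatively spanning family of representable matroids; and (iii) two valuations agreeing on a spanning family agree everywhere. For (ii) the natural choice is the nested (equivalently, Schubert) matroids: they are representable since they are lattice path matroids, and they span the space of valuative invariants by Theorem~\ref{thm:bases_valuation}; for a nested matroid $X_v$ is transparent enough — it degenerates along a generic one-parameter subgroup to a reduced union of coordinate subspaces — that both sides can be read off term by term.

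The core is to explain why the class has the shape $\sum_{B} P^n_{B,\M}\prod_{j\notin B}Q^k_j$, and the route I would take is fixed-point localization. Note that $\prod_{j\notin B}Q^k_j = \prod_{j\notin B}\prod_{i\in[k]}(1-u_it_j)$ is, via the Koszul resolution, exactly the $G$-equivariant $K$-class of the coordinate subspace of $\mathbb{A}^{k\times n}$ on which the columns not indexed by $B$ vanish, since the entry $v_{ij}$ carries weight $u_it_j$. Meanwhile $\prod_{\ell=1}^{n-1} t_{\omega_\ell}/(t_{\omega_\ell}-t_{\omega_{\ell+1}}) = \prod_{\ell} (1 - t_{\omega_{\ell+1}}/t_{\omega_\ell})^{-1}$ is the equivariant Hilbert series at the torus-fixed point $p_\omega$ of the structure sheaf of the permutohedral toric variety (the toric variety of the permutohedron), whose local cone at $p_\omega$ is generated by the rays of weights $t_{\omega_{\ell+1}}/t_{\omega_\ell}$. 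So I would pick a generic one-parameter subgroup $\lambda$ of $T=(\CC^*)^n$, take a $G$-equivariant resolution $\widetilde{X}_v$ of $X_v$, run the Białynicki--Birula decomposition, and show: the $\lambda$-fixed loci are the coordinate points; the attracting cells are indexed by orderings $\omega\in\mathfrak{S}_n$; the cell $C_\omega$ is an affine space whose weights are exactly $\{t_{\omega_{\ell+1}}/t_{\omega_\ell}\}_\ell$ together with the normal directions $\{u_it_j : j\notin B(\omega,\M)\}$ into which it flows; hence $[\mathcal{O}_{\overline{C_\omega}}]$ contributes $\prod_\ell \tfrac{t_{\omega_\ell}}{t_{\omega_\ell}-t_{\omega_{\ell+1}}}\cdot\prod_{j\notin B(\omega,\M)}Q^k_j$ to $[\mathcal{O}_{\widetilde X_v}] = \sum_\omega [\mathcal{O}_{\overline{C_\omega}}]$. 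Pushing forward to $\mathbb{A}^{k\times n}$ and grouping the $\omega$ by their common minimal basis $B$ produces $\sum_B P^n_{B,\M}\prod_{j\notin B}Q^k_j$. Independence of the chosen representative $v$ — delicate, because the locus of matrices realizing $\M$ need not be irreducible — is not checked by hand but follows a posteriori from valuativity together with agreement on the spanning family.

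I expect the main obstacle to be proving that the geometric side $v\mapsto[\mathcal{O}_{X_v}]$ is valuative at all: this is where a matroid polytope subdivision must be converted into a $K$-theoretic inclusion--exclusion, and it is genuinely nontrivial since $\mathbb{A}^{k\times n}$ and the orbit closures are singular and non-proper, so one must either build a flat family degenerating $X_v$ compatibly with the subdivision or import the analogous statement for $\overline{Tx}\subseteq G(k,n)$ from \cite{FinkSpeyer} and descend it through the relationship between $X_v$ and $\overline{Tx}$. The second, more technical, difficulty lies in the localization bookkeeping — verifying that $[\mathcal{O}_{\widetilde X_v}]$ genuinely decomposes as the sum of the cell-closure classes, identifying the fixed loci and attracting cells of the resolution, computing each cell's equivariant weight, and confirming that the rule $\omega\mapsto B(\omega,\M)$ is precisely the one that records which coordinate subspace a cell limits into. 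Once these are secured, assembling the formula and reading off that $\mathcal{K}$ is a valuation is routine.
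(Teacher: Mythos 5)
The paper does not prove this statement: it is quoted verbatim from Berget--Fink \cite[Theorem 8.1]{bergetfink_K} and used as a black box, so there is no in-paper proof to compare your argument against. Judged on its own terms, your proposal is an architectural outline rather than a proof. Its skeleton does match the strategy of the cited work --- the weight \(\prod_{\ell}t_{\omega_\ell}/(t_{\omega_\ell}-t_{\omega_{\ell+1}})\) really is the localized class at the fixed point \(\omega\) of the permutohedral variety, \(\prod_{j\notin B}Q^k_j\) really is the Koszul class of the coordinate subspace where the columns outside \(B\) vanish, and the grouping of permutations by \(B(\omega,\M)\) is the greedy-algorithm identification of the \(\omega\)-optimal vertex of \(\polytope{P}(\M)\), i.e.\ of the normal cone containing \(\omega\). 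Extending to nonrepresentable matroids by valuativity plus agreement on nested (Schubert) matroids is also the right mechanism, and is exactly how this paper itself extends representable constructions (Remark~\ref{rem:valuation from rep}).

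The genuine gaps are the two steps you yourself flag as ``obstacles,'' and they are not peripheral --- they are the theorem. First, valuativity: you reduce it to ``a finite combinatorial statement about how the \(\omega\)-lexicographically minimal basis is distributed among the cells of a subdivision'' but do not prove that statement; to close it you would need the fact that for each fixed \(\omega\) the assignment \(\M\mapsto e_{B(\omega,\M)}\) (the \(\omega\)-optimal vertex) is a valuative function of the matroid polytope, which is a real lemma (it is in Derksen--Fink / Ardila--Fink--Rinc\'on territory), not an observation. Second, the localization bookkeeping: asserting that a \(G\)-equivariant resolution of \(X_v\) exists whose Bia{\l}ynicki--Birula cells are affine with exactly the listed weights, that the structure sheaf class decomposes as the sum of cell-closure classes, and that the pushforward to \(\mathbb A^{k\times n}\) introduces no correction terms (the resolution is generally not small, and \(X_v\) is singular) is precisely the technical content of Berget--Fink's proof. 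As submitted, the proposal identifies where the work lives but does none of it, so it cannot stand as a proof; as a reconstruction of the cited argument's shape it is accurate.
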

A priori, \(\mathcal K(\M)\) is a rational function but, in fact, \cite[Theorem 8.3]{bergetfink_K} shows that it is a polynomial with integer coefficients in \(u_1,\dots, u_k, t_1, \dots , t_n\), symmetric in the variables \(u_1, \dots, u_k\). As explained in \cite{bergetfink_K}, we can recover the Chow class \(\Sc(\M)\) from \(\mathcal K(\M)\) as follows. Let \(\KtoSc\) be the operator on \(\ZZ[u_1,\dots,u_k, t_1,\dots, t_n]\) defined by first making the substitutions \(u_i \mapsto 1-u_i\), \(t_i \mapsto 1 - t_i\) and then gathering the lowest degree terms. By applying \(\KtoSc\) to \(\mathcal K(\M)\) and evaluating all variables \(t1,\ldots,t_n\) equal to zero, we obtain a homogeneous symmetric polynomial in \(u_1, \dots , u_k\) of degree \(k(n-k) - (n-\kappa(\M))\). Expanding this polynomial in the Schur basis gives
\[
    \KtoSc(\mathcal K(\M)) \big|_{t = 0} \ = \ \sum_\eta d_\eta(\M) s_\eta(u_1,\dots , u_k) \, ,
\]
where we write \(t=0\) as a shorthand notation to mean setting all variables \(t_1,\ldots,t_n\) equal to zero and \(d_\eta(\M)\) agrees with the Schubert coefficient of \(\M\) as defined in Definition \ref{def:Schubert coefficients}. If we identify the Schubert cycle \(\sigma_\eta \in A^\bullet(G(k,n))\) with the Schur polynomial \(s_\eta(u_1,\dots, u_k)\) we may write
\[
    \Sc(\M) \ = \ \KtoSc(\mathcal K(\M)) \big|_{t = 0} \, .
\]

\begin{example}
    \label{ex:U25 from K}
    Theorem \ref{thm:K formula} tells us that the \(K\)-class of the uniform matroid~\(\U_{2,5}\) is
    \begin{align*}
        \mathcal K(\U_{2,5}) \ &= \ 2(u_1^3u_2^2 + u_1^2u_2^3)t_1t_2t_3t_4t_5 \\
        & \quad -  u_1^2u_2^2(t_1t_2t_3t_4 + t_1t_2t_3t_5 + t_1t_2t_4t_5 + t_1t_3t_4t_5 + t_2t_3t_4t_5) + 1 \, .
    \end{align*}
    Applying the operator \(\KtoSc\) results in
    \begin{align*}
        \KtoSc(K(\U_{2,5})) \ &= 3u_1^3 + 4u_1u_2 + 3u_2^2 + 2(u_1+u_2)(t_1 + t_2 + t_3 + t_4 + t_5) \\
    & \quad + t_1t_2 + t_1t_3 + t_1t_4 + t_1t_5 + t_2t_3 + t_2t_4 + t_2t_5 + t_3t_4 + t_3t_5 + t_4t_5 \, .
    \end{align*}
    Now evaluating at \(t = 0\) gives
    \[
        \KtoSc(\mathcal K(\U_{2,5}))\big|_{t = 0} \ = \
    3u_1^3 + 4u_1u_2 + 3u_2^2 \ = \ 
    3s_{\scalebox{0.3}{\young(~~)}}(u_1,u_2) + 1s_{\scalebox{0.3}{\young(~,~)}}(u_1,u_2) \, ,
    \]
    which agrees with \(\Sc(\U_{2,5})\) as computed in Example \ref{ex:U25}.
\end{example}

\section{An identity for Schubert coefficients}\label{sec:series parallel identity}\noindent Our goal in this section is to prove the next theorem, an identity that relates Schubert coefficients of a matroid to those of its series and parallel extensions. Recall that we denote by \(\parallelExt^b(\M)\) the matroid where we add \(b\) elements in parallel to the last element of the ground set of \(\M\) and by \(\seriesExt(\M)\) the series extension of \(\M\) with respect to the last element. Furthermore, let \(\rmv_b(\eta)\) denote the set of partitions that can be obtained from \(\eta\) by removing \(b\) boxes, at most one per column. 
Finally, let \(\rmv_b(s_\eta) = \sum_{\mu \in \rmv_b(\eta)} s_\mu\) which we extend linearly to all symmetric functions.

\begin{theorem}
    \label{thm:main identity}
    Let \(\M\) be a matroid of rank \(k\) on \([n]\) and let \(b \geq 1\). Then
    \[
        \Sc\left(\parallelExt^{b-1}\seriesExt(\M) \oplus \U_{0,1}\right) + \Sc\left(\parallelExt^b(\M) \oplus \U_{1,1}\right) \ = \ \rmv_b\left(\vphantom{\parallelExt^{b}} {\Sc(\M \oplus \U_{1,1} \oplus \U_{0,b})}\right) \, .
    \]
\end{theorem}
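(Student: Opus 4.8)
\emph{Proof plan.}
The plan is to evaluate all three Chow classes through the $K$-theoretic formula of Theorem~\ref{thm:K formula}, together with the operator $\KtoSc$ and the specialization $t=0$. We may and do assume that the last element of $\M$ is neither a loop nor a coloop, which is the only case needed in our applications (there $\M$ is a connected snake matroid); under this assumption the first two matroids in the statement have rank $k+1$ and ground set $[n+b+1]$ with $\kappa(\M)+1$ connected components, so their Chow classes lie in a common $A^m\!\bigl(G(k+1,n+b+1)\bigr)$, while the third has $b$ more components and its class lies in $A^{m+b}$, which $\rmv_b$ maps back to $A^m$. Since $\mathcal K$, hence every term, is a valuation in $\M$ (the operations $\parallelExt$, $\seriesExt$ and direct sum with a fixed matroid are compatible with matroid subdivisions), one could further reduce to representable $\M$; this is not needed below.

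The technical core is to describe how $\mathcal K$ transforms under one parallel extension $\parallelExt_n$ and one series extension $\seriesExt_n$. For $\parallelExt_n(\M)$, one observes that the lexicographically smallest basis $B(\omega,\parallelExt_n(\M))$ depends on $\omega\in\mathfrak S_{n+1}$ only through which of $n$ and $n+1$ comes first, and that summing the weights $\prod_\ell \tfrac{t_{\omega_\ell}}{t_{\omega_\ell}-t_{\omega_{\ell+1}}}$ over the two relative orders of $\{n,n+1\}$ collapses by way of the partial-fraction identity $\tfrac{t_a}{t_a-t_b}+\tfrac{t_b}{t_b-t_a}=1$ and iterates of it — this is the same mechanism which for a coloop collapses the insertions of $n+1$ and yields $P^{n+1}_{B\cup\{n+1\}}=P^{n}_{B}$. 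Iterating $b$ times expresses $\mathcal K(\parallelExt^b(\M))$, and similarly $\mathcal K(\parallelExt^{b-1}\seriesExt(\M))$ (handled the same way, or via duality $\seriesExt_n(\M)=(\parallelExt_n(\M^*))^*$), as $\mathcal K(\M)$ modified in a controlled way in the factors $P^n_B$ and $Q^k_j$ attached to the $b+1$ new ground-set elements. Combining with Proposition~\ref{prop:add loop or coloop} for the single loop and the single coloop, one obtains explicit formulas for all three $K$-classes in terms of $\mathcal K(\M)$.

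Finally one applies $\KtoSc$ and sets $t=0$ on each side. After the substitution $u_i\mapsto 1-u_i$, $t_j\mapsto 1-t_j$, extraction of lowest-degree terms, and $t=0$, a factor $\prod_{i=1}^{k+1}(1-u_it_j)$ coming from a loop becomes multiplication by $s_{[1^{k+1}]}$ — this recovers Proposition~\ref{prop:add loop or coloop} — while the combined contribution of the additional factors produced by a parallel class of $b+1$ elements, as opposed to a coloop together with $b$ loops, collapses precisely to the operator $\rmv_b$; this is the conceptual origin of $\rmv_b$ in the statement. Carrying this out on the three $K$-classes and matching the resulting symmetric polynomials in $u_1,\dots,u_{k+1}$ — a step that reduces to repeated use of Pieri's formula (Theorem~\ref{thm:pieri}) — completes the argument. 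I expect the main obstacle to be this middle part: making the telescoping for $\parallelExt_n$ and $\seriesExt_n$ precise, and tracking which of the factors $P^n_B$, $Q^k_j$ survive in the lowest degree singled out by $\KtoSc$, so that $\rmv_b$ emerges in the clean form above rather than as an unwieldy alternating sum.
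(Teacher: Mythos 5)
Your overall framework --- express all three classes via the $K$-theoretic formula of Theorem~\ref{thm:K formula}, apply $\KtoSc$, set $t=0$ --- is the same as the paper's, and your degree bookkeeping in the first paragraph is correct; but the two steps you yourself flag as ``the main obstacle'' are exactly where the content of the proof lies, and neither is established by what you write. The behaviour of $\mathcal K$ under one parallel extension is not a telescoping over the two relative orders of $n$ and $n+1$: the factors $Q^k_j$ attached to the non-basis elements change according to whether a basis uses $n$ or $n+1$, and the clean statement is the divided-difference formula $\mathcal K(\parallelExt(\M)\oplus\N)=\delta_n(\mathcal K(\M\oplus\U_{0,1}\oplus\N))$ of Theorem~\ref{thm:K parext}, which the paper imports from Berget--Fink rather than re-deriving from the permutation sums (only the loop case, Lemma~\ref{lem:add loop}, is handled by the partial-fraction identity you describe, via Lemma~\ref{lem:perm lemma}). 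More seriously, your fallback for the series extension via $\seriesExt_n(\M)=(\parallelExt_n(\M^*))^*$ does not transfer to $\mathcal K$: the $K$-classes of $\M$ and $\M^*$ live in Grothendieck groups attached to different ranks, and no duality rule for $\mathcal K$ is available. The paper proves $\mathcal K(\seriesExt(\M)\oplus\N)=\delta_n\circ\tau_n(\mathcal K(\M\oplus\U_{1,1}\oplus\N))$ (Theorem~\ref{thm:K serext}) by a genuinely geometric argument, and the asymmetry between the two formulas --- the extra $\tau_n$ --- is precisely what lets the two left-hand-side terms combine after applying $\KtoSc$.

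The final ``matching'' also does not reduce to repeated use of Pieri's formula. After organizing the iterated divided differences one must prove that $(-1)^b\partial_b\circ\cdots\circ\partial_1\bigl(q^{k+1}_{b+1}\cdots q^{k+1}_2\bigr)\big|_{t=0}=s_{[b^k]}$, which the paper establishes by an induction producing a Jacobi--Trudi determinant in the elementary symmetric polynomials (Lemma~\ref{lem:the last step}); and one must separately prove the Littlewood--Richardson identity $\rmv_b\bigl(s_{[b^{k+1}]}s_\eta\bigr)=s_{[b^k]}s_\eta$ (Lemma~\ref{lem:rmv}), which is the true origin of $\rmv_b$ in the statement: the right-hand side equals $s_{[b^{k+1}]}\,\Sc(\M\oplus\U_{1,1})$ by Proposition~\ref{prop:add loop or coloop}, and $\rmv_b$ converts the exponent $k+1$ into $k$ to match the left-hand side $s_{[b^k]}\,\Sc(\M\oplus\U_{1,1})$. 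Your heuristic that $\rmv_b$ measures the difference between a parallel class of $b+1$ elements and a coloop plus $b$ loops points in the right direction, but without these two lemmas the identity cannot be closed. Finally, restricting to the case where the last element of $\M$ is neither a loop nor a coloop proves less than the stated theorem; the paper's argument needs no such restriction.
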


In Section~\ref{sec:chow class of snakes} we use Theorem \ref{thm:main identity} to describe the Chow class of snake matroids and give a combinatorial description of the Schubert coefficients of all lattice path matroids. Paired with the findings of \cite{Hampe}, that we have mentioned in
Section~\ref{sec:matroids}, this leads to a new, algorithmic way of computing Schubert coefficients of arbitrary matroids, see Appendix \ref{sec:appendix}.

The remainder of this section is dedicated to the proof of Theorem \ref{thm:main identity}. We decided to split it into  various steps that might be of interest on their own. The first of these steps is an expression for the class \(\mathcal K(\parallelExt(\M))\) of the parallel extension of \(\M\) as studied in~\cite{bergetfink_K}. To formulate our next results we define the following divided difference operators on the polynomial ring \(\ZZ[u_1,\dots, u_k, t_1, \dots, t_n]\). For \(1 \leq i < n\), we define
\[
    \delta_i(f) =  \frac{t_i f - t_{i+1} \tau_i f}{t_i - t_{i+1}}
\]
where \(\tau_i\) is the simple transposition swapping \(t_i\) and \(t_{i+1}\). The following is a slight modification of a result by Berget and Fink, which uses this operator.

\begin{theorem}
    \label{thm:K parext}
    Let \(\M\) be a matroid or rank \(k\) on \([n]\) and let \(\parallelExt(\M)\) be the parallel extension of \(\M\) with respect to the last element \(n\) and let \(\N\) be any matroid. Then
    \[
        \mathcal K(\parallelExt(\M) \oplus \N) \ = \ \delta_n(\mathcal K(\M\oplus \U_{0,1} \oplus \N)) \, .
    \]
\end{theorem}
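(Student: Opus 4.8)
The plan is to reduce this statement about $K$-classes to the defining formula in Theorem~\ref{thm:K formula}, by comparing the two sides term by term over the bases of $\parallelExt(\M) \oplus \N$. Write $n' = n+1$ for the size of the ground set of $\parallelExt(\M)$, so that $n$ is the distinguished element and $n+1$ is the new element added in parallel. The bases of $\parallelExt(\M)$ come in three flavours: bases $B$ of $\M$ with $n \notin B$; bases $B$ of $\M$ with $n \in B$ (which remain bases, and also contribute $B \setminus \{n\} \cup \{n+1\}$); and there are no others. Correspondingly the bases of $\M \oplus \U_{0,1}$ are exactly the bases of $\M$, now viewed on $[n+1]$ with $n+1$ a loop. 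So what must be checked is that applying $\delta_n$ to $\mathcal K(\M \oplus \U_{0,1} \oplus \N)$ produces precisely the extra terms indexed by the new bases $B \setminus \{n\} \cup \{n+1\}$, on top of the old ones.

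First I would isolate how $\delta_n$ acts on the two structural pieces of each summand of $\mathcal K$, namely on the rational function $P^{n'}_{B,\M\oplus\U_{0,1}}$ (which records the permutation statistics) and on the product $\prod_{j \notin B} Q^k_j$ (which records the cofactors $1 - u_i t_j$). Since $Q^k_n$ and $Q^k_{n+1}$ differ exactly by the transposition $\tau_n$, and since $\delta_n$ is a twisted Leibniz-type operator, the key identity to establish is how $\delta_n$ interacts with the factor $Q^k_{n+1}$ coming from the fact that $n+1$ is a loop in $\M \oplus \U_{0,1}$: applying $\delta_n$ should turn a term with the pair of cofactors $Q^k_n$-absent, $Q^k_{n+1}$-present into the sum of the same term plus the term with $Q^k_n$-present, $Q^k_{n+1}$-absent, matching the parallel class. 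The $\N$-part and the $u$-variables are inert under $\delta_n$, so they just come along for the ride; this is why the theorem holds with an arbitrary $\N$ and it suffices to prove the case $\N$ trivial and then tensor.

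The main obstacle is the behaviour of the $P^{n'}_{B}$ factors under $\delta_n$, because these are genuinely subtle: $P^{n'}_{B,\M\oplus\U_{0,1}}$ is a sum over permutations $\omega \in \mathfrak S_{n+1}$ for which $B$ is the lex-minimal basis, and one must understand how $\delta_n$ — which only touches $t_n, t_{n+1}$ — interacts with the telescoping products $\prod_{\ell} t_{\omega_\ell}/(t_{\omega_\ell} - t_{\omega_{\ell+1}})$ and with the combinatorial condition $B = B(\omega)$. The cleanest route is probably to group the permutations by what happens to the adjacent pair in positions surrounding the values $n$ and $n+1$, using the classical fact (already implicit in \cite{bergetfink_K}) that $\delta_i$ applied to $\sum_\omega \prod_\ell t_{\omega_\ell}/(t_{\omega_\ell}-t_{\omega_{\ell+1}})$ performs a Demazure-type symmetrization that exactly corresponds, on the matroid side, to allowing the parallel swap of $n$ and $n+1$ in any lex-minimal basis. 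I would first prove the needed operator identity $\delta_n\bigl(P^{n'}_{B,\M\oplus\U_{0,1}} \cdot Q^k_{n+1}\bigr) = P^{n'}_{B,\M} \cdot Q^k_{n+1} \,+\, P^{n'}_{B',\M}\cdot Q^k_{n}$ where $B' = B \setminus \{n\} \cup \{n+1\}$ when $n \in B$, and $\delta_n(P^{n'}_{B}\cdot 1) = P^{n'}_B$ when $n \notin B$ (so that $Q^k_n$ and $Q^k_{n+1}$ both persist); then summing over all bases $B$ of $\M$ and invoking Theorem~\ref{thm:K formula} for $\parallelExt(\M)$ assembles the claim. A subtlety to handle carefully is that $\delta_n$ does not commute with multiplication in general, so the split of each summand into a $P$-part and a $Q$-part must be done respecting that $Q^k_j$ for $j \neq n, n+1$ is $\tau_n$-invariant and hence factors out of $\delta_n$ cleanly, while only the $Q^k_n/Q^k_{n+1}$ pair genuinely participates.
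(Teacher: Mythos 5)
Your route is genuinely different from the paper's. The paper disposes of this theorem in two lines: it quotes \cite[Theorem~9.3]{FinkBerget_matrix}, which already asserts \(\mathcal K(\parallelExt(\M)) = \delta_n(\mathcal K(\M))\) for representable \(\M\), applies it to \(\N\oplus\M\) after relabeling the \(t\)-variables, and extends to arbitrary matroids by the uniqueness of valuative extensions (Remark~\ref{rem:valuation from rep}). You instead propose to verify the identity directly from the basis-sum formula of Theorem~\ref{thm:K formula}. That strategy is legitimate in principle, and it would even have the advantage of covering nonrepresentable matroids with no appeal to valuativity, since Theorem~\ref{thm:K formula} defines \(\mathcal K\) for all matroids. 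But as written your proposal has a genuine gap.

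The gap is the ``operator identity'' \(\delta_n\bigl(P^{n+1}_{B,\M\oplus\U_{0,1}}\, Q^k_{n+1}\bigr) = P^{n+1}_{B,\parallelExt(\M)}Q^k_{n+1} + P^{n+1}_{B',\parallelExt(\M)}Q^k_{n}\), which carries the entire content of the theorem and which you justify only by appeal to a ``classical fact already implicit in \cite{bergetfink_K}'' --- but that fact \emph{is} the statement being proved, so as it stands the argument is circular. The concrete difficulty you would have to confront is that the permutation sums on the two sides are taken with respect to different matroids: on the left the condition is \(B = B(\omega,\M\oplus\U_{0,1})\), where \(n+1\) is a loop and is simply skipped by the greedy algorithm, whereas on the right it is \(B=B(\omega,\parallelExt(\M))\), where whichever of \(n\) and \(n+1\) occurs first in \(\omega\) is selected and the other rejected. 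These two conditions partition \(\mathfrak S_{n+1}\) differently (for instance, a permutation in which \(n+1\) precedes \(n\) contributes to \(B\) on the left but to \(B'\) on the right), so the matching is not a term-by-term bijection of permutations; establishing it requires a nontrivial rational-function identity regrouping the telescoping products, in the spirit of Lemma~\ref{lem:perm lemma} but substantially more delicate, and no such identity is sketched. Until that identity is actually proved --- or the representable case is simply cited from \cite{FinkBerget_matrix} and the general case deduced by valuativity, as the paper does --- the proposal is a plan rather than a proof.
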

\begin{proof}
    By \cite[Theorem 9.3]{FinkBerget_matrix}, we have \(\mathcal K(\parallelExt(\M)) = \delta_n(\mathcal K(\M))\) for \(\M\) representable. The result follows by applying this to \(\N\oplus \M\), relabeling the variables \(t_i\) accordingly and by Remark~\ref{rem:valuation from rep}.
\end{proof}
Next we are going to achieve an analogous result for series extensions. Our proof follows the lines of \cite[Theorem~3.2]{FinkBerget_matrix} closely and relies on \cite[Lemma 9.4]{FinkBerget_matrix}, which we thus restate here for completeness.
\begin{lemma}{{\cite[Lemma 9.4]{FinkBerget_matrix}}}
    \label{lem:liegroupsK}
    Let \(T \subseteq B \subseteq P\) be a triple of Lie groups such that \(P/B \cong \mathbb P^1\) and \(T\) is a torus acting with weight \(\mu\) on \(\mathfrak{p}/\mathfrak{b}\). Let \(r \in N_P(T)\) be an element of the normalizer of \(T\) in \(P\), inducing an automorphism \(r\) of the weight lattice \(T^*\) such that \(r \cdot \mu = - \mu\). Let \(V\) be a \(P\)-representation and \(X\subseteq V\) a \(B\)-invariant subvariety. Then in the \(T\)-equivariant \(K\)-theory ring \(K_0^T(V)\) we have the equality of \(K\)-classes
    \[
    d \mathcal K (P\cdot X) \ = \ \frac{\mathcal K(X) -\mu r \mathcal K(X)}{1 - \mu} \, ,
    \]
    where \(d\) is the degree of the map \(P\times^B X \to P \cdot X\).
\end{lemma}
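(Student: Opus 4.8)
Since this is \cite[Lemma 9.4]{FinkBerget_matrix}, recalled here for the reader, the plan is to outline the equivariant-localization argument that proves it. The geometry linking \(X\) with \(P\cdot X\) is the homogeneous bundle \(Y \coloneqq P\times^B X=(P\times X)/B\), with \(b\cdot(p,x)=(pb^{-1},bx)\). It carries a \(T\)-equivariant projection \(f\colon Y\to P/B\cong\PP^1\), \([p,x]\mapsto pB\), whose fibre over \(eB\) is a copy of \(X\) with its original \(T\)-action, together with the action map \(\pi\colon Y\to V\), \([p,x]\mapsto p\cdot x\), which is proper with image \(P\cdot X\) and generically finite of degree \(d\) there. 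The first step is to reduce to computing \(\pi_*[\mathcal O_Y]\): since \(\pi\) is proper and generically finite of degree \(d\) onto the subvariety \(P\cdot X\subseteq V\), one has \(\pi_*[\mathcal O_Y]=d\,\mathcal K(P\cdot X)\) in \(K_0^T(V)\), the vanishing of the remaining (higher direct image and lower-dimensional) contributions being part of what \cite{FinkBerget_matrix} verifies.

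To evaluate \(\pi_*[\mathcal O_Y]\) I would factor \(\pi\) as the closed immersion \((f,\pi)\colon Y\hookrightarrow(P/B)\times V\) followed by the proper projection \((P/B)\times V\to V\); the latter pushforward is equivariant integration over \(P/B\cong\PP^1\). The \(K\)-theoretic localization (Atiyah--Bott, or holomorphic Lefschetz) formula on \(\PP^1\) writes this as a sum over the two \(T\)-fixed points of \(P/B\), namely \(eB\) and \(rB\) — the second is \(T\)-fixed because \(r\in N_P(T)\) — whose tangent weights are \(\pm\mu\) by hypothesis. The restriction of \([\mathcal O_Y]\) to \(\{eB\}\times V\) is \([\mathcal O_X]=\mathcal K(X)\), while its restriction to \(\{rB\}\times V\) is \([\mathcal O_{r\cdot X}]=r\,\mathcal K(X)\), where \(r\) acts via the automorphism it induces on the weight lattice \(T^*\). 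Summing the two fixed-point contributions gives
\[
    \pi_*[\mathcal O_Y] \ = \ \frac{\mathcal K(X)}{1-\mu} \,+\, \frac{r\,\mathcal K(X)}{1-\mu^{-1}} \ = \ \frac{\mathcal K(X)-\mu\, r\,\mathcal K(X)}{1-\mu}\, ,
\]
where the second equality uses \(\tfrac1{1-\mu^{-1}}=\tfrac{-\mu}{1-\mu}\). Combined with \(\pi_*[\mathcal O_Y]=d\,\mathcal K(P\cdot X)\) this is exactly the assertion.

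I expect the delicate point to be the first step, i.e.\ promoting \(\pi_*[\mathcal O_Y]=d\,\mathcal K(P\cdot X)\) to an \emph{exact} identity in \(K_0^T(V)\): this needs control of \(R^{>0}\pi_*\mathcal O_Y\) and of the non-finite locus of \(\pi\), typically obtained from \(Y\) being smooth (or having rational singularities) over \(P\cdot X\) together with \(\pi_*\mathcal O_Y=\mathcal O_{P\cdot X}\) when \(d=1\), and this is where I would rely on the precise hypotheses in \cite{FinkBerget_matrix}. The rest — trivializing \(f\) over the two \(T\)-stable affine charts of \(\PP^1\), identifying the two fibre classes with \(\mathcal K(X)\) and \(r\,\mathcal K(X)\), and the localization bookkeeping — is routine, the only real nuisance being the sign conventions for \(\mu\) versus \(-\mu\) and for the direction in which \(r\) acts on \(T^*\).
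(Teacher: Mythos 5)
The paper does not prove this statement at all: it is quoted verbatim from \cite[Lemma 9.4]{FinkBerget_matrix} ``for completeness'' and used as a black box in the proof of Theorem~\ref{thm:K serext}, so there is no internal proof to compare against. Your reconstruction of the argument from the original reference is essentially the right one: the standard proof does pass through the homogeneous bundle \(Y = P\times^B X\), the proper action map \(\pi\colon Y\to P\cdot X\subseteq V\) of degree \(d\), and \(K\)-theoretic localization over the two \(T\)-fixed points of \(P/B\cong\mathbb P^1\), whose tangent weights are \(\pm\mu\) and whose fibres contribute \(\mathcal K(X)\) and \(r\,\mathcal K(X)\); the algebraic identity \(\frac{1}{1-\mu^{-1}}=\frac{-\mu}{1-\mu}\) then yields exactly the displayed formula. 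You also correctly isolate the genuinely delicate step, namely promoting \(\pi_*[\mathcal O_Y]=d\,\mathcal K(P\cdot X)\) to an exact identity in \(K_0^T(V)\) (control of higher direct images and of the non-finite locus), which is precisely the part that must be taken from the hypotheses and auxiliary results of \cite{FinkBerget_matrix}. Two small caveats: the two fixed-point denominators \(1-\mu\) and \(1-\mu^{-1}\) must be matched to the correct fixed points (your assignment is consistent with the stated formula, but as you note this hinges on the sign convention for the weight on \(\mathfrak p/\mathfrak b\) versus its dual), and the identification of the fibre over \(rB\) with \(r\cdot X\) carrying the class \(r\,\mathcal K(X)\) uses that \(r\) normalizes \(T\), which you invoke correctly. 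As an outline of the external proof this is sound; as a submission for this paper it would simply be replaced by the citation.
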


\begin{theorem}
    \label{thm:K serext}
    Let \(\M\) be a matroid of rank \(k\) on \([n]\), \(\seriesExt(\M)\) be its series extension with respect to its last element and \(\N\) be any matroid. Then
    \[
        \mathcal K(\seriesExt(\M) \oplus \N) \ = \ \delta_n \circ \tau_n (\mathcal K(\M\oplus \U_{1,1} \oplus \N)) \, .
    \]
\end{theorem}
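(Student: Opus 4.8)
The plan is to mirror the strategy of Theorem~\ref{thm:K parext}: prove the identity for representable matroids via an application of Lemma~\ref{lem:liegroupsK} modelled on \cite[Theorem~3.2]{FinkBerget_matrix}, and then propagate it to all matroids by valuativity. For fixed \(\N\), both sides of the asserted identity are valuative in \(\M\) --- the left-hand side because \(\mathcal K\) is a valuation (Theorem~\ref{thm:K formula}) and \(\M\mapsto\seriesExt(\M)\oplus\N\) is compatible with matroid subdivisions, the right-hand side because \(\delta_n\circ\tau_n\) is linear --- and, symmetrically, both sides are valuative in \(\N\). By Remark~\ref{rem:valuation from rep} it therefore suffices to treat the case where \(\M\) and \(\N\) are both representable. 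One degenerate situation must be separated out: when \(n\) is a coloop of \(\M\) one has \(\seriesExt_n(\M)=\M\oplus\U_{1,1}\) on the nose, while \(\mathcal K(\M\oplus\U_{1,1}\oplus\N)\) involves neither \(t_n\) nor \(t_{n+1}\) (coloops carry no \(t\)-dependence), so \(\delta_n\circ\tau_n\) fixes it and the identity is immediate; below I assume \(n\) is not a coloop of \(\M\), which is exactly the case in which the Lie-theoretic argument produces \(d=1\).

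Fix matrices \(v\) and \(v_\N\) representing \(\M\) and \(\N\); then \(w=\left(\begin{smallmatrix} v&0&0\\ 0&1&0\\ 0&0&v_\N\end{smallmatrix}\right)\) represents \(\M\oplus\U_{1,1}\oplus\N\), with the coloop occupying column \(n+1\). The one genuinely new computation, which I would carry out by hand, is that the column operation adding column \(n+1\) (equal to the \((k+1)\)-st standard basis vector) to column \(n\) transforms \(w\) into a matrix \(w'\) representing \(\seriesExt(\M)\oplus\N\): in \(w'\) the coloop's row of the \(\M\oplus\U_{1,1}\) block is supported exactly on columns \(n\) and \(n+1\), so every set of \(k+1\) independent columns among the first \(n+1\) must use one of them, and a short case analysis then recovers precisely the two families \(\{B\cup\{n+1\}\mid B\in\mathcal B(\M)\}\) and \(\{B\cup\{n\}\mid B\in\mathcal B(\M),\ n\notin B\}\) from the definition of \(\seriesExt\), with \(\N\) untouched.

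Now I would invoke Lemma~\ref{lem:liegroupsK}, set up as in \cite[Theorem~3.2]{FinkBerget_matrix} but with the two relevant root subgroups interchanged, reflecting that series extension is dual to parallel extension. Let \(V\) be the ambient matrix space, \(G\) the group of row operations and column scalings, \(T\subseteq G\) its maximal (coordinate) torus, \(P\supseteq G\) the subgroup obtained by adjoining the copy of \(GL_2\) acting on columns \(n\) and \(n+1\), \(B\subseteq P\) the Borel whose unipotent part beyond \(T\) is the root subgroup ``add column \(n\) to column \(n+1\)'', and \(r\in N_P(T)\) the lift of the transposition of columns \(n\) and \(n+1\). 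Then \(P/B\cong\mathbb P^1\), the \(T\)-weight on \(\mathfrak p/\mathfrak b\) is \(\mu=t_nt_{n+1}^{-1}\), and \(r\) acts on \(K\)-classes as the transposition \(\tau_n\). Take \(X=X_w=\overline{G\cdot w}\). It is \(B\)-invariant because ``add column \(n\) to column \(n+1\)'' applied to \(w\) equals \(\left(\begin{smallmatrix} I&cv_n&0\\ 0&1&0\\ 0&0&I\end{smallmatrix}\right)w\in G\cdot w\); sweeping \(X\) with the \(GL_2\)-factor produces the matrix \(w'\) of the preceding paragraph, so \(P\cdot X=X_{\seriesExt(\M)\oplus\N}\); and the map \(P\times^B X\to P\cdot X\) has degree \(d=1\) by the same generic-fiber count as in the parallel case, using here that \(n\) is not a coloop so that \(\dim X_{\seriesExt(\M)\oplus\N}=\dim X_w+1\). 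Lemma~\ref{lem:liegroupsK} then yields
\begin{align*}
\mathcal K(\seriesExt(\M)\oplus\N)\ &=\ \frac{\mathcal K(w)-\mu\, r\,\mathcal K(w)}{1-\mu}\\
&=\ \frac{t_{n+1}\,\mathcal K(w)-t_n\,\tau_n\mathcal K(w)}{t_{n+1}-t_n}\ =\ \delta_n\circ\tau_n\bigl(\mathcal K(\M\oplus\U_{1,1}\oplus\N)\bigr),
\end{align*}
which is the representable case of the theorem; the general case follows from the reduction above.

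The main obstacle is the geometric part of the last step: verifying that the \(GL_2\)-sweep of \(X_w\) has closure exactly \(X_{\seriesExt(\M)\oplus\N}\) and that the natural map \(P\times^B X_w\to X_{\seriesExt(\M)\oplus\N}\) is birational, i.e. \(d=1\). This reduces to a dimension count together with the observation that a generic matrix representing \(\seriesExt(\M)\oplus\N\) has a unique preimage, and these are precisely the inputs established for parallel extensions in \cite[Theorem~3.2]{FinkBerget_matrix}; they transfer almost verbatim once the Borel is chosen as above. The only computation with no counterpart there is the basis bookkeeping of the second paragraph, and the only degeneracy is the coloop case, which is trivial.
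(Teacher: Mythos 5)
Your proposal is correct and follows essentially the same route as the paper's proof: reduce to representable matroids via valuativity, realize the series extension by a column operation on a matrix representing \(\M\oplus\U_{1,1}\oplus\N\), and apply Lemma~\ref{lem:liegroupsK} with the Borel of upper-triangular \(2\times 2\) matrices acting on columns \(n\) and \(n+1\), weight \(\mu=t_n/t_{n+1}\), and \(r\) the column swap inducing \(\tau_n\), deferring the \(d=1\) and \(P\cdot X=X_{v^s}\) claims to \cite[Theorem~9.3]{FinkBerget_matrix}. The only differences are cosmetic: the paper absorbs \(\N\) by a relabeling argument and does not isolate the coloop case, which you treat separately (and harmlessly).
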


\begin{proof}
    As in the proof of Theorem \ref{thm:K parext}, it suffices to show that \(\mathcal K(\seriesExt(\M)) = \delta_n \circ \tau_n (\mathcal K(\M\oplus \U_{1,1}))\) for any representable matroid \(\M\).
       
    Let \(v = (v_1 \ v_2 \dots v_n) \in \mathbb{A}^{(k+1) \times n}\) be a representation of the matroid \(\M\) of rank \(k\). Notice that since \(\M\) is of rank \(k\), the matrix \(v\) is not of full rank. This allows us to pick \(u \not\in  \spn\{v_1, \dots, v_n\}\) such that \(v^c = (v_1 \dots v_n \ u)\) represents \(\M \oplus \U_{1,1}\) and \(v^s = (v_1 \dots v_n + u \ u)\) represents \(\M^s\). Now to apply Lemma \ref{lem:liegroupsK} let 
    \begin{align*}
        V \ &=\ \mathbb{A}^{(k+1)\times(n+1)} \ , \\
        X \ &=\ X_{v^c} \subseteq V \ , \\
        P \ &=\ GL_{k+1} \times (\CC^*)^{n-1} \times GL_2 \ , \\
        B \ &=\ GL_{k+1} \times (\CC^*)^{n-1} \times 
        \left\{ \left[
        \begin{array}{cc}
            * & * \\
            0 & *
        \end{array}
        \right]\right\} \ ,
    \end{align*}
    and let \(T\) be the maximal torus in \(G = GL_{k+1} \times (\CC^*)^{n+1}\).
    
    %Here \(P\) and \(B\) act on \(V\), \(GL_{k+1}\) acts from the left, \((\CC^*)^{n-1}\) acts on the first \(n-1\) columns and \(GL_2\) acts from the right on the last two columns. i.e.
    %\[
    %    \left[
    %    \begin{array}{cc}
    %        a & b \\
    %        c & d
    %    \end{array}
    %    \right]
    %    (v_1 \dots v_{n-1} \ v_n \ v_{n+1}) \ = \ (v_1 \dots v_{n-1} \ av_n + cv_{n+1} \ bv_n + dv_{n+1}) \enspace .
    %\]
    
    Notice that as in the proof of \cite[Theorem 9.3]{FinkBerget_matrix}, the variety \(X\) is \(B\)-equivariant, \(P\cdot X = X_{v^s}\), \(P \times^B X \to P \cdot X\) is degree \(d = 1\) and \(P/B \simeq \mathbb{P}^1\).
    %The isomorphism \(P/B \simeq \mathbb{P}^1\) is given by 
    %\[
    %    \left[
    %    \begin{array}{cc}
    %        a & b \\
    %        c & d
    %    \end{array}
    %    \right]
    %    \longmapsto (a : c).
    %\]
    %T acts by 
    %\[
    %    \left[
    %    \begin{array}{cc}
    %        t_n & 0 \\
    %        0 & t_{n+1}
    %    \end{array}
    %    \right]
    %    \left[
    %    \begin{array}{cc}
    %        a & b \\
    %        c & d
    %    \end{array}
    %    \right]
    %    = \left[
    %    \begin{array}{cc}
    %        t_na & t_nb \\
    %        t_{n+1}c & t_{n+1}d
    %    \end{array}
    %    \right]
    %\]
    %i.e. if \(t \in T\) then \(t(a:c) = \frac{t_n}{t_{n+1}}(a:c)\) so
    The torus \(T\) acts on \(P/B\) with weight \(\mu = \frac{t_n}{t_{n+1}}\). Now let
    \[
        r = \left(I, 1, 
        \left[
        \begin{array}{cc}
            0 & 1 \\
            1 & 0
        \end{array}
        \right]
        \right) \in N_P(T)\, .
    \]
    The automorphism induced by \(r\) is the transposition \(\tau_n\), and clearly \(\tau_n \mu = \mu^{-1}\). By Lemma \ref{lem:liegroupsK}, we then have
    \[
        d \mathcal K(P\cdot X) \ = \ \frac{\mathcal K(X) - \mu r \mathcal K(X)}{1-\mu} \, .
    \]
    The left hand side is \(d\mathcal K(P\cdot X) = \mathcal K(X_{v^s}) = \mathcal K(\seriesExt(\M))\) and the right hand side is
    \[
        \frac{\mathcal K(X) - \mu r \mathcal K(X)}{1-\mu} \ = \ \frac{\mathcal K(\M\oplus \U_{1,1}) - \frac{t_n}{t_{n+1}} \tau_n \mathcal K(\M\oplus \U_{1,1})}{1 - \frac{t_n}{t_{n+1}}} \ = \ \delta_n \circ \tau_n(\mathcal K(\M\oplus \U_{1,1})) \, . \qedhere
    \]
\end{proof}
Our strategy to prove Theorem \ref{thm:main identity} is to use Theorem \ref{thm:K parext} and Theorem \ref{thm:K serext} to obtain an expression for \(\mathcal K\) that specializes to Theorem \ref{thm:main identity} when we apply the operator \(\KtoSc\) and set \(t=0\) as in the previous section. To prepare for the proof we gather a number of lemmas.

\begin{lemma}
    \label{lem:perm lemma}
    For any permutation \(\pi \in \mathfrak S_n\),
    \begin{equation}\label{eq:perm lemma}
        \prod_{\ell = 1}^{n-1} \frac{t_{\pi_\ell}}{t_{\pi_\ell} - t_{\pi_{\ell+1}}} \ = \ \sum_{\substack{\omega \in \mathfrak S_{n+1} \\ \omega \setminus \{n+1\} = \pi}} \prod_{\ell = 1}^n \frac{t_{\omega_\ell}}{t_{\omega_\ell} - t_{\omega_{\ell+1}}} \, .
    \end{equation}
\end{lemma}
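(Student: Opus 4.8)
The plan is to parametrize the permutations appearing on the right-hand side and to reduce the identity to an elementary telescoping sum of rational functions in the single ``new'' variable $t_{n+1}$.

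First I would observe that the permutations $\omega\in\mathfrak S_{n+1}$ with $\omega\setminus\{n+1\}=\pi$ are exactly the $n+1$ one-line words obtained by inserting the letter $n+1$ into $\pi_1\pi_2\cdots\pi_n$: for $j=0,1,\dots,n$ set $\omega^{(j)}=\pi_1\cdots\pi_j\,(n+1)\,\pi_{j+1}\cdots\pi_n$. Abbreviate the left-hand side by $F=\prod_{\ell=1}^{n-1}\frac{t_{\pi_\ell}}{t_{\pi_\ell}-t_{\pi_{\ell+1}}}$, a nonzero element of the field of rational functions in $t_1,\dots,t_n$ (with $F=1$ when $n=1$). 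Comparing the word $\omega^{(j)}$ with $\pi$ factor by factor, the product $\prod_{\ell=1}^n\frac{t_{\omega^{(j)}_\ell}}{t_{\omega^{(j)}_\ell}-t_{\omega^{(j)}_{\ell+1}}}$ equals $\frac{t_{n+1}}{t_{n+1}-t_{\pi_1}}\,F$ for $j=0$, equals $F\,\frac{t_{\pi_n}}{t_{\pi_n}-t_{n+1}}$ for $j=n$, and for an interior index $1\le j\le n-1$ it equals $F$ with its single factor $\frac{t_{\pi_j}}{t_{\pi_j}-t_{\pi_{j+1}}}$ replaced by $\frac{t_{\pi_j}}{t_{\pi_j}-t_{n+1}}\cdot\frac{t_{n+1}}{t_{n+1}-t_{\pi_{j+1}}}$; equivalently, it equals $F\cdot\frac{t_{n+1}(t_{\pi_j}-t_{\pi_{j+1}})}{(t_{\pi_j}-t_{n+1})(t_{n+1}-t_{\pi_{j+1}})}$.

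Next I would divide the claimed identity by $F$ and invoke the elementary partial-fraction identity
\[
\frac{t_{n+1}(a-b)}{(a-t_{n+1})(t_{n+1}-b)}\ =\ \frac{t_{n+1}}{t_{n+1}-b}-\frac{t_{n+1}}{t_{n+1}-a},
\]
checked by clearing denominators. Applying it with $a=t_{\pi_j}$, $b=t_{\pi_{j+1}}$, the sum of the interior contributions becomes $\sum_{j=1}^{n-1}\bigl(\frac{t_{n+1}}{t_{n+1}-t_{\pi_{j+1}}}-\frac{t_{n+1}}{t_{n+1}-t_{\pi_j}}\bigr)$, which telescopes to $\frac{t_{n+1}}{t_{n+1}-t_{\pi_n}}-\frac{t_{n+1}}{t_{n+1}-t_{\pi_1}}$ (an empty sum, hence $0$, when $n=1$). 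Adding the two boundary contributions $\frac{t_{n+1}}{t_{n+1}-t_{\pi_1}}$ and $\frac{t_{\pi_n}}{t_{\pi_n}-t_{n+1}}$ then collapses everything to $\frac{t_{n+1}}{t_{n+1}-t_{\pi_n}}+\frac{t_{\pi_n}}{t_{\pi_n}-t_{n+1}}=1$, and multiplying back by $F$ yields the asserted equality.

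I do not expect a genuine obstacle: the only real content is the partial-fraction identity above, which is precisely what makes the interior terms telescope, and the rest is bookkeeping. The two points deserving care are keeping the boundary insertions $j=0$ and $j=n$ separate from the telescoping interior sum, and handling the degenerate case $n=1$, where there is no interior sum and the two boundary terms alone already sum to $1$.
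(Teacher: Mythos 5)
Your proof is correct and follows essentially the same route as the paper's: enumerate the $n+1$ insertions of the letter $n+1$ into $\pi$, reduce each interior term via the partial-fraction identity $\frac{t_{n+1}(a-b)}{(a-t_{n+1})(t_{n+1}-b)}=\frac{t_{n+1}}{t_{n+1}-b}-\frac{t_{n+1}}{t_{n+1}-a}$, and telescope. The only cosmetic difference is that you treat the insertion at the end ($j=n$) as a separate boundary term, whereas the paper absorbs it into the telescoping sum via an implicit convention; your handling is arguably a little cleaner.
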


\begin{proof}
    By selecting the index \(j\) such that \(\omega_{j+1}=n+1\), the right-hand side of \eqref{eq:perm lemma} equals
    \begin{equation}\label{eq:perm lemma first step}
        \frac{t_{n+1}}{t_{n+1} - t_{\pi_1}} \cdot \prod_{\ell = 1}^{n-1} \frac{t_{\pi_\ell}}{t_{\pi_\ell} - t_{\pi_{\ell+1}}} 
        + 
        \underbrace{\sum_{j = 1}^n \frac{t_{\pi_j}}{t_{\pi_j} - t_{n+1}} \frac{t_{n+1}}{t_{n+1} - t_{\pi_{j+1}}} \cdot \prod_{\ell \neq j} \frac{t_{\pi_\ell}}{t_{\pi_\ell} - t_{\pi_{\ell+1}}}}_{\eqqcolon A}\, ,
    \end{equation}
    where
    \begin{equation}\label{eq:perm lemma second step}
        A = t_{n+1} \cdot \prod_{\ell = 1}^{n-1} \frac{t_{\pi_\ell}}{t_{\pi_\ell} - t_{\pi_{\ell+1}}} \cdot {\underbrace{\sum_{j = 1}^n \frac{t_{\pi_j} - t_{\pi_{j+1}}}{(t_{\pi_j} - t_{n+1})\cdot(t_{n+1} - t_{\pi_{j+1}})}}_{\eqqcolon B}}\, 
    \end{equation}
    and
    \begin{align*}
        B
        &= \sum_{j = 1}^n \frac{(t_{\pi_j} - t_{n+1}) + (t_{n+1} - t_{\pi_{j+1}})}{(t_{\pi_j} - t_{n+1})(t_{n+1} - t_{\pi_{j+1}})} 
        = \sum_{j = 1}^n \left(\frac{1}{t_{n+1} - t_{\pi_{j+1}}} - \frac{1}{t_{n+1} - t_{\pi_j}}\right)\\
        &= -\frac{1}{t_{n+1} - t_{\pi_1}} + \frac{1}{t_{n+1}}
        = \frac{1}{t_{n+1}}\cdot \frac{t_{\pi_1}}{t_{\pi_1} - t_{n+1}}\, .
    \end{align*}
    Therefore \eqref{eq:perm lemma second step} becomes
    \begin{equation*}
        A = \frac{t_{\pi_1}}{t_{\pi_1} - t_{n+1}} \cdot \prod_{\ell = 1}^{n-1} \frac{t_{\pi_\ell}}{t_{\pi_\ell} - t_{\pi_{\ell+1}}}\ ,
    \end{equation*}
    and thus \eqref{eq:perm lemma first step} simplifies to the claimed formula.
\end{proof}
This lets us compute how the \(K\)-class of a matroid changes when adding a loop.
\begin{lemma}
    \label{lem:add loop}
    Let \(\M\) be a matroid of rank \(k\) on \([n]\). Then
    \begin{equation*}
        \mathcal K(\M \oplus \U_{0,1}) \ = \ \mathcal K(\M) \cdot Q^{k}_{n+1} \, .
    \end{equation*}
\end{lemma}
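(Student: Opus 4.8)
The plan is to compare both sides directly via the closed formula of Theorem~\ref{thm:K formula}. The matroid \(\M \oplus \U_{0,1}\) has rank \(k\) on \([n+1]\), and its bases are precisely the bases of \(\M\), since the element \(n+1\) is a loop and hence lies in no basis. Therefore
\[
    \mathcal K(\M \oplus \U_{0,1}) \ = \ \sum_{B \in \mathcal B(\M)} P^{n+1}_{B, \M \oplus \U_{0,1}} \prod_{\substack{j \in [n+1] \\ j \notin B}} Q^k_j \, ,
\]
and in each summand the product over \(j \notin B\) contains the factor \(Q^k_{n+1}\), which can be pulled out since \(n+1 \notin B\) always:
\[
    \prod_{\substack{j \in [n+1] \\ j \notin B}} Q^k_j \ = \ Q^k_{n+1} \prod_{\substack{j \in [n] \\ j \notin B}} Q^k_j \, .
\]
After factoring out \(Q^k_{n+1}\), comparison with the expansion of \(\mathcal K(\M)\) reduces the lemma to the identity \(P^{n+1}_{B, \M \oplus \U_{0,1}} = P^n_{B, \M}\) for every \(B \in \mathcal B(\M)\).

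To prove this identity I would first check that for any \(\omega \in \mathfrak S_{n+1}\) one has \(B(\omega, \M\oplus\U_{0,1}) = B(\pi, \M)\), where \(\pi \in \mathfrak S_n\) is the word obtained from \(\omega\) by deleting the entry \(n+1\). This holds because the lexicographically smallest basis is produced by the greedy procedure scanning the ground set in the order prescribed by \(\omega\), and this procedure never selects a loop; hence the outcome depends only on the relative order of \(1, \dots, n\) in \(\omega\), which is exactly recorded by \(\pi\). Consequently the index set of the defining sum of \(P^{n+1}_{B, \M\oplus\U_{0,1}}\) can be partitioned according to \(\pi = \omega \setminus\{n+1\}\): for each \(\pi \in \mathfrak S_n\) with \(B(\pi,\M) = B\), the inner sum over all \(\omega \in \mathfrak S_{n+1}\) with \(\omega\setminus\{n+1\} = \pi\) is precisely the left-hand side of Lemma~\ref{lem:perm lemma}, which evaluates to \(\prod_{\ell=1}^{n-1} t_{\pi_\ell}/(t_{\pi_\ell} - t_{\pi_{\ell+1}})\). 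Summing over all \(\pi\) with \(B(\pi,\M) = B\) then yields exactly \(P^n_{B,\M}\), as required, and reassembling the pieces gives \(\mathcal K(\M\oplus\U_{0,1}) = \mathcal K(\M)\cdot Q^k_{n+1}\).

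I do not expect a genuine obstacle: once Lemma~\ref{lem:perm lemma} is available the argument is essentially bookkeeping. The only point deserving a careful (but short) verification is the compatibility statement \(B(\omega, \M\oplus\U_{0,1}) = B(\omega\setminus\{n+1\}, \M)\), i.e., that deleting a loop from the ground set commutes with forming the lexicographically minimal basis; everything else follows by grouping terms and applying the preceding lemma.
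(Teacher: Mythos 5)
Your proposal is correct and follows essentially the same route as the paper's proof: both factor out \(Q^k_{n+1}\) using that the bases of \(\M\oplus\U_{0,1}\) coincide with those of \(\M\), observe that the lexicographically minimal basis is unaffected by inserting the loop \(n+1\) into the ordering, and then apply Lemma~\ref{lem:perm lemma} to collapse the sum over \(\omega\in\mathfrak S_{n+1}\) into the sum over \(\pi\in\mathfrak S_n\). The paper states these steps more tersely, but the content is identical.
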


\begin{proof}
    The bases of \(\M \oplus U_{0,1}\) are exactly the bases of \(\M\), and so if \(\pi \in \mathfrak S_n\) and \(\omega \setminus \{n+1\} = \pi\), then the lexicographical first basis of \(\M\) with respect to the order \(\pi\) is the same as the lexicographical first basis of \(\M \oplus \U_{0,1}\) with respect to the order \(\omega\). Lemma~\ref{lem:perm lemma} and the definition of the function \(\mathcal K\) imply that
    \begin{align*}
        \mathcal K(\M \oplus \U_{0,1})  =  \sum_{B \in \mathcal B(\M \oplus \U_{0,1})} P^{n+1}_B \prod_{\substack{j \in [n+1] \\ j \notin B}} Q^k_j 
        = \ Q^k_{n+1}\sum_{B \in \mathcal B(\M)} P^n_B \prod_{\substack{j \in [n] \\ j \notin B}} Q^k_j
        = \ Q^k_{n+1} \mathcal K(\M) \, .
    \end{align*}
\end{proof}

\begin{remark}
    \label{rmk:add cloop}
    Notice that by Lemma \ref{lem:perm lemma} and the fact that a coloop belongs to every basis, the polynomial \(\mathcal K(\M \oplus \U_{1,1})\) does not contain the variable \(t_{n+1}\).
\end{remark}

\begin{lemma}
    \label{lem:Krec}
    Let \(\M\) be a matroid of rank \(k\) on \([n]\) and \(b \geq 1\). Then
    \begin{align*}
        & \mathcal K(\parallelExt^{b-1}\seriesExt(\M)\oplus \U_{0,1}) \ + \ \mathcal K(\parallelExt^b(\M)\oplus \U_{1,1}) \\
        &= \ \delta_{n+b-1}\circ \cdots \circ\delta_{n+1} \left(Q_{n+b}^{k+1} \cdots Q_{n+2}^{k+1}\left[Q_{n+b+1}^{k+1}\mathcal K(\seriesExt(\M)) + \mathcal K(\parallelExt(\M) \oplus \U_{1,1})\right]\right) \, .
    \end{align*}
\end{lemma}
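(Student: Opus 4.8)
The plan is to rewrite each of the two summands on the left‑hand side by peeling off the parallel extensions as divided difference operators via Theorem~\ref{thm:K parext}, to evaluate the loop and coloop direct summands that appear via Lemma~\ref{lem:add loop}, and finally to collect everything under one common string of operators. Two preliminary identities carry the bulk of the work. The first, proved by induction on $c$ using Theorem~\ref{thm:K parext} together with the observations that $\parallelExt^c(\N)=\parallelExt_{m+c-1}(\parallelExt^{c-1}(\N))$ (each new element is added in parallel to the previous one) and $\U_{0,c-1}\oplus\U_{0,1}=\U_{0,c}$, states that for every matroid $\N$ on $[m]$, every matroid $\mathsf X$, and every $c\ge 0$,
\[
    \mathcal K(\parallelExt^c(\N)\oplus\mathsf X)\ =\ (\delta_{m+c-1}\circ\cdots\circ\delta_m)\left(\mathcal K(\N\oplus\U_{0,c}\oplus\mathsf X)\right),
\]
where the composition, and the products of $Q$'s below, are understood to be empty when $c=0$. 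The second is a pair of companions to Lemma~\ref{lem:add loop}: iterating that lemma gives $\mathcal K(\N\oplus\U_{0,c})=\mathcal K(\N)\prod_{i=1}^{c}Q^{r}_{m+i}$ for a rank‑$r$ matroid $\N$ on $[m]$, and exactly the same computation — a coloop lies in every basis while a loop lies in none, so $P^{m+c+1}_{B\cup\{m+c+1\}}=P^{m}_{B}$ after iterating Lemma~\ref{lem:perm lemma} — yields
\[
    \mathcal K(\N\oplus\U_{0,c}\oplus\U_{1,1})\ =\ \left(\prod_{i=1}^{c}Q^{r+1}_{m+i}\right)\mathcal K(\N\oplus\U_{1,1}).
\]

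Granting these, I would proceed as follows. Applying the first identity with $\N=\seriesExt(\M)$ (of rank $k+1$ on $[n+1]$, so $m=n+1$), $\mathsf X=\U_{0,1}$ and $c=b-1$ turns the first summand into $(\delta_{n+b-1}\circ\cdots\circ\delta_{n+1})(\mathcal K(\seriesExt(\M)\oplus\U_{0,b}))$, and by the iterated Lemma~\ref{lem:add loop} this $K$‑class equals $\mathcal K(\seriesExt(\M))\,Q^{k+1}_{n+2}\cdots Q^{k+1}_{n+b+1}$. Applying the first identity with $\N=\parallelExt(\M)$ (of rank $k$ on $[n+1]$), $\mathsf X=\U_{1,1}$ and $c=b-1$, and using $\parallelExt^{b-1}(\parallelExt(\M))=\parallelExt^{b}(\M)$, turns the second summand into $(\delta_{n+b-1}\circ\cdots\circ\delta_{n+1})(\mathcal K(\parallelExt(\M)\oplus\U_{0,b-1}\oplus\U_{1,1}))$, which by the second companion identity equals $(\delta_{n+b-1}\circ\cdots\circ\delta_{n+1})(Q^{k+1}_{n+2}\cdots Q^{k+1}_{n+b}\,\mathcal K(\parallelExt(\M)\oplus\U_{1,1}))$. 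Adding the two expressions and using that each $\delta_i$ is $\ZZ$‑linear to factor out the common $Q^{k+1}_{n+2}\cdots Q^{k+1}_{n+b}$ gives precisely the claimed formula.

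The mathematical content here is light; the real labour, and the main pitfall, is bookkeeping. Because $\parallelExt$ and $\oplus$ depend on the order of the ground set, one must keep careful track of which element receives which label: in particular it matters that in $\parallelExt(\M)\oplus\U_{0,b-1}\oplus\U_{1,1}$ the added loops sit between $\parallelExt(\M)$ and the coloop, which is why the second companion identity — rather than a plain application of Lemma~\ref{lem:add loop}, which appends a loop at the very end — is what is needed, and one must check that the subscripts of the divided differences and of the $Q$'s line up across the two summands. I expect the only genuinely delicate point to be the verification that $P^{m+c+1}_{B\cup\{m+c+1\}}=P^{m}_{B}$ in the companion identity — that inserting the loops and the coloop into an ordering leaves the lexicographically minimal basis unchanged, and that the associated rational functions telescope through Lemma~\ref{lem:perm lemma} — but this is essentially the argument already carried out in the proof of Lemma~\ref{lem:add loop}.
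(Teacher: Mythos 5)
Your proposal is correct and follows essentially the same route as the paper: peel off the $b-1$ parallel extensions as divided difference operators via Theorem~\ref{thm:K parext}, convert the resulting loops into factors $Q^{k+1}_{n+2}\cdots Q^{k+1}_{n+b(+1)}$, and add the two expressions using linearity of the $\delta_i$. Your ``companion identity'' for $\mathcal K(\N\oplus\U_{0,c}\oplus\U_{1,1})$ just makes explicit a relabeling step that the paper silently subsumes under Lemma~\ref{lem:add loop}, so this is careful bookkeeping rather than a different argument.
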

\begin{proof}
    First we apply Theorem \ref{thm:K parext} and Lemma \ref{lem:add loop} \(b-1\) times each to get
    \begin{align*}
        & \mathcal K(\parallelExt^b(\M) \oplus \U_{1,1}) \ = \ \delta_{n+b-1} \circ \cdots \circ \delta_{n+1} \left(\mathcal K(\parallelExt(\M) \oplus \U_{0,b-1}\oplus \U_{1,1})\right)\\
        & = \ \delta_{n+b-1} \circ \cdots \circ \delta_{n+1}\left(Q^{k+1}_{n+b} \cdots  Q^{k+1}_{n+2} \ \mathcal K(\parallelExt(\M) \oplus \U_{1,1})\right) \, .
    \end{align*}
    Similarly we get 
    \[
        \mathcal K(\parallelExt^{b-1}\seriesExt(\M) \oplus \U_{0,1}) \ = \ \delta_{n+b-1} \circ \cdots \circ \delta_{n+1} \left(Q^{k+1}_{n+b+1} \cdots Q^{k+1}_{n+2} \ \mathcal K(\seriesExt(\M))\right) \, .
    \]
    Adding these two expressions leads to the desired result.
\end{proof}

We continue by proving a statement that allows us to deal with the right hand side of Theorem~\ref{thm:main identity} by explicitly computing some Littlewood--Richardson coefficients. Recall that a \emph{lattice word} is a sequence of positive integers where in every intial subword there are at least as many occurrences of \(i\) as there are of \(i+1\). For given partitions \(\lambda\), \(\mu\) and \(\eta\) that satisfy \(|\lambda| = |\mu| + |\eta|\), a \emph{Littlewood--Richardson tableaux} of skew shape \(\lambda/\mu\) and content \(\eta\) is a semistandard skew tableaux such that the sequence obtained by reading its rows from right to left is a lattice word. The following combinatorial interpretation can be found in {\cite[Theorem~A1.3.3]{Stanley-EC2}}.
\begin{theorem}[Littlewood--Richardson rule]
    Let \(\lambda\), \(\mu\) and \(\eta\) be partitions. The Littlewood--Richardson coefficient \(c_{\mu,\eta}^\lambda\) is equal to the number of Littlewood--Richardson tableaux of skew shape \(\lambda/\mu\) and content \(\eta\). 
\end{theorem}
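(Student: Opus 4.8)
The plan is to prove the rule by a combinatorial reorganization of the skew Schur function, using only the two facts already recorded: that $s_{\lambda/\mu}=\sum_\eta c^\lambda_{\mu,\eta}s_\eta$ in the Schur basis, and that $s_{\lambda/\mu}=\sum_{T}u^{T}$, the sum running over all semistandard skew tableaux $T$ of shape $\lambda/\mu$ (where, as for straight shapes, $u^{T}=u_1^{t_1}u_2^{t_2}\cdots$ records the content of $T$). The goal is to rewrite the second sum as a nonnegative combination of ordinary Schur functions and to identify the coefficient of $s_\eta$ with the number of Littlewood--Richardson tableaux of shape $\lambda/\mu$ and content $\eta$; comparing with the first expansion and using linear independence of Schur functions then closes the argument.

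First I would introduce \emph{jeu de taquin} and the \emph{rectification} map: every semistandard skew tableau $T$ can be slid to a semistandard tableau $\mathrm{rect}(T)$ of straight shape, and this tableau does not depend on the order in which the slides are performed. The crucial input is that, for a fixed straight-shape semistandard tableau $V$, the number $\#\{\,T\in\SSYT_{\lambda/\mu}:\mathrm{rect}(T)=V\,\}$ depends only on the shape of $V$, and that the map $T\mapsto\mathrm{rect}(T)$ is content-preserving. Grouping the tableaux of $\SSYT_{\lambda/\mu}$ first by the shape $\eta$ of their rectification and then by the rectification itself, each block of fixed rectification $V$ contributes $\#\{T:\mathrm{rect}(T)=V\}\cdot u^{\mathrm{content}(V)}$, so the $\eta$-layer contributes $c_{\lambda/\mu,\eta}\,s_\eta$, where $c_{\lambda/\mu,\eta}$ is the common block size. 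Thus $s_{\lambda/\mu}=\sum_\eta c_{\lambda/\mu,\eta}\,s_\eta$, and hence $c^\lambda_{\mu,\eta}=c_{\lambda/\mu,\eta}$.

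It remains to count $c_{\lambda/\mu,\eta}$ as a number of tableaux. Since this number is independent of the chosen $V$ of shape $\eta$, I would evaluate it on the superstandard tableau $U_\eta$ of shape and content $\eta$. One then checks that a skew tableau rectifies to $U_\eta$ precisely when its content is $\eta$ and its reading word (rows from right to left, as defined above) is a lattice word — in other words, precisely when it is a Littlewood--Richardson tableau of shape $\lambda/\mu$ and content $\eta$. Therefore $c^\lambda_{\mu,\eta}$ equals the number of such tableaux, as claimed.

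The hard part is the jeu-de-taquin infrastructure itself: confluence of the slides (so that $\mathrm{rect}$ is well defined), the fact that the block size $c_{\lambda/\mu,\eta}$ does not depend on which $V$ of shape $\eta$ one uses, and the description of the tableaux rectifying to $U_\eta$ by the lattice-word condition. All three rest on the single fact that a jeu-de-taquin slide does not change the Knuth (plactic) equivalence class of the reading word, and giving this a careful proof is tantamount to developing the plactic monoid. In a paper of this scope it is cleaner to cite \cite[Appendix~1]{Stanley-EC2}, and specifically \cite[Theorem~A1.3.3]{Stanley-EC2}, than to reproduce that theory, which is the route we take.
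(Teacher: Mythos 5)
The paper gives no proof of this classical result and simply cites \cite[Theorem~A1.3.3]{Stanley-EC2}, which is exactly where your proposal lands: your jeu-de-taquin sketch (rectification, well-definedness via Knuth equivalence, counting tableaux rectifying to the superstandard tableau $U_\eta$) is the standard correct argument, and you rightly conclude that reproducing the plactic-monoid infrastructure is out of scope and the citation is the appropriate route. So your treatment matches the paper's, with a correct outline of the underlying proof added on top.
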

\begin{lemma}
    \label{lem:rmv}
    Let \(\eta \subseteq (k+1) \times (n-k)\) and \(b \geq 1\). In the ring of symmetric polynomials with \(k+1\) variables holds 
    \[
        \rmv_b(s_{[b^{k+1}]} s_\eta) = s_{[b^k]} s_{\eta} \, .
    \]
\end{lemma}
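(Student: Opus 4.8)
The plan is to peel off the rectangle $[b^{k+1}]$, reducing the statement to the identity $\rmv_b\!\big(s_{\eta+(b^{k+1})}\big)=s_{[b^k]}\,s_\eta$ in $k+1$ variables, and then to settle this by a Littlewood--Richardson computation. First I would record the elementary fact that, in $k+1$ variables, multiplying by $s_{[b^{k+1}]}=(u_1\cdots u_{k+1})^b$ just shifts the partition: writing $\eta+(b^{k+1})$ for the partition obtained from $\eta$ by adding $b$ to each of its first $k+1$ parts (padding $\eta$ with zeros), one has $s_{[b^{k+1}]}\,s_\eta = s_{\eta+(b^{k+1})}$. Hence the left-hand side equals $\rmv_b\!\big(s_{\eta+(b^{k+1})}\big)=\sum_{\mu}s_\mu$, the sum ranging over partitions $\mu$ such that $\big(\eta+(b^{k+1})\big)/\mu$ is a horizontal strip of size $b$; since $\eta$, and therefore $\eta+(b^{k+1})$, has at most $k+1$ parts, every such $\mu$ also has at most $k+1$ parts, so nothing is lost by working in $k+1$ variables. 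It is essential to carry out this reduction in this order --- first realizing $s_{[b^{k+1}]}s_\eta$ as the single Schur polynomial $s_{\eta+(b^{k+1})}$, and only then applying $\rmv_b$ --- because $\rmv_b$ does not commute with restriction to finitely many variables. Unwinding the definition of a horizontal strip, the $\mu$ occurring on the left are exactly those with $\ell(\mu)\le k+1$, $|\mu|=|\eta|+kb$, $\eta_i\le\mu_i\le\eta_i+b$ for all $i$, and $\mu_i\ge\eta_{i+1}+b$ for $1\le i\le k$, each with coefficient $1$.

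It then remains to show that $s_{[b^k]}\,s_\eta=\sum_\lambda c^{\lambda}_{[b^k],\eta}\,s_\lambda$ has the very same Schur expansion in $k+1$ variables. The terms with $\ell(\lambda)>k+1$ vanish there, so I must check that for $\lambda$ with at most $k+1$ parts the coefficient $c^{\lambda}_{[b^k],\eta}$ equals $1$ on precisely the set just described and $0$ otherwise. For this I would invoke the Littlewood--Richardson rule: a Littlewood--Richardson tableau of shape $\lambda/\eta$ with rectangular content $[b^k]$, if it exists, is forced --- the legality of the reverse reading word pins down, band by band, where the entries $1,2,\dots,k$ must sit --- so $c^{\lambda}_{[b^k],\eta}\in\{0,1\}$; and the conditions that make this forced filling a genuine semistandard, lattice filling translate exactly into the interlacing inequalities $\eta_i\le\lambda_i\le\eta_i+b$ and $\lambda_i\ge\eta_{i+1}+b$ above. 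Matching the two expansions term by term then proves the identity, hence the lemma.

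The main obstacle is this last combinatorial step: pinning down the unique candidate Littlewood--Richardson filling of $\lambda/\eta$ with content $[b^k]$ and the exact condition for its legality, and then recognizing that the resulting set of admissible $\lambda$ is the same as the set for which $\big(\eta+(b^{k+1})\big)/\lambda$ is a horizontal strip of size $b$. In practice one can shortcut the Littlewood--Richardson rule entirely: the left-hand support is, by the definition of $\rmv_b$, just $\{\lambda : \big(\eta+(b^{k+1})\big)/\lambda\text{ a horizontal strip of size }b\}$, described by the explicit inequalities above, while the right-hand support is described by the (also explicit) inequalities coming from the Littlewood--Richardson description of $c^{\lambda}_{[b^k],\eta}$ for a rectangular factor; the identity then reduces to a direct, if slightly tedious, verification that the two systems of inequalities coincide.
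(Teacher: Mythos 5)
Your proposal is correct and follows essentially the same route as the paper: collapse \(s_{[b^{k+1}]}s_\eta\) into the single Schur polynomial \(s_{\square\eta}\) via Pieri in \(k+1\) variables, then show via the Littlewood--Richardson rule that \(c^\lambda_{[b^k],\eta}\) is \(1\) exactly on \(\rmv_b(\square\eta)\) and \(0\) otherwise. The only difference is that the paper analyzes LR tableaux of shape \(\lambda/[b^k]\) with content \(\eta\) rather than shape \(\lambda/\eta\) with content \([b^k]\), which makes the forcedness of the filling (your acknowledged ``main obstacle'') immediate row by row; note that your uniqueness claim genuinely needs the restriction \(\ell(\lambda)\le k+1\), since rectangle products are not multiplicity-free in general.
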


\begin{proof}
    First notice that it is sufficient to only consider partitions with at most \(k+1\) parts as we work in the ring of symmetric polynomials with \(k+1\) variables. In this ring we have \(s_{[b^{k+1}]} s_\eta = (s_{[1^{k+1}]})^b s_\eta = s_{\square\eta}\) where \(\square\eta = [\eta_1 + b, \eta_2 + b, \dots, \eta_{k+1} + b]\), by applying Pieri's formula (Theorem~\ref{thm:pieri}) multiple times. Additonally,
    the Littlewood--Richardson rule shows that
    \[
        s_{[b^k]}s_\eta \ = \ \sum_{\lambda} c^\lambda_{[b^k], \eta} s_\lambda \, ,
    \]
    where \(c^\lambda_{[b^k], \eta}\) is the number of Littlewood--Richardson tableaux of skew shape \(\lambda / [b^k]\) with content \(\eta\).
    Therefore, the claim that we want to prove is that \(c^\lambda_{[b^k],\eta} = 1\) whenever \(\lambda \in \rmv_b(\square \eta)\) and \(0\) otherwise.

    To this end, assume \(T\) is a Littlewood–Richardson tableau of skew shape \(\lambda/[b^k]\) with content~\(\eta\).
    Then the filling of row \(i\) consists of \(\lambda_i-b\) copies of the number \(i\) for all \(i\leq k\), as the subword formed by the first \(i\) rows has to be a lattice word and the content has to increase row by row; in particular, the filling of \(\lambda/[b^k]\) is unique.
    This implies \(b\leq \lambda_i\leq b+\eta_i\) for all \(i\leq k\), and as \(|\lambda|-kb = |\eta|\) we obtain \(\lambda_{k+1}=\eta_{k+1}+\sum_{i=1}^k (\eta_i-\lambda_i+b)\). This last row of the tableau is filled with \(\eta_i-\lambda_i+b\) copies of the number \(i\) for each \(i \leq k\), and \(\eta_{k+1}\) copies of \(k+1\).
    We conclude that \(\lambda_{k+1}\leq \eta_{k+1}+b\), as otherwise the \((b+1)\)st entry on row \(k+1\) is in \([k]\), which would mean that the \((b+1)\)st column of our filling would not be increasing. Hence, \(\lambda \subseteq \square\eta\).
    
    Moreover, in the lattice word corresponding to \(T\), all \(\eta_{i+1}\) appearances of \(i+1\) are read before the \((\lambda_i-b+1)\)st appearances of \(i\), implying that \(\lambda_i-b\geq \eta_{i+1}\). This shows that \(\lambda \in \rmv_b(\square \eta)\). We already saw that \(\lambda\) has a unique filling, so we have completed the proof.
\end{proof}

\begin{example}
    Figure \ref{fig:LR tableaux} shows an example and a nonexample of a Littlewood--Richardson skew tableau as in the proof above. In the nonexample the word, read row by row from right to left, is not a lattice word as all \(3\)'s come before the last \(2\).
\end{example}

\begin{figure}[ht]
    \centering
    \begin{tikzpicture}[scale=0.45]
    \draw[step = 1] (0,0) grid (5,5);
    \draw[step = 1] (5,2) grid (7,5);
    \draw[step = 1] (7,4) grid (8,5);
    \draw[very thick, Red] (0,1.05) -- (3.95,1.05) -- (3.95,5) -- (0,5) -- (0,1);
    \draw[very thick, Blue](4.05,0.95) -- (5,0.95) -- (5,2) -- (6,2) -- (6,3) -- (7,3) -- (7,5) -- (4.05,5) -- (4.05,0.95) -- (0,0.95) -- (0,0) -- (3,0) -- (3,0.95);
    \node at (4.5,4.5) {$1$};
    \node at (5.5,4.5) {$1$};
    \node at (6.5,4.5) {$1$};
    \node at (7.5,4.5) {\textcolor{Red}{\(\times\)}};
    \node at (4.5,3.5) {$2$};
    \node at (5.5,3.5) {$2$};
    \node at (6.5,3.5) {$2$};
    \node at (4.5,2.5) {$3$};
    \node at (5.5,2.5) {$3$};
    \node at (6.5,2.5) {\textcolor{Red}{\(\times\)}};
    \node at (4.5,1.5) {$4$};
    \node at (.5,.5) {$1$};
    \node at (1.5,.5) {$3$};
    \node at (2.5,.5) {$5$};
    \node at (3.5,.5) {\textcolor{Red}{\(\times\)}};
    \node at (4.5,.5) {\textcolor{Red}{\(\times\)}};
\end{tikzpicture}
    \begin{tikzpicture}[scale=0.45]
    \draw[step = 1] (0,0) grid (5,5);
    \draw[step = 1] (5,2) grid (7,5);
    \draw[step = 1] (7,4) grid (8,5);
    \draw[very thick, Red] (0,1.05) -- (3.95,1.05) -- (3.95,5) -- (0,5) -- (0,1);
    \draw[very thick, Blue] (0,0) -- (5,0) -- (5,3) -- (6,3) -- (6,4) -- (7,4) -- (7,5) -- (4.05,5) -- (4.05,0.95) -- (0,0.95) -- (0,0);
    \node at (4.5,4.5) {$1$};
    \node at (5.5,4.5) {$1$};
    \node at (6.5,4.5) {$1$};
    \node at (7.5,4.5) {\textcolor{Red}{\(\times\)}};
    \node at (4.5,3.5) {$2$};
    \node at (5.5,3.5) {$2$};
    \node at (6.5,3.5) {\textcolor{Red}{\(\times\)}};
    \node at (4.5,2.5) {$3$};
    \node at (5.5,2.5) {\textcolor{Red}{\(\times\)}};
    \node at (6.5,2.5) {\textcolor{Red}{\(\times\)}};
    \node at (4.5,1.5) {$4$};
    \node at (.5,.5) {$1$};
    \node at (1.5,.5) {$2$};
    \node at (2.5,.5) {$3$};
    \node at (3.5,.5) {$3$};
    \node at (4.5,.5) {$5$};
\end{tikzpicture}
    \caption{An example on the left and nonexample on the right of a Littlewood--Richardson skew tableau with content \(\eta = [4,3^2,1^2]\) and \(b = k = 4\).}
    \label{fig:LR tableaux}
\end{figure}

\noindent Now consider the polynomials
\[
    q_j^k \ \coloneqq \ \KtoSc(Q_j^k) \ = \ \prod_{i \in [k]} (u_i + t_j) \ = \ \sum_{\ell = 0}^{k}e_{k-\ell}(u_1,\dots ,u_k) t_j^\ell \, ,
\]
where \(e_{k-\ell}(u_1,\dots ,u_k)\) is an elementary symmetric polynomial. Consider also the divided difference operators \(\partial_i\) on \(\ZZ[u_1,\dots ,u_k, t_1,\dots, t_n]\) defined by \(\partial_i(f) = \frac{f - \tau_if}{t_i - t_{i+1}}\).

\begin{lemma}
    \label{lem:the last step}
    Fix a positive integer \(k\). For any \(b \geq 1\) we have
    \[
        (-1)^b\partial_b \circ \dots \circ \partial_2 \circ \partial_1 \left( q^{k+1}_{b+1} \dots q^{k+1}_3 q^{k+1}_2 \right) \bigg|_{t = 0}   = \  s_{[b^k]} \, .
    \]
\end{lemma}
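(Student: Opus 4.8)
The plan is to interpret the left-hand side as an iterated application of divided differences to a product of $q$-polynomials, and to recognize the result via the Schur polynomial in $k$ variables. First I would recall that $q^{k+1}_j = \prod_{i=1}^{k+1}(u_i + t_j)$, so that the product $q^{k+1}_{b+1}\cdots q^{k+1}_2$ is, up to the single factor $\prod_{i=1}^{k+1} u_i^{?}$ that one extracts at $t=0$, a polynomial in $t_1,\dots,t_b$ whose coefficients are elementary symmetric polynomials in $u_1,\dots,u_{k+1}$. The key observation is that $\prod_{j=2}^{b+1}(u_i + t_j)$ as a function of the $t_j$'s is, for each fixed $i$, a product of linear forms, and the operator $\partial_b\circ\cdots\circ\partial_1$ applied to a product $\prod_{j}(a + t_{j})$-type expression is governed by the classical fact that iterated divided differences compute Schur polynomials: namely $\partial_{w_0}\big(t_1^{k} t_2^{k-1}\cdots\big)$-style identities. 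More precisely, I would use that for the longest element $w_0 \in \mathfrak{S}_{b}$ (realized here by the composite $\partial_b\circ\cdots\circ\partial_1$ acting on variables $t_1,\dots,t_{b}$ — note the indices telescope so this is indeed a reduced expression up to sign), one has the standard identity $\partial_{w_0}(t^{\delta}) = 1$ and more generally $\partial_{w_0}$ sends a monomial $t^{\alpha+\delta}$ to $s_\alpha(t)$; combined with a substitution relating the $t_j$'s to the $u_i$'s, this will produce $s_{[b^k]}(u_1,\dots,u_k)$ after setting $t=0$.

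Concretely, here are the steps I would carry out. (1) Expand $\prod_{j=2}^{b+1} q^{k+1}_j = \prod_{j=2}^{b+1}\prod_{i=1}^{k+1}(u_i+t_j)$ and regroup it as $\prod_{i=1}^{k+1}\prod_{j=2}^{b+1}(u_i+t_j)$; observe that evaluation at $t=0$ at the very end means we only need, for each term, the part of minimal $t$-degree surviving after the divided differences, so track degrees carefully. (2) Apply the product/Leibniz-type rule for $\partial_i$ and the fact that $\partial_i$ kills anything symmetric in $t_i, t_{i+1}$; peel off the $u_{k+1}$-factor $\prod_{j}(u_{k+1}+t_j)$, which is symmetric in all of $t_1,\dots,t_b$ only after we also include the "missing" $t_1$ — here I would handle the index shift, since the product runs over $t_2,\dots,t_{b+1}$ while the operators are $\partial_1,\dots,\partial_b$, so a relabeling $t_{j+1}\mapsto t_j$ (or tracking which divided difference acts on which pair) is needed. (3) Reduce to the identity $(-1)^b \partial_b\cdots\partial_1\big(\prod_{i=1}^{k}\prod_{j}(u_i + t_j)\big)\big|_{t=0} = s_{[b^k]}(u_1,\dots,u_k)$, which I would prove either by the bialternant/Jacobi–Trudi description of $s_{[b^k]}$ as a ratio of determinants, or by induction on $b$ using the recursion $s_{[b^k]} = $ (a Pieri-type step), matching it against one layer $\partial_1$ of the operator. (4) Verify the sign $(-1)^b$ bookkeeping from the $t_i - t_{i+1}$ denominators in each $\partial_i$ versus the $t_i f - t_{i+1}\tau_i f$ in the $\delta_i$'s; this is where signs are easiest to slip.

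The main obstacle I expect is not conceptual but bookkeeping: matching the index set $\{2,\dots,b+1\}$ of the $q$-factors against the operator indices $\{1,\dots,b\}$, and correctly identifying which low-degree-in-$t$ part of the big product survives both the divided differences and the final specialization $t=0$. A clean way around this is to prove a slightly more general auxiliary lemma first — computing $\partial_b\circ\cdots\circ\partial_1$ applied to $\prod_{j=1}^{b+1}\prod_{i=1}^{k+1}(u_i+t_j)$ (one extra factor) and extracting a Schur polynomial $s_\lambda$ for a general rectangular-ish $\lambda$ — and then specialize. I would also keep in mind the alternative route suggested by the surrounding results: since Lemma~\ref{lem:rmv} already packages the "$\rmv_b$ of a product of $s_{[1^{k+1}]}$'s" behavior, one can instead prove this lemma by applying $\KtoSc$ and $\partial$'s to the $K$-theoretic identity and comparing with the symmetric-function statement of Lemma~\ref{lem:rmv}, which may sidestep the determinant computation entirely; but the direct divided-difference computation is the most self-contained and is what I would write up.
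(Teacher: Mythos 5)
There is a genuine gap here, in fact two concrete errors that would derail the write-up. First, the composite \(\partial_b\circ\cdots\circ\partial_1\) is a reduced word for a \((b+1)\)-cycle, not for the longest element \(w_0\); it has length \(b\), whereas \(\ell(w_0)=\binom{b+1}{2}\). So the classical identity \(\partial_{w_0}(t^{\alpha+\delta})=s_\alpha(t)\) does not apply, and even if it did it would produce a Schur polynomial in the \(t\)-variables, not in the \(u\)-variables; the "substitution relating the \(t_j\)'s to the \(u_i\)'s" that you defer is exactly the content of the lemma. Second, your step (2)--(3) reduction -- peeling off the \(u_{k+1}\)-factors and proving \((-1)^b\partial_b\cdots\partial_1\bigl(\prod_{i=1}^{k}\prod_j(u_i+t_j)\bigr)\big|_{t=0}=s_{[b^k]}(u_1,\dots,u_k)\) -- is false, and the target of the lemma is not what you state: \(s_{[b^k]}\) here is the Schur polynomial in \(k+1\) variables \(u_1,\dots,u_{k+1}\) (the whole computation lives in \(A^\bullet(G(k+1,\cdot))\)). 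Already for \(b=k=1\) one checks \(-\partial_1\bigl((u_1+t_2)(u_2+t_2)\bigr)\big|_{t=0}=u_1+u_2=e_1(u_1,u_2)\), whereas your reduced identity would give \(u_1\). The factor \(\prod_{j}(u_{k+1}+t_j)\) cannot be discarded: the elementary symmetric polynomial \(e_{k+1}(u_1,\dots,u_{k+1})\) is nonzero and enters essentially into the recursion.

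The workable route is the one you mention only in passing: induction on \(b\). The paper's proof strengthens the statement by keeping \(t_{b+1}\) alive, showing that the partial specialization \(\Sigma_b\) equals \(\sum_{\ell=0}^{k}\det A(b,\ell)\,t_{b+1}^{\ell}\) for explicit \(b\times b\) matrices \(A(b,\ell)\) of elementary symmetric polynomials in \(u_1,\dots,u_{k+1}\) (Jacobi--Trudi-type matrices with a modified last row), establishes the recursion \(\Sigma_b=-\partial_b(q^{k+1}_{b+1}\Sigma_{b-1})|_{t_b=0}\) via commutation of the \(q\)'s past the \(\partial_i\)'s, and closes the induction with a last-column expansion of \(\det A(b,\ell)\); the lemma is then the \(\ell=0\), \(t_{b+1}=0\) case plus the dual Jacobi--Trudi identity. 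If you want to complete your proposal, you must carry out an induction of this kind; the strengthened statement tracking the \(t_{b+1}\)-dependence (or some equivalent bookkeeping) is needed because the naive statement with all \(t\)'s set to zero does not propagate through a single application of \(\partial_b\).
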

\begin{proof}
    Let \(\Sigma_b\) be the polynomial in \(u_1,\dots , u_{k+1}, t_{b+1}\) defined via 
    \[
        \Sigma_b  \ = \ (-1)^b\partial_b \circ \dots \circ \partial_2 \circ \partial_1 \left( q^{k+1}_{b+1} \dots q^{k+1}_3 q^{k+1}_2 \right) \bigg|_{t_1 =\cdots= t_b = 0} \, .
    \]
    We have to show that \(\Sigma_b |_{t_{b+1} = 0} = s_{[b^k]}\). For this sake we claim that 
    \begin{equation}
        \label{eq:sigma poly 1}
        \Sigma_b \ = \ \sum_{\ell = 0}^k \det A(b,\ell)\, t_{b+1}^\ell \, ,
    \end{equation}
    where \(A(b,\ell)\) is the \(b \times b\)-matrix of elementary symmetric polynomials 
    \[
        A(b,\ell)  \ = \  
        \left[
        \begin{array}{ccccc}
            e_k & e_{k+1} & 0 & \dots & 0 \\
            e_{k-1} & e_k & e_{k+1} & \dots & 0\\
            \vdots & \vdots  & \ddots & \ddots & \vdots\\
            e_{k-b+2} & e_{k-b+1} & \dots & e_k & e_{k+1}\\
            e_{k-b+1-\ell} & e_{k-b+2-\ell} & \dots & e_{k-1-\ell} & e_{k-\ell}
        \end{array}
        \right] \, ,
    \]
    where we write \(e_{j}\) for the elementary symmetric polynomial \(e_{j}(u_1,\dots, u_{k+1})\) and use the convention that \(e_0 = 1\) and \(e_j = 0\) whenever \(j\) is 
    negative.
    Here we want to point out that \(e_{k+j}=0\) if \(j>1\) as it is a square-free polynomial in \(k+1\) variables.
    Assuming the equality in \eqref{eq:sigma poly 1} we have that
    \[
        \Sigma_b |_{t_{b+1} = 0} \ = \ \det A(b,0) \ = \ s_{[b^k]} \, ,
    \]
    where the last equality is the Jacobi--Trudi identity, see \cite[Corollary~7.16.2]{Stanley-EC2}.
    We are left to prove \eqref{eq:sigma poly 1}. We begin by manipulating \(\Sigma_b\). First we use that the polynomial \(q^{k+1}_j\) commutes with the divided difference operator \(\partial_i\) whenever \(j>i+1\), i.e., \(\partial_i(q^{k+1}_j f) =~q^{k+1}_j\partial_i(f)\). This way we obtain
    \begin{equation*}
        \label{eq:sigma poly 2}
        \Sigma_b  \ = \ (-1)^b \partial_b( q^{k+1}_{b+1} \ \partial_{b-1}( q^{k+1}_{b} \cdots \partial_2 (q^{k+1}_3 \ \partial_1(q^{k+1}_2)))) \big|_{t = 0} \, .
    \end{equation*}
    Next, we use that \(\partial_j(t_i f) = t_i\, \partial_j(f)\) and that \(q^{k+1}_j\big|_{t_i=0} = q^{k+1}_j\) whenever \(i<j\).
    Hence \(\partial_j(q_{j+1}^{k+1} f)\big|_{t_i=0} = \partial_j(q_{j+1}^{k+1} f\big|_{t_i=0})
    \) for \(i<j\). Applying this step iteratively shows that
    \[
    \Sigma_b\ =\ - \partial_{b}(q^{k+1}_{b+1} \Sigma_{b-1}) \big|_{t_b = 0} \, .
    \] We are now prepared to prove \eqref{eq:sigma poly 1} by induction on \(b\). For the base case \(b = 1\) we have
    \begin{align*}
        \Sigma_1 \ &= \ -\partial_1(q^{k+1}_2)\big|_{t_1 = 0}  \ = \  %\partial_1(q^{k+1}_1)\big|_{t_1 = 0} \ = \ 
        \frac{q^{k+1}_1 - q^{k+1}_2}{t_1 - t_2} \bigg|_{t_1 = 0}
        = \ \sum_{\ell = 0}^{k+1} e_{k+1-\ell} \frac{t_1^\ell-t_2^\ell}{t_1-t_2}\bigg|_{t_1 = 0}\\ 
        %&= \ \sum_{\ell = 1}^{k+1} e_{k+1-\ell}t_2^{\ell-1}\\
        &= \ \sum_{\ell = 0}^k e_{k-\ell} \, t_2^\ell \ = \ \sum_{\ell = 0}^k \det A(1,\ell)\, t_2^\ell \, .
    \end{align*}
    Now let \(b > 1\) and assume by induction that
    \[
        \Sigma_{b-1} \ = \ \sum_{\ell = 0}^{k} \det A(b-1,\ell)\, t_b^\ell \, .
    \]
    With these assumptions we have
    \begin{align*}
        -\partial_b(q^{k+1}_{b+1} \Sigma_{b-1})\big|_{t_b = 0} 
       \ &= \ -\sum_{\ell = 0}^k\det A(b-1,\ell) \ \partial_b(q^{k+1}_{b+1} \, t_{b}^\ell) \big|_{t_b = 0}\\
        &= \ - \sum_{\ell = 0}^k\det A(b-1,\ell) \ \frac{q^{k+1}_{b+1} t_{b}^\ell - q^{k+1}_{b}t^\ell_{b+1}}{t_b-t_{b+1}} \bigg|_{t_b = 0}\\
        &= \ \sum_{\ell = 0}^k\det A(b-1,\ell) \sum_{r = 0}^{k+1}e_{k+1-r} \ \frac{t^r_bt^\ell_{b+1} - t^\ell_bt^r_{b+1}}{t_b - t_{b+1}}\bigg|_{t_b = 0} \, .
    \end{align*}
   We see that in the inner sum we have a geometric sum which equals
    \[
    \frac{t^r_bt^\ell_{b+1} - t^\ell_bt^r_{b+1}}{t_b - t_{b+1}}\bigg|_{t_b = 0} \ =\quad 
    \begin{cases}
        - t^{\ell-1}_{b+1} & \text{if } r = 0 < \ell\\
        t^{r-1}_{b+1} & \text{if } r>\ell=0\\
        0 & \text{otherwise} \, .\\
    \end{cases}
    \]
    Leading us to the following equality
    \begin{align*}
        \Sigma_b &= -\partial_b(q^{k+1}_{b+1} \Sigma_{b-1})\big|_{t_b = 0}\\ &= 
        \det A(b-1,0) \sum_{r=1}^{k+1} e_{k+1-r}\, t^{r-1}_{b+1} 
        - \sum_{\ell=1}^{k} \det A(b-1,\ell) e_{k+1}\, t^{\ell-1}_{b+1}\\
        &= \sum_{\ell = 0}^k \bigl(\det A(b-1,0)\, e_{k-\ell} - \det A(b-1,\ell+1)\, e_{k+1}\bigr)t_{b+1}^\ell
         \ =\  \sum_{\ell = 0}^k \det A(b,\ell) t_{b+1}^\ell \, ,
        \end{align*}
        where we used that \(\det A(b-1,k+1)=0\), as the last row of the matrix vanishes, and in the last equation we applied the identity
        \begin{equation*}
            \det(A(b,\ell)) \ = \ \det(A(b-1,0))\,e_{k-\ell} - \det(A(b-1,\ell+1))\, e_{k+1}
        \end{equation*}
        which we obtain from expanding the determinant with respect to the last column \(b\) of the matrix \(A(b,\ell)\).
        This completes the induction and hence the entire proof.
\end{proof}

We are now prepared to combine  these results to prove  Theorem \ref{thm:main identity}.
\begin{proof}[Proof of Theorem~\ref{thm:main identity}]
    Observe that
    \begin{align*}
        \mathcal K(\parallelExt(\M) \oplus \U_{1,1})\ & = \ \delta_n(\mathcal K(\M\oplus \U_{0,1} \oplus \U_{1,1}))\\
        & =\ \frac{t_nQ_{n+1}^{k+1} \mathcal K(\M \oplus \U_{1,1})}{t_n - t_{n+1}}\ -\ \frac{t_{n+1}Q_{n}^{k+1} \tau_n \mathcal K(\M \oplus \U_{1,1})}{t_n - t_{n+1}}\\
        & \quad+\ \frac{t_nQ_{n}^{k+1} \mathcal K(\M \oplus \U_{1,1})}{t_n - t_{n+1}}\ -\ \frac{t_nQ_{n}^{k+1} \mathcal K(\M \oplus \U_{1,1})}{t_n - t_{n+1}}\\
        &=\ Q_n^{k+1}\delta_n(\mathcal K (\M \oplus \U_{1,1}))\ +\ t_n \frac{Q_{n+1}^{k+1} - Q_n^{k+1}}{t_n - t_{n+1}}\mathcal K (\M \oplus \U_{1,1}) \, .
    \end{align*}
    Here we used Theorem \ref{thm:K parext}, add and subtract the same term and gather like terms. Using Theorem \ref{thm:K serext}, the innermost part of the right hand side of Lemma \ref{lem:Krec} is
    \begin{align*}
        Q_{n+b+1}^{k+1} \mathcal K(\seriesExt(\M)) + \mathcal K(\parallelExt(\M) \oplus \U_{1,1})\ &=\ t_n \frac{Q_{n+1}^{k+1} - Q_n^{k+1}}{t_n - t_{n+1}}\mathcal K (\M \oplus \U_{1,1}) \\
        &\quad+ \ Q_{n+b+1}^{k+1} \ \delta_n\circ \tau_n(\mathcal K(\M\oplus \U_{1,1})) \\
        &\quad+ \ Q_n^{k+1} \ \delta_n(\mathcal K (\M \oplus \U_{1,1}))\ .
    \end{align*}
    Notice that \(\KtoSc(\delta_i(f)) = - \partial_i(\KtoSc(f))\) and \(\partial_n \circ \tau_n(f) = - \partial_n(f) \), so applying \(\KtoSc\) to the equation above gives
    \begin{align*}
        \KtoSc\left(Q_{n+b+1}^{k+1}\mathcal K(\seriesExt(\M)) + \mathcal K(\parallelExt(\M) \oplus \U_{1,1})\right) &= (q_{n+b+1}^{k+1} - q_n^{k+1}) \ \partial_n\left(\KtoSc(\mathcal K(\M\oplus \U_{1,1}))\right)\\
        &\quad+ \partial_n(q_n^{k+1}) \ \KtoSc(\mathcal K(\M \oplus \U_{1,1})) \, .
    \end{align*}
    We now apply \(\KtoSc\) to Lemma \ref{lem:Krec} to obtain
    \begin{align*}
        & \KtoSc(\mathcal K(\parallelExt^{b-1}\seriesExt(\M)\oplus \U_{0,1})) + \KtoSc(\mathcal K(\parallelExt^b(\M)\oplus \U_{1,1}))\\
        %& = (-1)^{b-1} \partial_{n+b-1}\circ \cdots \circ \partial_{n+1}\left(q_{n+b}^{k+1}\cdots q_{n+2}^{k+1}\left[\KtoSc(Q_{n+b+1}^{k+1}\mathcal K(\seriesExt(\M)) + \mathcal K(\parallelExt(\M) \oplus \U_{1,1}))\right]\right)
        & = (-1)^{b-1} \partial_{n+b-1}\circ \cdots \circ \partial_{n+1}\left(q_{n+b}^{k+1}\cdots q_{n+2}^{k+1}\left[(q_{n+b+1}^{k+1} - q_n^{k+1})\ \partial_n\left(\KtoSc(\mathcal K(\M\oplus \U_{1,1}))\right)\right]\right)\\
        &\quad + (-1)^{b-1} \partial_{n+b-1}\circ \cdots \circ \partial_{n+1}\left(q_{n+b}^{k+1} \cdots q_{n+2}^{k+1} \ \partial_n(q_n^{k+1}) \KtoSc(\mathcal K(\M \oplus \U_{1,1}))\right) \, .
    \end{align*}
    Notice that when we evaluate at \(t = 0\) the first term above vanishes since the factor \(q_{n+b+1}^{k+1} - q_n^{k+1}\) can be pulled out of all the divided difference operators. By Remark \ref{rmk:add cloop} we can also pull \(\KtoSc(\mathcal K(\M \oplus \U_{1,1}))\) out of all the divided difference operators. When evaluating at \(t = 0\) the whole expression simplifies to 
    \begin{align*}
        & \Sc(\parallelExt^{b-1}\seriesExt(\M)\oplus \U_{0,1}) + \Sc(\parallelExt^b(\M)\oplus \U_{1,1})\\
        & = \ \Sc(\M \oplus \U_{1,1}) \cdot (-1)^b\partial_{n+b-1}\circ \cdots \circ \partial_{n+1}\circ \partial_n \left(q_{n+b}^{k+1} \cdots q_{n+2}^{k+1}q_{n+1}^{k+1}\right) \bigg |_{t = 0}\, ,
    \end{align*}
    where we used that \(\partial_n(q_{n}^{k+1}) = -\partial_n(q_{n+1}^{k+1})\) and identified the Schubert cycle \(\sigma_\eta\) with the Schur polynomial \(s_\eta = s_\eta(u_1,\dots, u_{k+1})\). By Lemma~\ref{lem:the last step} we get
    \[
        \Sc(\parallelExt^{b-1}\seriesExt(\M)\oplus \U_{0,1}) + \Sc(\parallelExt^b(\M)\oplus \U_{1,1}) \ = \ \Sc(\M \oplus \U_{1,1}) \ s_{[b^k]}
    \]
    and by Proposition \ref{prop:add loop or coloop} we have that
    \[
        \Sc(\M \oplus \U_{1,1} \oplus \U_{0,b}) \ = \ s_{[b^{k+1}]}\Sc(\M \oplus \U_{1,1}) \, .
    \]
    The statement now follows from Lemma \ref{lem:rmv}.
\end{proof}

\section{The Chow class of snake matroids}\label{sec:chow class of snakes}\noindent
In this section, our aim is to prove that the dual Chow class of a snake matroid is a ribbon Schur function. Recall that \(R = R(k,n)\) denotes the quotient of the ring of symmetric functions by the ideal \(\langle s_\lambda \mid \lambda \not \subseteq k\times (n-k)\rangle\) and is isomorphic to the Chow ring \(A^\bullet(G(k,n))\) as mentioned in Section \ref{subsec:schubert coefficients}. In the remainder of the section we do computations in \(R\). In particular, we write \(\Sc^c(\M) = \sum_\eta d_{\eta^c}(\M)s_\eta\).
\begin{theorem}\label{thm:ribbon snake}
    Let \(\S\) be a snake matroid of rank \(k\) on \([n]\) given by the ribbon \(\ribbon(\mathbf b)\). Then the dual Chow class of \(\S\) in \(R\) is equal to 
    \[
        \Sc^c(\S) \ = \ s_{\ribbon(\mathbf b)} \, ,
    \]
    where \(s_{\ribbon(\mathbf b)}\) is the ribbon Schur function of shape \(\ribbon(\mathbf b)\) .
\end{theorem}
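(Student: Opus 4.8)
The plan is to induct on the rank $k$ of the snake matroid, using the recursive structure described in Remark~\ref{rem:snake series parallel}, namely that $\S(b_1,\dots,b_k) = \parallelExt^{b_k-1}\seriesExt(\S(b_1,\dots,b_{k-1}))$ for $k>1$, with the base case $\S(b_1) = \parallelExt^{b_1-1}(\U_{1,2})$. The key input is the series-parallel identity of Theorem~\ref{thm:main identity}, which I would apply to $\M = \S(b_1,\dots,b_{k-1})$ with $b = b_k$. The induction hypothesis tells us $\Sc^c(\S(b_1,\dots,b_{k-1})) = s_{\ribbon(b_1,\dots,b_{k-1})}$, and the goal is to extract from Theorem~\ref{thm:main identity} that $\Sc^c(\S(b_1,\dots,b_k)) = s_{\ribbon(b_1,\dots,b_k)}$.

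\textbf{Carrying out the induction step.} First I would translate Theorem~\ref{thm:main identity} into the dual (Poincaré) picture, recalling from Proposition~\ref{prop:add loop or coloop} that adding a loop or coloop leaves the dual class $\Sc^c$ unchanged. Thus $\Sc^c(\parallelExt^{b-1}\seriesExt(\M)\oplus\U_{0,1}) = \Sc^c(\parallelExt^{b-1}\seriesExt(\M))$ and $\Sc^c(\parallelExt^b(\M)\oplus\U_{1,1}) = \Sc^c(\parallelExt^b(\M))$. I would also need to understand the dual of the operator $\rmv_b$; since passing to $\Sc^c$ replaces each $s_\eta$ by $s_{\eta^c}$ in a $k\times(n-k)$ (or $(k+1)\times(\cdot)$) rectangle, and removing $b$ boxes at most one per column from $\eta$ corresponds, after complementation, to \emph{adding} $b$ boxes at most one per column — i.e. to Pieri multiplication by $s_{[b]}$, by Theorem~\ref{thm:pieri}. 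So on the dual side the right-hand side of Theorem~\ref{thm:main identity} becomes $s_{[b]}\cdot \Sc^c(\M\oplus\U_{1,1}\oplus\U_{0,b}) = s_{[b]}\cdot \Sc^c(\M)$. Meanwhile $\parallelExt^b(\M)$ differs from $\parallelExt^{b-1}\seriesExt(\M)$ only in that the last "step" of the ribbon is an extra parallel box rather than a series box: the term $\Sc^c(\parallelExt^b(\M))$ should correspond to a ribbon where the top row of $\ribbon(\mathbf b)$ has been merged into the previous row, i.e. $\ribbon(b_1,\dots,b_{k-2},b_{k-1}+b_k-1)$ shape considerations, which by induction (lower rank) is also a known ribbon Schur function. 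Then the identity reads, schematically,
\[
    s_{\ribbon(b_1,\dots,b_{k-1},b)} + s_{\ribbon(b_1,\dots,b_{k-2},b_{k-1}+b-1)} \ = \ s_{[b]}\cdot s_{\ribbon(b_1,\dots,b_{k-1})}\,,
\]
and the final step is to recognize this as a \emph{classical ribbon recursion}: multiplying a ribbon Schur function by $s_{[b]} = h_b$ (complete homogeneous, a single row) yields, via Pieri, the sum over ribbons obtained by appending a horizontal strip of size $b$, which splits into exactly these two cases — attaching all $b$ new boxes in a new top row overlapping in one column (giving $\ribbon(\dots,b_{k-1},b)$) versus attaching $b-1$ of them to extend the previous row and one in a new row (giving the merged shape). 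This standard identity for ribbon (skew) Schur functions, e.g. as in \cite{macmahon_combinatory_analysis} or via the Jacobi--Trudi/Hamel--Goulden determinant, closes the induction. The base case $k=1$ amounts to $\Sc^c(\U_{1,n}) = s_{[n-1]}$, which follows from Klyachko's formula (or directly: $\U_{1,n} = \parallelExt^{n-2}(\U_{1,2})$ and $\ribbon(n-1)$ is a single row).

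\textbf{Main obstacle.} The principal difficulty is bookkeeping: matching the matroid-theoretic ground-set sizes and rectangle dimensions on both sides of Theorem~\ref{thm:main identity} with the combinatorics of ribbon shapes, so that the complementation $\eta\mapsto\eta^c$ is taken in the correct rectangle at each stage and the loop/coloop additions are tracked consistently (the rank jumps by one under $\seriesExt$, and $\U_{1,1}$/$\U_{0,b}$ summands shift both $n$ and $k$). One must also verify that the "second" term $\Sc^c(\parallelExt^b(\M))$ genuinely corresponds to the merged ribbon — this requires either identifying $\parallelExt^b(\S(b_1,\dots,b_{k-1}))$ explicitly as a (disconnected, or lower-rank-connected) lattice path matroid and invoking Proposition~\ref{prop:add loop or coloop} again, or observing that $\parallelExt$ of a snake along its last element produces a ribbon with one fewer "corner." Once the indexing is pinned down, the symmetric-function identity is classical and the argument is short; but getting the indexing exactly right, including the degenerate cases $b_k=1$ and $k=2$, is where care is needed. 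An alternative, possibly cleaner, route I would keep in mind is to bypass induction entirely and instead verify directly that $s_{\ribbon(\mathbf b)}$ satisfies the same recursion as $\Sc^c$ under $\parallelExt$ and $\seriesExt$ together with the correct initial value, appealing to the uniqueness of a valuative invariant determined by its behavior on snakes (cf. Remark~\ref{rem:snake series parallel} and Proposition~\ref{prop:valuative into snakes}).
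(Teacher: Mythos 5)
Your overall strategy is sound and is, in its essential step, the same as the paper's: both arguments specialize Theorem~\ref{thm:main identity} to snakes via Remark~\ref{rem:snake series parallel} (the paper records the resulting recursion as Corollary~\ref{cor:main identity snakes}), dualize using Proposition~\ref{prop:add loop or coloop} together with the observation that \(\rmv_b\) complements to Pieri multiplication by \(s_{[b]}\), and then close an induction on the rank using a classical fact about ribbon Schur functions. You diverge only in the final step: the paper packages the recursion into a determinantal formula (Lemma~\ref{lem:snake determinant}) and invokes the Jacobi--Trudi identity, whereas you invoke MacMahon's concatenation/near-concatenation identity for ribbon Schur functions directly. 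Both routes are legitimate and of comparable difficulty; yours avoids the determinant but requires correctly identifying the ``merged'' snake.

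That identification is where your write-up contains a concrete error, precisely at the point you flag as delicate. Since \(\S(b_1,\dots,b_{k-1}) = \parallelExt^{b_{k-1}-1}\seriesExt(\S(b_1,\dots,b_{k-2}))\), applying \(\parallelExt^{b}\) yields \(\parallelExt^{b_{k-1}+b-1}\seriesExt(\S(b_1,\dots,b_{k-2})) = \S(b_1,\dots,b_{k-2},b_{k-1}+b)\): the last row is lengthened by \(b\) boxes and no box is lost. Your merged composition \((b_1,\dots,b_{k-2},b_{k-1}+b-1)\) is off by one, and as written your schematic identity is dimensionally inconsistent (the two terms on its left-hand side are homogeneous of degrees \(\sum_i b_i + b\) and \(\sum_i b_i + b - 1\)), hence false. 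The corrected identity,
\[
    s_{[b]}\, s_{\ribbon(b_1,\dots,b_{k-1})} \ = \ s_{\ribbon(b_1,\dots,b_{k-1},b)} + s_{\ribbon(b_1,\dots,b_{k-2},b_{k-1}+b)} \, ,
\]
is exactly MacMahon's identity, matches the paper's \(\mathbf b''\) in Corollary~\ref{cor:main identity snakes}, and makes both terms produced by the recursion have rank \(k-1\), so a single induction on the rank closes. With that repair your argument goes through.
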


As explained in Remark~\ref{rem:snake series parallel}, snake matroids are exactly the matroids that can be built recursively by series and parallel extensions on the last element starting with the uniform matroid~\(\U_{1,2}\). In this case, Theorem \ref{thm:main identity} specializes to an identity involving three different snake matroids.

\begin{corollary}\label{cor:main identity snakes}
    Let \(\mathbf b = (b_1,\ldots, b_k, b_{k+1})\).
    For the snake matroid \(\S(\mathbf b)\) of rank \(k + 1\)
    \[
        \Sc(\S(\mathbf b) \oplus \U_{0,1}) \ = \ \rmv_{b_k}\left(\Sc(\S(\mathbf b ') \oplus \U_{1,1} \oplus \U_{0,b_k})\right) - \Sc(\S(\mathbf b'') \oplus \U_{1,1})\, ,
    \]
    where \(\mathbf b' = (b_1,\ldots, b_k)\) and \(\mathbf b'' = (b_1,\ldots, b_{k-1}, b_k + b_{k+1})\).
    Equivalently,
    \[
        \Sc^c(\S(\mathbf b))\ \ = \ s_{[b_k]}\Sc^c(\S(\mathbf b')) - \Sc^c(\S(\mathbf b'')) \, .
    \]
\end{corollary}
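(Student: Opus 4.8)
The statement is a direct specialization of Theorem~\ref{thm:main identity}. The plan is to apply that identity to the rank-\(k\) matroid \(\M = \S(\mathbf b')\) and to the integer \(b = b_{k+1}\), which produces
\[
    \Sc\bigl(\parallelExt^{b_{k+1}-1}\seriesExt(\S(\mathbf b'))\oplus\U_{0,1}\bigr) \,+\, \Sc\bigl(\parallelExt^{b_{k+1}}(\S(\mathbf b'))\oplus\U_{1,1}\bigr) \;=\; \rmv_{b_{k+1}}\bigl(\Sc(\S(\mathbf b')\oplus\U_{1,1}\oplus\U_{0,b_{k+1}})\bigr),
\]
and then to recognize each matroid occurring here as one of \(\S(\mathbf b)\), \(\S(\mathbf b')\), \(\S(\mathbf b'')\) (with loops and coloops possibly adjoined) by means of Remark~\ref{rem:snake series parallel}.

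Two identifications are needed. The first is \(\parallelExt^{b_{k+1}-1}\seriesExt(\S(\mathbf b')) = \S(\mathbf b)\), which is exactly the recursion of Remark~\ref{rem:snake series parallel} for the composition \(\mathbf b = (b_1,\dots,b_k,b_{k+1})\). The second is \(\parallelExt^{b_{k+1}}(\S(\mathbf b')) = \S(\mathbf b'')\): writing \(\S(\mathbf b') = \parallelExt^{b_k-1}\seriesExt(\S(b_1,\dots,b_{k-1}))\) for \(k\geq 2\) (or \(\S(\mathbf b') = \parallelExt^{b_1-1}(\U_{1,2})\) for \(k=1\)) and using that parallel extensions on the last element compose additively, one gets \(\parallelExt^{b_{k+1}}(\S(\mathbf b')) = \parallelExt^{\,b_k+b_{k+1}-1}\seriesExt(\S(b_1,\dots,b_{k-1})) = \S(b_1,\dots,b_{k-1},b_k+b_{k+1}) = \S(\mathbf b'')\), again by Remark~\ref{rem:snake series parallel} (with the obvious modification when \(k=1\)). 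Substituting both identities into the display and rearranging gives the first claimed formula; all four matroids appearing there have rank \(k+1\) and ground set of size \(|\mathbf b|+2\), so the equality takes place in a single ring \(A^\bullet(G(k+1,|\mathbf b|+2))\).

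For the second formula, the plan is to take the Poincaré dual of the first inside \(R\). By Proposition~\ref{prop:add loop or coloop} the dual class is unchanged under adjoining loops or coloops, so dualizing sends \(\Sc(\S(\mathbf b)\oplus\U_{0,1})\mapsto\Sc^c(\S(\mathbf b))\), \(\Sc(\S(\mathbf b'')\oplus\U_{1,1})\mapsto\Sc^c(\S(\mathbf b''))\) and \(\Sc(\S(\mathbf b')\oplus\U_{1,1}\oplus\U_{0,b_{k+1}})\mapsto\Sc^c(\S(\mathbf b'))\). It remains to translate the operator \(\rmv_b\): under the rectangle complement \(\eta\mapsto\eta^c\), removing \(b\) boxes from \(\eta\) with at most one per column is the same as adjoining a horizontal strip of size \(b\) to \(\eta^c\), so by Pieri's rule (Theorem~\ref{thm:pieri}) applying \(\rmv_b\) to a Chow class and then dualizing equals multiplying its Poincaré dual by \(s_{[b]}\) in \(R\). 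Applying this dictionary term by term turns the first formula into \(\Sc^c(\S(\mathbf b)) = s_{[b_{k+1}]}\,\Sc^c(\S(\mathbf b')) - \Sc^c(\S(\mathbf b''))\). The algebra here is essentially forced once Theorem~\ref{thm:main identity} is available; the steps that need genuine care are verifying that the \(\rmv_b\)/Pieri correspondence remains exact after truncating to the rectangle in \(R\) (no Schur functions outside the \(k\times(n-k)\) rectangle are lost or spuriously introduced), and tracking the ground-set sizes, since the auxiliary snakes \(\S(\mathbf b')\) and \(\S(\mathbf b'')\) do not on their own live in the same Grassmannian as \(\S(\mathbf b)\).
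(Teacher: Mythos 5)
Your argument is correct and is precisely the one the paper intends: the corollary is presented as an immediate specialization of Theorem~\ref{thm:main identity} to \(\M = \S(\mathbf b')\) and \(b = b_{k+1}\), with the three matroids identified via Remark~\ref{rem:snake series parallel}, and your passage to the dual form via Proposition~\ref{prop:add loop or coloop} and the \(\rmv_b\)/Pieri dictionary is the standard translation. One point you should state explicitly rather than gloss over: your substitution yields \(\rmv_{b_{k+1}}\), \(\U_{0,b_{k+1}}\) and \(s_{[b_{k+1}]}\), whereas the statement as printed has \(b_k\) in those positions; this is a typo in the statement (Example~\ref{ex:snake recursion comp} uses the multiplier \(s_{[3]} = s_{[b_{k+1}]}\), and Proposition~\ref{prop:syt recursion} and the proof of Theorem~\ref{thm:combinatorial Sc snakes} likewise use \(s_{[b_{k+1}]}\)), so your derivation proves the corrected version and does not literally reproduce the display. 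As for the truncation worry you raise at the end, it in fact dissolves: the Pieri terms of \(s_{[b_{k+1}]}\Sc^c(\S(\mathbf b'))\) whose first row overflows the rectangle are exactly cancelled by \(\Sc^c(\S(\mathbf b''))\), as the example illustrates, so the identity already holds at the level of symmetric functions.
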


\begin{example}
    \label{ex:snake recursion comp}
    Corollary \ref{cor:main identity snakes} allows us to compute the Schubert coefficients of the snake matroid \(\S(2,1,2,3)\) under the assumption that we know that 
    \[
        \Sc^c(2,1,2) \ = \ s_{[2,2,1]} + s_{[3,1,1]} \ \text { and } \  \Sc^c(2,1,5) \ = \ s_{[5,2,1]} + s_{[6,1,1]} \, .
    \]
    Applying Corollary \ref{cor:main identity snakes} together with Pieri's formula (Theorem~\ref{thm:pieri}) we obtain
    \begin{align*}
        & \Sc^c(\S(2,1,2,3)) \ = \ s_{[3]}\Sc^c(\S(2,1,2)) - \Sc^c(\S(2,1,5)) \\
        &= \ s_{[3]}(s_{[2,2,1]} + s_{[3,1,1]}) - (s_{[5,2,1]} + s_{[6,1,1]}) \\
        & = \ s_{[5,2,1]} + s_{[5,1,1,1]} + s_{[4,3,1]} + s_{[4,2,2]} + 2 s_{[4,2,1,1]} + s_{[3,3,1,1]} + s_{[3,2,2,1]} \, .
    \end{align*}
\end{example}
 Let \(\schurmatrix\) denote the upper triangular infinite matrix over \(R\) defined by \(\schurmatrix=[s_{[j-i]}]_{0 \leq i,j}\). For any pair of \(k\) row and column indices \(\mathbf{r} = \{r_1,\dots,r_k\}\) and \(\mathbf{c} = \{c_1,\dots,c_k\}\) let \(\schurmatrix(\mathbf{r}, \mathbf{c})\) denote the minor of \(\schurmatrix\) associated to \(\mathbf{r}\) and \(\mathbf{c}\). That is, \(\schurmatrix(\mathbf{r}, \mathbf{c}) = \det([s_{[c_j-r_i]}]_{1 \leq i,j \leq k})\). With this notation we can recover the Chow class of a snake matroid as such a determinant.
\begin{definition}
  \label{def:comb formula}
  For a sequence \(\mathbf b = (b_1, \ldots, b_k)\) of nonnegative integers the \emph{descent set associated to \(\mathbf b\)} is
  \[
  \Des(\mathbf b) = \ \SetOf{\sum_{i=1}^s b_i}{ 1 \leq s \leq k-1} \, .
  \]
  We also say that this is the descent set associated to the snake matroid \(\S(\mathbf b)\).
\end{definition}
\begin{lemma}
    \label{lem:snake determinant}
    Let \(\S = \S(\mathbf{b})\) be a snake matroid of rank \(k\) on \([n]\) and let \(\mathcal D~=~\Des(\mathbf b)\). Then we have
    \[
        \schurmatrix(\{0\} \cup \mathcal D, \mathcal D \cup \{n-1\}) \ = \ \sum_{\eta \vdash n-1} d_{\eta^c}(\S) s_\eta \ = \ \Sc^c(\S)\, .
    \]
\end{lemma}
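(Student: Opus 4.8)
The plan is to prove the determinantal formula in Lemma~\ref{lem:snake determinant} by induction on the rank $k$, using the recursion for $\Sc^c$ of snake matroids given in Corollary~\ref{cor:main identity snakes}, and matching it against the Laplace expansion of the determinant $\schurmatrix(\{0\}\cup\mathcal D,\mathcal D\cup\{n-1\})$ along its last column. The base case $k=1$ is $\S(b_1)=\parallelExt^{b_1-1}(\U_{1,2})$, where $\mathcal D=\varnothing$, $n=b_1+1$, and the claimed determinant is the $1\times 1$ minor $s_{[n-1]}=s_{[b_1]}$; this matches $\Sc^c(\S(b_1))=s_{\ribbon(b_1)}=s_{[b_1]}$ directly (or can be checked from the $K$-theoretic definition). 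For the inductive step, write $\mathbf b=(b_1,\dots,b_k,b_{k+1})$, $\mathbf b'=(b_1,\dots,b_k)$, $\mathbf b''=(b_1,\dots,b_{k-1},b_k+b_{k+1})$, and set $N=n-1=\sum b_i$, $N'=\sum_{i\le k}b_i$, $N''=N'+b_{k+1}-b_k$ (the relevant ``size minus one'' parameters for the smaller snakes, with the appropriate convention). The descent sets are $\mathcal D=\Des(\mathbf b)=\mathcal D'\cup\{N'\}$ where $\mathcal D'=\Des(\mathbf b')$, and $\Des(\mathbf b'')=\mathcal D''$ is $\mathcal D'$ with its last element $N'$ replaced by... — here one must be careful: $\mathcal D''$ keeps all of $\mathcal D'$ except the interpretation of the last partial sum shifts. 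I would set this up so that the row index set for $\S(\mathbf b)$ is $\{0\}\cup\mathcal D=\{0\}\cup\mathcal D'\cup\{N'\}$ and the column index set is $\mathcal D\cup\{N\}=\mathcal D'\cup\{N',N\}$.

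The key computation is the Laplace expansion of $\schurmatrix(\{0\}\cup\mathcal D'\cup\{N'\},\,\mathcal D'\cup\{N',N\})$ along the last column, whose entries are $s_{[N-0]}=s_{[N]}$ in the top row, $s_{[N-d]}$ for $d\in\mathcal D'$, and $s_{[N-N']}=s_{[b_{k+1}]}$ in the bottom row. Deleting the bottom row and last column leaves the minor $\schurmatrix(\{0\}\cup\mathcal D',\mathcal D'\cup\{N'\})$, which by the inductive hypothesis applied to $\S(\mathbf b')$ (a snake of rank $k$ on $N'+1$ elements) equals $\Sc^c(\S(\mathbf b'))$; its cofactor sign is $(-1)^{(k+1)+(k+1)}=+1$ and it comes multiplied by the bottom-right entry $s_{[b_{k+1}]}$, but one must instead expand to isolate the term $s_{[b_k]}\Sc^c(\S(\mathbf b'))$ matching Corollary~\ref{cor:main identity snakes}. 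The cleaner route: expand along the last \emph{row} (entries $s_{[N'-r]}$ for $r\in\{0\}\cup\mathcal D'$ and $s_{[N'-N']}=s_{[0]}=1$ is not right either since $N'<N$) — rather, I expect the correct move is a two-term expansion of the last column: the minor equals $s_{[b_k]}\cdot\schurmatrix(\{0\}\cup\mathcal D',\mathcal D'\cup\{N\})$ minus a correction. Matching with Corollary~\ref{cor:main identity snakes}, which reads $\Sc^c(\S(\mathbf b))=s_{[b_k]}\Sc^c(\S(\mathbf b'))-\Sc^c(\S(\mathbf b''))$, I would identify $\schurmatrix(\{0\}\cup\mathcal D',\mathcal D'\cup\{N'-b_k+N\dots\})$ — more precisely the minor obtained by deleting the row indexed $N'$ — as $\Sc^c(\S(\mathbf b''))$ via the inductive hypothesis applied to $\S(\mathbf b'')$, whose descent set is exactly $\mathcal D'$ (since $\mathbf b''$ merges the last two parts) and whose ground set has size $N''+1=N-b_k+1$. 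The determinant identity relating these two $k\times k$ minors and the $(k+1)\times(k+1)$ minor is a standard Laplace/Desnanot–Jacobi–type cofactor expansion along the last column, using that all entries of $\schurmatrix$ in a fixed ``superdiagonal position'' are equal.

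The main obstacle is purely bookkeeping: getting the index sets $\{0\}\cup\mathcal D$, $\mathcal D\cup\{n-1\}$ and their relation to the shifted index sets for $\mathbf b'$ and $\mathbf b''$ exactly right, including the edge cases where $b_k=1$ or where $\mathbf b'$ has length $1$ (so $\mathcal D'=\varnothing$), and checking that the cofactor signs in the Laplace expansion conspire to produce precisely the minus sign in Corollary~\ref{cor:main identity snakes}. Once the expansion of the last column is written as (entry $\cdot$ complementary minor) summed over the two ``active'' rows (the row indexed $N'$, contributing $s_{[N-N']}=s_{[b_{k+1}]}$ times a minor, and conceptually the column-shift giving the $s_{[b_k]}$ term), the two resulting $k\times k$ minors are recognized by induction as $\Sc^c(\S(\mathbf b'))$ (after a harmless column relabeling, since shifting the last column index by a constant and simultaneously... — actually $\schurmatrix$ entries depend only on the difference, so replacing $N$ by $N'$ in the last column of the deleted minor changes the minor, and this is exactly where the $-\Sc^c(\S(\mathbf b''))$ term must come from). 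I would therefore phrase the induction as: expand the last column, collect the term where the last column's entry in the $N'$-row is used versus not used, show the ``not used'' contribution is $s_{[b_k]}\,\schurmatrix(\{0\}\cup\mathcal D',\mathcal D'\cup\{N\})$ and, after the substitution $N\rightsquigarrow N'$ forced by removing the $N'$-column, that this is $s_{[b_k]}\Sc^c(\S(\mathbf b'))$, while the ``used'' contribution is $-\schurmatrix(\{0\}\cup\mathcal D',\mathcal D'\cup\{N-b_k\})=-\Sc^c(\S(\mathbf b''))$; conclude by Corollary~\ref{cor:main identity snakes}. The second equality $\sum_{\eta\vdash n-1}d_{\eta^c}(\S)s_\eta=\Sc^c(\S)$ in the statement is then just the definition of $\Sc^c$ recalled at the start of the section.
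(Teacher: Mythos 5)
Your overall strategy---induction on the rank, base case the \(1\times1\) minor \(s_{[n-1]}=s_{[b_1]}\), inductive step a two-term cofactor expansion of the determinant that reproduces the recursion of Corollary~\ref{cor:main identity snakes}---is exactly the paper's proof. But the expansion itself is never carried out correctly, and as written the argument does not close. The last \emph{column} of \(\schurmatrix(\{0\}\cup\mathcal D,\mathcal D\cup\{N\})\), which you repeatedly try to expand along, has entries \(s_{[N]},s_{[N-d_1]},\dots,s_{[b_{k+1}]}\), all generically nonzero, so expanding along it yields \(k+1\) terms, not two; and there is no term of a column expansion in which that column is ``not used.'' The line with only two nonzero entries is the last \emph{row}, indexed by \(N'=d_k\): its entries are \(s_{[d_j-N']}=0\) for \(j<k\), then \(s_{[0]}=1\) in the column indexed \(N'\) and \(s_{[N-N']}=s_{[b_{k+1}]}\) in the column indexed \(N\). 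Expanding along that row gives
\[
\schurmatrix(\{0\}\cup\mathcal D,\,\mathcal D\cup\{N\}) \ = \ s_{[b_{k+1}]}\,\schurmatrix(\{0\}\cup\mathcal D',\,\mathcal D'\cup\{N'\}) \ - \ \schurmatrix(\{0\}\cup\mathcal D',\,\mathcal D'\cup\{N\})\, ,
\]
and the two minors are identified by the inductive hypothesis: the first is \(\Sc^c(\S(\mathbf b'))\) (descent set \(\mathcal D'\), ground set of size \(N'+1\)), the second is \(\Sc^c(\S(\mathbf b''))\), because \(\mathbf b''\) merges the last two parts and hence has the \emph{same} total size \(N\) and descent set \(\mathcal D'\). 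Your final identifications get both minors wrong: you attach the column set \(\mathcal D'\cup\{N\}\) to the \(\mathbf b'\) term, posit a column index \(N-b_k\) for the \(\mathbf b''\) term (its ground set has \(N+1=n\) elements, not \(N-b_k+1\)), and invoke a ``substitution \(N\rightsquigarrow N'\)'' that has no place in a Laplace expansion.

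Much of your confusion traces to the coefficient: the determinant naturally produces \(s_{[b_{k+1}]}\), the \emph{last} part of \(\mathbf b\), and that is the correct coefficient. The \(s_{[b_k]}\) in the printed statement of Corollary~\ref{cor:main identity snakes} is a typo, as Example~\ref{ex:snake recursion comp} (which multiplies by \(s_{[3]}=s_{[b_4]}\)) and the proof of Theorem~\ref{thm:combinatorial Sc snakes} (which uses \(s_{[b_{k+1}]}\)) both confirm; you should not contort the expansion to produce \(s_{[b_k]}\). With the last-row expansion written as above and the minors correctly matched, your induction goes through and coincides with the paper's argument (the paper says ``first column,'' which gives the equivalent recursion stripping \(b_1\) instead of \(b_{k+1}\); its own worked example performs precisely the last-row expansion).
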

\begin{proof}
    The proof is by induction on the rank, with the case \(k=1\) being trivial. By expanding along the first column we obtain exactly the recursion in Corollary \ref{cor:main identity snakes}, from which we conclude.
\end{proof}

\begin{example}
    One can easily check that the values in Example \ref{ex:snake recursion comp} agree with
    \begin{align*}
    \Sc^c(2,1,2,3) \ &= \ \det \begin{pmatrix}
        s_{[2]} & s_{[3]} & s_{[5]} & s_{[8]} \\
        1 & s_{[1]} & s_{[3]} & s_{[6]} \\
        0 & 1 & s_{[2]} & s_{[5]}\\
        0 & 0 & 1 & s_{[3]}\\
    \end{pmatrix}\\
    &\\
    &= \ s_{[3]} \det \begin{pmatrix}
        s_{[2]} & s_{[3]} & s_{[5]}  \\
        1 & s_{[1]} & s_{[3]} \\
        0 & 1 & s_{[2]}\\
    \end{pmatrix}
    - 1 \cdot \det \begin{pmatrix}
        s_{[2]} & s_{[3]} & s_{[8]}  \\
        1 & s_{[1]} & s_{[6]} \\
        0 & 1 & s_{[5]} \\
    \end{pmatrix} \\
    &\\
    &= \ s_{[3]} \Sc^c(\S(2,1,2)) - \Sc^c(\S(2,1,5)) \, .
    \end{align*}
\end{example}

We are now ready to prove Theorem \ref{thm:ribbon snake}.
\begin{proof}[Proof of Theorem \ref{thm:ribbon snake}]
    Let \(\S = \S(\mathbf b)\) be the snake matroid with associated ribbon \(\ribbon(\mathbf b)\). By inverting the order of the entries of \(\mathbf b\) we obtain an isomorphic snake matroid \(\S' = \S(\mathbf b')\) with \(\mathbf b' = (b_1', \dots , b_k')\) and associated ribbon \(\ribbon(\mathbf b')\). Let \(\Des(\mathbf b') = \{d'_1, \ldots,d'_{k-1} \}\) be the descent set associated to \(\S'\) and set \(d'_0 = 0\) and \(d'_k = n-1\). Then by Lemma \ref{lem:snake determinant} we have
    \[
        \Sc^c(\S) \ = \ \Sc^c(\S') \ = \ \det\left(\left[s_{[d'_j-d'_{i-1}]}\right]_{i,j = 1}^k\right) \, .
    \]
    The ribbon \(\ribbon(\mathbf b) = \lambda/\mu\) satisfies
    %\(\lambda_i = n-k+i-1- \sum_{m = 1}^{i-1}b_m'\), \ \(\mu_j = n-k+j-1-\sum_{m = 1}^{j}b_m'\) and \(d'_j = \sum_{m = 1}^jb_m'\)
    \[
        d'_j - d'_{i-1} \ = \ \lambda_i - \mu_j - i + j \qquad 1 \leq i,j \leq k \, .
    \]
    Now the result follows from the Jacobi--Trudi identity, see for example \cite[Theorem.16.1]{Stanley-EC2}. 
\end{proof}
The following is an immediate consequence of expanding a skew Schur function in the Schur basis.
\begin{corollary}\label{cor:Sc_coefficients_snakes_are_LR_ribbons}
    Let \(\S = \S(\mathbf b)\) be a snake matroid of rank \(k\) on \([n]\) with associated ribbon \(\ribbon(\mathbf b) = \lambda/\mu\). Then
    \[
    d_{\eta^c}(\S) \ = \ c^{\lambda}_{\mu,\eta} \, .
    \]
\end{corollary}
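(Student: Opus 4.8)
The plan is to deduce the statement directly from Theorem~\ref{thm:ribbon snake} together with the Schur expansion of a skew Schur function recalled in Section~\ref{sec:tableaux}. Write the associated ribbon as \(\ribbon(\mathbf b) = \lambda/\mu\), with \(\lambda\) having \(k\) parts and \(\lambda_1 = n-k\), as in the encoding of connected lattice path matroids. First I would invoke Theorem~\ref{thm:ribbon snake} to identify, inside the ring \(R = R(k,n)\),
\[
    \Sc^c(\S) \ = \ s_{\ribbon(\mathbf b)} \ = \ s_{\lambda/\mu}\, .
\]
Next I would apply the identity \(s_{\lambda/\mu} = \sum_{\eta} c^\lambda_{\mu,\eta}\, s_\eta\), which expresses a skew Schur function in the Schur basis. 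Finally I would compare with the definition of the dual Chow class, \(\Sc^c(\S) = \sum_\eta d_\eta(\S)\, s_{\eta^c} = \sum_\nu d_{\nu^c}(\S)\, s_\nu\), and read off \(d_{\eta^c}(\S) = c^\lambda_{\mu,\eta}\) by equating coefficients of \(s_\eta\) and renaming the index.

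The only point requiring a word of care is that \(R\) is a proper quotient of the ring of symmetric functions, so equating coefficients there could in principle lose information if some \(s_\eta\) with \(\eta \not\subseteq k\times(n-k)\) occurred in the expansion of \(s_{\lambda/\mu}\). I would rule this out by observing that \(c^\lambda_{\mu,\eta} \neq 0\) forces \(\eta \subseteq \lambda\); since \(\lambda\) has at most \(k\) rows and \(\lambda_1 = n-k\), every such \(\eta\) already lies in \(k\times(n-k)\), so no term is killed in passing to \(R\) and the comparison of coefficients is valid.

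I do not expect a genuine obstacle here: all the content sits in Theorem~\ref{thm:ribbon snake}, and this corollary is merely the translation of that statement into Littlewood--Richardson language via the standard skew-Schur expansion. (If one prefers a route not routing through the symmetric-function interpretation, one can instead start from Lemma~\ref{lem:snake determinant}, recognize the determinant \(\schurmatrix(\{0\}\cup\mathcal D,\, \mathcal D\cup\{n-1\})\) as the Jacobi--Trudi expression for \(s_{\lambda/\mu}\), and expand from there; but this essentially re-derives Theorem~\ref{thm:ribbon snake} along the way.)
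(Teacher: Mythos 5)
Your proposal is correct and is precisely the argument the paper intends: the corollary is stated as an immediate consequence of Theorem~\ref{thm:ribbon snake} together with the skew-Schur expansion \(s_{\lambda/\mu}=\sum_\eta c^\lambda_{\mu,\eta}s_\eta\) recalled in Section~\ref{sec:tableaux}. Your extra check that every \(\eta\) with \(c^\lambda_{\mu,\eta}\neq 0\) satisfies \(\eta\subseteq\lambda\subseteq k\times(n-k)\), so that nothing is lost in the quotient \(R\), is a point the paper leaves implicit and is worth having spelled out.
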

By expanding the determinant in Lemma \ref{lem:snake determinant} we get an alternative formula for the Chow class of snake matroids. Let \(\mathbf{b} = (b_1,b_2,\dots, b_k)\) be a composition of \(n-1\) with \(k\) parts. For any subset \(A = \{i_1,i_2, \dots, i_m\} \subset [k-1]\) let \(\mathbf{b}(A)\) denote the composition of \(n-1\) with \(|A| + 1\) parts given by 
\[
    \mathbf{b}(A) \ = \ (b_1 + \dots + b_{i_1}, b_{i_1+1} + \dots + b_{i_2}, \dots, b_{i_m+1} + \dots + b_k) \, .
\]
\begin{corollary}
    \label{cor:snake alternating formula}
    With notation as above let \(\S = \S(\mathbf{b})\) be a snake matroid of rank \(k\) on~\([n]\). Then 
    \[
        \Sc^c(\S) \ = \ \sum_{A \subseteq [k-1]} (-1)^{k-1-|A|} \prod_{j \in \mathbf{b}(A)} s_{[j]} \, .
    \]
\end{corollary}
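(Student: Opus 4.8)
The plan is to apply the Leibniz (permutation) expansion to the $k\times k$ determinant provided by Lemma~\ref{lem:snake determinant} and to group the surviving terms according to cycle type. Set $c_i=b_1+\dots+b_i$ for $0\le i\le k$, so that $c_0=0$, $c_k=n-1$ and $\Des(\mathbf b)=\{c_1,\dots,c_{k-1}\}$; then $\{0\}\cup\Des(\mathbf b)=\{c_0,\dots,c_{k-1}\}$ and $\Des(\mathbf b)\cup\{n-1\}=\{c_1,\dots,c_k\}$, so Lemma~\ref{lem:snake determinant} reads $\Sc^c(\S)=\det\bigl([\,s_{[c_j-c_{i-1}]}\,]_{1\le i,j\le k}\bigr)$. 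This matrix is upper Hessenberg: the entry $s_{[c_j-c_{i-1}]}$ vanishes when $j<i-1$ because then $c_j-c_{i-1}<0$, and its subdiagonal entries are $s_{[c_{i-1}-c_{i-1}]}=s_{[0]}=1$. Expanding, $\Sc^c(\S)=\sum_{\sigma\in\mathfrak S_k}\operatorname{sgn}(\sigma)\prod_{i=1}^k s_{[c_{\sigma(i)}-c_{i-1}]}$, and since the $c_i$ are strictly increasing a term survives only if $\sigma(i)\ge i-1$ for every $i$.

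The heart of the argument is to describe these permutations. I would show that $\sigma\in\mathfrak S_k$ satisfies $\sigma(i)\ge i-1$ for all $i$ if and only if $\sigma$ is a product of disjoint cycles, one for each block of a partition of $[k]$ into consecutive intervals, where the cycle on an interval $[a,b]$ sends $a\mapsto b$ and $j\mapsto j-1$ for $a<j\le b$. Partitions of $[k]$ into consecutive intervals are in obvious bijection with subsets $A\subseteq[k-1]$ (the interior cut points), the decomposition attached to $A=\{i_1<\dots<i_m\}$ being $[1,i_1],[i_1+1,i_2],\dots,[i_m+1,k]$. This equivalence is proved by a short induction on $k$, distinguishing $\sigma(k)=k$ from $\sigma(k)=k-1$ and, in the latter case, splicing the value $k$ out of its cycle; this bit of bookkeeping is the one genuinely delicate point, and I expect it to be the main obstacle to a clean write-up.

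Granting this, it remains to compute, for the permutation $\sigma_A$ attached to $A=\{i_1<\dots<i_m\}$ (with $i_0:=0$, $i_{m+1}:=k$), its sign and its monomial. In the interval $[i_{\ell-1}+1,i_\ell]$ the left endpoint contributes the factor $s_{[c_{i_\ell}-c_{i_{\ell-1}}]}$ and every other position $j$ contributes $s_{[c_{j-1}-c_{j-1}]}=1$, so $\prod_i s_{[c_{\sigma_A(i)}-c_{i-1}]}=\prod_{\ell=1}^{m+1}s_{[c_{i_\ell}-c_{i_{\ell-1}}]}$; as $c_{i_\ell}-c_{i_{\ell-1}}=b_{i_{\ell-1}+1}+\dots+b_{i_\ell}$, this is exactly $\prod_{j\in\mathbf b(A)}s_{[j]}$. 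Each interval cycle on $[a,b]$ has length $b-a+1$, hence sign $(-1)^{b-a}$, so $\operatorname{sgn}(\sigma_A)=(-1)^{\sum_\ell(\mathrm{length}_\ell-1)}=(-1)^{k-(m+1)}=(-1)^{k-1-|A|}$. Summing over $\sigma$, equivalently over $A\subseteq[k-1]$, yields the stated identity. A couple of small points to address at the end: all manipulations are legitimate in the quotient ring $R$, since the Leibniz formula holds over any commutative ring and it is harmless that some factors $s_{[j]}$ vanish there; and one could alternatively derive the formula by combining Theorem~\ref{thm:ribbon snake} with the classical expansion of a ribbon Schur function into products of complete homogeneous symmetric functions, but the determinant route keeps the proof self-contained.
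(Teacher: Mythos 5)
Your proposal is correct and follows the same route the paper intends: Corollary \ref{cor:snake alternating formula} is stated as a direct expansion of the determinant in Lemma \ref{lem:snake determinant}, and your Leibniz-expansion argument — identifying the surviving permutations of the upper Hessenberg matrix $[s_{[c_j-c_{i-1}]}]$ with products of interval cycles indexed by subsets $A\subseteq[k-1]$, and checking that the sign is $(-1)^{k-1-|A|}$ and the monomial is $\prod_{j\in\mathbf b(A)}s_{[j]}$ — is exactly the bookkeeping the paper leaves implicit. The details check out (the splicing induction, the sign count via cycle lengths, and the match with the paper's definition of $\mathbf b(A)$), so this is a complete write-up of the paper's one-line proof.
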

When expanding the products of Schur functions in the expression above in the Schur basis, the coefficients are the well known \emph{Kostka numbers}, see \cite[Corollary 7.12.4]{Stanley-EC2}. That is, if \(\mathbf{b}\) is a composition of \(n-1\), then 
\[
    \prod_{j \in \mathbf{b}}s_{[j]} \ = \ \sum_{\eta \vdash n-1} K_{\eta, \mathbf{b}} s_\eta \, ,
\]
\begin{corollary}
    \label{cor:snake Kostka}
    Let \(\S = \S(\mathbf{b})\) be a snake matroid of rank \(k\) on \([n]\) and let \(\eta\subseteq k \times (n-k)\) be a partition of \(n-1\). Then 
    \[
        d_{\eta^c}(\S) \ = \sum_{A\subseteq [k-1]} (-1)^{k-1-|A|} K_{\eta, \mathbf{b}(A)} \, .
    \]
\end{corollary}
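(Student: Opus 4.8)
The plan is to deduce the statement directly from the alternating-sum expansion in Corollary~\ref{cor:snake alternating formula} together with the Schur-basis expansion of a product of single-row Schur functions recorded immediately above its statement. Recall that for any composition $\mathbf{c}$ of $n-1$ one has $\prod_{j \in \mathbf{c}} s_{[j]} = \sum_{\eta \vdash n-1} K_{\eta, \mathbf{c}}\, s_\eta$, where $K_{\eta,\mathbf{c}}$ is the Kostka number counting semistandard Young tableaux of shape $\eta$ and content $\mathbf{c}$. First I would apply this to each composition $\mathbf{b}(A)$ for $A \subseteq [k-1]$, substitute into the right-hand side of Corollary~\ref{cor:snake alternating formula}, and interchange the two finite sums to obtain
\[
    \Sc^c(\S) \ = \ \sum_{\eta \vdash n-1} \left( \sum_{A \subseteq [k-1]} (-1)^{k-1-|A|}\, K_{\eta, \mathbf{b}(A)} \right) s_\eta \, .
\]

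Next I would observe that these computations take place in the quotient ring $R = R(k,n)$, in which every Schur function $s_\eta$ with $\eta \not\subseteq k\times(n-k)$ vanishes; thus the outer sum above is effectively over partitions $\eta$ of $n-1$ that are contained in $k\times(n-k)$. Since $\{s_\eta : \eta \subseteq k\times(n-k)\}$ is a basis of $R$ and, by definition, $\Sc^c(\S) = \sum_\eta d_{\eta^c}(\S)\, s_\eta$, comparing the coefficients of $s_\eta$ yields
\[
    d_{\eta^c}(\S) \ = \ \sum_{A \subseteq [k-1]} (-1)^{k-1-|A|}\, K_{\eta, \mathbf{b}(A)}
\]
for every partition $\eta$ of $n-1$ contained in $k\times(n-k)$, which is exactly the claim.

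There is essentially no obstacle here: the substantive work has already been done in Corollary~\ref{cor:snake alternating formula} and in the classical Kostka expansion, and the only point deserving a word of care is the bookkeeping in the quotient $R$, where the out-of-rectangle Schur functions drop out rather than contribute spurious terms. One could alternatively run the argument straight from Lemma~\ref{lem:snake determinant} by expanding the determinant into signed products of single-row Schur functions and then applying the Kostka expansion to each product, but routing through Corollary~\ref{cor:snake alternating formula} keeps the signs transparent and makes the bijective content of $\mathbf{b}(A)$ explicit.
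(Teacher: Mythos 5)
Your proposal is correct and matches the paper's (implicit) argument exactly: the corollary is obtained by substituting the Kostka expansion $\prod_{j\in\mathbf{b}(A)} s_{[j]} = \sum_\eta K_{\eta,\mathbf{b}(A)} s_\eta$ into Corollary~\ref{cor:snake alternating formula} and comparing coefficients of $s_\eta$ in $R(k,n)$. Your remark about out-of-rectangle Schur functions vanishing in the quotient is the right piece of bookkeeping and is consistent with the paper's conventions.
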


Corollary \ref{cor:snake Kostka} greatly simplifies for specific shapes of the partition \(\eta\). 
\begin{corollary}\label{cor:Kostka}
    Let \(\S(\mathbf b)\) be a snake matroid of rank \(k\) on \([n]\) and let \(\eta \vdash n-1\) be a partition of length \(k\). Then 
    \[
    d_{\eta^c}(\S) = K_{\eta, \mathbf b} \,.
    \]
    Similarly, if \(\eta\) is a partition with \(\eta_1 = n-k\), then
    \[
    d_{\eta^c}(\S) = K_{\eta,\mathbf b^*} \, ,
    \]
    where \(b^*\) is the composition associated to the transpose of the ribbon, i.e., \(\ribbon(\mathbf b^*) = \ribbon(\mathbf b)^t \). 
\end{corollary}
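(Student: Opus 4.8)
The plan is to read off the first identity from the alternating Kostka formula of Corollary~\ref{cor:snake Kostka} and to deduce the second from the first by matroid duality. For the first statement, recall that Corollary~\ref{cor:snake Kostka} gives
\[
    d_{\eta^c}(\S(\mathbf b)) \ = \ \sum_{A \subseteq [k-1]} (-1)^{k-1-|A|}\, K_{\eta,\mathbf b(A)}\,,
\]
where \(\mathbf b(A)\) is a composition with exactly \(|A|+1\) parts. I would argue that only one term survives: if \(A\) is a proper subset of \([k-1]\), then \(\mathbf b(A)\) has at most \(k-1\) parts, so every semistandard tableau of content \(\mathbf b(A)\) uses only the symbols \(1,\dots,k-1\); but \(\eta\) has length \(k\), hence its first column has \(k\) cells, which cannot carry a strictly increasing sequence drawn from only \(k-1\) symbols, forcing \(K_{\eta,\mathbf b(A)}=0\). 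Only \(A=[k-1]\) remains, and since \(\mathbf b([k-1])=\mathbf b\) with sign \((-1)^{0}=1\), this yields \(d_{\eta^c}(\S(\mathbf b))=K_{\eta,\mathbf b}\). This step is completely routine.

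For the second statement I would dualize. I plan to use the behaviour of Schubert coefficients under matroid duality recorded in Section~\ref{subsec:schubert coefficients}, namely \(d_{\zeta}(\M^{*})=d_{\zeta^{t}}(\M)\); the fact that complementation inside a rectangle commutes with transposition, \((\eta^{c})^{t}=(\eta^{t})^{c}\); and the fact that the dual of a snake matroid is again a snake matroid, with \(\S(\mathbf b)^{*}\cong\S(\mathbf b^{*})\). The last point follows either because duality of lattice path matroids corresponds to transposing the defining skew shape while \(\ribbon(\mathbf b)^{t}=\ribbon(\mathbf b^{*})\) by definition, or from Remark~\ref{rem:snake series parallel}: duality interchanges series and parallel extensions and fixes \(\U_{1,2}\), so \(\S(\mathbf b)^{*}\) is a snake, and applying the \(\omega\)-involution to Theorem~\ref{thm:ribbon snake} together with \(\Sc^{c}(\M^{*})=\omega(\Sc^{c}(\M))\) identifies its ribbon as \(\ribbon(\mathbf b)^{t}\). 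Granting this, for a partition \(\eta\) with \(\eta_{1}=n-k\) I would chain
\[
    d_{\eta^{c}}(\S(\mathbf b)) \ = \ d_{(\eta^{c})^{t}}\bigl(\S(\mathbf b)^{*}\bigr) \ = \ d_{(\eta^{t})^{c}}\bigl(\S(\mathbf b^{*})\bigr)
\]
and then apply the first part to \(\S(\mathbf b^{*})\), which has rank \(n-k\) and whose relevant index \(\eta^{t}\) has length \(\ell(\eta^{t})=\eta_{1}=n-k\); this evaluates the right-hand side as the Kostka number in the statement. (Alternatively one can avoid matroid duality altogether: apply \(\omega\) to the determinant of Lemma~\ref{lem:snake determinant}, or to Corollary~\ref{cor:snake alternating formula}, and rerun the vanishing argument with ``column of length \(k\)'' replaced by ``row of length \(n-k\)''.)

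The only genuine obstacle lies in the second part: identifying precisely which ribbon the dual snake carries, and keeping the transpositions and complements straight when passing between the \(k\times(n-k)\) and \((n-k)\times k\) rectangles. Everything else — in particular the whole of the first part — is a short and formal argument.
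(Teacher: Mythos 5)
Your first part is exactly the paper's argument: in the alternating sum of Corollary~\ref{cor:snake Kostka} every term with \(A\subsetneq[k-1]\) vanishes because a composition with fewer than \(k\) parts cannot be the content of a semistandard tableau whose shape has \(k\) rows, leaving only \(K_{\eta,\mathbf b}\). The second part is also the paper's route --- the paper literally says ``follows by duality'' and nothing more --- and your chain \(d_{\eta^c}(\S(\mathbf b))=d_{(\eta^c)^t}(\S(\mathbf b)^*)=d_{(\eta^t)^c}(\S(\mathbf b^*))\) is the right way to fill it in. However, you should not have asserted that this ``evaluates the right-hand side as the Kostka number in the statement'': applying the first part to \(\S(\mathbf b^*)\), of rank \(n-k\), with the partition \(\eta^t\) of length \(n-k\), yields \(K_{\eta^t,\mathbf b^*}\), whereas the statement as printed reads \(K_{\eta,\mathbf b^*}\). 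These are genuinely different: for \(\S(2,1,2,3)\) and \(\eta=[5,2,1]\) one has \(\mathbf b^*=(1,1,2,3,1)\), \(d_{\eta^c}(\S)=1=K_{\eta^t,\mathbf b^*}\), but \(K_{\eta,\mathbf b^*}=11\). So the printed statement carries a typo (a missing transpose on \(\eta\)) --- consistent with how the corollary is later used in Proposition~\ref{prop: dominance order}, where the condition \(\eta\le\operatorname{cols}(\mathbf b)^t\) is equivalent to \(\operatorname{rows}(\mathbf b^*)\le\eta^t\), i.e.\ to \(K_{\eta^t,\mathbf b^*}\neq 0\). Your derivation is correct; the flaw is only in silently identifying its output with the (erroneous) displayed formula rather than flagging the discrepancy.
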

\begin{proof}
    It is easy to see that \(K_{\eta,\mathbf b}\) is zero if \(\ell(\mathbf b) < \ell(\eta)\), as there cannot be a legal filling of a semistandard Young tableau with less entries than the number of rows. The only summand in Corollary \ref{cor:snake Kostka} that can contribute when \(\ell(\eta)=k\) is then \(K_{\eta,\mathbf b}\). The result then follows. The result for partitions with \(\eta_1 = n-k\) follow by duality.
\end{proof}

In fact any Kostka number can be obtained as a Schubert coefficient of a snake matroid. Let \(\eta\) be a partition and \(\mathbf{b}\) a composition. For any \(k \geq \max\{\ell(\eta), \ell(\mathbf{b})\}\) let \(\eta' = [\eta_1 +1, \dots , \eta_k + 1]\) and \(\mathbf{b}' = (b_1+1 , \dots , b_k + 1)\), then by Corollary \ref{cor:Kostka} we have 
\[
    K_{\eta, \mathbf{b}} \ = \ K_{\eta', \mathbf{b}'} \ = \ d_{\eta'}(\S(\mathbf{b}')) \, . 
\]

\subsection{A descent statistic interpretation} \label{sec:combinatorial}
When focusing on the Chow class of snake matroids, Corollary \ref{cor:Sc_coefficients_snakes_are_LR_ribbons} tells us that we deal with the well-studied Littlewood--Richardson coefficients as they appear in the Schur expansion of ribbon Schur functions; these were first studied by MacMahon in \cite{macmahon_combinatory_analysis}. 
We follow Gessel who described them in the following combinatorial fashion.
A \emph{descent} in a standard Young tableau is an entry \(i\) such that the entry \(i+1\) is in a row below \(i\). For a standard Young tableau \(T\) we denote the set of descents of \(T\) by \(\Des(T)\), and the number of descents by \(\des(T) = |\Des(T)|\). For a given partition~\(\eta\) of \(\sum_i b_i\), let \(\SYT_{\eta}(\mathbf b)\) denote the set of standard Young tableaux~\(T\) of shape \(\eta\) with
\begin{equation} \label{eq:des condition}
    \Des(T) \ = \ \Des(\mathbf b) \, .
\end{equation}
Gessel describes the Littlewood--Richardson coefficients \(c^\lambda_{\mu,\eta}\) when \(\lambda/\mu\) is a ribbon in terms of standard Young tableaux with a predetermined descent set. The following theorem is a consequence of Theorem \ref{thm:ribbon snake} by applying this description in \cite[Theorem~7]{gessel}.

\begin{theorem} \label{thm:combinatorial Sc snakes}
    Let \(\eta\) be a partition of \(n-1\) and \(\S = \S(\mathbf b)\) a snake matroid of rank \(k\) on the ground set \([n]\).
    The Schubert coefficient of \(\S\) associated to \(\eta^c\) is given by 
      \begin{equation*}
            d_{\eta^c}(\S(\mathbf b)) \ = \ |\SYT_\eta(\mathbf b)| \, .
      \end{equation*}
\end{theorem}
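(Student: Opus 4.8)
The plan is to deduce this statement by combining Theorem~\ref{thm:ribbon snake} with Gessel's combinatorial description of the Schur expansion of ribbon Schur functions. By Theorem~\ref{thm:ribbon snake}, we have \(\Sc^c(\S(\mathbf b)) = s_{\ribbon(\mathbf b)}\) in \(R\), and by definition of the dual class together with Corollary~\ref{cor:Sc_coefficients_snakes_are_LR_ribbons}, the coefficient \(d_{\eta^c}(\S(\mathbf b))\) equals the Littlewood--Richardson coefficient \(c^\lambda_{\mu,\eta}\), where \(\ribbon(\mathbf b) = \lambda/\mu\). So the statement reduces to showing \(c^\lambda_{\mu,\eta} = |\SYT_\eta(\mathbf b)|\) whenever \(\lambda/\mu\) is the ribbon \(\ribbon(\mathbf b)\).

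First I would recall Gessel's theorem precisely. In \cite[Theorem~7]{gessel} (see also the fundamental quasisymmetric function expansion of skew Schur functions), the Schur expansion of the ribbon Schur function \(s_{\ribbon(\mathbf b)}\) is given by
\[
    s_{\ribbon(\mathbf b)} \ = \ \sum_{\eta} |\SYT_\eta(\mathbf b)| \, s_\eta \, ,
\]
where the sum is over partitions \(\eta\) of \(|\mathbf b| = n-1\), and \(\SYT_\eta(\mathbf b)\) is the set of standard Young tableaux of shape \(\eta\) whose descent set equals \(\Des(\mathbf b)\). The underlying reason is that a ribbon \(\ribbon(\mathbf b)\) with row lengths \(\mathbf b = (b_1,\dots,b_k)\) (read from the bottom) has the property that its standard skew tableaux are indexed exactly by standard Young tableaux of straight shapes with a forced descent structure: reading a standard filling of the ribbon row by row recovers a permutation whose descent set corresponds to the composition \(\mathbf b\), and the Littlewood--Richardson / quasisymmetric machinery then matches these against standard tableaux of shape \(\eta\) with the prescribed descent set. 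I would cite this directly rather than reprove it.

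The remaining step is purely bookkeeping: match indices so that the coefficient of \(s_\eta\) in \(s_{\ribbon(\mathbf b)}\) is the coefficient \(d_{\eta^c}(\S(\mathbf b))\) of \(s_\eta\) in \(\Sc^c(\S(\mathbf b))\). Since we work in the quotient ring \(R = R(k,n)\), one must also check that no cancellation or truncation occurs for the relevant \(\eta\), i.e.\ that every \(\eta\) with \(|\SYT_\eta(\mathbf b)| \neq 0\) and \(\eta \vdash n-1\) satisfies \(\eta \subseteq k\times(n-k)\); this holds because \(\S(\mathbf b)\) has rank \(k\) on \([n]\), so \(\mu \subseteq \lambda \subseteq k\times(n-k)\) forces any \(\eta\) with \(c^\lambda_{\mu,\eta} \neq 0\) to fit inside the same rectangle, and \(\SYT_\eta(\mathbf b)\) is nonempty only when \(\ell(\eta) \le k\). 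The main (and essentially only) obstacle is verifying that the descent-set convention in \cite{gessel} agrees with our \(\Des(\mathbf b) = \{\sum_{i=1}^s b_i : 1 \le s \le k-1\}\), in particular the orientation of the ribbon (we read row lengths from the bottom, and in Theorem~\ref{thm:ribbon snake} we already passed to the reversed composition \(\mathbf b'\)); I would handle this by noting that reversing \(\mathbf b\) transposes the ribbon and complements the descent set within \([n-2]\), and that \(|\SYT_\eta(\mathbf b)| = |\SYT_{\eta^t}(\mathbf b^{\mathrm{rev}})|\) via tableau transposition, so the two conventions yield the same final count. The paper also notes a direct proof is given in Theorem~\ref{thm:combinatorial Sc snakes}'s surrounding discussion, which could be used as an alternative to invoking \cite{gessel}.
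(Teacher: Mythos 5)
Your proposal is correct, and it is in fact the derivation the paper itself sanctions in the sentence preceding the theorem and again in Remark~\ref{rmk:Vasu}: Theorem~\ref{thm:ribbon snake} gives \(\Sc^c(\S(\mathbf b)) = s_{\ribbon(\mathbf b)}\), Corollary~\ref{cor:Sc_coefficients_snakes_are_LR_ribbons} identifies \(d_{\eta^c}(\S)\) with \(c^\lambda_{\mu,\eta}\), and Gessel's \cite[Theorem~7]{gessel} supplies \(s_{\ribbon(\mathbf b)} = \sum_\eta |\SYT_\eta(\mathbf b)|\,s_\eta\). However, the paper's \emph{written} proof deliberately avoids invoking Gessel: it proceeds by induction on the rank \(k\), using the recursion \(\Sc^c(\S(\mathbf b)) = s_{[b_{k+1}]}\Sc^c(\S(\mathbf b')) - \Sc^c(\S(\mathbf b''))\) from Corollary~\ref{cor:main identity snakes} and then showing in Proposition~\ref{prop:syt recursion}, via an explicit Pieri-rule bijection on tableaux, that the counts \(|\SYT_\eta(\cdot)|\) satisfy the identical recursion. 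The payoff of the paper's route is that, combined with Theorem~\ref{thm:ribbon snake}, it yields a \emph{new proof} of Gessel's theorem rather than consuming it; your route is shorter but circular for that purpose. Your checks that no truncation occurs in \(R(k,n)\) (since \(c^\lambda_{\mu,\eta}\neq 0\) forces \(\eta\subseteq\lambda\subseteq k\times(n-k)\), and \(\SYT_\eta(\mathbf b)\neq\emptyset\) forces exactly \(k-1\) descents) are sound and match the paper's remark after Corollary~\ref{cor:combinatorial Sc nested}. One small slip in your convention-matching aside: reversing \(\mathbf b\) rotates the ribbon by \(180\) degrees, it does not transpose it, and \(\Des(\mathbf b^{\mathrm{rev}})\) is the image of \(\Des(\mathbf b)\) under \(i\mapsto n-1-i\), not its complement in \([n-2]\) (complementation corresponds to conjugating the ribbon). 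The conclusion you need — that \(|\SYT_\eta(\mathbf b)| = |\SYT_\eta(\mathbf b^{\mathrm{rev}})|\) — is still true, since rotation preserves the skew Schur function, but the justification should go through rotation rather than transposition. This does not affect the validity of the argument.
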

In the rest of this section we give an independent proof of Theorem \ref{thm:combinatorial Sc snakes} based on the recursion in Corollary \ref{cor:main identity snakes}.  This can be viewed together with Theorem \ref{thm:ribbon snake} as a new proof of \cite[Theorem~7]{gessel}.

The standard Young tableaux with descent set given by \eqref{eq:des condition} can be constructed in the following way. First place entries \(1, \ldots, b_1\) in the first row of the tableau. Then, iteratively for each \(s = 2,\ldots, k\), add the \(b_s\) new entries \(\sum_{i=1}^{s-1} b_i + 1, \ldots, \sum_{i=1}^s b_i\) so that there is at most one new entry per column, the new entries are ordered from left to right, each new entry is at the bottom of its column, and the leftmost new entry \(\sum_{i=1}^{s-1} b_i + 1\) is below the previous largest entry \(\sum_{i=1}^{s-1} b_i\).

\begin{example}\label{ex:snake and its tableaux}
    Consider the snake matroid \(\S = \S(2,1,2,3)\). The standard Young tableaux with descent set given by \eqref{eq:des condition} are illustrated in Figure \ref{fig:snake tableaux}, colored to visualize the construction process described above. By Theorem~\ref{thm:combinatorial Sc snakes} it follows that
    \begin{equation*}
        \Sc^c(\S) \ = \ \sigma_{\scalebox{0.3}{\young(~~~~~,~~,~)}} + \sigma_{\scalebox{0.3}{\young(~~~~~,~,~,~)}} + \sigma_{\scalebox{0.3}{\young(~~~~,~~~,~)}} + \sigma_{\scalebox{0.3}{\young(~~~~,~~,~~)}} + 2 \sigma_{\scalebox{0.3}{\young(~~~~,~~,~,~)}} + \sigma_{\scalebox{0.3}{\young(~~~,~~~,~,~)}} + \sigma_{\scalebox{0.3}{\young(~~~,~~,~~,~)}}\; ,
    \end{equation*}
    as we saw in Example \ref{ex:snake recursion comp}.
\end{example}

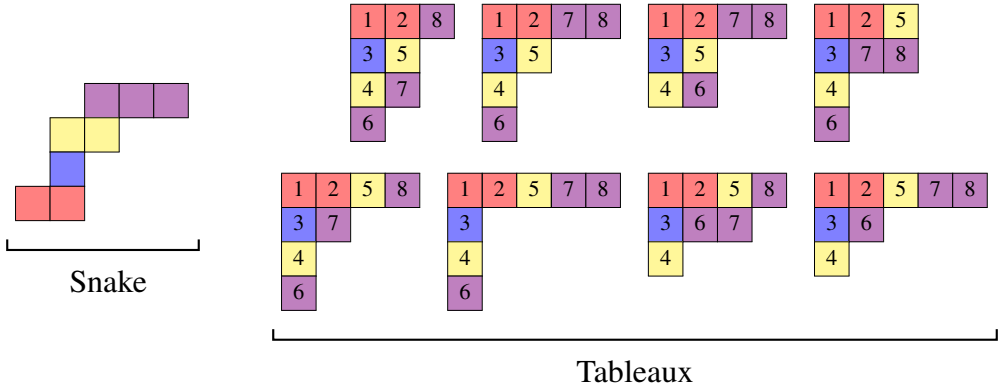
\begin{figure}[ht]
    \centering
    \begin{tikzpicture}[scale=0.7]
\node[anchor=south west, scale=0.7] at (0,1.7) {
    \begin{ytableau}
    \none & \none   & *(Violet)  & *(Violet) & *(Violet)\\
    \none & *(Yellow) & *(Yellow) & \none & \none\\
    \none & *(Blue) & \none & \none & \none\\
    *(Red)& *(Red)& \none & \none & \none\\
    \end{ytableau}
};
\node at (1.9,0.7) {Snake};
\draw[thick] (0,1.5) -- (0,1.3) -- (3.6,1.3) -- (3.6,1.5);

\coordinate (shift) at (5,0);
\begin{scope}[shift=(shift)]
\node[anchor=south west, scale=0.7] at (1.3,3.2) {
    \begin{ytableau}
    *(Red) 1 & *(Red) 2 & *(Violet) 8\\
    *(Blue) 3  & *(Yellow) 5\\
    *(Yellow) 4 & *(Violet) 7\\
    *(Violet) 6 \\
    \end{ytableau}

    \hspace{3mm}
    \begin{ytableau}
    *(Red) 1 & *(Red) 2 & *(Violet) 7 & *(Violet) 8\\
    *(Blue) 3  & *(Yellow) 5\\
    *(Yellow) 4 \\
    *(Violet) 6 \\
    \end{ytableau}

    \hspace{3mm}
    \begin{ytableau}
    *(Red) 1 & *(Red) 2 & *(Violet) 7 & *(Violet) 8\\
    *(Blue) 3  & *(Yellow) 5\\
    *(Yellow) 4 & *(Violet) 6 \\
    \end{ytableau}

    \hspace{3mm}
    \begin{ytableau}
    *(Red) 1 & *(Red) 2 & *(Yellow) 5\\
    *(Blue) 3  & *(Violet) 7 & *(Violet) 8\\
    *(Yellow) 4 \\
    *(Violet) 6 \\
    \end{ytableau}
};

\node[anchor=south west, scale=0.7] at (0,0) {
    \begin{ytableau}
    *(Red) 1 & *(Red) 2 & *(Yellow) 5 & *(Violet) 8\\
    *(Blue) 3  & *(Violet) 7\\
    *(Yellow) 4 \\
    *(Violet) 6 \\
    \end{ytableau}

    \hspace{3mm}
    \begin{ytableau}
    *(Red) 1 & *(Red) 2 & *(Yellow) 5 & *(Violet) 7 & *(Violet) 8\\
    *(Blue) 3  \\
    *(Yellow) 4 \\
    *(Violet) 6 \\
    \end{ytableau}

    \hspace{3mm}
    \begin{ytableau}
    *(Red) 1 & *(Red) 2 & *(Yellow) 5 & *(Violet) 8\\
    *(Blue) 3  & *(Violet) 6 & *(Violet) 7 \\
    *(Yellow) 4 \\
    \end{ytableau}

    \hspace{3mm}
    \begin{ytableau}
    *(Red) 1 & *(Red) 2 & *(Yellow) 5 & *(Violet) 7 & *(Violet) 8\\
    *(Blue) 3 & *(Violet) 6 \\
    *(Yellow) 4 \\
    \end{ytableau}
};

\node at (6.8,-1) {Tableaux};
\draw[thick] (0,-0.2) -- (0,-0.4) -- (13.6,-0.4) -- (13.6,-0.2);
\end{scope}

\end{tikzpicture}
    \caption{The snake matroid \(\S(2,1,2,3)\) and the standard Young tableaux with its associated descent set.}
    \label{fig:snake tableaux}
\end{figure}

In what follows, we will prove that our standard Young tableaux satisfy the same recursion as Corollary \ref{cor:main identity snakes}.
\begin{proposition} \label{prop:syt recursion}
  Let \(\mathbf b = (b_1,\ldots,b_k,b_{k+1})\) be a composition and consider \(\mathbf b' = (b_1,\ldots, b_{k})\) and \(\mathbf b'' = (b_1,\ldots, b_{k-1}, b_k + b_{k+1})\). Then
  \begin{align*}
    s_{[b_{k+1}]} \cdot \sum_{\eta} \left|\SYT_{\eta}(\mathbf b')\right| s_{\eta} \ &= \ \sum_{\mu} \left|\SYT_{\mu}(\mathbf b)\right| s_{\mu}  + \sum_{\mu}\left|\SYT_{\mu}(\mathbf b'')\right| s_{\mu} \, ,
  \end{align*}
  where the sums are over all partitions of \(b_1 + \dots + b_k\).
\end{proposition}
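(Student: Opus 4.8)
The plan is to establish the identity coefficientwise in the Schur basis, peeling off the last part $b_{k+1}$ with Pieri's formula and then matching the two sides by a sign-free bijection. Throughout put $\beta_s := b_1 + \dots + b_s$ and $N := b_1 + \dots + b_{k+1}$, and observe that $\Des(\mathbf b') = \Des(\mathbf b'') = \{\beta_1,\dots,\beta_{k-1}\}$ whereas $\Des(\mathbf b) = \{\beta_1,\dots,\beta_k\}$. By Pieri's formula (Theorem~\ref{thm:pieri}), $s_{[b_{k+1}]}\,s_\eta = \sum_\lambda s_\lambda$ over all partitions $\lambda$ for which $\lambda/\eta$ is a horizontal strip of size $b_{k+1}$ (that is, $|\lambda/\eta| = b_{k+1}$ with at most one box per column). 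Hence the coefficient of $s_\lambda$ on the left-hand side is $\sum_{\eta}|\SYT_\eta(\mathbf b')|$, summed over all (necessarily $\beta_k$-sized) partitions $\eta$ with $\lambda/\eta$ a horizontal strip of size $b_{k+1}$. So it is enough to construct, for each partition $\lambda$ of $N$, a bijection
\[
    \SYT_\lambda(\mathbf b)\ \sqcup\ \SYT_\lambda(\mathbf b'')\ \longleftrightarrow\ \bigsqcup_{\substack{\eta\subseteq\lambda,\ \lambda/\eta\text{ horizontal strip}\\ |\lambda/\eta|\,=\,b_{k+1}}}\SYT_\eta(\mathbf b')\,.
\]

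The bijection I propose is the obvious one. Going left to right, send a standard Young tableau $S$ of shape $\lambda$ to the pair $(\eta, T)$ where $\eta$ is the shape formed by the cells of $S$ containing the entries $\le \beta_k$ and $T$ is the induced subtableau. Going right to left, send $(\eta, T)$ to the unique standard tableau $S$ of shape $\lambda$ that restricts to $T$ and has no descent in $\{\beta_k+1,\dots,N-1\}$; concretely, $S$ is obtained from $T$ by filling the horizontal strip $\lambda/\eta$ with $\beta_k+1,\dots,N$ so that the entries increase along each row and the lower rows receive the smaller values. Granting the lemma below, both maps land where they should and are mutually inverse: if $S$ lies in the left-hand side then $\Des(S)$ avoids $\{\beta_k+1,\dots,N-1\}$, so the entries $>\beta_k$ of $S$ form a horizontal strip, while $\Des(T) = \Des(S)\cap\{1,\dots,\beta_k-1\} = \{\beta_1,\dots,\beta_{k-1}\}$ gives $T\in\SYT_\eta(\mathbf b')$; conversely the $S$ built from $(\eta,T)$ is a genuine standard tableau with $\Des(S)\cap\{1,\dots,\beta_k-1\}=\Des(T)$ and no descent exceeding $\beta_k$, so $\Des(S)$ equals $\{\beta_1,\dots,\beta_{k-1}\}$ or $\{\beta_1,\dots,\beta_k\}$ according to whether $\beta_k$ is a descent of $S$ — and this dichotomy is exactly the disjoint union $\SYT_\lambda(\mathbf b)\sqcup\SYT_\lambda(\mathbf b'')$.

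The one point requiring care, which I would isolate as a lemma, is the following: in a standard Young tableau of size $N$, if no index in $\{j+1,\dots,N-1\}$ is a descent, then the cells containing the entries $j+1,\dots,N$ form a horizontal strip; and conversely, a skew horizontal strip has a unique standard filling by consecutive integers with no descents, namely the one described above. For the forward implication I would argue by contradiction: if two entries larger than $j$ occupy the same column, pass to two such entries $p<q$ in adjacent cells of that column; if none of $p,p+1,\dots,q-1$ were a descent, then $p{+}1,\dots,q{-}1$, and hence $q$ as well, would lie in rows weakly above $p$, contradicting that $q$ lies directly below $p$. This lemma (with $j=\beta_k$) supplies both the well-definedness of the left-to-right map and the uniqueness of the canonical extension; the remaining checks — that the canonical filling is a valid tableau, that the descent-set bookkeeping is as stated, and that the two maps invert each other — are routine. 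Summing the coefficientwise identity over all $\lambda$ proves the proposition, which matches the recursion of Corollary~\ref{cor:main identity snakes} and, together with the base case $k=1$, yields an independent proof of Theorem~\ref{thm:combinatorial Sc snakes}.
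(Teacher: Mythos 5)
Your proof is correct and follows essentially the same route as the paper's: Pieri's formula on the left, then the bijection that restricts a tableau to its entries $\le b_1+\dots+b_k$ in one direction and fills the remaining horizontal strip left-to-right in the other, with the case split according to whether $\beta_k$ is a descent. The only difference is cosmetic — you organize the bijection coefficientwise in $\lambda$ and spell out as a lemma the fact (which the paper asserts in one sentence) that a descent-free suffix of entries occupies a horizontal strip.
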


\begin{proof}
  By Pieri's formula, the multiplication by \(s_{[b_{k+1}]}\) on the left-hand side corresponds to the mapping
  \begin{equation*}
    s_{\lambda} \ \longmapsto \ \sum_{\mu} s_{\mu} \, ,
  \end{equation*}
  where \(\mu\) runs over all partitions obtained by adding \(b_{k+1}\) boxes to \(\lambda\), at most one per column. Let \(\mathcal T\) denote the set of tableaux obtained by adding \(b_{k+1}\) empty boxes, at most one per column, to any tableau in \(\bigcup_{\lambda} \SYT_{\lambda}(\mathbf b')\), and let \(\mathcal S\) denote the set of shapes of \(\mathcal T\). Our goal is to find a bijection
  \begin{equation*}
    \mathcal T \ \longrightarrow \ \bigcup_{\mu \in \mathcal S} \SYT_{\mu}(\mathbf b) \cup \SYT_{\mu}(\mathbf b'') \, ,
  \end{equation*}
  and this is simple: just fill in the numbers \(\sum_{i=1}^k b_i + 1, \ldots, \sum_{i=1}^k b_i + b_{k+1}\) from left to right in the empty boxes of a tableau in \(\mathcal T\). If \(\sum_{i=1}^k b_i\) is a descent, the new tableau is contained in \(\SYT_{\mu}(\mathbf b)\) (for the appropriate shape \(\mu\)). Otherwise, the tableau is contained in \(\SYT_{\mu}(\mathbf b'')\). The mapping has an inverse: remove the entries \(\sum_{i=1}^k b_i + 1, \ldots, \sum_{i=1}^k b_i + b_{k+1}\). Since those entries are all ascents in the tableau we started with, the resulting tableau has at most one empty box per column, and obviously these boxes lie at the bottom of their columns. Hence the resulting tableau is contained in \(\mathcal T\). 
  
  Lastly, observe that for any
  \begin{equation*}
    T \in \bigcup_{\mu} \SYT_{\mu}(\mathbf b) \cup \SYT_{\mu}(\mathbf b'') \, ,
  \end{equation*}
  the tableau \(T|_{\sum_{i=1}^k b_i}\) is contained in \(\SYT_{\lambda}(\mathbf b')\) (for the appropriate \(\lambda\)). Hence, the bijection above proves the desired result.
\end{proof}

\begin{proof}[Proof of Theorem \ref{thm:combinatorial Sc snakes}]
    We prove the claim by induction on \(k\), the rank of the snake matroid. For \(k = 1\), we have 
    \begin{equation*}
        \Sc^c(\S(b)) \ = \ s_{[b]} \ = \ \sum_{\eta} |\SYT_{\eta}(b)| s_{\eta} \, .
    \end{equation*}
    Now, assuming by induction that the statement holds for \(k\geq 1\),
    \begin{align*}
        \Sc^c(\S(\mathbf b)) \ &= \ s_{[b_{k+1}]} \cdot \Sc^c(\S(\mathbf b')) - \Sc^c(\S(\mathbf b'')) \\
        &= \  s_{[b_{k+1}]} \cdot \sum_{\eta} |\SYT_{\eta}(\mathbf b')| s_{\eta}  - \sum_{\eta} |\SYT_{\eta}(\mathbf b'')| s_{\eta} \\
        &= \ \sum_{\eta} |\SYT_{\eta}(\mathbf b)| s_{\eta} \, ,
    \end{align*}
    where the first equality is Corollary \ref{cor:main identity snakes}, the second follows by induction, and the third from Proposition~\ref{prop:syt recursion}.
\end{proof}
\begin{example}
    \label{ex:recover beta snake}
    It is well known that the \(\beta\)-invariant of a snake matroid is one, we recover this fact. Let \(\S = \S(\mathbf b)\) be a snake matroid of rank \(k\) on \([n]\). Recall that the beta invariant is given by \(\beta(\S) = d_{h^c}(\S)\) where \(h = [n-k, 1^{k-1}]\). The only standard Young tableau in \(\SYT_h(\mathbf b)\) is the one containing \(1,b_1+2,b_2+1,\dots, b_{k-1}+1\) in the first column, and the remaining integers in \([n-1]\) in the first row. Hence by Theorem \ref{thm:combinatorial Sc snakes} we have
    \[
        \beta(\S) \ = \ d_{h^c}(\S) \ = \ |\SYT_h(\mathbf b)| \ = \ 1 \, .
    \]
\end{example} 

\begin{example}
    \label{ex:recover minimal}
    The minimal matroid \(\mathsf{T}_{k,n}\) is the unique, up to isomorphism, connected matroid of rank \(k\) on \([n]\) with minimal number of bases. It is isomorphic to either of the snake matroids \(\S(n-k,1,1,\dots, 1)\) and \(\S(1,1,\dots, 1,n-k)\) with \(k-1\) ones. We recover \cite[Lemma 5.3]{JP} which states that \(\Sc(\mathsf{T}_{k,n}) = s_{h^c}\). By Theorem \ref{thm:combinatorial Sc snakes} this is easy to see since for \(\S(n-k,1,1,\dots,1)\) the only standard Young tableau with descent set \(\{n-k, n-k+1, \dots , n-2\}\) is the one of shape \(h\) with the entries \(1,2,\dots, n-k\) in the first row and \(1,n-k+1,\dots, n-1\) in the first column.
\end{example}

\begin{remark} \label{rmk:Vasu}
    In \cite[Theorem 3]{gessel} Gessel shows that when expanding a symmetric polynomial \(g\) in the basis of \emph{fundamental quasisymmetric polynomials}, the coefficients are given by pairing \(g\) with ribbon Schur polynomials.
    A consequence is \cite[Theorem~7]{gessel} which with our notation states that \(s_{\rho(\mathbf{b})} = \sum_{\eta} |\SYT_\eta(\mathbf{b})|s_\eta\). As mentioned previously, together with Theorem~\ref{thm:ribbon snake} this provides a proof of Theorem~\ref{thm:combinatorial Sc snakes}. In \cite[Section~10]{NadeauSpinkTewari} the authors provide a geometric interpretation of \cite[Theorem~3]{gessel} in terms of \emph{Grassmannian Richardson varieties} \(Y^\lambda_\mu\) where \(\lambda/\mu\) is a ribbon. The precise connection between \(Y^\lambda_\mu\) and the torus orbit closure \(\overline{Tx}\) when \(\M_x\) is the snake matroid corresponding to the skew shape \(\lambda/\mu\) is worthy of further investigation.
\end{remark}

\subsection{Schubert coefficients of lattice path matroids}\label{sec:lpm}
We now extend Theorem \ref{thm:combinatorial Sc snakes} to give a combinatorial formula for the Schubert coefficients of connected lattice path matroids, in particular nested matroids and uniform matroids.
\begin{remark}
    While a connected lattice path matroid can also be described by a skew shape \(\lambda/\mu\), it is in general not true that \(\Sc^c(\M(\lambda/\mu)) = s_{\lambda/\mu}\). Indeed the equality holds if and only if the matroid is a snake, as any other shape would have too many boxes for the equality to hold.
\end{remark}

Let \(\M = \M(\lambda / \mu)\) be a connected lattice path matroid of rank \(k\) on \([n]\). Let \(\S_\mathbf{U}\) and \(\S_\mathbf{L}\) be, respectively, the uppermost and lowermost snake matroid of rank \(k\) on \([n]\) whose diagram fit inside \(\lambda / \mu\). Let \(D_\mathbf{U} = \{c_1,\ldots,c_{k-1} \}\) and \(D_\mathbf{L} = \{d_1,\ldots,d_{k-1}\}\) be the descent sets associated to \(\S_\mathbf{U}\) and \(\S_\mathbf{L}\). Clearly \(c_i \leq d_i\), so we may consider the set system of intervals 
\[
    \mathcal A \ = \ \SetOf{[c_i,d_i]}{i = 1,2,\dots, k-1} \, .
\]
%Figure \ref{fig:LPM setsystem} shows a lattice path matroid and the corresponding set system.
%\begin{figure}
%    \centering
%    \scalebox{0.6}{\input{figures/LPMtransversal}}
%    \caption{The lattice path matroid \(M(\lambda /\mu)\) with \(\lambda = [6^2,5,3]\) and \(\mu = [3,1^2]\). The sets in the corresponding set system \(\mathcal A\) are the labels in each blue strip.}
%    \label{fig:LPM setsystem}
%uni\end{figure}
\begin{theorem}
    \label{thm:combinatorial Sc lattice path}
    Let \(\M\) be a connected lattice path matroid of rank \(k\) on \([n]\) and let \(\eta\) be a partition of \(n-1\) in \(k\times (n-k)\). Then with notation as above, the Schubert coefficient of \(\M\) associated to \(\eta^c\) is given by 
    \[
        d_{\eta^c}(\M) \ = \left|\SetOf{T \in \SYT_\eta}{\Des(T) \text{ is a transversal of } \mathcal{A}}\right| \, .
    \]
\end{theorem}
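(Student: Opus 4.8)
\begin{proof}[Proof sketch of Theorem~\ref{thm:combinatorial Sc lattice path}]
The plan is to reduce the statement to snake matroids by valuativity and then to translate the resulting sum over snakes into a sum over standard Young tableaux. Since \(\eta \vdash n-1\) and \(\eta \subseteq k\times(n-k)\), the complement satisfies \(|\eta^c| = k(n-k)-(n-1) = (k-1)(n-k-1)\), so by the discussion following Definition~\ref{def:Schubert coefficients} the map \(d_{\eta^c} = d_{\eta^c,(k-1)(n-k-1)}\) is an additive valuative matroid invariant on \(\mathcal M_{n,k}\). Applying Proposition~\ref{prop:valuative into snakes} to the connected lattice path matroid \(\M = \M(\lambda/\mu)\) gives
\[
    d_{\eta^c}(\M) \ = \ \sum_{\S(\mathbf b) \subseteq \lambda/\mu} d_{\eta^c}(\S(\mathbf b)) \ = \ \sum_{\S(\mathbf b) \subseteq \lambda/\mu} \bigl|\{\,T \in \SYT_\eta : \Des(T) = \Des(\mathbf b)\,\}\bigr| \, ,
\]
where the sum runs over all snake matroids fitting inside \(\lambda/\mu\) (each of which is again a connected matroid of rank \(k\) on \([n]\)), and the second equality is Theorem~\ref{thm:combinatorial Sc snakes}. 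A snake matroid is determined by its associated composition, hence by its descent set, so the snakes appearing contribute pairwise distinct subsets \(\Des(\mathbf b)\); and each \(T \in \SYT_\eta\) has a single descent set, so the sets \(\{T \in \SYT_\eta : \Des(T) = D\}\) are disjoint as \(D\) varies. Consequently the theorem follows once we prove the purely combinatorial claim that, as \(\S(\mathbf b)\) ranges over the snakes fitting inside \(\lambda/\mu\), the set \(\Des(\mathbf b)\) ranges exactly over the transversals of \(\mathcal A\).

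To prove the claim, write \(\Des(\mathbf b) = \{s_1 < \dots < s_{k-1}\}\), so that \(s_i = b_1 + \dots + b_i\). By \cite[Appendix~A]{FerroniSchroeter:2024} and \cite[Lemma 4.3.5]{Bidkhori:2010}, the snakes occurring in Proposition~\ref{prop:valuative into snakes} are exactly those whose ribbon \(\ribbon(\mathbf b)\) is a subdiagram of \(\lambda/\mu\) when both are drawn inside the \(k\times(n-k)\) box. For a snake ribbon, the left endpoint of each row is a monotone function of the partial sums \(s_i\); therefore \(\ribbon(\mathbf b)\) lies between \(\ribbon(\S_\mathbf U)\) and \(\ribbon(\S_\mathbf L)\) row by row if and only if its turning points lie in the corresponding ranges, i.e.\ if and only if \(c_i \leq s_i \leq d_i\) for all \(i\), where \(D_\mathbf U = \{c_1 < \dots < c_{k-1}\}\) and \(D_\mathbf L = \{d_1 < \dots < d_{k-1}\}\). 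Since \(\S_\mathbf U\) and \(\S_\mathbf L\) are by construction the uppermost and lowermost snakes inside \(\lambda/\mu\), this is exactly the condition \(\S(\mathbf b)\subseteq \lambda/\mu\). Finally, because \(c_1 < \dots < c_{k-1}\) and \(d_1 < \dots < d_{k-1}\), a standard argument for interval systems shows that a \((k-1)\)-subset \(\{s_1 < \dots < s_{k-1}\}\) is a transversal (system of distinct representatives) of \(\mathcal A = \{[c_i,d_i]\}\) precisely when \(c_i \leq s_i \leq d_i\) for every \(i\): one inclusion matches \(s_i\) to \([c_i,d_i]\), and for the other any crossing in a hypothetical matching, say \(s_a \mapsto [c_i,d_i]\) and \(s_b \mapsto [c_j,d_j]\) with \(a<b\) but \(i>j\), can be uncrossed using \(c_j \leq c_i \leq s_a < s_b \leq d_j \leq d_i\), yielding the order-preserving matching. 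Combining these facts,
\[
    d_{\eta^c}(\M) \ = \ \sum_{D\ \text{transversal of}\ \mathcal A}\bigl|\{\,T \in \SYT_\eta : \Des(T) = D\,\}\bigr| \ = \ \bigl|\{\,T \in \SYT_\eta : \Des(T)\ \text{is a transversal of}\ \mathcal A\,\}\bigr| \, . \qedhere
\]
\end{proof}

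\noindent\textbf{Remark on the proof.} The reduction to snakes and the tableau bookkeeping are routine given Proposition~\ref{prop:valuative into snakes} and Theorem~\ref{thm:combinatorial Sc snakes}. The main obstacle is the combinatorial identification in the second paragraph: one must verify, from the recursive description of the snakes appearing in the decomposition of a lattice path matroid, that these are exactly the snake subdiagrams of \(\lambda/\mu\), and that ``being a snake subdiagram'' unwinds to the coordinatewise interval conditions \(c_i \le s_i \le d_i\) realized by the extreme snakes \(\S_\mathbf U\) and \(\S_\mathbf L\); the interval-transversal lemma is then elementary.
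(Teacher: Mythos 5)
Your proposal is correct and follows essentially the same route as the paper: reduce to snake matroids via additivity and Proposition~\ref{prop:valuative into snakes}, apply Theorem~\ref{thm:combinatorial Sc snakes} to each snake, and identify the descent sets of the snakes fitting inside \(\lambda/\mu\) with the transversals of \(\mathcal{A}\). The only difference is that you spell out the bijection between snakes and transversals (the coordinatewise conditions \(c_i\le s_i\le d_i\) and the uncrossing argument), which the paper asserts in one sentence without proof.
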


\begin{proof}
    By Proposition~\ref{prop:valuative into snakes} and the fact that \(d_{\eta^c}\) is an additive, valuative matroid invariant we have that 
    \[
        d_{\eta^c}(\M) \ = \ \sum_{\S}d_{\eta^c}(\S) \, ,
    \]
    where the sum is over all snake matroids of rank \(k\) on \([n]\) whose diagram fits in the diagram of \(\M\). The map sending such a snake matroid to its associated descent set is a bijection with the set of transverals of \(\mathcal A\). Now Theorem \ref{thm:combinatorial Sc snakes} finishes the proof.
\end{proof}
We now describe explicitly the Schubert coefficients of nested matroids. Let \(\SYT_\eta^{k-1}\) denote the set of standard Young tableaux of shape \(\eta\) with \(k-1\) descents.
\begin{corollary}
    \label{cor:combinatorial Sc nested}
    Let \(\N\) be a connected nested matroid of rank \(k\) on \([n]\) with cyclic flats \(\emptyset = H_0 \subset H_1 \subset \dots \subset H_s = E\) and let \(r_i = \rank(H_i)\)  and \(h_i = |H_i|\). Let \(\eta\) be a partition of \(n-1\) in \(k\times (n-k)\). Then the Schubert coefficient of \(\N\) associated to \(\eta^c\) is given by
    \[
        d_{\eta^c}(\N) \ = \ \left|\SetOf{T \in \SYT_\eta^{k-1}}{ \des(T|_{h_i}) < r_i \ \text{ for } \ i = 1,\dots, s-1}\right| \, .
    \]
\end{corollary}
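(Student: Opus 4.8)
The plan is to deduce the statement from two results already established: the decomposition of an additive valuation into evaluations on snakes (Proposition~\ref{prop:valuative into snakes}) and the snake formula of Theorem~\ref{thm:combinatorial Sc snakes}; the real work then lies in translating the condition ``$\S$ fits inside $\N$'' into the condition on descent sets appearing in the statement. Since $d_{\eta^c}$ is additive and valuative, Proposition~\ref{prop:valuative into snakes} gives $d_{\eta^c}(\N) = \sum_{\S} d_{\eta^c}(\S)$, where $\S$ ranges over the rank-$k$ snake matroids on $[n]$ fitting inside a fixed lattice path presentation of $\N$. The map $\mathbf b \mapsto \Des(\mathbf b)$ is a bijection from rank-$k$ compositions of $n-1$ to $(k-1)$-element subsets of $[n-2]$, so, inserting Theorem~\ref{thm:combinatorial Sc snakes} and using that tableaux with distinct descent sets are distinct,
\[
    d_{\eta^c}(\N) \ = \ \bigl|\SetOf{T \in \SYT_\eta}{\Des(T) = \Des(\mathbf b)\ \text{for some snake}\ \S(\mathbf b)\ \text{fitting inside}\ \N}\bigr|\, .
\]
Thus it suffices to show that the descent sets of the snakes fitting inside $\N$ are exactly the subsets $D \subseteq [n-2]$ with $|D| = k-1$ and $|D \cap [h_i-1]| < r_i$ for all $i = 1,\dots,s-1$; granting this, the elementary identity $\Des(T|_h) = \Des(T) \cap [h-1]$ (restricting a tableau does not move its boxes) shows that the right-hand side counts exactly the tableaux in the statement.

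To prove that characterization I would present $\N$ so that its cyclic flats are the initial segments $H_i = \{1,\dots,h_i\}$, and argue in two steps. First, connectedness forces the lower lattice path of $\N$ to be the minimal one, $\mathbf L(\N) = \mathbf E^{n-k}\mathbf N^k$: the set $\{n-k+1,\dots,n\}$ of the $k$ largest elements is a basis because $|\{n-k+1,\dots,n\} \cap H_i| = \max(0,\,h_i-(n-k)) \le r_i$ for every $i$, the inequality $h_i \le n-k+r_i$ being a consequence of $k = \rank(\N) \le \rank(H_i) + |E \setminus H_i|$. Since $\mathbf E^{n-k}\mathbf N^k$ lies below every lattice path, the constraint on a snake coming from the lower path is vacuous, and $\S(\mathbf b)$ fits inside $\N$ if and only if $\mathbf U(\S(\mathbf b))$ lies below $\mathbf U(\N)$. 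Second, I would compute both upper paths explicitly: from $\rank([h]) = |B(\mathbf U(\N)) \cap [h]|$ and the rank function of a nested matroid on initial segments one gets $B(\mathbf U(\N)) = \bigsqcup_{i=1}^{s}\{h_{i-1}+1,\dots,h_{i-1}+(r_i-r_{i-1})\}$, whereas the lex-minimal basis of a snake is $B(\mathbf U(\S(\mathbf b))) = \{1,\,1+b_1,\,1+b_1+b_2,\,\dots,\,1+b_1+\dots+b_{k-1}\}$. Comparing these two increasing $k$-tuples entrywise---the Gale-order criterion for one lattice path to lie below another---and writing $\Des(\mathbf b) = \{e_1 < \dots < e_{k-1}\}$, the resulting inequalities collapse to $e_{r_i} \ge h_i$ for $i = 1,\dots,s-1$, which says precisely that $|\Des(\mathbf b) \cap [h_i-1]| < r_i$. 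This yields the required description of the fitting snakes and finishes the proof.

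The step I expect to be the real obstacle is the bookkeeping of the second paragraph: establishing the dictionary between the matroid data $(h_i,r_i)$ of the cyclic-flat chain and the lattice-path data of the presentation with $H_i$ initial---in particular the fact that \emph{connectedness} is what pins the lower path to the minimal one (so that only the upper path constrains a fitting snake), together with the explicit form of $B(\mathbf U(\N))$. After that, the entrywise comparison of the two $k$-tuples and the identity $\Des(T|_h) = \Des(T) \cap [h-1]$ are routine.
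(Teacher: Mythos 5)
Your proposal is correct and follows essentially the same route as the paper: reduce to the presentation of \(\N\) with the cyclic flats as initial segments (lower path minimal, upper path determined by the pairs \((h_i,r_i)\)), decompose into the snakes fitting inside, apply the snake formula of Theorem~\ref{thm:combinatorial Sc snakes}, and translate the fitting condition into \(\des(T|_{h_i})<r_i\) via \(\Des(T|_{h})=\Des(T)\cap[h-1]\). The only cosmetic difference is that you bypass the intermediate transversal formulation of Theorem~\ref{thm:combinatorial Sc lattice path} and instead characterize the admissible descent sets by an entrywise (Gale-order) comparison of \(B(\mathbf U(\S(\mathbf b)))\) with \(B(\mathbf U(\N))\), which yields the same inequalities \(e_{r_i}\geq h_i\).
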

\begin{proof}
    Up to isomorphism we may assume that \(H_i = \{1,2,\dots, h_i\}\). Then \(\N\) is the lattice path matroid \(\M[\mathbf{L}, \mathbf{U}]\) with lower path \(\mathbf{L} = \mathbf{E}^{n-k}\mathbf{N}^k\) and upper path
    \[
        \mathbf{U} \ = \ \mathbf{N}^{r_1}\mathbf{E}^{h_1-r_1}\mathbf{N}^{r_2-r_1} \mathbf{E}^{h_2 - h_1 - (r_2-r_1)} \dots \ \mathbf{N}^{r_s - r_{s-1}} \mathbf{E}^{h_s - h_{s-1} - (r_s - r_{s-1})} \, .
    \]
    With notation as in Theorem \ref{thm:combinatorial Sc lattice path} the descent set of the uppermost snake matroid is the disjoint union of intervals
    \[
        D_\textbf{U}  \ = \ [1, r_1-1] \cup [h_1, h_1+r_2-r_1-1] \cup \dots \cup [h_{s-1}, h_{s-1} + r_s-r_{s-1}-1] \, ,
    \]
    and the descent set of the lowermost snake is the interval \(D_\mathbf{L} = [n-k,n-2]\). Let \(\mathcal A\) be the associated set system. By Theorem \ref{thm:combinatorial Sc lattice path} it only remains to show that 
    \[
        \SetOf{T \in SYT_\eta}{\Des(T) \text{ is a transversal of } \mathcal{A}} \ = \ \SetOf{T \in \SYT_\eta^{k-1}}{ \des(T|_{h_i}) < r_i} \, .
    \]
    Let \(T\) be a standard Young tableaux in the set on the left above. Then for each \(i\) the descent set of the restriction \(T|_{h_i}\)  is 
    \[
        \Des(T|_{h_i}) \ = \ \Des(T) \cap [1,h_i-1] \, .
    \]
    Only the first \(r_i-1\) intervals in \(\mathcal A\) has nonempty intersection with \([1,h_i-1]\) so \(\des(T|_{h_i}) < r_i\). For the other direction let \(T \in \SYT_\eta^{k-1}\) be such that \(\Des(T)\) is not a transversal of \(\mathcal A\). Then \(\Des(T)\) is the descent set associated to some snake matroid \(\S\) of rank \(k\) on \([n]\) whose diagram passes above the diagram of \(\N\). Let \(i\) be the smallest index such that part of the diagram of \(\S\) is to the north--west of the point \((h_i-r_i,r_i)\). Then \(T|_{h_i}\) has at least \(r_i\) descents. Figure \ref{fig:nested matroid proof} depicts a nested matroid with the intervals \([c_i,d_i]\) in \(\mathcal A\) and the points \((h_i-r_i,r_i)\) marked.
\end{proof}
\begin{figure}
    \centering
    \scalebox{0.6}{\begin{tikzpicture}
    \foreach \x\y in {2/0,2/1,4/2,4/3,5/4}
        \fill[blue!30, rounded corners=0.5mm] (\x+0.2,\y+0.2) rectangle (6.8,\y+0.8);

    \foreach \x\y in {0/0,2/1,2/2,4/3,4/4,5/5}
        \draw[thick] (\x,\y) -- (7,\y);
        
    \foreach \x\y in {1/1,2/1,3/3,4/3,5/5,6/6,7/6}
        \draw[thick] (\x,0) -- (\x,\y);

    \draw[thick] (0,0) -- (0,1) -- (2,1) -- (2,3) -- (4,3) -- (4,5) -- (5,5) -- (5,6) -- (7,6);

    \draw[line width=0.6mm, rounded corners=0.5mm, Red] (2,1) -- (3,1) -- (3,-0.01415);
    \draw[line width=0.6mm, rounded corners=0.5mm, Red] (4,3) -- (5,3) -- (5,2) -- (6,2)  -- (6,1)  -- (7,1) -- (7,-0.01415);
    \draw[line width=0.6mm, rounded corners=0.5mm, Red] (5,5) -- (6,5) -- (6,4) -- (7.01415,4);

    \node[very thick, dot, Red] at (0,0){};
    \node[Red] at (-0.5,0.3){\(0,0\)};
    
    \node[thick, dot, Red] at (2,1){};
    \node[Red] at (1.5,1.3){\(3,1\)};

    \node[very thick, dot, Red] at (4,3){};
    \node[Red] at (3.5,3.3){\(7,3\)};
    
    \node[very thick, dot, Red] at (5,5){};
    \node[Red] at (4.4,5.3){\(10,5\)};
    
    \node[very thick, dot, Red] at (7,6){};
    \node[Red] at (6.4,6.3){\(13,6\)};

    \foreach \x in {1,...,7}
        \node at (\x-0.5,0.5) {\(\x\)};

    \foreach \x in {4,...,8}
        \node at (\x-1.5,1.5) {\(\x\)};

    \foreach \x in {5,...,9}
        \node at (\x-2.5,2.5) {\(\x\)};

    \foreach \x in {8,9,10}
        \node at (\x-3.5,3.5) {\(\x\)};

    \foreach \x in {9,10,11}
        \node at (\x-4.5,4.5) {\(\x\)};

    \foreach \x in {11,12}
        \node at (\x-5.5,5.5) {\(\x\)};
\end{tikzpicture}}
    \caption{A lattice path representation of a nested matroid. The inner corners of the upper path are the coordinates \((h_i-r_i,r_i)\), here depicted in red with the corresponding \(h_i\) and \(r_i\). The intervals \([c_i,d_i]\) are marked with blue rectangles in each row. The possible restrictions \(T|_{h_i}\) are obtained by building standard Young tableaux as in Theorem \ref{thm:combinatorial Sc snakes} up to the corresponding red line.}
    \label{fig:nested matroid proof}
\end{figure}
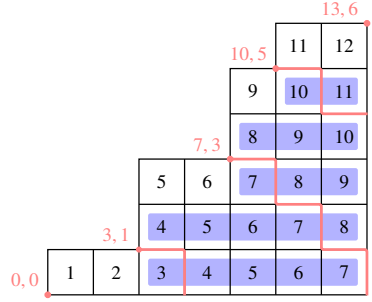

\begin{remark}
    Note that if a partition \(\eta\) of \(n-1\) is not contained in \(k\times (n-k)\) then any standard Young tableaux \(T\) of shape \(\eta\) cannot have exactly \(k-1\) descents. If \(\eta\) has more than \(k\) parts then \(T\) has at least \(k\) descents, and if \(\eta_1 > n-k\) then \(T\) has at most \(k-2\) descents. All the standard Young tableaux involved in Theorems \ref{thm:combinatorial Sc snakes}, \ref{thm:combinatorial Sc lattice path} and Corollaries \ref{cor:combinatorial Sc nested} and \ref{cor:combinatorial Sc uniform}  have exactly \(k-1\) descents. So the partitions of \(n-1\) that may give a nonzero contribution are those contained in \(k\times (n-k)\).
\end{remark}

We finally focus on uniform matroids. Klyachko provided formulas for the Schubert coefficients of uniform matroids in \cite{Klyachko} and \cite{klyachko-r}\footnote{The formula in \cite{Klyachko} is an expression with alternating signs while the formula in \cite{klyachko-r} (see also Corollary \ref{cor:combinatorial Sc uniform}) is clearly positive, however the article is less known as it is published in Russian.}. The formula in the latter was reproved independently by Nadeau and Tewari in \cite{nadeau-tewari}.

\begin{corollary}[{\cite[Theorem 3.3]{klyachko-r}}]
    \label{cor:combinatorial Sc uniform}
    Let \(\eta\) be a partition of \(n-1\) in \(k\times (n-k)\). The Schubert coefficient of the uniform matroid~\(\U_{k,n}\) associated to \(\eta^c\) is given by
    \[
        d_{\eta^c}(\U_{k,n}) \ = \ |\SYT_\eta^{k-1}| \, .
    \]
\end{corollary}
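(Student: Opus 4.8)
The plan is to derive Corollary~\ref{cor:combinatorial Sc uniform} as the degenerate case \(s=1\) of Corollary~\ref{cor:combinatorial Sc nested}. First I would reduce to the connected case: assume \(0<k<n\), since for the remaining values of \(k\) the matroid \(\U_{k,n}\) is a direct sum of loops or of coloops and no partition \(\eta\vdash n-1\) fits in \(k\times(n-k)\), so there is nothing to check. As recorded in the example right after the definition of nested matroids, \(\U_{k,n}\) is then the connected nested matroid with \(\mathbf L=\mathbf E^{n-k}\mathbf N^k\) and \(\mathbf U=\mathbf N^k\mathbf E^{n-k}\). I would next observe that its only cyclic flats are \(\emptyset\) and the ground set \(E\): every proper flat of \(\U_{k,n}\) has fewer than \(k\) elements, hence contains no circuit, while \(E\) is a flat containing circuits. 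So in the notation of Corollary~\ref{cor:combinatorial Sc nested} we have \(s=1\), \(r_1=k\), \(h_1=n\); the list of constraints ``\(\des(T|_{h_i})<r_i\) for \(i=1,\dots,s-1\)'' is empty, and the corollary becomes precisely \(d_{\eta^c}(\U_{k,n})=|\SYT_\eta^{k-1}|\).

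Alternatively, I would give a self-contained argument from Theorem~\ref{thm:combinatorial Sc lattice path} applied directly to \(\U_{k,n}\). Here the uppermost snake fitting into the \(k\times(n-k)\) rectangle has descent set \(\{1,2,\dots,k-1\}\) and the lowermost one has descent set \(\{n-k,n-k+1,\dots,n-2\}\), so the interval family of Theorem~\ref{thm:combinatorial Sc lattice path} is
\[
    \mathcal A \ = \ \SetOf{[\,i,\ n-k+i-1\,]}{i=1,\dots,k-1}\, .
\]
The one thing left to check is that this family is as loose as possible, i.e.\ that a set \(D\subseteq\{1,\dots,n-2\}\) is a transversal of \(\mathcal A\) if and only if \(|D|=k-1\). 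One implication is clear since \(|\mathcal A|=k-1\). For the converse, writing \(D=\{x_1<x_2<\dots<x_{k-1}\}\), one has \(x_i\ge x_1+(i-1)\ge i\) and \(x_i\le x_{k-1}-(k-1-i)\le(n-2)-(k-1-i)=n-k+i-1\), so \(x_i\in[\,i,\ n-k+i-1\,]\) for every \(i\), and the assignment \([\,i,\ n-k+i-1\,]\mapsto x_i\) exhibits \(D\) as a transversal. Hence \(\SetOf{T\in\SYT_\eta}{\Des(T)\text{ is a transversal of }\mathcal A}=\SYT_\eta^{k-1}\), and Theorem~\ref{thm:combinatorial Sc lattice path} yields the formula.

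I do not expect a real obstacle here, as the statement is a direct specialization; the only steps needing a line of justification are the identification of the cyclic flats of \(\U_{k,n}\) (equivalently, of the extreme snakes inside the rectangle) and the bookkeeping showing that the interval family \(\mathcal A\) imposes no constraint beyond \(|\Des(T)|=k-1\). If brevity is preferred, one can simply cite Corollary~\ref{cor:combinatorial Sc nested} with \(s=1\) and be done.
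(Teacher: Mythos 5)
Your first route is exactly the paper's proof: the paper deduces the corollary from Corollary~\ref{cor:combinatorial Sc nested} by noting that \(\U_{k,n}\) is the connected nested matroid with cyclic flats \(\emptyset = H_0 \subset H_1 = [n]\), so \(s=1\) and the list of restrictions \(\des(T|_{h_i})<r_i\) is vacuous. Your alternative self-contained argument via Theorem~\ref{thm:combinatorial Sc lattice path} (checking that the interval family \(\mathcal A=\{[i,\,n-k+i-1]\}_{i=1}^{k-1}\) admits every \((k-1)\)-subset of \([n-2]\) as a transversal) is also correct, but it is extra work the paper does not need.
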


\begin{proof}
    The result follows from \ref{cor:combinatorial Sc nested} since the uniform matroid \(\U_{k,n}\) is the connected nested matroid with cyclic flats \(\emptyset = H_0 \subset H_1 = [n]\).
\end{proof}

\begin{remark}
     Klyachko's motivation for Corollary \ref{cor:combinatorial Sc uniform} was to compute the Chow class of the torus orbit closure of a generic point in the Grassmannian. Our Corollary \ref{cor:combinatorial Sc nested} improves his results as the matroids associated to generic points in a Schubert variety are precisely nested matroids.
\end{remark}

In \cite[Corollary~5.4]{berget-fink-equivariant}, Berget and Fink provide the following alternative description of $\Sc(\U_{k,n})$.
\begin{proposition}\label{prop:berget-fink-uniform}
    For any $k\leq n$, the Chow class of the uniform matroid $\U_{k,n}$ equals
    \[
    \Sc(\U_{k,n}) = \sum_{\mu}s_{\mu}s_{\widetilde{\mu}},
    \]
    where the sum is taken over all partitions \(\mu\subseteq (k-1)\times (n-k-1)\) and \(\widetilde{\mu}\) denotes the complement of \(\mu\) inside the \((k-1)\times(n-k-1)\) rectangle. 
\end{proposition}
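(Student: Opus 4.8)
The plan is to produce the product form on the right-hand side one term at a time, by decomposing $\U_{k,n}$ valuatively into snakes, applying Theorem~\ref{thm:ribbon snake}, and then using Poincaré duality on $R(k,n)$.

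\emph{Step 1 (decompose $\U_{k,n}$ into snakes).} Assume $0<k<n$; for $k\in\{0,n\}$ both sides are the fundamental class $1$. Then $\U_{k,n}$ is the connected lattice path matroid whose skew shape is the full $k\times(n-k)$ rectangle. Since for a connected matroid every Schubert coefficient $d_{\eta}=d_{\eta,(k-1)(n-k-1)}$ is an additive valuation (as recorded after Definition~\ref{def:Schubert coefficients}), Proposition~\ref{prop:valuative into snakes} applies to each $d_\eta$ and gives, in $R(k,n)$,
\[
\Sc(\U_{k,n}) \ = \ \sum_{\S}\Sc(\S),
\]
where $\S$ ranges over all snake matroids of rank $k$ on $[n]$, i.e.\ over all compositions $\mathbf b$ of $n-1$ with exactly $k$ positive parts, via $\S=\S(\mathbf b)$.

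\emph{Step 2 (product formula for $\Sc(\S(\mathbf b))$).} Represent the ribbon $\ribbon(\mathbf b)=\lambda/\mu$ canonically inside the $k\times(n-k)$ rectangle, so that $\lambda$ has $k$ parts with $\lambda_1=n-k$ and $\mu$ is the inner partition; both sit inside $k\times(n-k)$. By Theorem~\ref{thm:ribbon snake}, $\Sc^c(\S(\mathbf b))=s_{\lambda/\mu}=\sum_\eta c^{\lambda}_{\mu,\eta}s_\eta$. Applying the Poincaré-dual involution $s_\eta\mapsto s_{\eta^c}$ of $R(k,n)$ and using the $\mathfrak S_3$-symmetry of Littlewood--Richardson coefficients (equivalently, that $c^\lambda_{\mu,\eta}=\deg(\sigma_\mu\sigma_\eta\sigma_{\lambda^c})$ is symmetric, by the complementary dimension formula) we get $c^{\lambda}_{\mu,\eta}=c^{\eta^c}_{\mu,\lambda^c}$, hence after reindexing $\beta=\eta^c$,
\[
\Sc(\S(\mathbf b)) \ = \ \sum_{\eta}c^{\eta^c}_{\mu,\lambda^c}\,s_{\eta^c} \ = \ \sum_{\beta}c^{\beta}_{\mu,\lambda^c}\,s_{\beta} \ = \ s_{\mu}\,s_{\lambda^c}
\]
in $R(k,n)$ (the last equality drops the Schur terms outside the rectangle, which the middle sum never produces). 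This is the statement that Poincaré duality turns a ribbon Schur function $s_{\lambda/\mu}$ into $s_\mu s_{\lambda^c}$; compare Proposition~\ref{prop:Sc snake as a product}.

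\emph{Step 3 (the bijection and conclusion).} A ribbon with $k$ rows, $n-1$ boxes, and bounding box exactly $k\times(n-k)$ is determined by its inner partition $\mu$: the bottom row being leftmost forces $\mu$ to have at most $k-1$ parts, and the top row being nonempty forces $\mu_1\le n-k-1$, so $\mu\subseteq(k-1)\times(n-k-1)$; conversely each such $\mu$ comes from a unique such ribbon, with $\lambda_1=n-k$ and $\lambda_i=\mu_{i-1}+1$ for $2\le i\le k$. Plugging these formulas into the definition of complement gives, after a short computation, $\lambda^c=\widetilde\mu$. Therefore, writing $\mu=\mu(\mathbf b)$,
\[
\Sc(\U_{k,n}) \ = \ \sum_{\mathbf b}s_{\mu(\mathbf b)}\,s_{\widetilde{\mu(\mathbf b)}} \ = \ \sum_{\mu\subseteq(k-1)\times(n-k-1)}s_\mu\,s_{\widetilde\mu},
\]
as desired. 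The only delicate point is the bookkeeping in Step~3 — fixing conventions for ribbons, their canonical skew-shape representatives, and the two complement operations so that $\lambda^c=\widetilde\mu$ comes out cleanly; there is no conceptual obstacle. As an alternative one could instead start from Klyachko's formula (Corollary~\ref{cor:combinatorial Sc uniform}), i.e.\ $\Sc(\U_{k,n})=\sum_\eta|\SYT_\eta^{k-1}|\,s_{\eta^c}$, and match $|\SYT_\eta^{k-1}|$ with the Littlewood--Richardson expansion $\sum_\mu c^{\eta^c}_{\mu,\widetilde\mu}$ of the right-hand side; comparing the two then also yields the symmetric-function identity $\sum_\mu s_\mu s_{\widetilde\mu}=\sum_\eta|\SYT_\eta^{k-1}|\,s_{\eta^c}$ in $R(k,n)$.
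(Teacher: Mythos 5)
Your proof is correct, and it is precisely the derivation the paper itself indicates: the paper states this proposition only as a citation of Berget--Fink, but Remark~\ref{rmk:Carl Lian} observes that it follows from Proposition~\ref{prop:valuative into snakes} (your Step 1) together with Proposition~\ref{prop:Sc snake as a product} (your Step 2) and the bijection $\mathbf b \leftrightarrow \mu$ with $\lambda^c=\widetilde{\mu}$ described just before that proposition (your Step 3). The only cosmetic difference is that in Step 2 you establish $c^{\lambda}_{\mu,\eta}=c^{\eta^c}_{\mu,\lambda^c}$ via the symmetry of $\deg(\sigma_\mu\sigma_\eta\sigma_{\lambda^c})$, whereas the paper's proof of Proposition~\ref{prop:Sc snake as a product} uses the equivalent observation that $s_{\lambda/\eta^c}=s_{\eta/\lambda^c}$ because one skew diagram is the $180$-degree rotation of the other.
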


We know that by fixing the rectangle \(k\times (n-k)\), a partition \(\mu \subseteq (k-1)\times(n-k-1)\) uniquely identifies a partition \(\lambda = [n-k, \mu_1 + 1,\ldots, \mu_{k-1}+1] \subseteq k \times (n-k)\) such that \(\lambda/\mu\) is a ribbon. It is straightforward to check that \(\lambda^c = \widetilde{\mu}\). The decomposition in Proposition \ref{prop:berget-fink-uniform} agrees with the following result.
\begin{proposition}\label{prop:Sc snake as a product}
    Let \(k \leq n\) and let \(\mu\) and \(\lambda\) be as above, then
    \[
        \Sc(\S(\lambda/\mu)) \ = \ s_{\mu}s_{\lambda^c} \, .
    \]
\end{proposition}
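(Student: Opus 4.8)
The plan is to place both sides of the claimed equality inside the ring $R=R(k,n)\cong A^\bullet(G(k,n))$ and separate them using the intersection pairing. Recall from Section~\ref{subsec:schubert coefficients} that $R$ is graded, that its top graded piece is $\ZZ\cdot s_{k\times(n-k)}$ with $\deg(s_{k\times(n-k)})=1$, and that the complementary dimension formula reads $\deg(s_\alpha s_\beta)=1$ if $\beta=\alpha^c$ and $0$ otherwise, whenever $|\alpha|+|\beta|=k(n-k)$. Equivalently, the pairing $\langle s_\alpha,s_\beta\rangle:=\deg(s_\alpha s_\beta)$ between $R^i$ and $R^{k(n-k)-i}$ is perfect, with $\{s_\eta\}$ and $\{s_{\eta^c}\}$ mutually dual bases.

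First I would record the Schur expansion of $\Sc(\S(\lambda/\mu))$. By Corollary~\ref{cor:Sc_coefficients_snakes_are_LR_ribbons} (equivalently, by Theorem~\ref{thm:ribbon snake} after writing $s_{\lambda/\mu}=\sum_\eta c^\lambda_{\mu,\eta}s_\eta$ and using the relation between $\Sc$ and $\Sc^c$) one has $d_{\eta^c}(\S(\lambda/\mu))=c^\lambda_{\mu,\eta}$; re-indexing $\Sc(\S(\lambda/\mu))=\sum_\eta d_\eta(\S(\lambda/\mu))\,s_\eta$ by $\eta\mapsto\eta^c$ gives
\[
    \Sc(\S(\lambda/\mu)) \ = \ \sum_{\eta} c^\lambda_{\mu,\eta}\,s_{\eta^c} \qquad \text{in } R.
\]
Both this element and $s_\mu s_{\lambda^c}$ are homogeneous of degree $(k-1)(n-k-1)$: the ribbon $\lambda/\mu$ has $n-1$ boxes, so $c^\lambda_{\mu,\eta}\neq0$ forces $|\eta|=n-1$ and hence $|\eta^c|=k(n-k)-(n-1)=(k-1)(n-k-1)$, while likewise $|\mu|+|\lambda^c|=|\mu|+k(n-k)-|\lambda|=(k-1)(n-k-1)$.

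Then I would verify that these two elements pair identically with every basis vector $s_\nu$ of the complementary graded piece $R^{\,n-1}$. On one hand $\langle\Sc(\S(\lambda/\mu)),s_\nu\rangle=d_{\nu^c}(\S(\lambda/\mu))=c^\lambda_{\mu,\nu}$. On the other hand, expanding $s_\mu s_\nu=\sum_\rho c^\rho_{\mu,\nu}s_\rho$ and applying the complementary dimension formula,
\[
    \langle s_\mu s_{\lambda^c},s_\nu\rangle \ = \ \deg\!\big(s_\mu s_\nu\,s_{\lambda^c}\big) \ = \ \sum_\rho c^\rho_{\mu,\nu}\,\deg\!\big(s_\rho s_{\lambda^c}\big) \ = \ c^\lambda_{\mu,\nu},
\]
since only $\rho=\lambda$ survives (and $\lambda\subseteq k\times(n-k)$; terms with $\rho\not\subseteq k\times(n-k)$ vanish in $R$ and contribute nothing to the top-degree pairing in any case). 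As the pairing is perfect, $\Sc(\S(\lambda/\mu))=s_\mu s_{\lambda^c}$. Phrased without reference to matroids, the same computation yields the general identity $s_\mu s_{\nu^c}=\sum_\rho c^\nu_{\mu,\rho}s_{\rho^c}$ in $R$, of which the proposition is the instance $\nu=\lambda$.

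I do not expect a genuine obstacle here: granting Theorem~\ref{thm:ribbon snake} / Corollary~\ref{cor:Sc_coefficients_snakes_are_LR_ribbons}, the argument is a routine Poincaré-duality manipulation in the Schubert basis. The only points requiring care are the degree bookkeeping above and the check that passing to the quotient $R$ (rather than staying in the full ring of symmetric functions) does not disturb the pairing computations, which holds because Schur functions indexed by partitions outside the $k\times(n-k)$ rectangle both vanish in $R$ and pair to $0$ in top degree. As a consistency check one may note that summing $\Sc(\S(\lambda/\mu))=s_\mu s_{\widetilde{\mu}}$ over all $\mu\subseteq(k-1)\times(n-k-1)$ and invoking Proposition~\ref{prop:valuative into snakes} recovers Berget--Fink's Proposition~\ref{prop:berget-fink-uniform}.
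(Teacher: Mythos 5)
Your argument is correct, and all the ingredients you use (the Schur expansion of $\Sc(\S(\lambda/\mu))$ from Corollary~\ref{cor:Sc_coefficients_snakes_are_LR_ribbons}, the complementary dimension formula, and the resulting perfect pairing on $R$ with $\{s_\eta\}$ and $\{s_{\eta^c}\}$ dual bases) are available from the paper's setup; the degree bookkeeping and the remark about partitions falling outside the $k\times(n-k)$ rectangle are exactly the points that need checking, and you handle both. The paper takes a slightly different route to the same coefficient identity: after expanding both sides in the Schur basis, it reduces the claim to $c^{\lambda}_{\mu,\eta^c}=c^{\eta}_{\lambda^c,\mu}$ and proves this combinatorially, using the symmetry $c^{\lambda}_{\mu,\eta^c}=c^{\lambda}_{\eta^c,\mu}$ together with the observation that $s_{\lambda/\eta^c}=s_{\eta/\lambda^c}$ because $\eta/\lambda^c$ is the $180$-degree rotation of $\lambda/\eta^c$ inside the rectangle. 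The two proofs rest on the same duality phenomenon — your pairing computation $\deg(s_\mu s_{\lambda^c}s_\nu)=c^{\lambda}_{\mu,\nu}$ is precisely the algebraic shadow of that rotation identity — but yours stays entirely inside the ring $R$ and avoids any explicit manipulation of skew diagrams, at the cost of invoking perfection of the intersection pairing, while the paper's is self-contained at the level of Littlewood--Richardson combinatorics. Your closing observation, that the same computation gives $s_\mu s_{\nu^c}=\sum_\rho c^{\nu}_{\mu,\rho}s_{\rho^c}$ in $R$ with the proposition as the case $\nu=\lambda$, is a nice way to isolate the purely symmetric-function content of the statement.
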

\begin{proof}
    By Theorem \ref{thm:ribbon snake} and after expanding in the Schur basis, the result is equivalent to showing that
    \[
    \sum_{\eta} c^\lambda_{\mu,\eta^c}s_{\eta} \ = \ \sum_{\eta}c^\eta_{\lambda^c,\mu}s_\eta \, .
    \]
    We then need to show that for a fixed partition \(\eta \subseteq k\times (n-k)\), the equality \(c^\lambda_{\mu,\eta^c} = c^\eta_{\lambda^c,\mu}\) holds. By the symmetry of the Littlewood--Richardson coefficients, this is equivalent to \(c^\lambda_{\eta^c,\mu} = c^\eta_{\lambda^c,\mu}\), which follows after observing that \(s_{\lambda/\eta^c} = s_{\eta/\lambda^c}\). To see that, notice that \(\eta/\lambda^c\) is obtained by rotating the diagram of \(\lambda/\eta^c\) by 180 degrees.  
\end{proof}

\begin{remark} \label{rmk:Carl Lian}
    In light of Proposition \ref{prop:Sc snake as a product}, Proposition \ref{prop:berget-fink-uniform} reduces to a special case of Proposition \ref{prop:valuative into snakes}. Motivated by the two formulas by Klyachko and Berget and Fink, in \cite{lian}, Lian studied the expansion of \(s_{\mu}s_{\widetilde{\mu}}\) in the Schur basis and described the coefficients that appear using a combinatorial count of so-called \emph{1-strip-less-tableaux}. The coefficients appearing in \cite[Theorem 1.3]{lian} agree, after running the standardization algorithm described in \cite[Appendix~A]{lian}, with the ones in Theorem~\ref{thm:combinatorial Sc lattice path}. Proposition~\ref{prop:Sc snake as a product} gives a new interpretation of the summands in Proposition~\ref{prop:berget-fink-uniform} as Chow classes of snake matroids.
\end{remark}

\section{Applications}\label{sec: applications}

\subsection{Robinson–Schensted–Knuth correspondence (RSK) and volume} \label{subsec:rsk and volume}
The volume of snake matroid polytopes was studied in great details in \cite{Knauer-snakes}. There, the authors show the following result. Consider the poset whose Hasse diagram is obtained by rotating the ribbon \(\ribbon(\mathbf b)\) defining a snake matroid~\(\S\) by \(45\) degrees clockwise, with an element for every box in \(\ribbon(\mathbf b)\) and a cover relation whenever two boxes are adjacent. Let \(Z(\S)\) be the dual of this poset and label it from left to right as depicted in Figure \ref{fig:snake poset}.
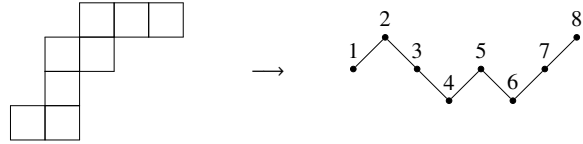
\begin{figure}[ht]
    \centering
    \scalebox{0.7}{ %\begin{tikzpicture}
 %   \node[anchor=south west] at (0,0) {
 %     \ydiagram{4+1, 1+4, 1+1, 2}
 %   };
 %   \node at (5,1.45) {\(\longrightarrow\)};

 %   \coordinate (shift) at (6,1);
 %   \begin{scope}[shift=(shift), scale=0.6]
 %       \foreach \x\y in {1/0,2/1,3/0,4/-1,5/0,6/1,7/2,8/1}
 %           \node[dot, label={{\x}}] (\x) at (\x,\y) {};
 %       \draw[rpath] (1) -- (2) -- (3) -- (4) -- (5) -- (6) -- (7) -- (8);
 %   \end{scope}
%\end{tikzpicture}

 \begin{tikzpicture}
    \node[anchor=south west] at (0,0) {
      \ydiagram{2+3, 1+2, 1+1, 2}
    };
    \node at (5,1.45) {\(\longrightarrow\)};

    \coordinate (shift) at (6,1.5);
    \begin{scope}[shift=(shift), scale=0.6]
        \foreach \x\y in {1/0,2/1,3/0,4/-1,5/0,6/-1,7/0,8/1}
            \node[dot, label={{\x}}] (\x) at (\x,\y) {};
        \draw[rpath] (1) -- (2) -- (3) -- (4) -- (5) -- (6) -- (7) -- (8);
    \end{scope}
\end{tikzpicture}}
    \caption{The snake matroid \(\S = \S(2,1,2,3)\) and the corresponding labeled poset \(Z(\S)\).}
    \label{fig:snake poset}
\end{figure}

\begin{theorem}[{\cite[Theorem~4.7]{Knauer-snakes}}]\label{thm:volume snake linear extensions}
    The normalized volume of a snake matroid \(\S(\mathbf{b})\) equals the number of linear extensions of the poset \(Z(\S)\). In particular, this is equal to the number of permutations \(\pi\) in \(\mathfrak{S}_{n-1}\) such that their descent set is equal to \(\Des(\pi) = \Des(\mathbf{b})\).
\end{theorem}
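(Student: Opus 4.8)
The plan is to express the normalized volume of $\polytope{P}(\S)$ as a degree in the Chow ring of the Grassmannian, evaluate that degree using our formula for the Schubert coefficients of snakes, and then translate the resulting tableau count into a permutation count via the Robinson--Schensted--Knuth correspondence.

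First, since every snake matroid is a lattice path matroid it is representable, say $\S = \S(\mathbf b) = \M_x$ with $x \in G(k,n)$, and $\overline{Tx}$ has dimension $n-1$ because $\S$ is connected. By the theorem of Gelfand--Goresky--MacPherson--Serganova, $\overline{Tx}$ is the projective toric variety of the lattice polytope $\polytope{P}(\S)$ embedded through the Plücker embedding, whose hyperplane class is the Schubert divisor $\sigma_{[1]}$; hence the normalized volume of $\polytope{P}(\S)$ equals the projective degree of $\overline{Tx}$, namely $\deg\bigl(\Sc(\S)\cdot\sigma_{[1]}^{\,n-1}\bigr)$.

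Next I would compute this degree. Iterating Pieri's formula (Theorem~\ref{thm:pieri}) gives $\sigma_{[1]}^{\,n-1} = \sum_{\nu}|\SYT_{\nu}|\,\sigma_{\nu}$, the sum over partitions $\nu\vdash n-1$ contained in $k\times(n-k)$, and the complementary dimension formula says $\deg(\sigma_{\eta}\sigma_{\nu})=1$ if $\nu=\eta^{c}$ and $0$ otherwise. Writing $\Sc(\S)=\sum_{\eta}d_{\eta}(\S)\sigma_{\eta}$ and pairing term by term,
\[
    \deg\bigl(\Sc(\S)\cdot\sigma_{[1]}^{\,n-1}\bigr)\ =\ \sum_{\nu\vdash n-1}d_{\nu^{c}}(\S)\,|\SYT_{\nu}|\ =\ \sum_{\nu\vdash n-1}|\SYT_{\nu}(\mathbf b)|\cdot|\SYT_{\nu}|\,,
\]
where the second equality is Theorem~\ref{thm:combinatorial Sc snakes}; extending the sum to all $\nu\vdash n-1$ is harmless, since for $\nu\not\subseteq k\times(n-k)$ the set $\SYT_{\nu}(\mathbf b)$ is empty. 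Now the Robinson--Schensted--Knuth correspondence is a bijection between $\mathfrak{S}_{n-1}$ and pairs $(P,Q)$ of standard Young tableaux of a common shape $\nu\vdash n-1$ for which $\Des(\pi)=\Des(Q)$ (see \cite{Stanley-EC2}), so the last sum counts exactly the permutations $\pi\in\mathfrak{S}_{n-1}$ with $\Des(\pi)=\Des(\mathbf b)$. Finally, the equality of this count with the number of linear extensions of the fence poset $Z(\S)$ is the classical enumeration of linear extensions of a zigzag poset by descent class \cite{Stanley-EC1}; it remains only to check that the left-to-right labelling of $Z(\S)$ coming from the ribbon $\ribbon(\mathbf b)$ has descent (up/down) composition $\mathbf b$.

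The hard part will be the first step: carefully justifying that the normalized volume of $\polytope{P}(\S)$ is the Chow-theoretic degree $\deg(\Sc(\S)\cdot\sigma_{[1]}^{\,n-1})$ --- in particular that the polarization the Plücker embedding induces on $\overline{Tx}$ corresponds to the polytope $\polytope{P}(\S)$ itself, and that ``normalized volume'' is the one scaled by $(n-1)!$. The remaining ingredients --- Pieri's formula, the complementary dimension formula, the descent-compatibility of RSK, and the fence-poset count --- are all classical; the only delicate bookkeeping is keeping track of the complementation $\nu\leftrightarrow\nu^{c}$ and fixing the orientation of $Z(\S)$ so that its descent composition is $\mathbf b$ and not a reversal or complement of it.
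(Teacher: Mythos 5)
Your proposal is correct and follows essentially the same route as the paper: volume $=\sum_{\eta}|\SYT_{\eta}|\,d_{\eta^{c}}(\S)$, then Theorem~\ref{thm:combinatorial Sc snakes}, then RSK with descent preservation. The only difference is that the paper imports the first identity as Lemma~\ref{lem:important} (citing \cite{JP}), whereas you re-derive it via the Plücker degree of $\overline{Tx}$, Pieri, and the complementary dimension formula --- a legitimate but strictly optional detour, and the step you rightly flag as delicate is exactly what the citation covers.
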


For the second claim in the statement, recall that the linear extensions of \(Z(\S)\) are given by the Jordan--Hölder set
\begin{equation*}
    \mathcal{JH}(Z(\S)) \ \coloneqq \ \SetOf{\pi \in \mathfrak S_{n-1}}{a \prec_{Z(\S)} b \implies \pi^{-1}(a) < \pi^{-1}(b)} \, .
\end{equation*}
Hence, the inverses of the linear extensions of \(Z(\S)\) are the permutations \(\pi\) such that if \(a \prec_{Z(\S)} b\), then \(\pi(a) < \pi(b)\). Given our chosen labelling of \(Z(\S)\), this is exactly the set of permutations having \(\Des(\mathbf b)\) as descent set. 
We show now how to recover this result in our setting. We will use the following linear relation that ties the Schubert coefficients of a matroid \(\M\) to the normalized relative volume of its matroid polytope \(\Volume(\M)\).
\begin{lemma}[{\cite[Proposition 5.1]{JP}}]
    \label{lem:important}
    Let \(\M\) be a matroid of rank \(k\) on \([n]\). Then
    \[
        \sum_\eta |\SYT_{\eta^c}| \ d_\eta(\M) \ = \ \Volume(\M) \, .
    \]
\end{lemma}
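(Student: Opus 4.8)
The plan is to recognize both sides as valuative matroid invariants on $\mathcal M_{n,k}$ that agree on representable matroids, and then to conclude by Remark~\ref{rem:valuation from rep}. First I would observe that the right-hand side $\M\mapsto\sum_\eta|\SYT_{\eta^c}|\,d_\eta(\M)$ is valuative: for each admissible codimension $m$ the map $\M\mapsto\sum_{\eta\subseteq k\times(n-k)}|\SYT_{\eta^c}|\,d_{\eta,m}(\M)$ is a $\ZZ$-linear combination of the valuative invariants $d_{\eta,m}$ of Theorem~\ref{thm:FinkSpeyer}, hence valuative; since for a fixed $\M$ only the value $m=k(n-k)-(n-\kappa(\M))$ contributes, the finite sum of these over all $m$ equals the right-hand side and is valuative. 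The left-hand side $\M\mapsto\Volume(\M)$, the lattice-normalized relative volume of $\polytope{P}(\M)$, is valuative by the classical valuativity of volume, compatibly with the notion of matroid subdivision used around Theorem~\ref{thm:bases_valuation}. It therefore suffices to prove the identity when $\M=\M_x$ is representable.

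So suppose $\M=\M_x$ for some $x\in G(k,n)$ and put $d=n-\kappa(\M)=\dim\overline{Tx}$. By the theorem of Gelfand--Goresky--MacPherson--Serganova \cite{GGMS}, $\overline{Tx}$ is the projective toric variety of $\polytope{P}(\M)$, and the Plücker embedding of $G(k,n)$ restricts on $\overline{Tx}$ to the embedding given by the lattice points of $\polytope{P}(\M)$, namely the basis indicator vectors $e_B$. Hence the degree of $\overline{Tx}$ in this projective space is the normalized relative volume $\Volume(\M)$. On the other hand, the Plücker embedding pulls $\mathcal O(1)$ back to the special Schubert divisor class $\sigma_{[1]}\in A^1(G(k,n))$, so that degree is also $\deg\bigl([\overline{Tx}]\cdot\sigma_{[1]}^{\,d}\bigr)=\deg\bigl(\Sc(\M)\cdot\sigma_{[1]}^{\,d}\bigr)$. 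This gives the key identity $\Volume(\M)=\deg\bigl(\Sc(\M)\cdot\sigma_{[1]}^{\,d}\bigr)$ in the representable case.

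The remaining step is pure Schubert calculus. Expanding $\Sc(\M)=\sum_\eta d_\eta(\M)\sigma_\eta$, where every $\eta$ with $d_\eta(\M)\neq0$ lies in $k\times(n-k)$ and satisfies $|\eta^c|=d$, we get $\Volume(\M)=\sum_\eta d_\eta(\M)\,\deg(\sigma_\eta\sigma_{[1]}^{\,d})$. Iterating Pieri's formula (Theorem~\ref{thm:pieri}) shows $\sigma_{[1]}^{\,d}=\sum_\lambda|\SYT_\lambda|\,\sigma_\lambda$, the sum over $\lambda\subseteq k\times(n-k)$ with $|\lambda|=d$; indeed $|\SYT_\lambda|$ is exactly the number of ways of building $\lambda$ from $\emptyset$ by adding one box at a time, which is how $\sigma_\lambda$ arises from repeated multiplication by $\sigma_{[1]}$. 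By the complementary dimension formula, $\deg(\sigma_\eta\sigma_\lambda)$ equals $1$ if $\lambda=\eta^c$ and $0$ otherwise, so $\deg(\sigma_\eta\sigma_{[1]}^{\,d})=|\SYT_{\eta^c}|$ whenever $|\eta^c|=d$. Substituting back yields $\Volume(\M)=\sum_\eta|\SYT_{\eta^c}|\,d_\eta(\M)$, as desired.

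The step I expect to be the main obstacle is the identity $\Volume(\M)=\deg\bigl(\Sc(\M)\cdot\sigma_{[1]}^{\,d}\bigr)$ in the representable case: one must match normalization conventions with care (the lattice-normalized relative volume of the polytope $\polytope{P}(\M)$, which is not full-dimensional in $\RR^n$, versus the projective degree of $\overline{Tx}$) and verify that the Plücker embedding really induces the polytope's own projective embedding. These facts are part of the classical toric-geometry package underlying \cite{GGMS}, so I expect them to be essentially the only non-formal input; everything else is bookkeeping with valuativity, Pieri's rule, and the complementary dimension formula.
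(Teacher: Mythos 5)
The paper itself does not prove this lemma; it is imported verbatim from \cite[Proposition 5.1]{JP}, so there is no in-paper argument to compare against. Your reconstruction of the representable case is correct and is surely the intended one: by \cite{GGMS}, \(\overline{Tx}\) is the toric variety of \(\polytope{P}(\M)\) embedded by its lattice points (the basis indicators \(e_B\)), so its Plücker degree equals \(\Volume(\M)\); the Plücker hyperplane class is \(\sigma_{[1]}\), so that degree is also \(\deg\bigl(\Sc(\M)\sigma_{[1]}^{\,d}\bigr)\); and iterated Pieri plus the complementary dimension formula give \(\deg(\sigma_\eta\sigma_{[1]}^{\,d})=|\SYT_{\eta^c}|\). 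You are right that the only non-formal input is the toric ``degree = normalized volume'' statement, which requires the projective normality of matroid base polytopes.

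The genuine gap is in the valuativity reduction, specifically the assertion that \(\M\mapsto\Volume(\M)\) is valuative ``by the classical valuativity of volume''. The classical statement concerns the top-dimensional volume, which vanishes on lower-dimensional interior cells; the \emph{relative} normalized volume does not, and it fails the inclusion--exclusion in the paper's definition. Concretely, \(\polytope{P}(\U_{2,4})\) subdivides into two square pyramids meeting along the square \(\polytope{P}(\U_{1,2}\oplus\U_{1,2})\), and the signed sum over interior cells gives \(2+2-2=2\neq 4=\Volume(\U_{2,4})\). The same example shows your auxiliary function \(\sum_m\sum_\eta|\SYT_{\eta^c}|\,d_{\eta,m}\) cannot be valuative either (it agrees with \(\Volume\) on these representable matroids), so one cannot lean on Theorem~\ref{thm:FinkSpeyer} for the graded pieces with \(m>(k-1)(n-k-1)\) in the way you do. The repair is standard: for connected \(\M\) run your argument with the top-dimensional normalized volume, which equals \(\Volume(\M)\) there and is an honest additive valuation, against \(\sum_\eta|\SYT_{\eta^c}|\,d_{\eta,(k-1)(n-k-1)}\), which is additive, valuative, and vanishes on disconnected matroids exactly as the top-dimensional volume does; then deduce the disconnected case from the connected one using the direct-sum behaviour of Schubert coefficients together with \(\Volume(\polytope{P}_1\times\polytope{P}_2)=\binom{d_1+d_2}{d_1}\Volume(\polytope{P}_1)\Volume(\polytope{P}_2)\). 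Everything else in your proposal is sound.
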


\begin{proof}[Proof of Theorem \ref{thm:volume snake linear extensions}]
    Given a snake matroid \(\S = \S(\mathbf b)\) we write
    \begin{align*}
    \Volume(\S(\mathbf b)) \ &= \ \sum_{\eta \vdash n-1} |\SYT_{\eta}| \cdot d_{\eta^c}(\S) \\
    &= \ \sum_{\eta \vdash n-1} |\SYT_{\eta}| \cdot |\SYT_{\eta}(\mathbf b)| \\
    &= \ |\SetOf{\pi \in \mathfrak{S}_{n-1}}{\Des(\pi) = \Des(\mathbf b)} |\, ,
\end{align*}
where the first equality is Lemma \ref{lem:important}, the second is Theorem \ref{thm:combinatorial Sc snakes}, and the third follows from the RSK correspondence, see \cite[Section 7.11]{Stanley-EC2} for more details. 
\end{proof}

In \cite[Theorem 5.4]{benedetti-knauer-valenciaporras} the authors give a recursive formula for the volume of lattice path matroids of rank 2. The formula for the volume of snake matroids from Theorem \ref{thm:volume snake linear extensions} can be extended to all lattice path matroids, see also \cite[Corollary~5.1.1]{benedetti-knauer-valenciaporras}.
\begin{corollary}
    Let \(\M\) be a connected lattice path matroid, and let \(D_\mathbf{L} = \{c_1, \ldots, c_{k-1}\}\) and \(D_\mathbf{U} = \{d_1,\ldots, d_{k-1}\}\) be the descent sets associated to the lowermost and uppermost snakes fitting inside \(\M\), respectively. Then \(\Volume(\M)\) equals the number of permutations \(\pi \in \mathfrak S_{n-1}\) such that \(\Des(\pi)\) is a transversal of the set system
    \begin{equation*}
        \mathcal A \ = \ \SetOf{[c_i,d_i]}{i = 1,2,\dots, k-1} \, .
    \end{equation*}
\end{corollary}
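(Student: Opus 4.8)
The plan is to follow the same three-step argument used to prove Theorem~\ref{thm:volume snake linear extensions}, feeding in the lattice path version of our combinatorial formula in place of the snake version. First I would apply Lemma~\ref{lem:important} to the connected lattice path matroid $\M$ of rank $k$ on $[n]$: since $\M$ is connected, only partitions $\eta$ of $n-1$ contained in $k\times(n-k)$ contribute, and reindexing the sum by complementation the lemma reads
\[
\Volume(\M) \ = \ \sum_{\eta \vdash n-1} |\SYT_\eta| \cdot d_{\eta^c}(\M) \, .
\]

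Next I would substitute Theorem~\ref{thm:combinatorial Sc lattice path}, which computes $d_{\eta^c}(\M)$ as the number of standard Young tableaux of shape $\eta$ whose descent set is a transversal of the set system $\mathcal A = \SetOf{[c_i,d_i]}{1\le i\le k-1}$ built from the descent sets of the lowermost and uppermost snakes fitting inside $\M$. This turns the right-hand side into
\[
\Volume(\M) \ = \ \sum_{\eta \vdash n-1} |\SYT_\eta| \cdot \left|\SetOf{T \in \SYT_\eta}{\Des(T) \text{ is a transversal of } \mathcal{A}}\right| \, .
\]
For a fixed $\eta$, the product $|\SYT_\eta| \cdot |\SetOf{T \in \SYT_\eta}{\Des(T) \text{ transversal of } \mathcal A}|$ is precisely the number of ordered pairs $(P,Q)$ of standard Young tableaux on $n-1$ boxes of common shape $\eta$ for which $\Des(Q)$ is a transversal of $\mathcal A$, and summing over $\eta$ counts all such pairs regardless of shape.

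Finally I would invoke the RSK correspondence: the map $\pi \mapsto (P,Q)$ is a bijection from $\mathfrak S_{n-1}$ onto the set of ordered pairs of standard Young tableaux of equal shape with $n-1$ boxes, and the key compatibility is that the descent set of the recording tableau $Q$ equals $\Des(\pi)$ (see \cite[Section~7.11]{Stanley-EC2}). Hence the pairs counted above are in bijection with the permutations $\pi \in \mathfrak S_{n-1}$ whose descent set is a transversal of $\mathcal A$, which is the claimed formula. I do not expect a genuine obstacle: the statement is a direct chaining of Lemma~\ref{lem:important}, Theorem~\ref{thm:combinatorial Sc lattice path}, and RSK, paralleling the proof of Theorem~\ref{thm:volume snake linear extensions} with the snake-matroid input (Theorem~\ref{thm:combinatorial Sc snakes}) replaced by its lattice path generalization. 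The single point that must be stated carefully is the RSK--descent compatibility $\Des(\pi) = \Des(Q)$, since this is what transfers the transversal constraint imposed on one tableau of the pair over to the permutation itself.
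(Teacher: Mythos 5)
Your proposal is correct and follows exactly the paper's argument: the paper's proof of this corollary is the one-line remark that it follows from Theorem~\ref{thm:combinatorial Sc lattice path}, Lemma~\ref{lem:important} and RSK, mirroring the proof of Theorem~\ref{thm:volume snake linear extensions}, and your write-up simply spells out that chain (including the key RSK compatibility \(\Des(Q)=\Des(\pi)\)) in full detail.
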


\begin{proof}
    As above, this follows from Theorem \ref{thm:combinatorial Sc lattice path}, Lemma \ref{lem:important} and RSK.
\end{proof}

In \cite{Gessel-Viennot}, Gessel and Viennot provide a determinantal formula to compute the number of permutations with a given descent set. Let \(A = ( a_{ij})_{i,j\geq 0}\) be the infinite matrix where \( a_{ij} = \binom{i}{j}\) for \(j \leq i\) and \(0\) for \(j > i\). A \emph{binomial determinant} is any minor of \(A\). The minor corresponding to rows \(0 \leq a_1 < a_2 < \ldots < a_k\) and columns \(0 \leq b_1 < b_2 < \ldots < b_k\) is denoted by
\[
\binom{a_1,\ldots, a_k}{b_1, \ldots, b_k} \ = \ \det\left[\binom{a_i}{b_j} \right]_{1\leq i,j \leq k}\, .
\]
\begin{theorem}[{\cite[Corollary 6]{Gessel-Viennot}}]\label{thm: Gessel-Viennot}
    If \(\mathcal D = \{d_1,\ldots, d_k\} \subseteq [n-2]\), the number of permutations in \(\mathfrak{S}_{n-1}\) whose descent set is \(\mathcal D\) is 
    \[
        \binom{\mathcal D \cup \{n-1\}}{\{0\}\cup \mathcal D} \, .
    \]
\end{theorem}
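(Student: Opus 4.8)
The plan is to route everything through the determinantal formula for the dual Chow class of a snake matroid. Given $\mathcal D=\{d_1<\dots<d_k\}\subseteq[n-2]$, set $\mathbf b=(d_1,\,d_2-d_1,\,\dots,\,d_k-d_{k-1},\,(n-1)-d_k)$; this is a composition of $n-1$ with $k+1$ positive parts and $\Des(\mathbf b)=\mathcal D$, so $\S\coloneqq\S(\mathbf b)$ is a snake matroid of rank $k+1$ on $[n]$. By Theorem~\ref{thm:volume snake linear extensions} the number of permutations in $\mathfrak S_{n-1}$ with descent set $\mathcal D$ equals $\Volume(\S)$, so it suffices to prove $\Volume(\S)=\binom{\mathcal D\cup\{n-1\}}{\{0\}\cup\mathcal D}$. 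The bridge will be the \emph{exponential specialization} $\psi\colon\Lambda\to\QQ$, i.e.\ the algebra homomorphism with $\psi(p_1)=1$ and $\psi(p_m)=0$ for $m\ge2$; equivalently $\psi(h_m)=\psi(s_{[m]})=1/m!$, and $\psi(s_\eta)=|\SYT_\eta|/|\eta|!$ for every partition $\eta$ (see \cite[Section~7.8]{Stanley-EC2}).

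Now compute $\psi(\Sc^c(\S))$ in two ways. Applying $\psi$ to $\Sc^c(\S)=\sum_{\eta\vdash n-1}d_{\eta^c}(\S)\,s_\eta$ and using Lemma~\ref{lem:important} (after the substitution $\eta\mapsto\eta^c$) gives
\[
    \psi\bigl(\Sc^c(\S)\bigr)\;=\;\frac{1}{(n-1)!}\sum_{\eta\vdash n-1}d_{\eta^c}(\S)\,|\SYT_\eta|\;=\;\frac{\Volume(\S)}{(n-1)!}\,.
\]
On the other hand, Lemma~\ref{lem:snake determinant} identifies $\Sc^c(\S)$ with $\det\bigl[s_{[C_j-R_i]}\bigr]_{i,j=1}^{k+1}$, where $R=(0,d_1,\dots,d_k)$ and $C=(d_1,\dots,d_k,n-1)$ list the row and column indices; since $\psi$ is a ring homomorphism it passes through the determinant, so
\[
    \psi\bigl(\Sc^c(\S)\bigr)\;=\;\det\!\left[\frac{1}{(C_j-R_i)!}\right]_{i,j=1}^{k+1}\qquad\text{(with }1/m!\coloneqq0\text{ for }m<0\text{)}.
\]

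Finally I would match this against the binomial determinant. Pulling $1/R_i!$ out of row $i$ and $C_j!$ out of column $j$ yields
\[
    \binom{\mathcal D\cup\{n-1\}}{\{0\}\cup\mathcal D}\;=\;\det\!\left[\binom{C_j}{R_i}\right]_{i,j=1}^{k+1}\;=\;\frac{\prod_j C_j!}{\prod_i R_i!}\,\det\!\left[\frac{1}{(C_j-R_i)!}\right]_{i,j=1}^{k+1},
\]
and here $\prod_j C_j!/\prod_i R_i!=(n-1)!$ because the common factors $d_1!\cdots d_k!$ cancel. Combining the three displays gives $\binom{\mathcal D\cup\{n-1\}}{\{0\}\cup\mathcal D}=(n-1)!\,\psi(\Sc^c(\S))=\Volume(\S)$, as desired. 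I do not expect a real obstacle here; the one point that needs care is that the identity of Lemma~\ref{lem:snake determinant} may be read in the full ring $\Lambda$ rather than in the quotient $R$ without loss, which holds because $\Sc^c(\S)=s_{\ribbon(\mathbf b)}$ with $\ribbon(\mathbf b)=\lambda/\mu$ and $\lambda$ already contained in the $(k+1)\times(n-k-1)$ rectangle, so no Schur function outside that rectangle occurs. One could instead start from Theorem~\ref{thm:combinatorial Sc snakes} and the RSK proof of Theorem~\ref{thm:volume snake linear extensions}, but the binomial coefficients only surface once the $s_{[m]}$'s are assembled into a Jacobi--Trudi determinant, so going through Lemma~\ref{lem:snake determinant} is the most economical route.
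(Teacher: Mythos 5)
Your proof is correct, and it diverges from the paper's argument at the decisive step. Both proofs begin identically: they pass to the snake matroid $\S(\mathbf b)$ with $\Des(\mathbf b)=\mathcal D$ and reduce the claim, via Theorem~\ref{thm:volume snake linear extensions} and Lemma~\ref{lem:important}, to evaluating the linear functional $s_\eta\mapsto|\SYT_\eta|$ on $\Sc^c(\S)$. The paper realizes this functional as the coefficient extraction $[u^{[n-1]}]$ and, because that operation is not multiplicative, must run an induction on the rank in which the recursion of Corollary~\ref{cor:main identity snakes} is matched term by term with the cofactor expansion of the binomial determinant along its last column (using the convolution identity $[u^{[n-1]}](s_{[b_k]}F)=\binom{n-1}{b_k}[u^{[n-b_k-1]}]F$). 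You instead realize the same functional as $(n-1)!$ times the exponential specialization $\psi$, which \emph{is} an algebra homomorphism; this lets you push $\psi$ straight through the Jacobi--Trudi-type determinant of Lemma~\ref{lem:snake determinant}, turning it into $\det[1/(C_j-R_i)!]$, and the binomial determinant then appears by factoring $C_j!$ and $1/R_i!$ out of the columns and rows. The net effect is that your argument replaces the paper's induction with a one-line appeal to multiplicativity of $\psi$, at the modest cost of having to justify (as you correctly do, via Theorem~\ref{thm:ribbon snake} and the containment $\lambda\subseteq(k+1)\times(n-k-1)$) that the determinantal identity can be read in $\Lambda$ rather than in the quotient $R$, so that no Schur functions are lost before $\psi$ is applied. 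There is no circularity, since Theorem~\ref{thm:volume snake linear extensions} is proved in the paper via RSK, independently of Gessel--Viennot.
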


We prove Theorem \ref{thm: Gessel-Viennot} using our results.
\begin{proof}[Proof of Theorem \ref{thm: Gessel-Viennot}]
    Let \(\mathbf{b} = (b_1,\dots, b_k)\) be the composition such that the snake matroid \(\S = \S(\mathbf{b})\) of rank \(k\) on \([n]\) has associated descent set \(\Des(\mathbf{b}) = \mathcal D\). First using Theorem \ref{thm:volume snake linear extensions} and Lemma \ref{lem:important} we have
    \begin{align*}
        |\{\pi \in \mathfrak S_{n-1} \mid \Des(\pi) = \mathcal D\}| \ &= \ \Volume(\S) \ = \ \sum_{\eta} d_{\eta^c}(\S) |\SYT_\eta|\\
        &= \ \sum_{\eta} d_{\eta^c}(\S) [u^{[n-1]}]s_\eta\\
        &= \ [u^{[n-1]}]\left(\sum_{\eta} d_{\eta^c}(\S) \ s_\eta\right) \, .
    \end{align*}
    In the third equality we use that the coefficient of \(u^{[n-1]} = u_1\cdots u_{n-1}\) of the Schur function \(s_\eta\) is the number of standard Young tableaux of shape \(\eta\).
    We will show by induction on the rank \(k\) that for any snake matroid \(\S\) on \([n]\) with associated descent set \(\mathcal D\) we have
    \begin{equation}
        \label{eq:GV induction}
        [u^{[n-1]}]\left(\sum_{\eta} d_{\eta^c}(\S) \ s_\eta\right) \ = \ \binom{\mathcal{D} \cup \{n-1\}}{\{0\}\cup \mathcal D} \, .
    \end{equation}
    For \(k = 1\), that is \(\S = \S(n-1)\) and \(\mathcal D = \emptyset\), we have that \eqref{eq:GV induction} simplifies to 
    \[
        [u^{[n-1]}]s_{[n-1]} \ = \ 1 \ = \ \binom{n-1}{0} \, .
    \]
    For \(k \geq 2\) let \(\mathbf{b} = (b_1,\dots, b_k)\) be a composition of \(n-1\) and let \(\mathcal D = \Des(\mathbf b)\) be the associated descent set. Consider the compositions \(\mathbf{b}' = (b_1,\dots b_{k-1})\) and \(\mathbf{b}'' = (b_1,\dots,b_{k-1} + b_k)\) of \(n-b_k-1\) and \(n-1\) respectively. Notice that the associated descent set to both is \(\mathcal D' = \{d_1,\dots,d_{k-2}\}\). Now using Corollary \ref{cor:main identity snakes} we get
    \begin{align*}
        & [u^{[n-1]}]\left(\sum_{\eta} d_{\eta^c}(\S(\mathbf{b})) \ s_\eta\right) \ = \ [u^{[n-1]}] \left( s_{[b_k]} \sum_{\mu}d_{\mu^c}(\S(\mathbf{b}'))s_\mu - \sum_{\eta} d_{\eta^c}(\S(\mathbf{b}''))s_{\eta}\right)\\
        & = \ \sum_{A\in \binom{n-1}{b_k}} [u^A]s_{[b_k]} \ [u^{[n-1]\setminus A}]\left( \sum_{\mu}d_{\mu^c}(\S(\mathbf{b}'))s_\mu \right) \ - \ [u^{[n-1]}]\left(\sum_{\eta} d_{\eta^c}(\S(\mathbf{b}''))s_{\eta}\right)\\
        & = \ \binom{n-1}{b_k} [u^{[n-b_k-1]}]\left( \sum_{\mu}d_{\mu^c}(\S(\mathbf{b}'))s_\mu \right) \ - \ [u^{[n-1]}]\left(\sum_{\eta} d_{\eta^c}(\S(\mathbf{b}''))s_{\eta}\right)\\
        & = \ \binom{n-1}{b_k} \binom{\mathcal D' \cup \{n-b_k-1\}}{\{0\} \cup \mathcal{D}'} - \binom{\mathcal D' \cup \{n-1\}}{\{0\} \cup \mathcal{D}'} \ = \ \binom{\mathcal D \cup \{n-1\}}{\{0\} \cup \mathcal{D}} \, .
    \end{align*}
    In the third equality we used that \([u^A]s_{[b_k]} = 1\) and that \([u^{[n-1]\setminus A}]s_\mu\) is independent of~\(A\). The fourth equality is the induction hypothesis and the last equality follows from expanding the minor \(\binom{\mathcal D \cup \{n-1\}}{\{0\} \cup \mathcal{D}}\) along the last column.
\end{proof}

\subsection{Support}\label{sec:support}
A starting point to tackle Conjecture \ref{conj:nonnegativity} would be to know for which partitions the corresponding Schubert coefficients of a given matroid are nonzero. We define the support of a matroid \(\M\) of rank \(k\) on \([n]\) to be the set
\[
    \supp(\M) \ = \  \left\{\eta \vdash n - \kappa(\M) \mid \ d_{\eta^c}(\M) \neq 0\right\} \, .
\] 
When restricting to snake matroids, the support of ribbon Schur functions has been the subject of much work, see for example \cite{McNamara,king-welsh-vanwilligenburg,McNamara-vanWilligenburg} and references therein. Finding a complete characterization of the support is considered a hard problem \cite{McNamara-Willigenburg}.

In {\cite[Proposition~3.1]{McNamara}}, McNamara provides a necessary condition for a partition \(\eta\) to be in the support of a ribbon Schur function \(s_{\ribbon(\mathbf b)}\). We record the result here in our language. For a ribbon \(\ribbon(\mathbf b)\), let \(\operatorname{rows}(\mathbf b)\), respectively \(\operatorname{cols}(\mathbf b)\), denote the partition obtained by ordering the lengths of the rows, respectively columns, of \(\ribbon(\mathbf b)\) in weakly decreasing order.
\begin{lemma} \label{lem:mcnamara support}
    Let \(\S = \S(\mathbf b)\) be a snake matroid of rank \(k\) on \([n]\), and let \(\eta \subseteq k \times (n-k)\) be a partition of \(n-1\). If \(\eta \in \supp(\S)\), then
    \[
        \operatorname{rows}(\mathbf b) \leq \eta \leq \operatorname{cols}(\mathbf b)^t
    \]
    in the dominance order. 
\end{lemma}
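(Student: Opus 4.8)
The plan is to translate the question into one about the support of the ribbon Schur function $s_{\ribbon(\mathbf b)}$ and then settle it in two short steps. By Theorem~\ref{thm:ribbon snake} we have $\Sc^c(\S) = s_{\ribbon(\mathbf b)}$, so writing $\ribbon(\mathbf b) = \lambda/\mu$, Corollary~\ref{cor:Sc_coefficients_snakes_are_LR_ribbons} gives $d_{\eta^c}(\S) = c^\lambda_{\mu,\eta}$. Thus $\eta \in \supp(\S)$ precisely when $s_\eta$ occurs with nonzero coefficient in the Schur expansion of $s_{\ribbon(\mathbf b)}$, and the two inequalities to be proved are exactly the necessary condition of \cite[Proposition~3.1]{McNamara} for membership in the support of a ribbon Schur function. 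One option is therefore simply to invoke that result once the translation is recorded.

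For a self-contained argument I would prove the two bounds directly. Since skew Schur functions are Schur-positive, all coefficients $c^\lambda_{\mu,\nu}$ are nonnegative, and since the only semistandard filling of shape $\eta$ with content $\eta$ is the superstandard one, $K_{\eta,\eta} = 1$. Pairing $s_{\ribbon(\mathbf b)} = \sum_\nu c^\lambda_{\mu,\nu} s_\nu$ with $h_\eta$ and using that $\{h_\nu\}$ and $\{m_\nu\}$ are dual bases, a nonzero coefficient $c^\lambda_{\mu,\eta}$ forces the Kostka number $K_{\ribbon(\mathbf b),\eta}$, which counts semistandard fillings of the ribbon $\ribbon(\mathbf b)$ with content $\eta$, to be positive. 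Fix such a filling $T$. Because the entries of $T$ strictly increase down each column, at most $\min(j,\ell_c)$ of the boxes in a column $c$ of length $\ell_c$ carry an entry at most $j$; summing over columns and over $i \le j$, the number of boxes of $T$ with entry at most $j$ is bounded by $\sum_{i=1}^{j}\bigl(\operatorname{cols}(\mathbf b)^t\bigr)_i$. Since that number of boxes equals $\eta_1 + \dots + \eta_j$, this says $\eta \leq \operatorname{cols}(\mathbf b)^t$ in dominance order.

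For the lower bound I would apply the fundamental involution $\omega$, equivalently transpose the ribbon: from $s_{\ribbon(\mathbf b)} = \sum_\nu c^\lambda_{\mu,\nu} s_\nu$ one obtains $s_{\ribbon(\mathbf b)^t} = \sum_\nu c^\lambda_{\mu,\nu} s_{\nu^t}$, so $\eta \in \supp(\S)$ is equivalent to $\eta^t$ lying in the support of $s_{\ribbon(\mathbf b^*)} = s_{\ribbon(\mathbf b)^t}$. The columns of $\ribbon(\mathbf b)^t$ are the rows of $\ribbon(\mathbf b)$, hence $\operatorname{cols}(\mathbf b^*) = \operatorname{rows}(\mathbf b)$, and the upper bound just proved, applied to $\ribbon(\mathbf b^*)$, yields $\eta^t \leq \operatorname{rows}(\mathbf b)^t$; conjugating reverses the dominance order and gives $\operatorname{rows}(\mathbf b) \leq \eta$. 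The only thing requiring genuine care in all of this is bookkeeping: matching the convention that the rows of $\ribbon(\mathbf b)$ are read from the bottom with the skew-shape description $\lambda/\mu$ used in Theorem~\ref{thm:ribbon snake}, and tracking the direction of dominance under conjugation. There is no real obstacle beyond this — once the reduction to ribbon Schur supports is in place, the upper bound is immediate from column-strictness of semistandard tableaux and the lower bound follows by the conjugation symmetry.
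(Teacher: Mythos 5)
Your proposal is correct. Note first that the paper does not prove this lemma at all: it performs exactly the translation you describe in your opening paragraph (via Theorem~\ref{thm:ribbon snake} and Corollary~\ref{cor:Sc_coefficients_snakes_are_LR_ribbons}, the statement is a restatement of the support condition for the ribbon Schur function $s_{\ribbon(\mathbf b)}$) and then simply cites \cite[Proposition~3.1]{McNamara}. So your ``one option is to invoke that result'' is precisely the paper's route. Your self-contained argument goes further and is sound: the pairing $\langle s_{\lambda/\mu}, h_\eta\rangle = \sum_\nu c^\lambda_{\mu,\nu}K_{\nu,\eta} = K_{\lambda/\mu,\eta}$ together with $K_{\eta,\eta}=1$ and nonnegativity correctly reduces $c^\lambda_{\mu,\eta}\neq 0$ to the existence of a semistandard filling of the ribbon with content $\eta$; the column-strictness count $\sum_{i\le j}\eta_i \le \sum_c \min(j,\ell_c) = \sum_{i\le j}\bigl(\operatorname{cols}(\mathbf b)^t\bigr)_i$ gives the upper dominance bound; and the $\omega$-involution argument, using that conjugation reverses dominance and that $\operatorname{cols}$ of the transposed ribbon equals $\operatorname{rows}(\mathbf b)$, correctly yields the lower bound. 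What your version buys is independence from the external reference at the cost of a few standard symmetric-function facts; what the paper's version buys is brevity. Either is acceptable, and there is no gap in your argument.
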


When \(\eta\) has full first row or full first column Corollary \ref{cor:Kostka} lets us give a complete characterization, as it is known that \(K_{\eta,\mathbf b}\) is nonzero if and only if \(\operatorname{rows}(\mathbf b) \leq \eta\) in the dominance order (\cite[Proposition 7.10.5]{Stanley-EC2}).

\begin{proposition}\label{prop: dominance order}
    Let \(\S = \S(\mathbf b)\) be a snake matroid of rank \(k\) on \([n]\) and \(\eta \vdash n-1\) be a partition of length \(k\). Then
    \[
        \eta \in \supp(\S) \iff \operatorname{rows}(\mathbf b) \leq \eta \text{ in dominance order.} 
    \]
    Similarly, if \(\eta_1 = n-k\),
    \[
        \eta \in \supp(\S) \iff \eta \leq \operatorname{cols}(\mathbf b)^t \text{ in dominance order.} 
    \]
\end{proposition}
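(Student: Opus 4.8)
The plan is to reduce both equivalences to the classical criterion that a Kostka number $K_{\eta,\mathbf c}$ is nonzero if and only if $\operatorname{sort}(\mathbf c)\le\eta$ in the dominance order (\cite[Proposition~7.10.5]{Stanley-EC2}), where $\operatorname{sort}(\mathbf c)$ denotes the partition obtained by rearranging $\mathbf c$ into weakly decreasing order; here one uses that $K_{\eta,\mathbf c}$ depends only on the rearrangement class of $\mathbf c$. For the first statement, note that when $\ell(\eta)=k$ Corollary~\ref{cor:Kostka} gives $d_{\eta^{c}}(\S)=K_{\eta,\mathbf b}$, and since the row lengths of the ribbon $\ribbon(\mathbf b)$ are exactly the parts of $\mathbf b$ we have $\operatorname{rows}(\mathbf b)=\operatorname{sort}(\mathbf b)$; the criterion above then yields $\eta\in\supp(\S)\iff d_{\eta^{c}}(\S)\neq 0\iff\operatorname{rows}(\mathbf b)\le\eta$, which is the first claim.

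For the second statement my plan is to pass to the transpose rather than rerun the computation. By Theorem~\ref{thm:ribbon snake}, $\supp(\S(\mathbf b))$ is the Schur support of the ribbon Schur function $s_{\ribbon(\mathbf b)}$, and applying the transpose symmetry $c^{\lambda}_{\mu,\eta}=c^{\lambda^{t}}_{\mu^{t},\eta^{t}}$ of Littlewood--Richardson coefficients (equivalently, transposing a ribbon transposes all the shapes occurring in its Schur expansion) shows that $\eta\in\supp(\S(\mathbf b))$ if and only if $\eta^{t}\in\supp(\S(\mathbf b^{*}))$, where $\mathbf b^{*}$ is the composition with $\ribbon(\mathbf b^{*})=\ribbon(\mathbf b)^{t}$ from Corollary~\ref{cor:Kostka}; the same reduction also follows from the behaviour of Schubert coefficients under matroid duality, as $\S(\mathbf b)^{*}$ is, up to isomorphism, the snake matroid $\S(\mathbf b^{*})$. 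The hypothesis $\eta_{1}=n-k$ is precisely the statement that $\eta^{t}$ has length $n-k$, which is the rank of $\S(\mathbf b^{*})$, so the first part applies to $\S(\mathbf b^{*})$ and gives $\eta^{t}\in\supp(\S(\mathbf b^{*}))\iff\operatorname{rows}(\mathbf b^{*})\le\eta^{t}$. Since the rows of $\ribbon(\mathbf b^{*})=\ribbon(\mathbf b)^{t}$ are the columns of $\ribbon(\mathbf b)$ we have $\operatorname{rows}(\mathbf b^{*})=\operatorname{cols}(\mathbf b)$, and since transposition reverses the dominance order, $\operatorname{cols}(\mathbf b)\le\eta^{t}$ is equivalent to $\eta\le\operatorname{cols}(\mathbf b)^{t}$, completing the argument.

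I do not anticipate a genuine difficulty: the work is entirely bookkeeping — applying the Kostka criterion to sorted contents and using content-symmetry of Kostka numbers, keeping track of the rectangle in which the complement identity $(\eta^{c})^{t}=(\eta^{t})^{c}$ is read when transposing, and invoking the standard fact that $\nu\le\kappa$ in the dominance order if and only if $\kappa^{t}\le\nu^{t}$.
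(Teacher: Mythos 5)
Your proof is correct and follows essentially the same route as the paper: the first equivalence is Corollary~\ref{cor:Kostka} combined with the classical Kostka nonvanishing criterion, and the second is obtained by transposing the ribbon (equivalently, matroid duality) and applying the first part to \(\S(\mathbf b^*)\). Your explicit bookkeeping of the transposition --- reducing to \(K_{\eta^t,\mathbf b^*}\) and using that transposition reverses dominance order --- is exactly the careful reading of the second half of Corollary~\ref{cor:Kostka} needed to land on the stated inequality \(\eta \leq \operatorname{cols}(\mathbf b)^t\).
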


The following result gives a slightly different necessary condition.

\begin{proposition}\label{prop:support}
    Let \(\S\) be a snake matroid given by the ribbon \(\lambda / \mu\), and \(\eta\) a partition. If \(\eta \in \supp(\S)\) then \(\eta \subseteq \lambda\).
\end{proposition}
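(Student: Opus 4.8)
The plan is to reduce the statement to an elementary containment property of Littlewood--Richardson coefficients.

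First I would recall that a snake matroid is connected, so $\kappa(\S)=1$ and the partitions $\eta$ entering $\supp(\S)$ are partitions of $n-1$. By Theorem~\ref{thm:ribbon snake} we have $\Sc^c(\S)=s_{\lambda/\mu}$, and expanding this skew Schur function in the Schur basis --- which is exactly the content of Corollary~\ref{cor:Sc_coefficients_snakes_are_LR_ribbons} --- gives $d_{\eta^c}(\S)=c^{\lambda}_{\mu,\eta}$. Hence $\eta\in\supp(\S)$ is equivalent to $c^{\lambda}_{\mu,\eta}\neq 0$, and it suffices to prove that a nonzero Littlewood--Richardson coefficient $c^{\lambda}_{\mu,\eta}$ forces $\eta\subseteq\lambda$.

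Second, I would exploit the symmetry $c^{\lambda}_{\mu,\eta}=c^{\lambda}_{\eta,\mu}$, which holds because the product of Schur functions is commutative. By the Littlewood--Richardson rule, $c^{\lambda}_{\eta,\mu}$ counts Littlewood--Richardson tableaux of skew shape $\lambda/\eta$ and content $\mu$; equivalently, $c^{\lambda}_{\eta,\mu}=\langle s_{\lambda/\eta},s_\mu\rangle$. If $\eta\not\subseteq\lambda$, then $\lambda/\eta$ is not a genuine skew diagram, so no such tableaux exist and $s_{\lambda/\eta}=0$; consequently $c^{\lambda}_{\eta,\mu}=0$ for every $\mu$. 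Taking the contrapositive, $c^{\lambda}_{\mu,\eta}\neq 0$ implies $\eta\subseteq\lambda$, which is what we wanted.

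I do not anticipate a genuine obstacle: the only substantive input is the identification $d_{\eta^c}(\S)=c^{\lambda}_{\mu,\eta}$ already established in Corollary~\ref{cor:Sc_coefficients_snakes_are_LR_ribbons}, after which the conclusion follows from the standard fact that a nonzero $c^{\lambda}_{\mu,\eta}$ requires both lower indices to sit inside $\lambda$. It is worth noting that this bound is genuinely finer than the containment $\eta\subseteq k\times(n-k)$ that one can read off directly from the number of descents of the tableaux in $\SYT_{\eta}(\mathbf b)$, so the skew-Schur viewpoint of Section~\ref{sec:chow class of snakes} is really what makes the argument short.
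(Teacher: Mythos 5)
Your proof is correct, but it takes a different route from the paper. You pass through Theorem~\ref{thm:ribbon snake} and Corollary~\ref{cor:Sc_coefficients_snakes_are_LR_ribbons} to identify \(d_{\eta^c}(\S)\) with the Littlewood--Richardson coefficient \(c^{\lambda}_{\mu,\eta}\), and then invoke the standard containment property of these coefficients: by the symmetry \(c^{\lambda}_{\mu,\eta}=c^{\lambda}_{\eta,\mu}\) and the Littlewood--Richardson rule applied to the shape \(\lambda/\eta\), nonvanishing forces \(\eta\subseteq\lambda\). This is airtight (and the boundary case \(\eta\not\subseteq k\times(n-k)\) is harmless, since such \(\eta\) are excluded from the support by definition). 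The paper instead argues by induction on the rank \(k\), using the standard Young tableau description of Theorem~\ref{thm:combinatorial Sc snakes}: a tableau in \(\SYT_\eta(\mathbf b)\) is built from one in \(\SYT_{\eta'}(\mathbf b')\) by adding \(b_k\) boxes, at most one per column and subject to the descent condition at \(n-b_k-1\), and one checks that these constraints prevent any row of the shape from overshooting the corresponding row of \(\lambda\). Your argument is shorter and outsources the combinatorics to a classical fact about Littlewood--Richardson coefficients; the paper's argument is more self-contained and stays entirely within the tableau-building machinery it develops in Section~\ref{sec:combinatorial}, which makes the geometric meaning of the bound (row by row containment in \(\lambda\)) visible. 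Your closing remark that the bound is strictly finer than \(\eta\subseteq k\times(n-k)\) matches the examples given after the proposition.
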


\begin{proof}
    We will show that if \(\eta \not \subseteq \lambda\) then \(\eta \not \in \supp(\S)\) by induction on the rank \(k\) of \(\S\).
    The base case \(k=1\) is trivial as \(\Sc(\S(b)) = \Sc(\U_{1,b+1}) = \sigma_{[b]}\).
    when \(k > 1\), we consider the snake matroid \(\S' = \S(b_1,\ldots,b_{k-1})\) on \([n-b_k]\) given by the skew diagram \(\lambda' / \mu'\) where \(\lambda' = [\lambda_2, \dots, \lambda_k]\) and \(\mu' = [\mu_2, \dots ,\mu_k]\). For \(\eta \in \supp(\S)\), by Theorem \ref{thm:combinatorial Sc snakes}, \(d_{\eta^c}(\S) = |\SYT_\eta(\S)|\). These standard Young tableaux can be built from those in \(\SYT_{\eta'}(\S')\), for some \(\eta' \in \supp(\S')\), by adding \(b_k\) entries at most one per column, such that the smallest new element added \(n-b_k\) makes \(n-b_k-1\) a descent. By induction all these \(\eta'\) are contained in \(\lambda'\). It is sufficient to show that a tableau of shape \(\nu\) obtained like this from a tableau of shape \(\lambda'\) is contained in \(\lambda\). If \(\nu_1 > \lambda_1\), then we put all the new entries in the first row, which violates the descent condition. If \(\nu_i > \lambda_i\) for any other \(i\) this violates the condition of adding at most one entry per column, see Figure \ref{fig:snake support}. Hence \(\nu \subseteq \lambda\), and in particular \(\eta \subseteq \lambda\). 
\end{proof}

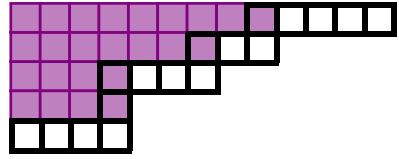
\begin{figure}[ht]
    \centering
    \begin{tikzpicture} 

    \node[violet, anchor=south west] at (0,12pt) {
        \ytableausetup{boxsize = 10pt, boxframe = 1pt}
        \begin{ytableau}
            *(Violet) & *(Violet) & *(Violet) &  *(Violet) &  *(Violet) &  *(Violet) & *(Violet) & *(Violet) & *(Violet)\\
            *(Violet) & *(Violet) & *(Violet) & *(Violet) & *(Violet) & *(Violet) & *(Violet)\\
            *(Violet) & *(Violet) & *(Violet) & *(Violet)\\
            *(Violet) & *(Violet) & *(Violet) & *(Violet)\\
        \end{ytableau}
        \ytableausetup{boxsize = normal, boxframe = normal}
    };
    
    \node[anchor=south west] at (0,0) {
        \ytableausetup{boxsize = 9pt, boxframe = 2pt}
        \begin{ytableau}
            \none & \none & \none  & \none  & \none & \none & \none & \none  & *(Violet) & *(white) & *(white) & *(white) & *(white)\\
            \none & \none & \none &  \none &  \none &  \none & *(Violet) & *(white) & *(white)\\
            \none & \none & \none & *(Violet) & *(white) & *(white) & *(white)\\
            \none & \none & \none & *(Violet)\\
            *(white) & *(white) & *(white) & *(white)\\
            \end{ytableau}
        \ytableausetup{boxsize = normal, boxframe = normal}
    };
\end{tikzpicture}
    \caption{The snake matroid \(\S(4,1,4,3,5)\) and \(\lambda' = [9,7,4,4]\) in violet.}
    \label{fig:snake support}
\end{figure}

As the following example shows, our condition is not implied by Lemma \ref{lem:mcnamara support}. The implication fails even for relatively small snake matroids.

\begin{example}
    Let \(\S = \S(\mathbf b)\) with \(\mathbf b = (1,2,3)\). We have that \(\operatorname{rows}(\mathbf b) = [3,2,1]\) and \(\operatorname{cols}(\mathbf b) = [2,2,1,1]\), so \(\operatorname{cols}(\mathbf b)^t = [4,2]\). Consider the partition \(\eta = [3,3]\). Clearly, 
    \[
        \operatorname{rows}(\mathbf b) \leq \eta \leq \operatorname{cols}(\mathbf b)^t
    \]
    in the dominance order, but \(\eta\) is not contained in \(\lambda\) and therefore the corresponding Schubert coefficient should be zero by Proposition \ref{prop:support}.
\end{example}

Recall that by inverting the order of the entries of \(\mathbf b\) we obtain an isomorphic snake matroid \(\S' = \S(\mathbf b')\), whose diagram can be obtained by rotating the diagram of \(\S\) by 180 degrees. Since \(\Sc\) is a matroid invariant, we know that \(\Sc(\S) = \Sc(\S')\), so this reversing operation can sometimes lead to even better bounds on the support.

\begin{example}
    Consider the snake matroid \(\S  = \S(2,1,5,1)\). By Theorem \ref{thm:combinatorial Sc snakes} we may compute the Schubert coefficients of \(\S\), 
    \[
        \Sc^c(\S) \ = \ \sigma_{\scalebox{0.3}{\young(~~~~~~,~~,~)}} +  \sigma_{\scalebox{0.3}{\young(~~~~~~,~,~,~)}} + \sigma_{\scalebox{0.3}{\young(~~~~~,~~~,~)}} + \sigma_{\scalebox{0.3}{\young(~~~~~,~~,~~)}} + \sigma_{\scalebox{0.3}{\young(~~~~~,~~,~,~)}}\; .
    \]
    Proposition \ref{prop:support} tells us that for a partition \(\eta\) to be in \(\supp(\S)\) we need \(\eta \subseteq \lambda = [6,6,2,2]\). When we consider the isomorphic snake matroid \(\S' = \S(1,5,1,2)\) we see that also \(\eta \subseteq \lambda' = [6,5,5,1]\) is necessary. The two snake matroids \(\S\) and \(\S'\) as well as the partitions \(\lambda\) and \(\lambda'\) are depicted in in Figure \ref{fig:support_ex} together with the partitions \(\eta = [3^3]\) and \(\eta' = [3,2^3]\). The partitions \(\eta\) and \(\eta'\) are not in the support of \(\S\) since \(\eta \not \subseteq \lambda\) and \(\eta' \not \subseteq \lambda'\). Note that this is not sufficient to describe the support, as for example the partition \([6,3]\) is contained in both \(\lambda\) and \(\lambda'\), but is not in the support of \(\S\).
\end{example}

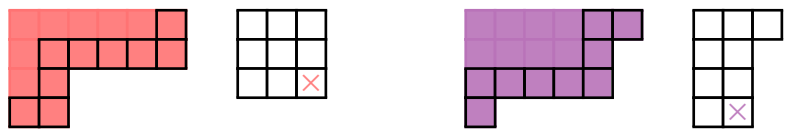
\begin{figure}[ht]
    \centering
    \begin{tikzpicture} 
    \node[Red!110, anchor=south west] at (0,0) {
        \ytableausetup{boxsize = 10pt, boxframe = 1pt}
        \begin{ytableau}
            *(Red) & *(Red) & *(Red) &  *(Red) &  *(Red) &  *(Red)\\
            *(Red) & *(Red) & *(Red) & *(Red) & *(Red) & *(Red)\\
            *(Red) & *(Red)\\
            *(Red) & *(Red)\\
        \end{ytableau}
        \ytableausetup{boxsize = normal, boxframe = normal}
    };
    
    \node[anchor=south west] at (0,0) {
        \ytableausetup{boxsize = 10pt, boxframe = 1pt}
        \begin{ytableau}
            \none & \none & \none  & \none  & \none & *(Red)\\
            \none & *(Red) & *(Red)  & *(Red)  & *(Red) & *(Red)\\
            \none & *(Red)\\
            *(Red) & *(Red)\\
            \end{ytableau}
        \ytableausetup{boxsize = normal, boxframe = normal}
    };

    \node[Violet!110, anchor=south west] at (6,0) {
        \ytableausetup{boxsize = 10pt, boxframe = 1pt}
        \begin{ytableau}
            *(Violet) & *(Violet) & *(Violet) &  *(Violet) &  *(Violet) &  *(Violet)\\
            *(Violet) & *(Violet) & *(Violet) & *(Violet) & *(Violet)\\
            *(Violet) & *(Violet) & *(Violet) & *(Violet)\\
            *(Violet)\\
        \end{ytableau}
        \ytableausetup{boxsize = normal, boxframe = normal}
    };
    
    \node[anchor=south west] at (6,0) {
        \ytableausetup{boxsize = 10pt, boxframe = 1pt}
        \begin{ytableau}
            \none & \none & \none  & \none  & *(Violet) & *(Violet)\\
            \none & \none & \none  & \none  & *(Violet)\\
            *(Violet) & *(Violet) & *(Violet)  & *(Violet)  & *(Violet)\\
            *(Violet)\\
            \end{ytableau}
        \ytableausetup{boxsize = normal, boxframe = normal}
    };

    \node[anchor=south west] at (3,0) {
        \ytableausetup{boxsize = 10pt, boxframe = 1pt}
        \begin{ytableau}
            *(White) & *(White) & *(White)\\
            *(White) & *(White) & *(White) \\
            *(White) & *(White) & *(White) \textcolor{Red}{\times}\\
            \none\\
        \end{ytableau}
        \ytableausetup{boxsize = normal, boxframe = normal}
    };

    \node[anchor=south west] at (9,0) {
        \ytableausetup{boxsize = 10pt, boxframe = 1pt}
        \begin{ytableau}
            *(White) & *(White) & *(White)\\
            *(White) & *(White)\\
            *(White) & *(White)\\
            *(White) & *(White) \textcolor{Violet}{\times}\\
        \end{ytableau}
        \ytableausetup{boxsize = normal, boxframe = normal}
    };
    
\end{tikzpicture}
    \caption{The isomorphic snake matroids \(\S = \S(2,1,5,1)\) and \(\S' = \S(1,5,1,2)\) and partitions \(\lambda = [6^2,2^2]\) in red and \(\lambda' = [6,5^2,1]\) in violet. The partitions \(\eta =  [3^3]\) and \(\eta' = [3,2^3]\) are marked where they extend outside of \(\lambda\) and \(\lambda'\) respectively.}
    \label{fig:support_ex}
\end{figure}

For the special case of the minimal matroid \(\mathsf{T}_{k,n}\) Proposition~\ref{prop:support} together with Example~\ref{ex:recover beta snake} describes all of the support. We again recover \(\Sc(\mathsf{T}_{k,n}) = \sigma_{h^c}\) as in Example~\ref{ex:recover minimal}.

\subsection{Positivity}\label{sec:positivity}
In this section we use Theorem~\ref{thm:combinatorial Sc snakes} to show that the Schubert coefficients~\(d_{\eta^c}(\M)\) are nonnegative whenever \(\M\) is a paving matroid and \(\eta\) is of the form
\[
\eta(m) = [n-k,m+1,1^{k-m-2}]\, .
\]

A central role is played by nested matroids of the form
\[
\Lambda_{k,h,n} \coloneqq \M([n-k,(h-k+1)^{k-1}])
\]
of rank \(k\) on \([n]\). They are also known as panhandle matroids and fall into the class of cuspidal matroids.

Our next results equip us with an explicit description of the Schubert coefficients of those matroids.

\begin{lemma}\label{lem:fullfirst-uniform}
    Let \(\eta(m)\) be the partition defined above. Then
    \begin{align*}
    d_{\eta(m)^c}(\U_{k,n}) \ &= \ \frac{n-k}{k-1}\binom{n-m-2}{n-k}\binom{n-k-1}{m}\, ,\\
    d_{(\eta(m)^t)^c}(\U_{k,n}) \ &= \ \frac{k}{n-k-1}\binom{n-m-2}{k}\binom{k-1}{m}
    \, \text{ and }\\
    d_{\eta(m)^c}(\Lambda_{k,h,n}) \ &= \ \frac{h-k+1}{k-1}\binom{h-m-1}{h-k+1}\binom{h-k}{m} \, 
    .\\
    \end{align*}
\end{lemma}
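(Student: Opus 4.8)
The plan is to use Corollary \ref{cor:Kostka}, which tells us that for a snake matroid $\S(\mathbf b)$ of rank $k$ on $[n]$, the Schubert coefficient $d_{\eta^c}(\S)$ equals a single Kostka number $K_{\eta,\mathbf b}$ when $\eta$ has length $k$ (and $K_{\eta,\mathbf b^*}$ when $\eta$ has full first row $\eta_1=n-k$). The three matroids in the statement are, however, not snakes; they are uniform and panhandle (nested) matroids. So the first move is to reduce to snakes: by Proposition \ref{prop:valuative into snakes}, $d_{\eta^c}(\M)=\sum_{\S}d_{\eta^c}(\S)$ over all snakes fitting inside the skew shape of $\M$. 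Since $\eta(m)$ has length $k-1<k$ exactly when $m<k-1$, wait — more carefully, $\eta(m)=[n-k,m+1,1^{k-m-2}]$ has $k-1$ parts. This does not match length $k$, so Corollary \ref{cor:Kostka} does not apply directly either; instead I will use Theorem \ref{thm:combinatorial Sc snakes} / Theorem \ref{thm:combinatorial Sc lattice path} and count standard Young tableaux of shape $\eta(m)$ with the prescribed descent structure, then evaluate the resulting count by a hook-length or direct bijective argument.

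Concretely, for the uniform matroid $\U_{k,n}$, Corollary \ref{cor:combinatorial Sc uniform} gives $d_{\eta(m)^c}(\U_{k,n})=|\SYT_{\eta(m)}^{k-1}|$, the number of standard Young tableaux of shape $\eta(m)$ with exactly $k-1$ descents. A tableau of shape $[n-k,m+1,1^{k-m-2}]$ has $k-1$ rows, hence at least $k-2$ descents automatically (between consecutive rows), so requiring exactly $k-1$ descents is a mild extra constraint; I would parametrize such tableaux by the position of the single ``extra'' descent and the interleaving of the second-row entries with the first-row entries, then recognize the count as a product of two binomial coefficients times a rational factor. The cleanest route is to apply the Gessel--Viennot binomial-determinant formula (Theorem \ref{thm: Gessel-Viennot}) or the Jacobi--Trudi/Lemma \ref{lem:snake determinant} determinant, expand the relevant $2\times 2$ or $3\times 3$ minor, and simplify; the factor $\frac{n-k}{k-1}$ is the tell-tale sign of a hook-length-formula computation for the near-hook shape $\eta(m)$. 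For the transpose shape $\eta(m)^t$, I would invoke the duality $d_{\eta,m}(\M^*)=d_{\eta^t,m}(\M)$ together with $\U_{k,n}^*=\U_{n-k,n}$, which immediately swaps the roles of $k$ and $n-k$ and yields the second formula from the first. For the panhandle matroid $\Lambda_{k,h,n}=\M([n-k,(h-k+1)^{k-1}])$, I would apply Corollary \ref{cor:combinatorial Sc nested}: its cyclic flats are $\emptyset\subset H_1\subset H_2=E$ with $|H_1|=h$, $\rank(H_1)=k-1$ (reading off from the skew shape), so $d_{\eta(m)^c}(\Lambda_{k,h,n})$ counts tableaux $T\in\SYT_{\eta(m)}^{k-1}$ with $\des(T|_h)<k-1$. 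This restriction confines the ``action'' to the first $h$ entries, effectively replacing $n$ by $h+1$ in the uniform-matroid count up to the first-row contribution, which explains why the third formula is obtained from the first by the substitution $n\mapsto h+1$ in the appropriate places (note $h-k+1=(h+1)-k$).

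The main obstacle I anticipate is the bookkeeping in the tableau count: correctly translating the ``exactly $k-1$ descents, with $\des(T|_h)<k-1$'' conditions into a clean lattice-path or ballot-sequence count, and then evaluating that count in closed form. The near-hook shape $\eta(m)$ makes the hook-length formula for $|\SYT_{\eta(m)}|$ explicit, but we need the refined count by descent number, which requires either a careful sieve (inclusion--exclusion over which inter-row gaps are descents) or recognizing the answer as a specific minor via Lemma \ref{lem:snake determinant} and expanding it. I would lean on the determinantal formula to avoid a fragile bijection: write $d_{\eta(m)^c}$ as a sum over the snakes inside the shape (a sum indexed by where the shape's ``corner'' descents sit), each summand being a product $\prod_j s_{[b_j]}$-coefficient, i.e.\ a product of binomials, and then sum the telescoping/hypergeometric series to land on the stated product. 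Verifying the rational prefactors $\frac{n-k}{k-1}$, $\frac{k}{n-k-1}$, $\frac{h-k+1}{k-1}$ come out correctly — rather than off by a shift in one of the binomial arguments — is where I expect to spend the most care.
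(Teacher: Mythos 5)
Your reductions match the paper's: decompose into snakes / use the descent--tableau formulas for uniform and nested matroids, relate the two uniform formulas by matroid duality (the paper goes from the second to the first, you go the other way --- same idea), and reduce the panhandle case to a uniform matroid on $h+1$ elements (the paper states $d_{\eta(m)^c}(\Lambda_{k,h,n}) = d_{\theta(m)^c}(\U_{k,h+1})$ with $\theta(m)=[h-k+1,m+1,1^{k-m-2}]$ via Theorem~\ref{thm:combinatorial Sc lattice path}, which is your observation made precise). But the substance of the lemma is the closed-form evaluation, and there you have a genuine gap plus an error. The error: $\eta(m)=[n-k,m+1,1^{k-m-2}]$ has $k-m$ rows, not $k-1$, so a standard Young tableau of this shape has at least $k-m-1$ forced descents and the condition ``exactly $k-1$ descents'' leaves $m$ extra descents to distribute, not one; your parametrization ``by the position of the single extra descent'' only makes sense for $m=1$. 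The gap: none of your proposed evaluation routes (hook-length formula, Gessel--Viennot minor, ``sum the telescoping/hypergeometric series'') is carried out, and the first two do not obviously apply to the refined count by descent number.

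The paper's actual device, which you did not find, is to work with the transposed shape and Kostka numbers: by Corollary~\ref{cor:Kostka}, summing over all snakes inside $\U_{k,n}$ gives $d_{(\eta(m)^t)^c}(\U_{k,n})=\sum_{\mathbf b}K_{\eta(m)^t,\mathbf b}$ over all compositions $\mathbf b$ of $n-1$ with $k$ parts; this sum is by definition the number of semistandard tableaux of that shape using each of the entries $1,\dots,k$ at least once. Because the shape has a full first column, that column is forced to be $1,\dots,k$, and deleting it leaves $|\SSYT_{[n-k-m-1,1^m]}({\leq}k)|$, a hook shape whose count is given in closed form by the hook-content formula --- this is exactly where the rational prefactor and the product of two binomials come from. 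So the evaluation is a single clean application of a classical product formula, not a hypergeometric summation; without identifying this (or an equivalent closed-form mechanism), your proposal does not yet prove the lemma.
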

\begin{proof}
    We begin by proving the second equation from which the first follows directly by duality.
    We decompose the uniform matroid \(\U_{k,n}\) into the sum of all snake matroids contained in it, which are indexed by all possible compositions of \(n-1\) with \(k\) parts. Corollary~\ref{cor:Kostka} implies
    \[
        d_{(\eta(m)^t)^c}(\U_{k,n}) \ = \ \sum_{\mathbf b}K_{\eta(m)^t,\mathbf b}\, .
    \]
    Since \(\eta(m)^t\) has full first column, it is easy to see that the whole sum coincides with \(|\SSYT_{\eta(m)}({=}k)|\). Furthermore, there is a unique way of filling the first column with the numbers from \(1\) to \(k\), this coincides with \(|\SSYT_{[n-k-m-1,1^m]}({\leq} k)|\). The hook-length formula \cite[Corollary~7.21.4]{Stanley-EC2} leads than to the desired statement.
    For the last equation it is enough to show that \(d_{\eta(m)^c}(\Lambda_{k,h,n}) = d_{\theta(m)^c}(\U_{k,h+1})\), where \(\theta(m) = [h-k+1,m+1,1^{k-m-2}]\). This follows by Theorem \ref{thm:combinatorial Sc lattice path}.
\end{proof}

\begin{theorem}\label{thm: nonnegativity paving}
    Let \(\M\) be a connected paving matroid of rank \(k\) on \(n\) elements and \(\eta(m)\) the partition we defined above. Then \(d_{\eta(m)^c}(\M) > 0\) .
\end{theorem}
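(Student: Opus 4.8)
\emph{Step 1: the split structure.} The plan is to exploit that a connected paving matroid $\M$ of rank $k$ on $[n]$ is determined by its \emph{stressed hyperplanes} $H_1,\dots,H_t$, namely the rank-$(k-1)$ flats with $|H_i|\ge k$; connectivity forces $k\le h_i:=|H_i|\le n-2$, and since every $(k-1)$-subset of $[n]$ spans a unique hyperplane, distinct $H_i,H_j$ satisfy $|H_i\cap H_j|\le k-2$. Each $H_i$ gives a split of the hypersimplex $\polytope{P}(\U_{k,n})$ along $\sum_{j\in H_i}x_j=k-1$: the side $\le k-1$ has vertex set $\{e_B:B\not\subseteq H_i\}$ and is the panhandle polytope $\polytope{P}(\Lambda_{k,h_i,n})$, while the side $\ge k-1$ is $\polytope{P}(\M'_i)$ where $\M'_i$ is the nested matroid with cyclic-flat chain $\emptyset\subsetneq[n]\setminus H_i\subsetneq[n]$ of ranks $0,1,k$ (equivalently $\mathcal B(\M'_i)=\{B\in\binom{[n]}{k}:|B\cap H_i|\ge k-1\}$). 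Because the $H_i$ pairwise meet in fewer than $k-1$ elements these $t$ splits are compatible, so they refine to a single matroid subdivision $\Sigma$ of $\polytope{P}(\U_{k,n})$ whose maximal cells are $\polytope{P}(\M)$ and $\polytope{P}(\M'_1),\dots,\polytope{P}(\M'_t)$; every lower-dimensional cell of $\Sigma$, in particular every common facet, is the base polytope of a disconnected matroid — e.g.\ the wall $\sum_{j\in H_i}x_j=k-1$ inside $\polytope{P}(\M)$ is $\polytope{P}(\M|_{H_i}\oplus\M/H_i)$.

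\emph{Step 2: from the subdivision to a formula, then specialize.} Since $d_{\eta^c}=d_{\eta^c,(k-1)(n-k-1)}$ is additive and valuative (Section~\ref{subsec:schubert coefficients}), it vanishes on all disconnected cells of $\Sigma$, so the defining inclusion--exclusion relation of a valuation applied to $\Sigma$ collapses to $d_{\eta^c}(\U_{k,n})=d_{\eta^c}(\M)+\sum_i d_{\eta^c}(\M'_i)$; applying the same to the single split of $H_i$ gives $d_{\eta^c}(\M'_i)=d_{\eta^c}(\U_{k,n})-d_{\eta^c}(\Lambda_{k,h_i,n})$, whence
\[
    d_{\eta^c}(\M)\ =\ (1-t)\,d_{\eta^c}(\U_{k,n})\ +\ \sum_{i=1}^{t}d_{\eta^c}\!\bigl(\Lambda_{k,h_i,n}\bigr).
\]
Now take $\eta=\eta(m)$. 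Corollary~\ref{cor:combinatorial Sc uniform} gives $d_{\eta(m)^c}(\U_{k,n})=|\SYT^{k-1}_{\eta(m)}|$, and Corollary~\ref{cor:combinatorial Sc nested} applied to the two-step chain of $\M'_i$ gives $d_{\eta(m)^c}(\M'_i)=N_{\,n-h_i}$, where $N_j:=|\{T\in\SYT^{k-1}_{\eta(m)}:1,2,\dots,j\ \text{all lie in the first row of }T\}|$; moreover Lemma~\ref{lem:fullfirst-uniform} supplies product formulas for $d_{\eta(m)^c}(\U_{k,n})$ and $d_{\eta(m)^c}(\Lambda_{k,h,n})$, hence for each $N_j$. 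Thus $d_{\eta(m)^c}(\M)=|\SYT^{k-1}_{\eta(m)}|-\sum_{j\ge 2}c_j N_j$, where $c_j$ is the number of stressed hyperplanes of $\M$ of size $n-j$ (so $2\le j\le n-k$), and the theorem reduces to proving $\sum_{j\ge 2}c_j N_j<|\SYT^{k-1}_{\eta(m)}|$.

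\emph{Step 3: the inequality.} The two inputs here are the monotonicity $N_1\ge N_2\ge\cdots\ge N_{n-k}$ with $N_1=|\SYT^{k-1}_{\eta(m)}|$, and the packing bound $\sum_{j\ge 2}c_j\binom{n-j}{k-1}\le\binom{n}{k-1}$ — valid because $H_i$ contains $\binom{h_i}{k-1}$ of the $(k-1)$-subsets of $[n]$ while each such subset lies in at most one stressed hyperplane. Since $h_i\le n-2$ we have $n-h_i\ge 2$, so every term satisfies $N_{\,n-h_i}\le N_2<N_1$; in particular the case $t=1$ is immediate. In general I would establish the pointwise estimate $N_j\binom{n}{k-1}\le\binom{n-j}{k-1}\,|\SYT^{k-1}_{\eta(m)}|$ (a statement about binomial and hook-length quantities, read off from the formulas behind Lemma~\ref{lem:fullfirst-uniform}) and then sum it against the packing bound; strictness follows from exhibiting one maximal-descent tableau of shape $\eta(m)$ — for instance a filling placing $2$ in the second row — counted by no $N_j$ with $j\ge 2$.

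\emph{Where the difficulty lies.} The one genuinely computational step is the estimate in Step~3: the multiplicity $c_j$ may be large precisely when $j$ is large (small hyperplanes are cheap to pack), whereas $N_j$ decays quickly in $j$ and in fact vanishes once $j$ exceeds a threshold depending on $m$, so the trade-off ought to be favourable; turning this heuristic into the clean bound $N_j\binom{n}{k-1}\le\binom{n-j}{k-1}|\SYT^{k-1}_{\eta(m)}|$ — or, should it fail for the few largest admissible values of $j$, handling those individually using $N_j\le N_2$ together with the sharper bound on $\sum_{j\le J}c_j$ forced on the remaining ones — is the part I expect to demand the real work.
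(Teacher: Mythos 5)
Your Steps 1 and 2 reproduce, in explicit form, the decomposition that the paper imports from \cite{FerroniSchroeter:2024} as \eqref{eq:paving valuativity}, and the resulting formula $d_{\eta(m)^c}(\M)=N_1-\sum_{j\ge 2}c_jN_j$ with $N_j=d_{\eta(m)^c}(\M_i')$ for $n-h_i=j$ is correct. One slip worth flagging: the matroid the paper calls $\Lambda_{k,h,n}$ is your $\M_i'$, i.e.\ the side $\sum_{j\in H_i}x_j\ge k-1$ of the split (its cyclic flats are $\emptyset\subsetneq[n]\setminus H_i\subsetneq[n]$ of ranks $0,1,k$), not the side containing $\polytope{P}(\M)$. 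This does not affect your final formula, since you compute $d_{\eta(m)^c}(\M_i')$ directly from Corollary~\ref{cor:combinatorial Sc nested}, but it means Lemma~\ref{lem:fullfirst-uniform} gives $N_j=d_{\eta(m)^c}(\Lambda_{k,n-j,n})$ outright, and that applying that lemma under your labelling would give the wrong value for $N_j$.

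The genuine gap is Step 3, and it is not merely an unfinished computation: the pointwise estimate $N_j\binom{n}{k-1}\le\binom{n-j}{k-1}N_1$ on which your plan rests is false. Take $k=3$, $n=7$, $m=1$, $j=2$ (so $h=5$ and $\eta(1)=[4,2]$): Lemma~\ref{lem:fullfirst-uniform} gives $N_1=d_{\eta(1)^c}(\U_{3,7})=6$ and $N_2=d_{\eta(1)^c}(\Lambda_{3,5,7})=3$, so $N_2/N_1=1/2>10/21=\binom{5}{2}/\binom{7}{2}$; note the failure occurs at the smallest admissible $j$, not the largest, so your fallback does not help. Moreover the packing bound $\sum_jc_j\binom{n-j}{k-1}\le\binom{n}{k-1}$ is the wrong normalization: in the same example it permits two hyperplanes of size $5$ ($2\binom{5}{2}=20\le 21$), a configuration that would force $\sum_jc_jN_j=6=N_1$ and destroy positivity, yet no such connected paving matroid exists. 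The global constraint that actually encodes existence and connectivity is $\beta(\M)=\binom{n-2}{k-1}-\sum_hc_h\binom{h-1}{k-1}>0$, and the pointwise estimate compatible with it is $N_{n-h}/N_1\le\binom{h-1}{k-1}/\binom{n-2}{k-1}$, i.e.\ one must compare $d_{\eta(m)^c}$ against the hook coefficient $d_{\eta(0)^c}=\beta$ rather than against a count of $(k-1)$-subsets. That ratio inequality is true --- via Lemma~\ref{lem:fullfirst-uniform} it reduces to $\binom{n-m-2}{k-1}\binom{h-1}{k-1}\ge\binom{n-2}{k-1}\binom{h-m-1}{k-1}$, an elementary binomial inequality --- and summing it against $\beta(\M)>0$ closes the argument; this is precisely the paper's inequality \eqref{eq:key_positivity}. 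Strictness then comes for free from $\beta(\M)>0$, so no separate tableau argument is needed.
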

\begin{proof}
    By the valuativity of \(d_{\eta}\) and \cite[Theorem~5.3]{FerroniSchroeter:2024} we may write
    \begin{equation}
        \label{eq:paving valuativity}
        d_{\eta}(\M) \ = \ d_{\eta}(\U_{k,n}) - \sum_{h} c_h\  d_{\eta}(\Lambda_{k,h,n})
    \end{equation}
    for all partitions \(\eta\), where \(c_h\geq 0\) counts the number of rank \(k-1\) flats in \(\M\) with \(h\geq k\) elements.
    We claim that 
    \begin{equation}\label{eq:key_positivity}
        \frac{d_{\eta(0)^c}(\Lambda_{k,h,n})}{d_{\eta(0)^c}(\U_{k,n})} \ \geq \frac{d_{\eta(m)^c}(\Lambda_{k,h,n})}{d_{\eta(m)^c}(\U_{k,n})}\, .
    \end{equation}
    By weighting with \(c_h\), summing over all values \(h\) and applying the identity \eqref{eq:paving valuativity} we obtain
    \[
       0 \ <\ \frac{\beta(\M)}{\beta(\U_{k,n})} \ = \ \frac{d_{\eta(0)^c}(\M)}{d_{\eta(0)^c}(\U_{k,n})}
        \ \leq \
         \frac{d_{\eta(m)^c}(\M)}{d_{\eta(m)^c}(\U_{k,n})}\, ,
    \]
    where we used that \(d_{\eta(0)^c}(\M) = \beta(\M)\) is positive. It follows from
    Lemma~\ref{lem:fullfirst-uniform} that
    the denominator  \(d_{\eta(m)^c}(\U_{k,n})\) is positive.
    Multiplying by this number reveals that the Schubert coefficient \(d_{\eta(m)^c}(\M)\) is positive. 
     Thus, we are left to prove our claim~\eqref{eq:key_positivity}. By Lemma~\ref{lem:fullfirst-uniform} this is equivalent to the inequality
    \[
    \binom{h-1}{h-k+1}\binom{n-m-2}{n-k}\binom{n-k-1}{m} \geq \binom{h-m-1}{h-k+1}\binom{h-k}{m}\binom{n-2}{n-k}\, ,
    \]
    which simplifies to 
    \[
    \binom{n-m-2}{k-1}\binom{h-1}{k-1} \geq \binom{n-2}{k-1}\binom{h-m-1}{k-1}\, .
    \]
    The last inequality holds true because \((b-i)(a-c-i)\geq (a-i)(b-c-i)\) for all \(0\leq i< m\) and thus
    \(\binom{a-m}{c}\binom{b}{c}\geq \binom{a}{c}\binom{b-m}{c}\) whenever \(a\geq b\).
\end{proof}

\section{Open problems}\label{sec:open problems}
\noindent We want to end this article by listing some open problems and questions which naturally arise from our work.  

\subsection{Schubert coefficients of positroids}
Lattice path matroids are known to have alcoved base polytopes, and hence one may calculate the volume of those polytopes by counting permutations with conditions on their descents, see \cite[Proposition 6.1]{lam-postnikov}. Moreover,  \cite[Theorem 2.1]{lam-postnikov-polypositroids} tells us  that the matroids with alcoved base polytopes are exactly positroids. By Lemma~\ref{lem:important} and RSK we expect that it is feasible to generalize Theorem~\ref{thm:combinatorial Sc lattice path} to positroids.

\begin{problem}
    Given a positroid for example encoded as a Grassmann necklace. Find a description of its Schubert coefficients in terms of standard Young tableaux.
\end{problem}

\subsection{Nonnegativity of Schubert coefficients}
Theorem \ref{thm: nonnegativity paving} provides evidence in support of Conjecture \ref{conj:nonnegativity}. However, this theorem makes assumptions on both the shapes of the partitions and the class of matroids.
It is quite helpful that the Young diagram of \(\eta(m)\) has a full first row to get the expressions in Lemma \ref{lem:fullfirst-uniform}. Even if conjecturally almost all matroids are paving, they are well structured and the
decomposition of the occuring panhandle matroids \(\Lambda_{k,h,n}\) into snakes are in bijection to a decompostion of a uniform matroid into snakes. However, we believe that with additional effort one could use the expression in Corollary \ref{cor:Kostka}, or using the results of \cite{FerroniSchroeter:2024} to extend our results and techniques in the proof of Theorem \ref{thm: nonnegativity paving} to other shapes and more matroids. Two possible first steps in this direction could be the following extensions.

\begin{problem}
    Show nonnegativity of the Schubert coefficients of paving matroids corresponding to any shape with full first row.
\end{problem}

\begin{problem}
    Show nonnegativity of the Schubert coefficients of split matroids corresponding to the shapes \(\eta(m)\) or all shapes with first full row. 
\end{problem}
\subsection{Support of lattice path matroids}
Describing the support of a snake matroid is 
a challenging problem as it is 
equivalent to characterizing the support of a given ribbon Schur function, see \cite{McNamara-Willigenburg}. Furthermore, we know that Lemma~\ref{lem:mcnamara support} and Proposition~\ref{prop:support} are not enough to fully characterize this support. However, it might be the case that the characterization of other classes of matroids is significantly easier. For example, the uniform matroid has full support. We thus formulate the following problem.

\begin{problem}
    Describe the support of nested matroids or other lattice path matroids.
\end{problem}

\subsection{Relating the volume and \texorpdfstring{\(\beta\)}{β}-invariant}
The nonnegativity in Conjecture \ref{conj:nonnegativity} together with Lemma \ref{lem:important} would imply that the inequality \({\beta(\M) \binom{n-2}{k-1} \leq \Volume(\M)}\) holds for any matroid \(\M\) of rank \(k\) on \([n]\). Moreover, by the proof of \cite[Lemma~5.3]{JP}, equality holds for the minimal matroid \(\mathsf{T}_{k,n}\). We remark that this inequality does not directly involve Schubert coefficients and might be of independent interest as it relates two fundamental matroid invariants. We therefore state the following conjecture.

\begin{conjecture}
    Let \(\M\) be a connected matroid of rank \(k\) on \([n]\). Then
    \[
        \beta(\M) \binom{n-2}{k-1} \ \leq \  \Volume(\M) \, ,
    \]
    with equality if and only if \(\M\) is isomorphic to \(\mathsf{T}_{k,n}\) or \(\U_{2,4} \, \).
\end{conjecture}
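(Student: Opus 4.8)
The plan is to rephrase the inequality in terms of Schubert coefficients via Lemma~\ref{lem:important}, and then reduce to snake matroids by valuativity. Set $h=[n-k,1^{k-1}]$, so that Speyer's identity gives $\beta(\M)=d_{h^c}(\M)$ and $|\SYT_h|=\binom{n-2}{k-1}$. Isolating the term $\mu=h$ in Lemma~\ref{lem:important} yields, for every connected matroid $\M$ of rank $k$ on $[n]$,
\[
    \Volume(\M)-\binom{n-2}{k-1}\beta(\M)\ =\ \sum_{\substack{\mu\vdash n-1\\ \mu\subseteq k\times(n-k),\ \mu\neq h}}|\SYT_\mu|\,d_{\mu^c}(\M)\ =:\ F(\M).
\]
Thus the conjectured inequality is implied by the nonnegativity Conjecture~\ref{conj:nonnegativity}; the point of a direct proof is to establish it for this specific nonnegative combination $F$ without knowing nonnegativity of each summand. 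Since each $d_{\mu^c,(k-1)(n-k-1)}$ is an additive valuative matroid invariant, so is $F$, and on connected matroids it coincides with $\Volume-\binom{n-2}{k-1}\beta$.

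The next step — which is within reach — is the snake case. By Theorem~\ref{thm:combinatorial Sc snakes},
\[
    F(\S(\mathbf b))\ =\ \sum_{\mu\neq h}|\SYT_\mu|\,\bigl|\SYT_\mu(\mathbf b)\bigr|\ \geq\ 0,
\]
and this vanishes exactly when $\supp(\S(\mathbf b))\subseteq\{h\}$. By Theorem~\ref{thm:ribbon snake}, $\supp(\S(\mathbf b))$ is the set of partitions occurring in the Schur expansion of the ribbon Schur function $s_{\ribbon(\mathbf b)}$, and since $h$ always lies in the support (Example~\ref{ex:recover beta snake}), $F(\S(\mathbf b))=0$ forces $s_{\ribbon(\mathbf b)}=s_h$. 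This means $\ribbon(\mathbf b)$ is a hook, which happens precisely for $\mathbf b=(n-k,1,\dots,1)$ or $\mathbf b=(1,\dots,1,n-k)$, and in either case $\S(\mathbf b)\cong\mathsf{T}_{k,n}$ (cf. Example~\ref{ex:recover minimal}). Hence $F(\S)\geq 0$, with equality iff $\S\cong\mathsf{T}_{k,n}$.

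For lattice path matroids the conclusion then follows from Proposition~\ref{prop:valuative into snakes}: as $F$ is additive and valuative, $F(\M(\lambda/\mu))=\sum_{\S}F(\S)\geq 0$, the sum being over all snakes fitting inside $\lambda/\mu$, with equality iff every such snake is isomorphic to $\mathsf{T}_{k,n}$. A short analysis of skew shapes shows the connected lattice path matroids with this last property are exactly $\mathsf{T}_{k,n}$ and, when $k=2,n=4$, the uniform matroid $\U_{2,4}=\M([2,2]/\emptyset)$, whose only fitting ribbons are $\ribbon(2,1)$ and $\ribbon(1,2)$, both hooks. This already establishes the statement for all lattice path matroids, hence for nested and uniform matroids.

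The remaining step, passing from lattice path matroids to arbitrary connected matroids, is the main obstacle, and it is of the same nature as Conjecture~\ref{conj:nonnegativity} itself. The natural tool is the valuative expansion $F(\M)=\sum_{\N\in\mathcal N_{n,k}}c(\N,\M)\,F(\N)$ of Theorem~\ref{thm:bases_valuation} (computable by Hampe's algorithm), but the integers $c(\N,\M)$ can be negative, so positivity of $F$ on nested matroids does not formally propagate; a proof would need genuinely new input, such as a nonnegative expansion adapted to $F$, or a deletion/contraction or monotonicity argument for $\Volume-\binom{n-2}{k-1}\beta$ directly. As partial progress one can attempt the paving (or split) case in the spirit of Theorem~\ref{thm: nonnegativity paving}: combine the identity $F(\M)=F(\U_{k,n})-\sum_h c_h\,F(\Lambda_{k,h,n})$ coming from \cite[Theorem~5.3]{FerroniSchroeter:2024} with estimates for $F$ on uniform and panhandle matroids analogous to Lemma~\ref{lem:fullfirst-uniform}. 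Finally, granting Conjecture~\ref{conj:nonnegativity}, the equality case of the general statement reduces to classifying connected matroids $\M$ with $\supp(\M)=\{h\}$, i.e. with $\Sc(\M)=\beta(\M)\,\sigma_{h^c}$, which one expects to force $\M\cong\mathsf{T}_{k,n}$ or $\M\cong\U_{2,4}$.
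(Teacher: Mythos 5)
The statement you were asked to prove is stated in the paper as an open conjecture (the final conjecture of Section~\ref{sec:open problems}); the paper gives no proof of it. What the paper does record is exactly the two observations you make: that the inequality would follow from Conjecture~\ref{conj:nonnegativity} combined with Lemma~\ref{lem:important} (together with Speyer's identity \(d_{h^c}=\beta\) and \(|\SYT_h|=\binom{n-2}{k-1}\)), and that the conjecture holds for lattice path matroids as a consequence of Theorem~\ref{thm:combinatorial Sc lattice path}. Your proposal fleshes both of these out correctly and in more detail than the paper does: the reformulation \(F(\M)=\sum_{\mu\neq h}|\SYT_\mu|\,d_{\mu^c}(\M)\) is right, the termwise nonnegativity of \(F\) on snakes via Theorem~\ref{thm:combinatorial Sc snakes} is right, and your analysis of the equality case (support equal to \(\{h\}\) forces the ribbon to be a hook, hence \(\mathsf{T}_{k,n}\), with \(\U_{2,4}\) appearing as the one rectangle all of whose fitting ribbons are hooks) is sound, though the ``short analysis of skew shapes'' for general connected lattice path matroids is asserted rather than carried out. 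You are also correct, and appropriately explicit, that the passage to arbitrary connected matroids is genuinely open: the coefficients \(c(\N,\M)\) in Theorem~\ref{thm:bases_valuation} can be negative, so positivity of \(F\) does not propagate from nested matroids, and this obstruction is of the same nature as Conjecture~\ref{conj:nonnegativity} itself. In short, there is no complete proof here, but there is also none in the paper; your partial results coincide with, and slightly extend, everything the paper establishes about this conjecture.
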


For connected matroids with nonnegative Schubert coefficients, the strict inequality is equivalent to the support of \(\M\) containing at least one partition in addition to the hook. It follows from Theorem \ref{thm:combinatorial Sc lattice path} that the conjecture holds for lattice path matroids. The conjecture would also imply the classification of sparse paving matroids in terms of Schubert coefficients stated in  \cite[Conjecture 1.3]{JP}.

\bibliographystyle{alpha}
\bibliography{bibliography}

\clearpage
\appendix

\section{Computing the Chow class of a matroid}\label{sec:appendix}
\noindent
In this appendix we demonstrate how to algorithmically compute the Chow class of an arbitrary matroid. SageMath \cite{sagemath} code to compute Chow classes of matroids can be found at
\vspace{5mm}
\begin{center}
    \url{https://github.com/jphamre/Chow-classes-of-matroids-and-SYT} \, .
\end{center}
\vspace{5mm}

Let \(\M\) be the matroid of rank 3 on \(\{1,\ldots,7\}\) whose lattice of cyclic flats \(\mathcal Z(\M)\) is depicted on the left in Figure \ref{fig:cyclic_flats}.
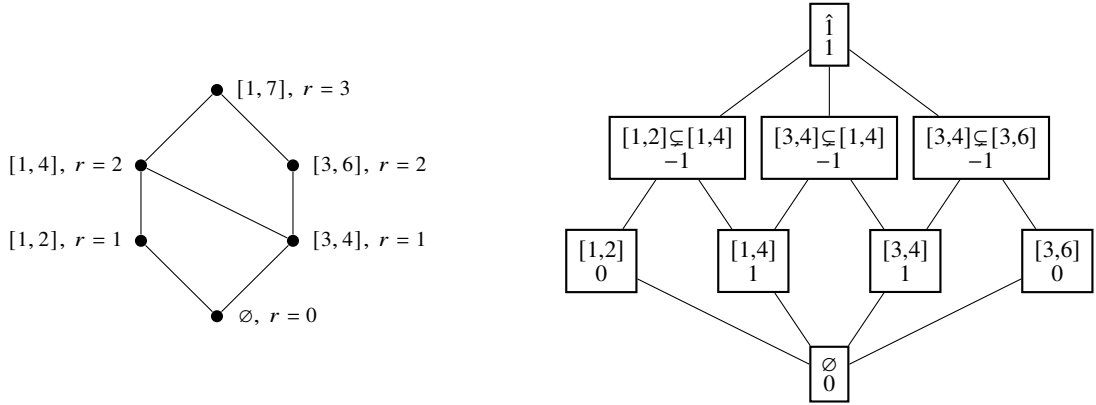
\begin{figure}
    \centering
    \begin{minipage}{0.4\textwidth}
\[\begin{tikzpicture}
\node[circle,scale=0.8, fill=black, inner sep=2pt] (0) at (0,0) {};
\node[circle,scale=0.8, fill=black, inner sep=2pt] (12) at (-1,1) {};
\node[circle,scale=0.8, fill=black, inner sep=2pt] (34) at (1,1) {};
\node[circle,scale=0.8, fill=black, inner sep=2pt] (1234) at (-1,2) {};
\node[circle,scale=0.8, fill=black, inner sep=2pt] (3456) at (1,2) {};
\node[circle,scale=0.8, fill=black, inner sep=2pt] (1234567) at (0,3) {};

\node at (0.8,0) {\tiny{$\varnothing,\ r=0$}};
\node at (-2,1) {\tiny{$[1,2],\ r=1$}};
\node at (2,1) {\tiny{$[3,4],\ r=1$}};
\node at (-2,2) {\tiny{$[1,4],\ r=2$}};
\node at (2,2) {\tiny{$[3,6],\ r=2$}};
\node at (1,3) {\tiny{$[1,7],\ r=3$}};

\draw (0) --(12);
\draw (0) --(34);

\draw (12) --(1234);
\draw (34) --(1234);
\draw (34) --(3456);
\draw (1234) --(1234567);
\draw (3456) --(1234567);
\end{tikzpicture}
\]
\end{minipage}
\hfill
\begin{minipage}{0.5\textwidth}
\[
\begin{tikzpicture}[squarednode/.style={rectangle, draw=black, thick, minimum size=5mm}]
\node[squarednode] (0) at (0,0) {$\substack{\varnothing\\0}$};
\node[squarednode] (12) at (-3,1.5) {$\substack{[1,2]\\0}$};
\node[squarednode] (1234) at (-1,1.5) {$\substack{[1,4]\\1}$};
\node[squarednode] (34) at (1,1.5) {$\substack{[3,4]\\1}$};
\node[squarednode] (3456) at (3,1.5) {$\substack{[3,6]\\0}$};
\node[squarednode] (a) at (-2,3) {$\substack{[1,2]\subsetneq [1,4]\\-1}$};
\node[squarednode] (b) at (0,3) {$\substack{[3,4]\subsetneq [1,4]\\-1}$};
\node[squarednode] (c) at (2,3) {$\substack{[3,4]\subsetneq [3,6]\\-1}$};
\node[squarednode] (1) at (0,4.5) {$\substack{\hat{1}\\1}$};

\draw (0) --(12);
\draw (0) --(1234);
\draw (0) --(34);
\draw (0) --(3456);

\draw (12) --(a);
\draw (1234) --(a);
\draw (1234) --(b);
\draw (34) --(b);
\draw (34) --(c);
\draw (3456) --(c);

\draw (a) --(1);
\draw (b) --(1);
\draw (c) --(1);

\end{tikzpicture}
\]    
\end{minipage}
    \caption{On the left, the lattice of cyclic flats \(\mathcal{Z}(\M)\) of \(\M\). We specify also the rank of each cyclic flat. On the right, its cyclic chain lattice \(\mathcal C_{\mathcal Z}(\M)\). For convenience, we omit to write the top and bottom element of every chain, which are always \(\varnothing\) and \([1,7]\), respectively. Below a given element \(\N\), we also write \(\mu_{\mathcal C_{\mathcal Z}(\M)}(\N,\hat{1})\)}
    \label{fig:cyclic_flats}
\end{figure}
We use Theorem \ref{thm:bases_valuation} and Theorem \ref{thm:FinkSpeyer}  to write
\[
    \Sc(\M) = \sum_{\N \in \mathcal N_{n,k}} c(\N,\M) \Sc(\N)\, .
\]
To find the value of the coefficients \(c(\N,\M)\), we follow Hampe's algorithmic method \cite{Hampe}. We consider the cyclic chain lattice \(\mathcal C_{\mathcal Z}(\M)\), i.e., the poset of all chains in \(\mathcal Z(\M)\) starting at the minimal element \(\hat{0}_{\mathcal Z(\M)} = \varnothing\) and ending at the maximal element \(\hat{1}_{\mathcal Z(\M)} = [1,7]\), ordered by inclusion and with an artificial top element \(\hat{1}\). See the poset on the right in Figure \ref{fig:cyclic_flats}. Each of the elements in \(\mathcal C_{\mathcal Z}(\M)\) except for \(\hat{1}\) corresponds to a nested matroid \(\N\), as a matroid is nested if and only if its lattice of cyclic flats is a chain. Moreover, \(c(\N,\M) = -\mu_{\mathcal C_{\mathcal Z}(\M)}(\N,\hat{1})\), where \(\mu_{\mathcal C_{\mathcal Z}(\M)}\) is the M\"{o}bius function of \(\mathcal C_{\mathcal Z}(\M)\).
This means that the Chow class of \(\M\) is equal to 
\begin{equation}
    \label{eq:nested decomp}
    \Sc(\M) = 3\Sc\left( \scalebox{0.3}{\young(::~~,:~~~,~~~~)}\right) - \Sc\left( \scalebox{0.3}{\young(::~~,~~~~,~~~~)}\right) - \Sc\left( \scalebox{0.3}{\young(:~~~,:~~~,~~~~)}\right)\, ,
\end{equation}
where each nested matroid \(\N\) is isomorphic to one of the lattice path matroids appearing on the right hand side. Using Corollary \ref{cor:combinatorial Sc nested} we compute the Chow classes of the nested matroid above,
\begin{align*}
    \Sc\left( \scalebox{0.3}{\young(::~~,:~~~,~~~~)}\right) \ &= \  3s_{\scalebox{0.3}{\young(~~~~,~~)}} +  5s_{\scalebox{0.3}{\young(~~~,~~~)}} + 2s_{\scalebox{0.3}{\young(~~~~,~,~)}} + 5s_{\scalebox{0.3}{\young(~~~,~~,~)}} + s_{\scalebox{0.3}{\young(~~,~~,~~)}}\\
    \Sc\left( \scalebox{0.3}{\young(::~~,~~~~,~~~~)}\right) \ &= \  5s_{\scalebox{0.3}{\young(~~~~,~~)}} +  7s_{\scalebox{0.3}{\young(~~~,~~~)}} + 3s_{\scalebox{0.3}{\young(~~~~,~,~)}} + 6s_{\scalebox{0.3}{\young(~~~,~~,~)}} + s_{\scalebox{0.3}{\young(~~,~~,~~)}}\\
    \Sc\left( \scalebox{0.3}{\young(:~~~,:~~~,~~~~)}\right) \ &= \  3s_{\scalebox{0.3}{\young(~~~~,~~)}} +  6s_{\scalebox{0.3}{\young(~~~,~~~)}} + 2s_{\scalebox{0.3}{\young(~~~~,~,~)}} + 6s_{\scalebox{0.3}{\young(~~~,~~,~)}} + s_{\scalebox{0.3}{\young(~~,~~,~~)}} \, .\\
\end{align*}
Now \eqref{eq:nested decomp} gives
\[
    \Sc(\M) \ = \  s_{\scalebox{0.3}{\young(~~~~,~~)}} +  2s_{\scalebox{0.3}{\young(~~~,~~~)}} + s_{\scalebox{0.3}{\young(~~~~,~,~)}} + 3s_{\scalebox{0.3}{\young(~~~,~~,~)}} + s_{\scalebox{0.3}{\young(~~,~~,~~)}} \, .
\]
Alternatively, one can further decompose each of the nested matroids in \eqref{eq:nested decomp} into the snake matroids contained in their skew shape. After grouping similar terms we get
\begin{align*}
    \Sc(\M) \ &= \ \Sc\left( \scalebox{0.3}{\young(::~~,:~~,~~)}\right) + 
    \Sc\left( \scalebox{0.3}{\young(:::~,:~~~,~~)}\right) + 
    \Sc\left( \scalebox{0.3}{\young(::~~,::~,~~~)}\right) + 
    \Sc\left( \scalebox{0.3}{\young(:::~,::~~,~~~)}\right)\\
    & \quad + 
    \Sc\left( \scalebox{0.3}{\young(:::~,:::~,~~~~)}\right)
    - \Sc\left( \scalebox{0.3}{\young(::~~,~~~,~)}\right) -
    \Sc\left( \scalebox{0.3}{\young(:::~,~~~~,~)}\right) -
    \Sc\left( \scalebox{0.3}{\young(:~~~,:~,~~)}\right)\, .
\end{align*}
By Theorem \ref{thm:ribbon snake} we get the following expression for the Poincaré dual of \(\Sc(\M)\), 
\[
    \Sc^c(\M) \ = \ 
    s_{\scalebox{0.3}{\young(::~~,:~~,~~)}} + 
    s_{\scalebox{0.3}{\young(:::~,:~~~,~~)}} + 
    s_{\scalebox{0.3}{\young(::~~,::~,~~~)}} + 
    s_{\scalebox{0.3}{\young(:::~,::~~,~~~)}} + 
    s_{\scalebox{0.3}{\young(:::~,:::~,~~~~)}} - 
    s_{\scalebox{0.3}{\young(::~~,~~~,~)}} -
    s_{\scalebox{0.3}{\young(:::~,~~~~,~)}} - 
    s_{\scalebox{0.3}{\young(:~~~,:~,~~)}} \, .
\]
Using this algorithm, we were able to compute the Chow class of much larger matroids than previously achieved. More specifically, we were able to obtain explicit expressions for all matroids in the matroid catalog, with ground sets up to \(8\) elements, and all paving matroids up to \(15\) elements.

\begin{proposition}
    The Schubert coefficient \(d_\eta(\M)\) is nonnegative for any \(\eta\) whenever
    \begin{itemize}
        \item \(\M\) is a matroid on a ground set with at most \(8\) elements, or
        \item \(\M\) is a paving matroid on a ground set with at most \(15\) elements. 
    \end{itemize}
\end{proposition}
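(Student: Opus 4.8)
The statement is a finite verification, so the plan is to describe the algorithm realising it together with the results that make it correct and feasible. First I would reduce to connected matroids. By \cite[Theorem~4.1]{JP}, of which Proposition~\ref{prop:add loop or coloop} is a special case, the Chow class of a direct sum is obtained from the Chow classes of its connected summands by multiplying the corresponding Schur functions inside the ambient ring $R(k,n)$; since the structure constants of $R(k,n)$ in the Schubert basis are Littlewood--Richardson coefficients, hence nonnegative, nonnegativity for arbitrary matroids follows from nonnegativity for connected matroids. One may moreover discard representable matroids immediately, since their Schubert coefficients count intersection points and are automatically nonnegative.

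For a connected matroid $\M$ of rank $k$ on $[n]$ I would then compute $\Sc(\M)$ combinatorially rather than via equivariant $K$-theory. Using Theorem~\ref{thm:bases_valuation} together with Hampe's formula $c(\N,\M) = -\mu_{\mathcal C_{\mathcal Z}(\M)}(\N,\hat 1)$ for the coefficients of the nested-matroid decomposition, we write $\Sc(\M) = \sum_{\N} c(\N,\M)\,\Sc(\N)$, and each $\Sc(\N)$ is expanded in the Schubert basis via Corollary~\ref{cor:combinatorial Sc nested}, that is, by enumerating standard Young tableaux of the relevant shapes subject to the bounds on the descents of their restrictions; equivalently, one decomposes each nested matroid into snakes through Proposition~\ref{prop:valuative into snakes} and applies Theorem~\ref{thm:ribbon snake} and Theorem~\ref{thm:combinatorial Sc snakes}. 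This reduces the entire computation to tableau enumeration and integer bookkeeping. Having obtained $\Sc(\M) = \sum_\eta d_\eta(\M)\,\sigma_\eta$, one checks that $d_\eta(\M) \ge 0$ for every $\eta$.

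It then remains to run this over a complete list. For the first item one iterates over the catalog of all matroids on ground sets of size at most $8$ available through \cite{sagemath}. For the second item one generates all connected paving matroids on at most $15$ elements; this is far more tractable than enumerating all matroids of that size, since a connected paving matroid of rank $k$ is determined by its collection of rank-$(k-1)$ flats of size at least $k$, equivalently by the signed combination of panhandle matroids $\Lambda_{k,h,n}$ that expresses it valuatively, as in the proof of Theorem~\ref{thm: nonnegativity paving} and \cite{FerroniSchroeter:2024}, whose snake decompositions are governed by that of $\U_{k,n}$. The explicit implementation is the SageMath code cited above.

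The main obstacle is not conceptual but one of scale and efficiency: the number of matroids on $8$ elements, and of connected paving matroids up to $15$ elements, is large, and a naive expansion of signed sums of ribbon and skew Schur functions inside a $k\times(n-k)$ rectangle would be infeasible. What makes it practical is precisely to work with the additive, valuative invariants $d_{\eta,m}$, so that loops, coloops and disconnected summands are stripped off for free, to use the standard Young tableau formulas of Corollary~\ref{cor:combinatorial Sc nested} and Theorem~\ref{thm:combinatorial Sc snakes} in place of symmetric-function multiplication, and to process each isomorphism class of matroid only once. Verifying that the code faithfully carries out these reductions, and that the matroid enumeration is exhaustive, is the part that demands care.
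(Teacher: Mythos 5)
Your proposal is correct and follows essentially the same route as the paper: the Proposition is established by running the Appendix's algorithm (Hampe's nested-matroid decomposition via the cyclic chain lattice, followed by the tableau formulas of Corollary~\ref{cor:combinatorial Sc nested} and Theorem~\ref{thm:combinatorial Sc snakes}) over the matroid catalog up to $8$ elements and over paving matroids up to $15$ elements. The additional reductions you mention (stripping disconnected summands, skipping representable matroids, encoding paving matroids by their large hyperplanes) are sensible implementation details consistent with the paper's method.
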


\end{document}